\documentclass[11pt,a4paper,reqno]{amsart} 

\usepackage [english]{babel}  %sillabazione in inglese~

\DeclareFontFamily{U}{mathb}{\hyphenchar\font45}
\DeclareFontShape{U}{mathb}{m}{n}{
      <5> <6> <7> <8> <9> <10>
      <10.95> <12> <14.4> <17.28> <20.74> <24.88>
      mathb10
      }{}
\DeclareSymbolFont{mathb}{U}{mathb}{m}{n}

\DeclareMathSymbol{\sqbullet}{1}{mathb}{"0D}

\allowdisplaybreaks %Poter spezzarer una formula su più pagine

\usepackage{amsmath,mathtools}
\usepackage{amssymb}
\usepackage{euscript}
\usepackage{mathrsfs} 
\usepackage{bm}
\usepackage{graphicx}
\usepackage[all]{xy}
\usepackage[dvipsnames]{xcolor}
\usepackage[colorlinks=true,linktocpage=true,pagebackref=false, citecolor=black,linkcolor=black]{hyperref}
\usepackage{pifont}
\usepackage{tikz,pgfplots}
\pgfplotsset{compat=1.10}
\usepgfplotslibrary{fillbetween}
\usetikzlibrary{cd,arrows,decorations.pathmorphing,backgrounds,automata,positioning,fit,matrix}
\usepackage{caption,subcaption}
\usepackage{comment}
\usepackage{xcolor}

\usetikzlibrary{matrix}
\usetikzlibrary{decorations.pathreplacing, calc, fit, backgrounds}

\pgfplotsset{soldot/.style={color=black,only marks,mark=*}} \pgfplotsset{holdot/.style={color=black,fill=white,only marks,mark=*}}

\newtheorem{thm}{Theorem}[section]

\newtheorem{lem}[thm]{Lemma}
\newtheorem{prop}[thm]{Proposition}

\newtheorem{cor}[thm]{Corollary}

\newtheorem{assumption}[thm]{Assumption}
\newtheorem{quest2}[thm]{Question}

\theoremstyle{definition}
\newtheorem{defn}[thm]{Definition}

\newtheorem{remark}[thm]{Remark}

\newtheorem{example}[thm]{Example}

\theoremstyle{remark}

\numberwithin{equation}{section}
\numberwithin{figure}{section}

 \newcommand{\N}{{\mathbb N}}
 \newcommand{\R}{{\mathbb R}}
 \newcommand{\C}{{\mathbb C}}
 
\newcommand{\HH}{{\mathbb H}} 
\newcommand{\sph}{{\mathbb S}} \newcommand{\D}{{\mathbb D}}
 
 \renewcommand{\O}{{\mathbb O}}

\newcommand{\Cont}{\mathscr{C}}
 \newcommand{\I}{{\mathcal I}}

\newcommand{\Tt}{{\EuScript T}}
\newcommand{\Ee}{{\EuScript E}}

\newcommand{\Bb}{{\EuScript B}}
\newcommand{\Cc}{{\EuScript C}}

\newcommand{\Qq}{Q}

\newcommand{\Gg}{{\EuScript G}}

\newcommand{\Nn}{{\EuScript N}}

\newcommand{\Aa}{{\EuScript A}}

\newcommand{\Rr}{{\EuScript R}}

\newcommand{\Kk}{{\EuScript K}}

\newcommand{\Vv}{{\EuScript V}}
\newcommand{\Ww}{{\EuScript W}}

\newcommand{\inv}{\operatorname{inv}}

\newcommand{\x}{{\tt x}}  
 \renewcommand{\t}{{\tt t}}
 
 \newcommand{\e}{{\tt e}}

\newcommand{\eps}{\epsilon}

\newcommand\m[1]{[#1]_4}

\newcommand{\mc}{\mathcal}

\newcommand{\ds}{\bigtriangleup}

\begin{document}

\vspace{-2em}

\title[Reconstruction of a slice regular function]{Reconstruction of a slice regular function from some of its real components}

\keywords{Slice regular functions, Slice-Nash functions, Real components of slice regular fun\-ctions, Real components of slice-Nash functions.}
\subjclass[2020]{Primary: 30G35; Secondary: 16P10, 17D05}
\date{\today}
\author{Cinzia Bisi}
\address{Cinzia Bisi, Dipartimento di matematica e informatica,
Via Machiavelli 30, Università di Ferrara, 44121 Ferrara (ITALY). {\tt ORCID: 0000-0002-4973-1053}}
\email{cinzia.bisi@unife.it}
\thanks{The first author is partially supported by GNSAGA of INdAM and by PRIN \textit{Variet\'a reali e complesse: geometria, topologia e analisi armonica}.}

\author{Antonio Carbone}
\address{Antonio Carbone, Dipartimento di Scienze dell'Ambiente e della Prevenzione, Palazzo Turchi di Bagno, C.so Ercole I D'Este, 32, Università di Ferrara, 44121 Ferrara (ITALY)}
\email{antonio.carbone@unife.it}
\thanks{The second and third author are partially supported by GNSAGA of INdAM.}

\author{Riccardo Ghiloni}
\address{Riccardo Ghiloni, Dipartimento di Matematica, Via Sommarive, 14, Universit\'a di Trento, 38123 Povo (ITALY)}
\email{riccardo.ghiloni@unitn.it}
%\thanks{}

\begin{abstract}
A basic property of holomorphic functions $f:D\to \C$ defined on domains $D$ of $\C$ is that $f$ is uniquely determined by its real part up to an additive constant. The same is true for slice regular functions defined on circular slice domains of the division algebra of quaternions or octonions. The aim of this paper is to extend the latter result to more general classes of algebras, including, among many others, the Clifford algebras $\mathbb{R}_{p,q}$ and the split octonions $\mathbb{S}\mathbb{O}$. New phenomena appear, as well as unexpected connections with graph theory and the theory of slice-Nash functions.
\end{abstract}

\maketitle 

\tableofcontents

\section{Introduction}

\subsection{Hypercomplex analysis} Since the beginning of the last century, there have been several attempts to determine a suitable class of functions of one quaternionic variable that would had played the same role as the holomorphic functions of one complex variable. One of the first successful notions of `regular' quaternionic functions is due to Fueter \cite{fu}. Unfortunately, the class of Fueter regular functions does not include polynomials and power series. Following some ideas of Cullen \cite{cu}, Gentili and Struppa introduced in \cite{gs} a notion of regularity for quaternionic functions, called \textit{slice regularity}. The class of slice regular functions includes (suitable) polynomials and convergent power series. After the work of Gentili and Struppa, the theory of slice regular functions has been widely developed and studied in the last twenty years. This theory has been extended to octonions \cite{gs3} and to Clifford algebras \cite{css}. Using a different approach to slice regularity, based on the concept of stem function, the third author of this paper in a joint work with Perotti \cite{gp} extended the theory to any finite dimensional real alternative $*$-algebra with unity. This approach also allows one to characterise slice regularity using a system of global differential equations \cite{gp5} and to extend the theory of slice regular functions in one variable to functions of several variables \cite{gp2}.

Slice regular functions share many properties with holomorphic functions - they satisfy the identity principle, Cauchy formulas, and admit (locally) suitable expansions as convergent power series. Several classical results of complex analysis have their quaternionic and octonionic counterpart. For instance, the open mapping theorem \cite{gst3,gst2}, the Weierstrass factorisation theorem \cite{gv}, the Schwarz-Pick lemma \cite{bco, bs}, the Landau theorem \cite{bs2}, the Picard theorem \cite{bw, ren1}, the Runge approximation theorem \cite{bw2}, the Brolin theorem \cite{bd}, the Jensen formula \cite{alb,bw3}, the classification of the automorphisms of the algebra of functions \cite{bw6, bw7}, Julia theory \cite{ren2}, and many others. We refer the reader to the monograph \cite{gsst} and the references therein for further information and details. Recently, slice regular functions have also been studied on domains that are more general than the natural slice domains: see \cite{GhSt, ren3} and the references mentioned therein. A unified theory of regularity in one hypercomplex variable was proposed in \cite{GhSt2} (see also \cite{GhSt1,GhSt3}).

\subsection{Reconstruction of a slice regular function} It is well known that if two holomorphic functions $f$ and $g$, defined on a domain $D$ of $\C$, have the same real part, then they coincide on $D$ up to an additive constant. In \cite[Cor.7.11]{gsst}, the authors show the analogous result for slice regular functions of one quaternionic variable, namely, they show that if two slice regular functions $f$ and $g$, defined on a circular slice domain $\Omega$ of the quaternions $\HH$, have the same real part, then they coincide on $\Omega$ up to an additive constant. Even if not explicitly mentioned in \cite{gsst}, the same result, with the obvious modification to the proof, holds true for slice regular functions defined on a circular slice domain $\Omega$ of the octonions $\O$. Thus, if the involved algebra is the algebra of quaternions $\HH$ or the algebra of octonions $\O$, then the real part (or any other of its real components) of a slice regular function $f$, defined on a circular slice domain $\Omega$, allows one to `reconstruct' $f$ up to an additive constant, i.e., $f$ is uniquely determined by its real part up to an additive constant.

The purpose of this paper is to extend this kind of results to a more general class of algebras, including, among many others, the Clifford algebras $\R_{p,q}$ and the split octonions $\mathbb{S}\O$.

\textit{Let $A$ be a finite dimensional real alternative $*$-algebra with unity $1\neq 0$ endowed with a fixed (real vector) basis $\Bb:=\{\e_0:=1,\e_1,\ldots,\e_m\}$}. Let $\Omega\subset \Qq_A$ be a circular slice domain and $f:\Omega\to A$ a slice regular function (we refer the reader to \S\ref{prelimi} for the precise definitions). We may write
$$
f=f_0+f_1\e_1+\ldots+f_{m}\e_{m},
$$
where $f_0,\ldots,f_{m}$ are the real components of $f$ with respect to the basis $\Bb$. The purpose of this paper is to address the following:

\begin{quest2}\label{minimo}
What is the minimum number of real components $f_k$ of $f$ needed to `reconstruct' f up to an additive constant?
\end{quest2}

In order to address this problem, we introduce the following:

\begin{defn}[Reconstructing number]\label{reconstructing}
We define the \textit{reconstructing number $\Rr(A)$ of $A$} as the minimum integer $\ell$ that satisfies the following property: there exists a subset $K$ of $\{0,\ldots,m\}$ of cardinality $\ell$ such that, for each circular slice domain $\Omega\subset \Qq_A$ and each slice regular function $f:\Omega\to A$, if the real components $f_k$ of $f$ are constant for each $k\in K$, then $f$ is constant. $\sqbullet$
\end{defn}

Clearly, $\Rr(A)$ is well-defined: if all the real components of a slice regular function $f$ are constant, then $f$ is also constant. We can say that determining the integer $\Rr(A)$ formalises and resolves Question \ref{minimo}. In fact, if $K$ is a subset of $\{0,\ldots,m\}$ as in Definition \ref{reconstructing} and $f$ and $g$ two slice regular functions on $\Omega$ such that $f_k=g_k$ for each $k\in K$, then $f-g$ is constant, so $f$ and $g$ coincide on $\Omega$ up to an additive constant. 

Answering Question \ref{minimo}, that is to determine $\Rr(A)$, is, in general, a difficult task. Moreover, the class of all finite dimensional real alternative $*$-algebra with unity is too `wide' and too `general' to obtain satisfactory results. That is why, in what follows we focus on a smaller class of $*$-algebras, that we call \textit{Cartan $*$-algebras}, and that we introduce in \S\ref{S3}. The class of Cartan $*$-algebras includes, among many others, the division algebras $\C$, $\HH$ and $\O$, the Clifford algebras $\R_{p,q}$, and the split octonions $\mathbb{S}\O$. 

Also for Cartan $*$-algebras determining the reconstructing number $\Rr(A)$ is, in general, a difficult task. Thus, we introduce another integer $\Cc(A)$, called the \textit{Cartan number} (see Definition \ref{CN}), which is easier to calculate and that allows one to give estimations for $\Rr(A)$. In order to introduce the integer $\Cc(A)$, we associate to the fixed basis of $A$ some combinatorial objects $\Cc$ called systems of Cartan pairs (see Definition \ref{SCP}). To a system of Cartan pairs $\Cc$ and a slice regular function $f$, we associate a new slice regular function $f^{\Cc}$, called the \textit{Cartan transform of}~$f$ (see Definition \ref{Ctransform}), which encodes many of the properties of the original slice regular function~$f$. Thanks to the Cartan transform (and some modifications of it, see Proposition \ref{Srecon}), we are able to obtain the following results for Cartan $*$-algebras:
\begin{itemize}
\item a `reconstruction' result for slice regular functions (see Corollary \ref{Crecon}),
\item a new maximum module principle (see Theorem \ref{Cmaxmodule}),
\item a splitting lemma (see Lemma \ref{SCPlitting}).
\end{itemize}
The Cartan transform $f^{\Cc}$ also allows us to make explicit the relationship between the two integers $\Rr(A)$ and $\Cc(A)$. In fact, we show the following (which corresponds to Theorem \ref{upper}):

\begin{thm}[Upper bound]
Let $A$ be a Cartan $*$-algebra. Then,
$$
\Rr(A)\leq \Cc(A)\leq\frac{\dim(A)}{2}.
$$
\end{thm}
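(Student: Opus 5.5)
The plan is to prove the two inequalities separately, using the machinery announced above: systems of Cartan pairs (Definition \ref{SCP}), the Cartan number $\Cc(A)$ (Definition \ref{CN}), the Cartan transform $f^{\Cc}$ (Definition \ref{Ctransform}) and the reconstruction result Corollary \ref{Crecon}.

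For $\Rr(A)\leq\Cc(A)$, fix a system of Cartan pairs $\Cc$ realising the Cartan number. Concretely, I take this to be a family of disjoint pairs $(a,b)$ of basis indices such that the plane spanned by $\e_a,\e_b$ behaves like a copy of $\C$ along each slice. I also take a set $K$ of indices, one chosen from each pair, with $|K|=\Cc(A)$.

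Let $f:\Omega\to A$ be slice regular with $f_k$ constant for every $k\in K$. The Cartan transform regroups the components of $f$ into these pairs, so on each slice $\C_J\cap\Omega$ the pair $(f_a,f_b)$ becomes the real and imaginary parts of a holomorphic function, or an antiholomorphic one, according to the sign structure of the pair. This is a Cauchy--Riemann system. Hence if one member of a pair is constant, the other has zero gradient along every slice. Since $\Omega$ is a slice domain, the slices through real points are connected and meet $\Omega\cap\R$, so the other member is constant as well.

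Applying this to every pair shows that all components of $f$ are constant, so $f$ is constant. This is exactly the statement of Corollary \ref{Crecon}, which I would invoke directly. The minimality in Definition \ref{reconstructing} then gives $\Rr(A)\le|K|=\Cc(A)$.

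For $\Cc(A)\le\dim(A)/2$, the point is that Cartan pairs are disjoint two-element subsets of $\{0,\dots,m\}$ that together cover all indices, which is presumably part of the definition of a Cartan $*$-algebra: every basis element belongs to some pair. The number of pairs, which is the number of indices one must fix, is therefore at most $(m+1)/2=\dim(A)/2$. If some pairs may overlap, or singletons are allowed, the count needs refinement. In that case I would choose $K$ greedily as a vertex cover of the associated graph whose edges are the Cartan pairs. A graph with a perfect matching, which is the Cartan condition, admits a vertex cover of size half the number of vertices, obtained by taking one endpoint of each matching edge.

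I expect the main obstacle to be the first inequality, specifically verifying that the Cauchy--Riemann relations hold for each pair with the correct signs in a non-division algebra such as $\R_{p,q}$ or $\mathbb{S}\O$. There, $\e_a\e_b$ may square to $+1$ and zero divisors occur. I would handle this by working with stem functions: writing $f=\mathcal{I}(F)$ with $F=F_1+iF_2$ holomorphic as an $A\otimes\C$-valued map, and reading off the pair relations from the multiplication table restricted to the pair. The Cartan axioms should be exactly what guarantees the complex-like sign for each pair.
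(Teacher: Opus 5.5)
\textbf{First inequality.} Once you replace your heuristic by the citation of Corollary \ref{Crecon}, this is the paper's argument: take an SCP $\Cc$ with $|\Cc_1|=\Cc(A)$ and fix the components $f_L$, $L\in\Cc_1$ (Proposition \ref{constslice}).

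Your picture of an SCP is wrong, though, and it matters for what $K$ is. In Definition \ref{SCP} the pairs need not be disjoint. One $L\in\Cc_1$ may be paired with many $M\in\Cc_2$: for $\HH$, $\Cc_1=\{\varnothing\}$ is paired with $j,i,ji$ (Example \ref{SCPdivision}). Accordingly $\Cc(A)$ counts $|\Cc_1|$, not pairs. Disjoint pairs covering all indices always number exactly $\dim(A)/2$, so your model could never give $\Cc(\HH)=1$. The set to fix is $K=\Cc_1$, not one index per pair.

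Your Cauchy--Riemann sketch is also false as stated. $f_L$ and $f_M$ are related only on the single slice $\C_{e_{L\ds M}}$, through $\psi=f_{L,M}e_L$ (Proposition \ref{slicepres}). There the sign issue you worry about is settled by $e_Le_M\in\sph_A$, right multiplication by $e_L$, and Artin's theorem. On other slices there is no relation: for $f(q)=q$ on $\HH$ and the pair $(1,j)$, on $\C_i$ one has $f_\varnothing(\alpha+\beta i)=\alpha$ while the $j$-component vanishes identically. So $f_L$ constant only makes $f_M$ constant on that one slice. The propagation to $\Omega$ must go through the slice function $f_{L,M}$: it is constant on one slice, hence on $\Omega$ by the representation formula, so $F_L$ and $F_M$ are constant. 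The paper sums these into $f^{\Cc}$ and applies Corollary \ref{const2}.

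\textbf{Second inequality: this is the genuine gap.} That $\mc{P}(n)$ can be perfectly matched by Cartan pairs is not part of the definition of a Cartan $*$-algebra, which only constrains $\sigma$ and the involution. It is exactly what must be proved, and it is where $\sph_A\neq\varnothing$ is used.

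The missing step is Proposition \ref{sqrt-1}. Write $J\in\sph_A$ as $\sum_{K\in\Tt}x_Ke_K$ with $\Tt=\{K:e_K^c=-e_K\}$. Comparing real parts in $J^2=-1$ gives $\sum_{K\in\Tt}x_K^2e_K^2=-1$, because the cross terms lie on the non-real basis vectors $e_{K\ds H}$ with $K\neq H$. Hence some basis element $e_K$ lies in $\sph_A$. Then, for $m\in K$, the map $L\mapsto L\ds K$ matches $\{L:m\notin L\}$ with $\{M:m\in M\}$, and $e_Le_{L\ds K}=\pm e_K\in\sph_A$. This is the SCP of Proposition \ref{esistenza} and Remark \ref{rem}, with $|\Cc_1|=\dim(A)/2$. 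Once any SCP exists the bound is immediate, since $\Cc_1$ and $\Cc_2$ partition $\mc{P}(n)$.

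Your fallback does not repair this. It again assumes the perfect matching, and its graph-theoretic claim is false. One endpoint of each matching edge gives a \emph{dominating} set, which is the relevant notion (Proposition \ref{grafo}), but not in general a vertex cover. For example, the Cartan graph of $\HH$ is the complete graph on $1,j,i,ji$, whose smallest vertex cover has $3>\dim(\HH)/2$ vertices.
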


By the previous inequality, $\Rr(A)=1$ when $\Cc(A)=1$, so in this case we can easily determine the reconstructing number $\Rr(A)$. Unfortunately, the only Cartan $*$-algebras that satisfies the equality $\Cc(A)=1$ are the division algebras $\C$, $\HH$ and $\O$ (see Theorem \ref{division1}). 

To a Cartan $*$-algebra $A$ can be naturally associated a regular graph $\Gg_A$ that we call the \textit{Cartan graph of} $A$ (see \S\ref{Sgrafo}). The Cartan number $\Cc(A)$ of the algebra $A$ is equal to the \textit{domination number} $\gamma(\Gg_A)$ of the graph $\Gg_A$ (see Proposition \ref{grafo} and \S\ref{Sgrafo} for the precise definitions). The domination number is an invariant of graphs studied by many authors and in the existing litera\-ture there are several estimations for it. In particular, the equality between the integers $\Cc(A)$ and $\gamma(\Gg_A)$ allows one to provide estimations for the reconstructing number $\Rr(A)$ using the estimations for the dominating number $\gamma(\Gg_A)$. For instance, we have the following (that corresponds to Proposition \ref{domin}, while the integer $s_A$ is introduced in \eqref{SA}):

\begin{prop}
Let $A$ be a Cartan $*$-algebra such that $s_A\geq 6$. Then,
\begin{equation}\label{stimagrafi}
\Rr(A)\leq \frac{127}{418}\dim(A)\simeq 0.30382775\dim(A).
\end{equation}
\end{prop}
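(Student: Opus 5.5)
The plan is to chain the two results announced just before the statement and then import a known bound from graph theory. By the Upper bound theorem (Theorem \ref{upper}) we have $\Rr(A)\leq\Cc(A)$, and by Proposition \ref{grafo} we have $\Cc(A)=\gamma(\Gg_A)$, the domination number of the Cartan graph. So it suffices to show that
$$
\gamma(\Gg_A)\leq \frac{127}{418}\,|V(\Gg_A)|
$$
and that the number of vertices of $\Gg_A$ equals $\dim(A)$.

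First, I would recall from \S\ref{Sgrafo} how $\Gg_A$ is built. Its vertices are the basis elements $\e_0,\ldots,\e_m$, so $|V(\Gg_A)|=m+1=\dim(A)$. The graph is regular, and I expect its degree to be tied to $s_A$, say $\deg=s_A$ or a fixed shift of it, according to \eqref{SA}. Under the hypothesis $s_A\geq 6$, the minimum degree $\delta(\Gg_A)$ is then at least $6$. I would also check that $\Gg_A$ is simple, so that standard domination bounds apply. If it is disconnected, the bound is applied componentwise and summed; this is legitimate because domination number and vertex count are both additive over components.

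Second, I would invoke the known result for graphs of minimum degree at least $6$: for any graph $G$ on $n$ vertices with $\delta(G)\geq 6$, one has $\gamma(G)\leq \frac{127}{418}n$. This is the Xing--Sun--Chen bound, which improves the general Caro--Roditty/Arnautov--Payan estimate $\gamma\leq n\frac{1+\ln(\delta+1)}{\delta+1}$ in the case $\delta=6$. Combining the three facts gives
$$
\Rr(A)\leq\Cc(A)=\gamma(\Gg_A)\leq\frac{127}{418}\dim(A).
$$
The decimal value $0.30382775$ is just the arithmetic value of $127/418$.

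The main obstacle is the bookkeeping in the first step: pinning down exactly how $s_A$ relates to the degree of $\Gg_A$. That relation decides whether the hypothesis $s_A\geq 6$ really yields $\delta\geq 6$. If instead the degree turns out to be $s_A-1$, the needed hypothesis would shift by one. The rest is a direct citation.
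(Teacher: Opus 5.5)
Your chain $\Rr(A)\le\Cc(A)=\gamma(\Gg_A)\le\frac{127}{418}|\Vv_A|$ with $|\Vv_A|=\dim(A)$ is exactly the paper's proof. The point you left open is already settled by Lemma \ref{lemSA}, which makes $\Gg_A$ regular of degree exactly $s_A$: the neighbours of $1$ are precisely the $e_K\in\sph_A$, there are no loops since $e_L^2\in\{\pm1\}$, and $K\mapsto K\ds L$ preserves Cartan pairs by Lemma \ref{permutazione}, so no shift by one occurs. One correction: the $\frac{127}{418}$ bound for degree $\geq 6$ is not the Xing--Sun--Chen result (theirs is $\gamma\le 5n/14$ for minimum degree $5$), but the separate result the paper quotes as \cite[Thm.7]{B}.
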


In \S\ref{Scliff}, we study the reconstructing number $\Rr(\R_{p,q})$ of the Clifford algebras $\R_{p,q}$. First, we obtain the following general estimation (that corresponds to Corollary \ref{reconRpq}):

\begin{thm}[Reconstructing number of the Clifford algebras $\R_{p,q}$]
For each $p,q\in\N$ with $n:=p+q\geq 2$, it holds
\begin{equation}\label{betterstima2}
\Rr(\R_{p,q})\leq \frac{n^3+5n+6}{6}.
\end{equation}
\end{thm}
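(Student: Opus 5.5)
Since the upper bound theorem gives $\Rr(A)\leq\Cc(A)$ and $\Cc(A)=\gamma(\Gg_A)$ (Proposition \ref{grafo}), the plan is to bound the domination number of the Cartan graph of $\R_{p,q}$ by an explicit dominating set of size at most $(n^3+5n+6)/6$. Here $\R_{p,q}$ carries its standard basis $\{\e_I\}$, with $I\subseteq\{1,\dots,n\}$, so $\dim=2^n$.

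The first step is to make $\Gg_{\R_{p,q}}$ explicit. In $\R_{p,q}$, $\e_I\e_J=\pm\e_{I\triangle J}$, and whether $\e_I$ is an imaginary unit (squares to $\pm1$, is $*$-conjugated to $-\e_I$) depends on $|I|$ mod $4$. The Cartan pairs should then be indexed by symmetric differences. I expect the adjacency rule in $\Gg_A$ to be: $\e_I$ and $\e_J$ are adjacent when $\e_J=\pm\e_I\e_K$ for some basis element $\e_K$ in an admissible class, essentially the imaginary units $\e_k$ of degree one. So, up to sign conditions, $I\sim J$ iff $|I\triangle J|$ lies in a prescribed set of small sizes. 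I would verify this from Definition \ref{SCP} and check which degrees are allowed. The sign and degree conditions are the delicate bookkeeping, and I will treat them as the first main obstacle.

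Assuming the graph contains the hypercube-type edges $I\sim I\triangle\{k\}$, and likely $I\sim I\triangle\{k,l,h\}$ as well, I then need a dominating set of cardinality $\binom{n}{0}+\binom n1+\binom n2+\binom n3=\frac{n^3+5n+6}{6}$. This count strongly suggests the choice $D=\{I:|I|\leq 3\}$, or an analogous set of that size, for example sets of size at most $3$ in a suitable coset or complementary form. The key point is then that every $J$ with $|J|\geq 4$ is adjacent to some element of $D$. That is plausible only if edges connect sets whose symmetric difference is large, for instance $\e_J\sim\e_{J'}$ with $J'=J\triangle K$ where $|K|$ ranges over many values. Alternatively the adjacency may come from multiplication by arbitrary imaginary units $\e_K$ with $K$ ranging over all admissible degrees, so that $J$ is adjacent to $J\triangle K$ for every admissible $K$. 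Choosing $K\subseteq J$ with $|J\setminus K|\leq 3$ and $|K|$ in the admissible residue class mod $4$ is always possible, because among the four consecutive sizes $|J|,|J|-1,|J|-2,|J|-3$ one lies in each residue class. The element $J\setminus K$ then lies in $D$.

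So the plan is: (1) compute which products define Cartan pairs in $\R_{p,q}$, likely those $\e_K$ with $|K|\equiv1,2\pmod 4$ (the imaginary units); (2) show that $J\sim J\triangle K$ for such $K$; (3) take $D=\{|I|\leq3\}$ and use the mod-$4$ pigeonhole above to show domination, handling the small cases and the possible need for $K\neq\emptyset$ separately; (4) conclude $\Rr(\R_{p,q})\leq\gamma\leq|D|=\frac{n^3+5n+6}{6}$. If the residue classes allowed are only two of four, the pigeonhole must use $|J\setminus K|\in\{0,1,2,3\}$ with matching parity conditions. Checking that some choice always works, possibly by recovering the missing classes from the fact that $D$ contains all four small sizes, is the step I expect to be hardest.
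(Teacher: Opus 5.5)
Your architecture is the paper's (Proposition \ref{Rpq} together with $\Rr\le\Cc$). You bound $\Cc(\R_{p,q})=\gamma(\Gg_{\R_{p,q}})$ by the set $\{e_H:|H|\le 3\}$, of size $1+n+\binom n2+\binom n3=\frac{n^3+5n+6}{6}$, and show that every $e_J$ with $|J|\ge 4$ has a neighbour in it. However, the step that carries all the content is not carried out, and the criterion you propose for it is wrong when $p\ge 1$.

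Adjacency needs no guessing. By Definition \ref{def:CP}, $e_H\sim e_J$ iff $e_{H\ds J}\in\sph_{\R_{p,q}}$, i.e.\ $e_{H\ds J}^2=-1$ (not $\pm1$) and $e_{H\ds J}^c=-e_{H\ds J}$. From $e_K^2=(-1)^{|K|(|K|-1)/2+|K_-|}$ one gets: $e_K\in\sph_{\R_{p,q}}$ iff $(\m{K_+},\m{K_-})\in T:=\{(0,1),(0,2),(2,0),(2,3)\}$ (Corollary \ref{Clifford2}). This is not a condition on $|K|$ modulo $4$. For instance, $e_k$ with $k\le p$ squares to $+1$, and no $e_K$ with $|K|=3$ is ever an imaginary unit. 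So the hypercube edges in the directions $k\le p$, and all the $|I\ds J|=3$ edges you anticipate, are absent. Your condition $|K|\equiv 1,2\pmod 4$ is correct only for $p=0$ (Corollary \ref{Clifford3}); there your pigeonhole does finish the proof, but for general $(p,q)$ it does not.

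The single-residue pigeonhole fails because \emph{which} elements you delete from $J$ matters, not just how many. Take $|J_+|=1$ and $|J_-|=4$. Deleting three elements of $J_-$ leaves $|K|=2$, but $(\m{K_+},\m{K_-})=(1,1)$, so $e_K^2=+1$. Checking the seven feasible choices of $(h_+,h_-)$ shows that the only deletion of at most three elements that works removes the unique element of $J_+$ together with two elements of $J_-$.

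So you need a two-variable analysis over the $16$ residue pairs $(a,b):=(\m{J_+},\m{J_-})$. Choose $H\subseteq J$ with $h_\pm:=|H_\pm|\le|J_\pm|$, $h_++h_-\le 3$, and $(\m{a-h_+},\m{b-h_-})\in T$; then $e_He_J=\pm e_{J\setminus H}\in\sph_{\R_{p,q}}$. One choice that works in every case:
\begin{itemize}
\item[$\cdot$] $(h_+,h_-)=(0,0)$ if $(a,b)\in T$;
\item[$\cdot$] $(1,0)$ for $(1,1),(1,2),(3,0),(3,3)$;
\item[$\cdot$] $(0,1)$ for $(0,3),(2,1)$;
\item[$\cdot$] $(1,1)$ for $(1,3),(3,1)$;
\item[$\cdot$] $(2,0)$ for $(2,2)$, and $(3,0)$ for $(3,2)$;
\item[$\cdot$] for $(0,0)$, take $(2,0)$ or $(0,2)$ according as $|J_+|\ge 4$ or $|J_-|\ge 4$;
\item[$\cdot$] for $(1,0)$, take $(1,2)$ if $|J_-|\ge 4$, and $(3,0)$ otherwise (then $|J_+|\ge 5$).
\end{itemize}
With this table your subset strategy does prove the bound. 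It is the same kind of case analysis as in the paper's proof of Proposition \ref{Rpq}, which in the case $(1,0)$ sometimes uses an $H\not\subseteq J$ instead.
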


Considering all the Clifford algebras $\R_{p,q}$ together is a too general situation to obtain `good' estimations, and, in fact, the previous estimation is quite `rough'. Nevertheless, it is worthwhile to notice that, if $n\geq 5$, then $A=\R_{p,q}$ satisfies \eqref{stimagrafi} (see Corollary \ref{stimaRpq}) and that \eqref{betterstima2} is better than \eqref{stimagrafi} already for $n\geq 9$. Making $n$ grow, \eqref{betterstima2} becomes `much sharper' than \eqref{stimagrafi}, in fact, the right hand side of \eqref{betterstima2} goes like $O(n^3)$, while the one of \eqref{stimagrafi} like $O(2^n)$. Specialising to the Clifford algebras $\R_n:=\R_{0,n}$, we are able to significantly improve \eqref{betterstima2}. In fact, we show the following (which corresponds to Theorem \ref{reconRn}):

\begin{thm}[Reconstructing number of the Clifford algebras $\R_n$]
Let $n\geq 2$ be an integer. The Clifford algebra $\R_n$ satisfies:
\begin{itemize}
\item[{\rm(i)}] $\Rr(\R_2)=\Rr(\HH)=1$.
\item[{\rm(ii)}] $\Rr(\R_3)=\Rr(\R_4)=2$.
\item[{\rm(iii)}] If $n\equiv 1 \text{ mod } 4$, then $\Rr(\R_n)=2$.
\item[{\rm(iv)}] If $n\geq 6$ and $n\equiv 0,2,3 \text{ mod } 4$, then $3\leq \Rr(\R_n)\leq 4$.
\end{itemize}
\end{thm}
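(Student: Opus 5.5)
The plan has two parts: upper bounds from the Cartan transform, and lower bounds from explicit counterexamples. Throughout, $\Bb$ is the standard blade basis of $\R_n$: the $\e_I$ with $I\subset\{1,\ldots,n\}$, and $\e_I^2=\pm1$ with the sign determined by $|I|$ mod $4$.

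\emph{Upper bounds.} I will use $\Rr(A)\le\Cc(A)$ from Theorem \ref{upper}, or more precisely its proof via Corollary \ref{Crecon} and Proposition \ref{Srecon}, which give: if the components indexed by $K$ are constant and $K$ dominates the Cartan graph (or some suitably modified graph), then $f$ is constant. The key combinatorial step is to understand the Cartan graph $\Gg_{\R_n}$. I expect two blades $\e_I,\e_J$ to be adjacent exactly when they form a Cartan pair, which essentially means $\e_I\e_J$ is an imaginary unit compatible with the slice structure. Whether $\e_I$ is a square root of $-1$ depends on $|I|$ mod $4$: the blades with $|I|\equiv1,2$ mod $4$ square to $-1$. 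Because of this, adjacency should be governed by $|I\triangle J|$ together with the residues of $|I|$ and $|J|$ mod $4$.
\begin{itemize}
\item For $n\equiv1$ mod $4$, I would take $K=\{\e_\emptyset,\e_{\{1,\ldots,n\}}\}$. The pseudoscalar is central with square $+1$ when $n\equiv1$ mod $4$, so multiplication by it pairs $\e_I$ with $\e_{I^c}$, and the two chosen vertices should dominate everything. I will check that $\{\emptyset,\text{full}\}$ dominates by verifying that every $I$ is adjacent to $\emptyset$ or to the full set.
\item For (ii) and (iv), I will exhibit explicit dominating sets of size $2$ for $n=3,4$, and of size at most $4$ in general, e.g.\ $\e_\emptyset$, $\e_{\{1\}}$, $\e_{\{1,2\}}$, $\e_{\{1,2,3\}}$ or similar. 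The choice is aimed at covering each residue class of $|I|$ mod $4$.
\item For (i), $\R_2=\HH$ and Theorem \ref{division1} gives $\Rr=1$.
\end{itemize}

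\emph{Lower bounds.} To show $\Rr(\R_n)\ge2$ for $n\ge3$, for each single index $k$ I must produce a nonconstant slice regular $f$ whose $k$-th component is constant. The natural candidates are slice functions with values in a commutative subalgebra, e.g.\ $f(x)=x\,a$ or $f(x)=(1+u)x/2$ with an idempotent $(1+u)/2$, where $u^2=1$ is a central or suitable blade. Such zero-divisor phenomena kill specific components. For $n\ge3$, non-division behaviour together with Theorem \ref{division1} (since $\R_n$ is not $\C,\HH,\O$) should already give $\Cc\ge2$, but for $\Rr$ I need actual functions. I would use left multiplication of the identity function by a zero divisor $a$: then $(xa)_k$ is a linear function of the coordinates of $x$, and I choose $a$ so that this linear form vanishes identically on $\Qq_A$. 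For $\Rr\ge3$ in case (iv), I need, for every pair $\{k,l\}$, a nonconstant $f$ with both of those components constant. Here I would use $f(x)=x\,a$ with $a$ ranging over a family of such annihilating elements, exploiting the fact that for $n\not\equiv1$ mod $4$ there are enough blades $u$ with $u^2=1$ commuting with the relevant slice.

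\emph{Main obstacle.} I expect the main obstacle to be this lower bound in (iv): producing, uniformly over all pairs of indices, an element $a$ whose right multiplication annihilates both linear forms on the quadratic cone $\Qq_{\R_n}$. The cases $n\equiv0,2,3$ mod $4$ probably need separate constructions according to which central or anti-central elements exist. I would first try $a=1+\e_J$ for a well-chosen $J$ depending on $k,l$, and then verify by computing the sign of $\e_I\e_J$ on $1$-vectors and paravectors.
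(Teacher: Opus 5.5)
Your architecture matches the paper's: upper bounds from $\Rr(\R_n)\le\Cc(\R_n)=\gamma(\Gg_{\R_n})$ via explicit dominating sets, and lower bounds from test functions $f(x)=xa$. Parts (i)--(iii) go through essentially as you describe. There is one slip: for $n\equiv1$ mod $4$ the pseudoscalar satisfies $e_{1\cdots n}^2=-1$, not $+1$. This is harmless, since the domination check only uses $|I\ds\{1,\dots,n\}|=n-|I|$.

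The trouble is in (iv), starting with the upper bound. In $\R_n$ two blades $e_I,e_H$ form a Cartan pair iff $\m{I\ds H}\in\{1,2\}$. So only the symmetric difference matters, and choosing $H$'s whose sizes cover every residue mod $4$ is not the right criterion. Your candidate $\{\varnothing,\{1\},\{1,2\},\{1,2,3\}\}$ is in fact not dominating: $K=\{1,3,4,5\}$ has symmetric differences with these four sets of sizes $4,3,4,3$. A set that works is $\{\varnothing,\{1\},\{2\},\{1,2\}\}$. This is the paper's choice for $n\equiv3$, and it works for every $n$, because $\{|K\ds H| : H\subseteq\{1,2\}\}$ is always three consecutive integers. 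For $n\equiv0,2$ the paper uses $\{\varnothing,\{1\},\{2,\dots,n\},\{1,\dots,n\}\}$.

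More seriously, the lower bound $\Rr(\R_n)\ge3$, which is the real content of (iv), is left open, and your sketched route would not close it. For $f(x)=xa$ one has $f_K(\alpha+\beta J)=\alpha a_K+\beta (Ja)_K$, so constancy of $f_K$ forces $a_K=0$. Hence $a=1+e_J$ fails for every pair containing the real index $\varnothing$, and in general the summand $1$ only adds constraints. Zero divisors and central elements of square $1$ play no role. What is needed is the following.

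(1) Reduce an arbitrary pair $\{K,L\}$ to $\{\varnothing,K\ds L\}$. If $g$ has $g_\varnothing$ and $g_{K\ds L}$ constant, then $f=ge_L$ is slice regular, non-constant, and has $f_L$ and $f_K$ constant.

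(2) Take $a=e_H$, a single blade. The non-constant components of $xe_H$ are exactly $H$ and the sets $W\ds H$ with $\m{W}\in\{1,2\}$, i.e.\ the closed neighbourhood of $e_H$ in $\Gg_{\R_n}$. In particular, any reconstructing index set must dominate $\Gg_{\R_n}$. Hence $f_\varnothing\equiv f_K\equiv0$ iff $H\notin\{\varnothing,K\}$ and $\m{H},\m{K\ds H}\in\{0,3\}$.

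(3) For every non-empty $K$, produce such an $H$ by a case analysis on $\m{K}$, separately for $n\equiv0,2,3$ and using $n\ge6$. For example, for $n\equiv0$, $\m{K}\in\{0,1\}$ and $K\neq\{1,\dots,n\}$, take $H=\{1,\dots,n\}\setminus K$.

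Step (3) is exactly where the hypotheses enter: for $n\equiv1$ and $K=\{1,\dots,n\}$ no such $H$ exists, consistent with (iii). The same blade choice also gives the $\Rr\ge2$ bounds you need in (ii)--(iii). For instance, $x\,e_{K\ds\{1,2,3\}}$ has $K$-component identically zero for $n\ge3$.
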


\subsection{Applications to slice-Nash functions}\label{IntroNash}
Nash functions were introduced by John Nash in his groundbreaking paper \cite{n}. A real (resp.\ complex) Nash function is a real analytic (resp.\ holomorphic) function which is algebraic over the ring of polynomials with real (resp.\ complex) coefficients (see Definition \ref{defNash} for the precise definition). Let $D$ be an open subset of $\C$ and $f:=f_0+if_1$ a holomorphic function on $D$. After identifying $\C$ with $\R^{2}$ in the usual way, the function $f_0$ can be regarded as a real analytic function. A natural question is the following:

\begin{quest2}\label{q1}
Is it true that, if the real part $f_0$ is a real Nash function, then $f$ is a complex Nash function?
\end{quest2} 

The second author of this paper answered the previous question in \cite{c}, showing the following:

\begin{thm}[{\cite[Thm.1.3, Cor.1.4]{c}}]\label{carbone}
The real part $f_0$ is a real Nash function if and only if the holomorphic function $f$ is a complex Nash function. 
\end{thm}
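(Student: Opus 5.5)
One direction is immediate. If $f=f_0+if_1$ is a complex Nash function on $D$, then $\bar f(z):=\overline{f(\bar z)}$ is also complex Nash, being algebraic over $\C[z]$ via the conjugated polynomial relation. Viewing $f$ and $\bar f$ as real analytic functions of $(x,y)\in\R^2$, both are algebraic over $\C[x,y]$. Hence $f_0=\frac12(f+\overline{f})$ is algebraic over $\C[x,y]$, because algebraic functions form a ring. Since $f_0$ is real valued, it is also algebraic over $\R[x,y]$: multiply a complex relation by its conjugate to get one with real coefficients. So $f_0$ is real Nash.

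For the converse, assume $f_0$ is real Nash on $D$, and work locally near a point $z_0\in D$. The plan is to recover $f$ from $f_0$ by the classical complexification formula. Since $f_0$ is real analytic, it extends to a holomorphic function $F_0(z,w)$ near $(z_0,\bar z_0)$ in $\C^2$ with $F_0(z,\bar z)=f_0(z)$. Using $x=(z+w)/2$ and $y=(z-w)/(2i)$, a polynomial relation $P(x,y,f_0)=0$ with $P\in\R[x,y][T]$ becomes $P\bigl(\tfrac{z+w}{2},\tfrac{z-w}{2i},F_0\bigr)=0$ by the identity principle in two variables. Thus $F_0$ is a complex Nash function of $(z,w)$. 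Next, $f_0=\frac12(f+\overline f)$, where $\overline{f(z)}=g(\bar z)$ with $g(w):=\overline{f(\bar w)}$ holomorphic. Therefore $F_0(z,w)=\frac12\bigl(f(z)+g(w)\bigr)$, and restricting to $w=\bar z_0$ gives
\[
f(z)=2F_0(z,\bar z_0)-\overline{f(z_0)}.
\]
The restriction of a Nash function of two complex variables to the complex line $w=\bar z_0$ is Nash in $z$, because substituting a constant into the polynomial relation keeps it polynomial. We only need to make sure it does not become trivial, which is handled by taking the minimal polynomial and dividing out powers of $(w-\bar z_0)$ beforehand. It follows that $f$ is Nash near $z_0$.

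The main obstacle is passing from local to global, since Definition \ref{defNash} presumably requires a single polynomial relation on all of $D$, which we may take to be connected. On a connected domain, an analytic function that satisfies a nontrivial polynomial relation near one point satisfies the same relation everywhere, by the identity principle applied to $P(z,f(z))$. So the local argument at one point already suffices, and the converse holds on all of $D$.
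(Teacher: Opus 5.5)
Your argument is correct. The paper does not prove this statement itself but imports it from \cite{c}; your key identity $f(z)=2F_0(z,\bar z_0)-\overline{f(z_0)}$, obtained by complexifying $f_0$ and restricting to the line $w=\bar z_0$, is the classical calculation of Cartan \cite[\S IV.3.5]{ca} that the paper credits as the inspiration for \cite{c}, so this is essentially the intended route. One correction: Definition \ref{defNash} is local (Nash on $D$ means Nash at every point), so your argument at an arbitrary $z_0\in D$ already finishes the proof, and the last paragraph about a single global relation and reducing to connected $D$ is unnecessary (such a global relation can genuinely fail when $D$ has infinitely many components, as the paper notes).
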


In \cite{bc}, the first and second author of this paper introduced slice-Nash functions. Even if in~\cite{bc} the focus is only on the quaternions and octonions, the definitions and most of the results of~\cite{bc} extends verbatim to slice regular functions defined on open circular subsets of the quadratic cone $\Qq_A$ of a finite dimensional real alternative $*$-algebra $A$ with unity $1\neq0$ (see \S\ref{SsliceNash}).

\textit{Let $\Omega\subset \Qq_A$ be an open circular set and $f:\Omega\to A$ a slice regular function. Assume that $A$ is endowed with a fixed (real vector) basis $\Bb:=\{\e_0:=1,\e_1,\ldots,\e_m\}$}. After the identification of $A$ with $\R^{\dim(A)}$ induced by the basis $\Bb$, we can regard the real components $f_k$ of $f$ as real analytic functions. Thus, we may ask the analogous of Question \ref{q1} in this contest, namely, to ask the following (see Definition \ref{defslicenash} for the definition of slice-Nash function):

\begin{quest2}\label{questNAsh}
What is the minimum number of real components $f_k$ of $f$ that have to be real Nash functions in order to guarantee that $f$ is a slice-Nash functions?
\end{quest2}

In order to address this question, we introduce the following:

\begin{defn}[Nash number]\label{Nashnumber}
We define the \textit{Nash number $\Nn(A)$ of $A$} as the minimum integer $\ell$ that satisfies the following property: there exists a subset $K$ of $\{0,\ldots,m\}$ of cardinality~$\ell$ such that, for each open circular set $\Omega\subset \Qq_A$ and each slice regular function $f:\Omega\to A$, if the real components $f_k$ of $f$ are real Nash functions for each $k\in K$, then $f$ is a slice-Nash function.~$\sqbullet$
\end{defn}

By Theorem \ref{carbone}, we know that $\Nn(\C)=1$. Making use of this fact and the results of \cite{bc}, we are able to show the following (which corresponds to Theorem \ref{mainNash}):

\begin{thm}
Let $A$ be a Cartan $*$-algebra. Then, $\Nn(A)\leq \Cc(A)$.
\end{thm}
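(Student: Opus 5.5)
The statement is $\Nn(A)\leq\Cc(A)$ for a Cartan $*$-algebra $A$. The natural strategy is to mirror the proof of $\Rr(A)\leq\Cc(A)$ (Theorem \ref{upper}), with ``constant'' replaced by ``Nash'' and with Theorem \ref{carbone} ($\Nn(\C)=1$) playing the role of the complex identity ``$f_0$ constant implies $f$ constant''.

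First, fix a system of Cartan pairs $\Cc$ realising $\Cc(A)$. Let $K\subset\{0,\ldots,m\}$ be the set of first indices of the pairs, so that $|K|=\Cc(A)$. Take an open circular $\Omega\subset\Qq_A$ and a slice regular $f:\Omega\to A$ with $f_k$ real Nash for all $k\in K$.

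Second, restrict $f$ to a slice $\C_J=\langle 1,J\rangle$ and split it by the pair decomposition. For a Cartan pair $(k,k')$ the elements $\e_k,\e_{k'}$ should satisfy $J\e_k=\pm\e_{k'}$, or an analogous relation making $\mathrm{span}(\e_k,\e_{k'})$ a copy of $\C$ as a left $\C_J$-module. Then $f|_{\Omega_J}=\sum_{(k,k')}F_{k,k'}\,\e_k$, where each $F_{k,k'}:\Omega_J\to\C_J$ is holomorphic (the splitting Lemma \ref{SCPlitting}). Its real part is $f_k|_{\Omega_J}$, up to the sign and transformation given by the Cartan transform $f^{\Cc}$ of Definition \ref{Ctransform}.

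Third, apply the Cartan transform for each pair. In the transformed function, the component indexed by $k$ is a holomorphic function whose real part is a fixed real-linear combination of the $f_k$ with $k\in K$, restricted to the slice. Restrictions of real Nash functions to linear subspaces are Nash, and so are linear combinations. By Theorem \ref{carbone} each such holomorphic function is then complex Nash on $\Omega_J$. For a Cartan system, which plays the role of a dominating set in $\Gg_A$, the domination property shows that the holomorphic pieces obtained from $K$ determine all components $F_{k,k'}$. Hence $f|_{\Omega_J}$ has all real components Nash on the slice.

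Finally, invoke the characterisation from \cite{bc}, extended in \S\ref{SsliceNash}: a slice regular function on an open circular set is slice-Nash if and only if its restriction to one (or two) slices $\C_J$ is Nash, equivalently its stem function is Nash. Using the representation formula, Nash-ness on $\Omega_J$ and $\Omega_{-J}$ propagates to the stem function and hence to $f$.

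The main obstacle is the third step: checking that the Cartan transform expresses each needed holomorphic piece with a real part built only from the $f_k$ with $k\in K$, and that no additive constant ambiguity arises (in the reconstruction proof constants are harmless, but here exact Nash-ness is needed). A second concern is that $\Omega$ need not be a slice domain here, so the argument should be run on each connected component of $\Omega_J$, where Theorem \ref{carbone} applies locally.
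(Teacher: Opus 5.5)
\paragraph{Where the argument breaks.}
The gap is in your second step. You work on one fixed slice $\C_J$, and you need every pair $(e_L,e_M)$ of a system $\Cc$ realising $\Cc(A)$ to satisfy $Je_L=\pm e_M$ (or to make $\mathrm{span}(e_L,e_M)$ a left $\C_J$-submodule). Write $J=\sum_W x_We_W$ with $x_\varnothing=0$. Then the condition $Je_L\in\R e_L\oplus\R e_M$ forces $J=\pm e_{L\ds M}$. So all pairs would have to share the same $L\ds M=K$, and $\Cc$ would be the system $\{(e_L,e_{L\ds K}):m\notin L\}$ of Remark \ref{rem} that underlies Lemma \ref{SCPlitting}. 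That system has $|\Cc_1|=\dim(A)/2$, so it only gives the trivial bound.

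A minimising system uses a different imaginary unit $e_{L\ds M}$ for different pairs. For $\HH$, the system $\{(1,j),(1,i),(1,ji)\}$ involves $\C_j$, $\C_i$ and $\C_{ji}$. No argument that reads the hypothesis on a single slice can succeed. For example, $f(x)=\exp(x)j$ on $\HH$ has real part $\equiv 0$ on $\C_i$, yet its real part on $\C_j$ is $-e^\alpha\sin\beta$, so $f$ is not slice-Nash.

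Your third step does not repair this. The Cartan transform $f^{\Cc}=\sum f_{L,M}$ is a sum of functions each adapted to a different slice. Nothing makes its components on one $\C_J$ functions of the $f_k$ with $k\in K$. The appeal to the domination property is exactly where the missing content sits.

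\paragraph{The missing idea.}
Treat each pair on its own slice, and assemble the information at the level of the stem function $F=\sum_K F_Ke_K$, not on one slice of $f$. For $(e_L,e_M)\in\Cc$, the proof of Proposition \ref{slicepres} gives, on $\Omega\cap\C_{e_{L\ds M}}$ with $z=\alpha+i\beta$, the identity $f_L(\alpha+\beta e_{L\ds M})=F_{1,L}(z)+\sigma(L\ds M,M)F_{2,M}(z)$. The even and odd parts in $\beta$ of this real Nash function are therefore $F_{1,L}$ (the real part of $F_L$) and $\sigma(L\ds M,M)F_{2,M}$ (up to sign the $\iota$-part of $F_M$). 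Since $F_L$ and $F_M$ are holomorphic, Theorem \ref{carbone}, applied to $F_L$ and to $-\iota F_M$, makes both complex Nash.

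The paper reaches the same conclusion by a slightly different packaging. It uses the holomorphic function $\psi$ of Proposition \ref{slicepres}(iii), the slice-preserving function $f_{(e_L,e_M)}$ of \eqref{vecchie}, and Theorem \ref{charNash}.

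Since $\Cc_1\cup\Cc_2=\mc{P}(n)$, every $F_K$ is complex Nash. Hence $F$ is stem-Nash and $f$ is slice-Nash. Additive constants never enter, and Theorem \ref{carbone} is local, so neither of the two obstacles you flagged is the real issue.
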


By the previous result and using the same arguments (with minor modifications to the proofs that we will explain in \S\ref{NashRpq}) used to determine the reconstructing number $\Rr(\R_{p,q})$ of the Clifford algebras $\R_{p,q}$, we obtain the analogous results for the Nash number $\Nn(\R_{p,q})$. In fact, we have the followings (which correspond, respectively, to Theorem \ref{Nash1} and Theorem \ref{Nash2}):

\begin{thm}[Nash number of the Clifford algebras $\R_{p,q}$]
For each $p,q\in\N$ with $n:=p+q\geq 2$, it holds
$$
\Nn(\R_{p,q})\leq \frac{n^3+5n+6}{6}.
$$
\end{thm}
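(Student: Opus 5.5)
The plan is to run, for the Nash number, the same argument that gives $\Rr(\R_{p,q})\leq \frac{n^3+5n+6}{6}$ (Corollary \ref{reconRpq}), substituting the Nash-type transfer results for the constancy ones. That bound comes from an explicit index set $K\subset\{0,\ldots,2^n-1\}$ with $|K|\leq\frac{n^3+5n+6}{6}$. Such a $K$ is built from a chain of systems of Cartan pairs, or of their modifications as in Proposition \ref{Srecon}, in the basis of monomials $\e_I=\e_{i_1}\cdots\e_{i_k}$. The chain has the property that, starting from the components indexed by $K$, one can recover all remaining components step by step. Each step uses a Cartan-pair relation: if $(a,b)$ is a Cartan pair and one of $f_a,f_b$ is controlled, then so is the other. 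This is the Cauchy--Riemann-type link $f_a+f_b\e\approx$ holomorphic in the complex slice. I take the same set $K$ and check that every propagation step also preserves the Nash property.

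The key steps are as follows. First, restrict $f$ to a slice $\C_J$ and use the Cartan transform $f^{\Cc}$ (Definition \ref{Ctransform}). For a Cartan pair $(a,b)$ the associated component $f_a+i f_b$ is then holomorphic in the complex variable, with the appropriate sign conventions, on the intersection of $\Omega$ with the slice. Second, apply Theorem \ref{carbone}: if $f_a$ is a real Nash function, then this holomorphic function is complex Nash, and hence $f_b$ is real Nash too. Third, iterate this along the chain used for $\Rr(\R_{p,q})$. Under this transfer, each step where constancy of $f_a$ forced constancy of $f_b$ becomes a step where Nashness of $f_a$ forces Nashness of $f_b$. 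At the end all real components $f_k$ are real Nash. Fourth, conclude with the result from \cite{bc} (extended in \S\ref{SsliceNash}) that a slice regular function with all real components Nash is slice-Nash; this is the same ingredient used in Theorem \ref{mainNash}.

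The main obstacle is the third step. I must verify that every inference in the proof of Corollary \ref{reconRpq} goes through the Cartan-pair mechanism, in a form Theorem \ref{carbone} can handle. Some steps in the Clifford argument may use the modified transforms of Proposition \ref{Srecon}, or the splitting Lemma \ref{SCPlitting}, and these could combine components linearly. For such steps I will use that real Nash functions form a ring closed under the relevant linear combinations and under restriction to slices. I also have to pass from Nashness of the restriction to each slice to Nashness on the circular set $\Omega$, which should follow from the stem-function description as in \cite{bc}.
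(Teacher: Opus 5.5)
Your basic mechanism, Proposition \ref{slicepres}(iii) on the slice $\C_{e_{L\ds M}}$ combined with Theorem \ref{carbone}, is the right one. The plan built on it has a genuine gap, though: the transfer step does not compose, and it never produces what your final step needs. From a Cartan pair $(e_L,e_M)$ and Nashness of $f_L$ you learn only that $f_L+\epsilon f_M J$ is complex Nash on the single slice $\Omega\cap\C_J$, where $J=e_{L\ds M}$ and $\epsilon=\sigma(L,L)\sigma(M,L)$. So $f_M$ is Nash on that slice only. To continue through a second pair $(e_M,e_N)$ you would need $f_M$ to be Nash on $\C_{J'}$ with $J'=e_{M\ds N}$. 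But there $f_M(\alpha+\beta J')=F_{1,M}(z)\pm F_{2,N}(z)$, which contains the still unknown stem component $F_{2,N}$. For the same reason your step four is out of reach. At a general point $\alpha+\beta I$ the component $f_M$ involves $F_{2,K}$ for many $K$, through the products $Ie_K$. Slice information therefore never gives Nashness of the real components on $\Omega$, which is what Theorem \ref{charNash}(iv) requires. (The constancy argument for $\Rr$ does not chain for the same reason.)

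Two ideas are missing.

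\emph{No chain is needed.} The bound on $\Rr(\R_{p,q})$ is $\Rr\le\Cc$ (Theorem \ref{upper}) together with Proposition \ref{Rpq}. That proposition is a pure domination statement: every $K$ with $|K|\ge 4$ is \emph{directly} Cartan-paired with some $H$ with $|H|\le 3$. Hence $\{H:|H|\le 3\}$ is $\Cc_1$ of an SCP (Corollary \ref{ricoprire}), and Proposition \ref{Srecon} and Lemma \ref{SCPlitting} play no role.

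\emph{Conclude at the level of stem functions.} This is exactly Theorem \ref{mainNash}. On $\C_J$ one has $(f_L+\epsilon f_MJ)\circ\phi_J=\phi_J\circ G$ with $G:=F_L+\epsilon\, iF_M$. Since $f_L+\epsilon f_MJ$ is complex Nash there, $G$ is complex Nash on $D$. The stem symmetry $\overline{F_K(\bar z)}=F_K(z)$ then gives $F_L=\frac12\bigl(G(z)+\overline{G(\bar z)}\bigr)$ and $F_M=\frac{\epsilon}{2i}\bigl(G(z)-\overline{G(\bar z)}\bigr)$, so both are complex Nash. Because $\Cc_1\cup\Cc_2=\mc{P}(n)$, every $F_K$ is complex Nash. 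Expanding $F$ in a splitting basis $\{e_K,e_He_K: m\notin K\}$, with $e_H\in\sph_A$ and $m\in H$, shows that $F$ is stem-Nash, so $f$ is slice-Nash. The statement is then just $\Nn(\R_{p,q})\le\Cc(\R_{p,q})\le\frac{n^3+5n+6}{6}$.
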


\begin{thm}[Nash number of the Clifford algebras $\R_n$]
Let $n\geq 2$ be an integer. The Clifford algebra $\R_n$ satisfies:
\begin{itemize}
\item[{\rm(i)}] $\Nn(\R_2)=\Nn(\HH)=1$.
\item[{\rm(ii)}] $\Nn(\R_3)=\Nn(\R_4)=2$.
\item[{\rm(iii)}] If $n\equiv 1 \text{ mod } 4$, then $\Nn(\R_n)=2$.
\item[{\rm(iv)}] If $n\geq 6$ and $n\equiv 0,2,3 \text{ mod } 4$, then $3\leq \Nn(\R_n)\leq 4$.
\end{itemize}
\end{thm}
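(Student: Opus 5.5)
The plan is to obtain both inequalities in (i)--(iv) by transferring, item by item, the proof of Theorem \ref{reconRn} for $\Rr(\R_n)$. For the upper bounds we reuse the same index sets $K$; for the lower bounds we replace each constant-component witness by a non-Nash one. Throughout, $\Nn(\R_2)=\Nn(\HH)$ is immediate because $\R_2\cong\HH$ as $*$-algebras and a $*$-isomorphism preserves both slice regularity and the Nash property.

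\emph{Upper bounds.} I would first reprove the ``reconstruction step'' in the Nash setting. In the reconstruction argument, each Cartan pair $(k,h)$ in a system of Cartan pairs $\Cc$ produces, after the Cartan transform $f^{\Cc}$ (or its modification in Proposition \ref{Srecon}), a holomorphic function of the form $f_k+i f_h$ on each slice $\C_J$. In the $\Rr$-proof, constancy of $f_k$ forces constancy of $f_h$. In the Nash version, Theorem \ref{carbone} gives instead that $f_k$ Nash implies $f_k+if_h$ complex Nash, hence $f_h$ Nash. Theorem \ref{mainNash}, i.e.\ $\Nn(A)\le\Cc(A)$, already packages this step. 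For the refinements below $\Cc(\R_n)$ (the values $2$ and $4$, obtained via the splitting lemma, Lemma \ref{SCPlitting}), I would follow the same chain of implications. At each step where the $\Rr$-proof deduces that some component is constant, I deduce instead that it is Nash, using Theorem \ref{carbone} together with the fact that sums, products and real-linear combinations of Nash functions are Nash. Once all components are Nash, $f$ is slice-Nash by the results of \cite{bc}, extended as in \S\ref{SsliceNash}.

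\emph{Lower bounds.} Fix $K$ of cardinality one less than the claimed value. The $\Rr$-proof provides a nonconstant slice regular $f$ with $f_k$ constant for all $k\in K$. I expect, and would verify from those constructions, that these witnesses have the form $f(x)=\sum_j x\,a_j$ or $f(x)=x\,a$ for constants $a_j\in\R_n$, possibly plus a constant. On a slice, $x=\alpha+J\beta$, so the $k$-th component of $x\,a$ is $\alpha\,c_k+\beta\,\ell_k(J)$, with $c_k,\ell_k(J)$ linear in the data. Since $\alpha$ and $\beta$ vary independently, vanishing for all $x$ forces $c_k=0$ and $\ell_k(J)=0$.

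Now replace $x$ by $\exp(x)=\cosh$-type/$\exp(\alpha)(\cos\beta+J\sin\beta)$, a slice-preserving non-Nash function. Then $g(x):=\exp(x)\,a$ has $k$-th component $\exp(\alpha)\cos\beta\,c_k+\exp(\alpha)\sin\beta\,\ell_k(J)=0$ for every $k\in K$, so these components are trivially Nash. On the other hand $g$ is not slice-Nash: its restriction to $\R$ is $e^t a$ with $a\neq0$, and $e^t$ is transcendental. This shows $\Nn\ge|K|+1$ in each case.

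The main obstacle is checking that every lower-bound witness in the proof of Theorem \ref{reconRn} is indeed of this ``slice-preserving function times constants'' type. This is most delicate in case (iv), where the bound $\Rr\ge3$ may rely on witnesses built via the Cartan transform rather than on simple linear ones. If some witness is more complicated, I would try to rewrite it, through the splitting lemma, as a sum of terms $\varphi_j(x)a_j$ with $\varphi_j$ slice-preserving and with the $K$-components cancelling identically as formal expressions in $(\alpha,\beta,J)$. I would then substitute $\exp$ for the identity (or $\exp(\lambda_j x)$ with independent $\lambda_j$) while keeping the cancellation.
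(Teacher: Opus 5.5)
Your proposal is correct and follows essentially the paper's route. The upper bounds are just $\Nn(\R_n)\le\Cc(\R_n)$ (Theorem \ref{mainNash}) combined with the Cartan-number values of Theorem \ref{CartanRn}. The lower bounds come from replacing the witnesses $x e_H$ by $\exp(x)e_H$, exactly as in Example \ref{exnonash} and Proposition \ref{maggiore3Nash}.

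Both of your worries are unfounded. The values $2$ and $4$ are bounds on $\Cc(\R_n)$ itself (Example \ref{R4}, Propositions \ref{lem41} and \ref{lem4}), not refinements obtained via the splitting lemma, so there is nothing beyond Theorem \ref{mainNash} to transfer. Likewise, every lower-bound witness in the $\Rr$-proofs, including case (iv), is a linear slice polynomial of the form $x e_H$, so your exponential substitution applies directly.
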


\begin{remark}
In this article, we have used Cartan's name in many of our fundamental defi\-nitions. The reason is that this article, like the articles \cite{c, cs}, was inspired by a brilliant calculation of Cartan \cite[{\S IV.3.5}]{ca}. $\sqbullet$
\end{remark}

\subsection{Structure of the article} We summarise here the organisation of the article. In order to keep this work as self-contained as possible, in \S\ref{prelimi}, we collect the notions and results that we need in the remaining sections. In \S\ref{S3}, we introduce Cartan $*$-algebras and study their main properties. In \S\ref{S4}, we introduce the systems of Cartan pairs, which are combinatorial objects associated to the involved algebra $A$. Moreover, we show that systems of Cartan pairs always exist for Cartan $*$-algebras. In \S\ref{S5}, we study the relationship between the systems of Cartan pairs and slice regular functions. We introduce the Cartan transform of a slice regular function and show that it encodes many of the properties of the original slice regular function. In particular, the Cartan transform will allow us to `reconstruct' a slice regular function from some of its real components and to show a version of the maximum module principle for slice regular functions defined on certain domains of Cartan $*$-algebras. At the end of \S\ref{S5}, we modify the definition of system of Cartan pairs to introduce Schwarz systems of Cartan pairs. This will allow us to `reconstruct' more explicitly a slice regular function from some of its real components and to show a version of the splitting lemma for Cartan $*$-algebras. In \S\ref{S6}, we define the Cartan number of a Cartan $*$-algebra and we show its relationship with the reconstructing number. In addition, we show that the only Cartan $*$-algebra with Cartan number equal to $1$  are the division algebras $\C$, $\HH$ and $\O$, and we provide a sufficient condition in order to guarantee that $\Rr(A)\geq 2$. Finally, we define the Cartan graph of a Cartan $*$-algebra and show that the dominating number of this graph is equal to the Cartan number of the involved algebra. In \S\ref{Scliff}, we study the reconstructing number of the Clifford algebras $\R_{p,q}$. We start by finding `rough' estimations for the general case and then we significantly improve them for the particular case of the Clifford algebras $\R_n:=\R_{0,n}$. In \S\ref{S8}, we propose an application of our results to slice-Nash functions, namely, we make use of the techniques developed in this paper to address the problem of determining how many real components of a slice regular function have to be real Nash functions in order to guarantee that the involved slice regular function is a slice-Nash function.

\section{Preliminaries}\label{prelimi}

In this section, we collect some preliminary concepts and results that will be used freely along this article. We include them for the sake of completeness and to ease the reading of the article. 

\subsection{Alternative $*$-algebras}\label{Salg}

Let $A$ be a finite dimensional (real) algebra with unity $1\neq 0$. We identify the field of real number $\R$ with the subalgebra of $A$ generated by $1$. The algebra $A$ is \textit{alternative} if the \textit{associator} of three elements $(x,y,z):=(xy)z-x(yz)$ is an alternating function of its arguments. Equivalently, $A$ is alternative if and only if $x(xy)=x^2y$ and $(xy)y=xy^2$ for each $x,y\in A$. Clearly, if $A$ is associative, then it is also alternative. If $A$ is alternative, then the subalgebra generated by two elements of $A$ is associative:

\begin{thm}[Artin's theorem]
If $A$ is alternative, then the subalgebra generated by two elements $x$ and $y$ is associative for each $x,y\in A$.
\end{thm}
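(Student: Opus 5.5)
We will follow the classical route (as in Schafer's book on nonassociative algebras). The plan is to show that the associator $(u,v,w)$ vanishes whenever $u,v,w$ are monomials in $x$ and $y$. By trilinearity of the associator, this is enough: the subalgebra generated by $x$ and $y$ is the linear span of such monomials, with arbitrary bracketings.

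The first step collects the basic identities.

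1. The associator is trilinear and, by hypothesis, alternating. Hence it vanishes whenever two of its arguments coincide, and it changes sign under transpositions.
2. In any algebra one has the Teichmüller identity
\[
(ab,c,d)-(a,bc,d)+(a,b,cd)=a(b,c,d)+(a,b,c)d ,
\]
which is checked by expanding.
3. Combining 1 and 2, we will derive the Moufang identities, for instance $(axa)y=a(x(ay))$.
4. We will also establish that the Kleinfeld function
\[
F(w,z,u,v):=(wz,u,v)-z(w,u,v)-(z,u,v)w
\]
is alternating in all four of its arguments. We expect this to follow from the Teichmüller identity together with the alternating property, by the standard linearisation arguments.

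The second step is an induction on the total degree $n=\deg u+\deg v+\deg w$ of the three monomials.

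1. If all three monomials have degree $1$, they lie in $\{x,y\}$, so two of them coincide and the associator is $0$.
2. Otherwise, by the alternating property we may assume $\deg u\geq 2$ and write $u=u_1u_2$ with $u_1,u_2$ monomials of smaller degree.
3. The definition of $F$ gives
\[
(u_1u_2,v,w)=u_2(u_1,v,w)+(u_2,v,w)u_1+F(u_1,u_2,v,w).
\]
The first two terms vanish by the induction hypothesis. So it remains to show that $F$ vanishes on any four monomials $u_1,u_2,v,w$ in $x,y$ whose total degree is $n$.

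The main obstacle is this vanishing of $F$. Because $F$ is alternating, it vanishes as soon as two of its four arguments are equal. In particular it vanishes if all four arguments have degree $1$, since again there are only two generators. For the general case we will use the following reduction.

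1. Whenever some argument has degree at least $2$, move it to the first slot using the alternating property, say it is $p=p_1p_2$.
2. Apply the identity of the second step to $(p\,q,r,s)$, and use the Teichmüller identity to rewrite the resulting associators.
3. After discarding the terms that vanish by the induction hypothesis (associators of total degree $<n$), this should express $F(p,q,r,s)$ as $\pm F$ evaluated at arguments in which one argument has strictly smaller degree.

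A secondary induction on the maximal degree among the four arguments then pushes everything down to degree-$1$ arguments, where $F=0$. If this bookkeeping becomes unwieldy, our fallback is to invoke the Moufang identities directly. Together with the alternating property they show that the set of elements $a$ with $(a,x,y)=(a,y,x)=0$ and $(a,x,x)=(a,y,y)=0$ is closed under products of monomials. Closure of the degree-$1$ case under multiplication then propagates the vanishing to all monomials.
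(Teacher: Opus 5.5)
The paper does not prove this statement; it quotes Artin's theorem as a classical fact, so your argument has to stand on its own. Your reduction is sound up to the vanishing of $F$. The Kleinfeld function is indeed alternating in an alternative algebra. You only announce this, but it does follow from the Teichm\"uller identity, alternativity and identities such as the linearisation of $(x,xy,y)=0$. Under the induction hypothesis one also has $(u_1u_2,v,w)=F(u_1,u_2,v,w)$.

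The gap is in the secondary induction. The move you can actually justify is
\[
F(p_1p_2,q,r,s)=F(p_1,p_2q,r,s).
\]
It holds because $(p_1,p_2,q)$ has total degree at most $n-2$, so $(p_1p_2)q=p_1(p_2q)$, and all other terms on both sides are associators of total degree $<n$; no Teichm\"uller rewriting is needed. But this move lowers the degree of one argument only by raising that of another, so the maximal degree need not decrease. More fundamentally, every such rewriting keeps the total degree of the four arguments equal to $n$. So for $n\ge 5$ some argument always has degree at least $2$, the endpoint ``all four arguments of degree $1$'' is unreachable, and your secondary induction has nowhere to terminate.

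The missing observation is that three generator arguments suffice. Only $x$ and $y$ are available, so two of three generators coincide and the alternating $F$ vanishes. Use the transfer rule above to shrink slot $1$ to a generator $\ell_1$, dumping all excess degree into slot $2$. Then swap slots $1$ and $3$ (which only changes the sign) and repeat, and then swap slots $1$ and $4$ and repeat. You arrive at $\pm F(\ell_3,w,\ell_1,\ell_2)$ with $\ell_1,\ell_2,\ell_3\in\{x,y\}$, which is $0$. This completes your main route.

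Your fallback does not close the gap either. Closure of $\{a:(a,x,y)=0\}$ under products is exactly the statement $F(a,b,x,y)=0$ for $a,b$ in that set, which you assert without proof. Even granted, it only gives $(m,x,y)=0$ for monomials $m$, not $(p,q,r)=0$ for arbitrary monomials $p,q,r$.
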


Alternative algebras also satisfy the so-called \textit{(middle) Moufang identity} 
\begin{equation}\label{Mou}
(xy)(zx)=x(yz)x
\end{equation}
for each $x,y,z\in A$.

An algebra $A$ is called a \textit{$*$-algebra} if it is endowed with an involutive real linear map $x\mapsto x^c$, called \textit{$*$-involution}, that satisfies the following properties:
\begin{itemize}
\item[{\rm(i)}] $(x^c)^c=x$ for each $x\in A$,
\item[{\rm(ii)}] $(xy)^c=y^cx^c$ for each $x,y\in A$,
\item[{\rm(iii)}] $x^c=x$ for each $x\in \R$.
\end{itemize}

Division algebras $\C$, $\HH$ and $\O$ endowed with the standard conjugations are examples of alternative $*$-algebras. 

\begin{example}[Division algebras]
The $*$-algebras of \textit{complex numbers} $\C$, \textit{quaternions} $\HH$ and \textit{octonions} $\O$ can be built from the field of real numbers $\R$ by means of the so-called Cayley-Dickson construction:
\begin{itemize}
\item $\C=\R+i\R$, $(\alpha+i\beta)(\gamma+i\delta):=(\alpha\gamma-\beta\delta)+i(\alpha\delta+\beta\gamma)$, $(\alpha+i\beta)^c:=\alpha-i\beta$ for each $\alpha,\beta,\gamma,\delta\in\R$.
\item $\HH:=\C+ j\C$, $(\alpha+j\beta)(\gamma+j\delta):=(\alpha\gamma-\delta\beta^c)+j(\alpha^c\delta+\beta\gamma)$, $(\alpha+\beta j)^c:=\alpha^c-j\beta $ for each $\alpha,\beta,\gamma,\delta\in \C$.
\item $\O:=\HH+ \ell\HH$, $ (\alpha+\ell\beta)(\gamma+\ell\delta):=(\alpha\gamma-\delta\beta^c)+\ell(\alpha^c\delta+\gamma\beta)$, $(\alpha+\ell\beta)^c:=\alpha^c-\ell\beta$ for each $\alpha,\beta,\gamma,\delta\in \HH$. $\sqbullet$
\end{itemize}
\end{example}

Complex numbers $\C$ and quaternions $\HH$ are associative, while octonions $\O$ are alternative, but not associative. Another example of associative $*$-algebra is given by the dual quaternions.

\begin{example}[Dual quaternions]\label{dualquaternions}
The associative $*$-algebra $\D\HH$ of \textit{dual quaternions} is defined as $\HH+\eps\HH$, where $(\alpha+\eps\beta)(\gamma+\eps\delta):=\alpha\gamma+\eps(\alpha\delta+\beta\gamma)$ and $(\alpha+\eps\beta)^c:=\alpha^c+\eps\beta^c$ for each $\alpha,\beta,\gamma,\delta\in \HH$. In particular, $\eps$ commutes with every element of $\D\HH$ and $\eps^2=0$. $\sqbullet$
\end{example} 

The algebra of split octionions is an example of alternative $*$-algebra, which is not associative. 

\begin{example}[Split octonions]\label{splitoctonions}
The alternative $*$-algebra $\mathbb{S}\O$ of \textit{split octonions} is defined as $\HH+\ell\HH$, where $(\alpha+\ell\beta)(\gamma+\ell\delta):=(\alpha\gamma+\delta\beta^c)+\ell(\alpha^c\delta+\gamma\beta)$ and $(\alpha+\ell\beta)^c:=\alpha^c-\ell\beta$ for each $\alpha,\beta,\gamma,\delta\in \HH$. $\sqbullet$
\end{example} 

On our alternative $*$-algebra $A$ we consider the \textit{trace} and the \textit{(squared) norm} defined, respe\-ctively, as $t(x):=x+x^c$ and $n(x):=xx^c$ for each $x\in A$. We call the elements of 
$$
\sph_{A}:=\{J\in A: t(J)=0, n(J)=1\}=\{J\in A: J^c=-J, J^2=-1\}.
$$
the \textit{imaginary units of} $A$. Let $J\in\sph_A$. The $*$-subalgebra generated by $J$, i.e.\ the complex plane $\C_J:=\R[J]$, is $*$-isomorphic to $\C$ through the $*$-isomorphism 
\begin{equation}\label{*iso}
\phi_J: \C\to \C_J, \quad \alpha+i \beta\mapsto \alpha+J\beta.
\end{equation} 
The complex plane $\C_J$ is said to be a \textit{(complex) slice} of $A$. The \textit{quadratic cone of} $A$ is the subset of $A$ defined as
$$\textstyle
\Qq_A:=\bigcup_{J\in\sph_A}\C_J=\bigcup_{J\in\sph_A}\phi_J(\C).
$$
Clearly, $\Qq_A\neq \varnothing$ if and only if $\sph_A\neq \varnothing$. Moreover, the quadratic cone $\Qq_A$ is property contained in $A$, unless $A$ is $*$-isomorphic to one of the division algebras $\C$, $\HH$ or $\O$ endowed with the standard conjugation \cite[Prop.1]{gp}. It holds $\C_J\cap \C_I=\R$ for each $I,J\in\sph_A$ such that $J\neq\pm I$. As a consequence, every element $x\in \Qq_A\setminus\R$ can be written uniquely as $x=\alpha+\beta J$, where $\alpha,\beta\in \R$, $\beta>0$ and $J\in\sph_A$. If $x\in \R$, then $x=\alpha$, $\beta=0$ and $J$ can be chosen arbitrarily in $\sph_A$. 

In what follows, we make the following:

\begin{assumption}\label{ass1}
We assume that $A$ is a finite dimensional alternative $*$-algebra with unity $1\neq 0$ such that $\sph_A\neq \varnothing$. Moreover, we endow $A$ with the natural topology and real analytic structure induced by its finite dimensional real vector space structure.
\end{assumption}

\subsection{Clifford algebras}\label{SClifford}
\textit{Let $n\geq 1$ be an integer. Denote by $\mc{P}(n)$ the power set of $\{1,\ldots,n\}$ and, for each $K\in\mc{P}(n)$, denote by $|K|$ the cardinality of $K$}. Let $p$ and $q$ be non-negative integers such that $n=p+q$. The \textit{Clifford algebra} $\R_{p,q}$ is the associative $*$-algebra constructed by taking the real vector space $\R^{2^n}$ with the following properties:
\begin{itemize}
\item $\{e_K\}_{K\in \mc{P}(n)}$ denotes the standard basis of $\R^{2^n}$,
\item $e_{\varnothing}:=1$,
\item if $K=\{k_1,\ldots,k_s\}$ with $1\leq k_1<\ldots<k_s\leq n$, then the element $e_K$ is also denoted as $e_{k_1\ldots k_s}$ and the product $e_{k_1}\cdots e_{k_s}$ is defined to be $e_K=e_{k_1\ldots k_s}$,
\item $e_k^2:=1$ for each $k\in\{1,\ldots,p\}$ and $e_k^2:=-1$ for each $k\in\{p+1,\ldots,n\}$,
\item $e_ke_h=-e_he_k$ for each $k,h\in \{1,\ldots,n\}$ with $k\neq h$,
\item $e_K^c=e_K$ if $|K|\equiv 0,3\text{ mod } 4$ and $e_K^c=-e_K$ if $|K|\equiv 1,2\text{ mod } 4$.
\end{itemize}
The $*$-involution $x\mapsto x^c$ is called \textit{Clifford conjugation}.
In what follows, we will denote the Clifford algebras $\R_{0,n}$ also as $\R_n$. 

The following example show that it is not restrictive, for our purposes, to restrict to the case $n=p+q\geq 2$.

\begin{example}\label{nogood}
(i) The Clifford algebra $\R_{0,1}$ is $*$-isomorphic to the complex numbers $\C$ endowed with the standard conjugation. 

(ii) The Clifford algebra $\R_{1,0}$ is isomorphic to the Clifford algebra $\mathbb{S}\C=\R+\ell\R$ of \textit{split complex number}. The product on $\mathbb{S}\C$ is defined as
$$
(\alpha+\ell\beta)(\gamma+\ell\delta):=(\alpha\gamma+\beta\delta)+\ell(\alpha\delta+\beta\gamma).
$$
for each $\alpha,\beta,\gamma,\delta\in\R$. In particular, $\{J\in\mathbb{S}\C: J^2=-1\}=\varnothing$, so the Clifford algebra $\R_{1,0}=\mathbb{S}\C$ is not suitable for our purposes. $\sqbullet$
\end{example}

In the following example, we classify the Clifford algebras $\R_{p,q}$ with $n=p+q=2$.

\begin{example}
(i) The Clifford algebra $\R_2=\R_{0,2}$ is $*$-isomorphic to the division algebra $\HH$ of quaternions endowed with the standard conjugation.

(ii) The Clifford algebra $\R_{2,0}$ is $*$-isomorphic to the Clifford algebra $\R_{1,1}$. Both the previous $*$-algebras are $*$-isomorphic to the algebra $\mathbb{S}\HH=\C+\ell\C$ of \textit{split quaternions} endowed with the $*$-involution defined as $(\alpha+\ell\beta)^c:=\overline{\alpha}-\ell\beta$ for each $\alpha, \beta\in\C$, where $\bar{\cdot}$ denotes the conjugation of $\C$. The product on $\mathbb{S}\HH$ is defined as
$$
(\alpha+\ell\beta)(\gamma+\ell\delta):=(\alpha\gamma+\overline{\beta}\delta)+\ell(\overline{\alpha}\delta+\beta\gamma)
$$
for each $\alpha,\beta,\gamma,\delta\in\C$. $\sqbullet$
\end{example}

Next, we want to characterise which elements of the basis $\{e_K\}_{K\in \mc{P}(n)}$ belong to $\sph_{\R_{p,q}}$. This will be useful for us in \S\ref{Scliff}. In order to lighten the notation, given an integer $m\geq 0$, we will denote by $\m{m}$ the remainder of $m$ divided by 4. Moreover, given $K\in\mc{P}(n)$, we will simply write $\m{K}$ instead of $\m{|K|}$ to denote the remainder of $|K|$ of divided by 4. For each $K\in\mc{P}(n)$, we define
\begin{equation}\label{+-}
K_+:=K\cap\{1,\ldots,p\} \quad \text{and} \quad K_-:=K\cap\{p+1,\ldots,n\}.%\{k\in K : 1\leq k\leq p\} \quad \text{and} \quad K_-:=\{k\in K : p+1\leq k\leq n\}.
\end{equation}
We start by determining which elements of the basis  $\{e_K\}_{K\in \mc{P}(n)}$ satisfy $e_K^2=-1$. 

\begin{lem}\label{Clifford1}
Let $K\in\mc{P}(n)$. Then, $e_K^2=-1$ if and only if 
$$
(\m{K_+},\m{K_-})\in\{(0,1), (0,2), (1,2), (1,3), (2,0), (2,3), (3,0), (3,1)\}.
$$
\end{lem}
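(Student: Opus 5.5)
We compute $e_K^2$ explicitly for $K=\{k_1<\ldots<k_s\}$, $s=|K|$. Writing $e_K=e_{k_1}\cdots e_{k_s}$, we have
$$
e_K^2=e_{k_1}\cdots e_{k_s}e_{k_1}\cdots e_{k_s}.
$$
We bring the second copy of each generator next to its first copy using the anticommutation relation $e_ke_h=-e_he_k$ for $k\neq h$. Moving $e_{k_1}$ from the second block past $e_{k_2},\ldots,e_{k_s}$ costs $s-1$ transpositions. Moving $e_{k_2}$ then costs $s-2$, and so on. The total sign is therefore $(-1)^{s(s-1)/2}$, and
$$
e_K^2=(-1)^{s(s-1)/2}\prod_{k\in K}e_k^2=(-1)^{s(s-1)/2}(-1)^{|K_-|},
$$
since $e_k^2=1$ for $k\in K_+$ and $e_k^2=-1$ for $k\in K_-$.

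Next we set $a:=|K_+|$ and $b:=|K_-|$, so that $s=a+b$. Then
$$
\frac{s(s-1)}{2}=\frac{a(a-1)}{2}+\frac{b(b-1)}{2}+ab,
$$
and hence
$$
e_K^2=(-1)^{\frac{a(a-1)}{2}+\frac{b(b-1)}{2}+ab+b}.
$$
The parity of $\frac{a(a-1)}{2}$ depends only on $\m{a}$: it is even for $\m{a}\in\{0,1\}$ and odd for $\m{a}\in\{2,3\}$. Likewise, the parities of $ab$ and $b$ depend only on $a$ and $b$ mod $2$. So the exponent mod $2$ is a function of the pair $(\m{K_+},\m{K_-})$, and it suffices to check the $16$ pairs $(a,b)\in\{0,1,2,3\}^2$.

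We list the pairs where the exponent is odd. Let $\epsilon(x)$ denote the parity of $x(x-1)/2$, so $\epsilon=0,0,1,1$ for $x=0,1,2,3$.
\begin{itemize}
\item For $a=0$ the exponent is $\epsilon(b)+b$, which takes the values $0,1,1,0$ for $b=0,1,2,3$. This gives $(0,1)$ and $(0,2)$.
\item For $a=1$ it is $\epsilon(b)+2b\equiv\epsilon(b)$, which gives $(1,2)$ and $(1,3)$.
\item For $a=2$ it is $1+\epsilon(b)+3b\equiv 1+\epsilon(b)+b$, which gives $(2,0)$ and $(2,3)$.
\item For $a=3$ it is $1+\epsilon(b)+4b\equiv 1+\epsilon(b)$, which gives $(3,0)$ and $(3,1)$.
\end{itemize}
This is exactly the list in the statement.

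There is no real obstacle here. The only care needed is to count the transposition sign correctly and to confirm that the exponent mod $2$ depends only on the residues mod $4$, which the decomposition above makes transparent.
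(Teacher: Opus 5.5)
Your proof is correct and follows the same route as the paper. Both compute $e_K^2=(-1)^{|K|(|K|-1)/2}(-1)^{|K_-|}$ by reordering, note that the sign depends only on $(\m{K_+},\m{K_-})$, and check the sixteen cases. Your decomposition $\frac{s(s-1)}{2}=\frac{a(a-1)}{2}+\frac{b(b-1)}{2}+ab$ usefully justifies the residue-dependence, which the paper only asserts, and your explicit case table reproduces the stated list exactly.
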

\begin{proof}
Let $K\in\mc{P}(n)$. We may assume that $K\neq\varnothing$. Assume, in addition, that $K=\{k_1,\ldots,k_s\}$, with $1\leq k_1<\ldots<k_s\leq n$. As $\R_{p,q}$ is associative and $e_ke_h+e_he_k=0$ for each $k,h\in\{1,\ldots,n\}$ with $k\neq h$, using a simple inductive argument on $|K|=s\geq 1$, we have
$$
e_K^2=(e_{k_1}\cdots e_{k_s})^2=e_{k_1}\cdots e_{k_s}e_{k_1}\cdots e_{k_s}=(-1)^{\frac{|K|(|K|-1))}{2}}e_{k_1}^2\cdots e_{k_s}^2.
$$
As $e_k^2=1$ for each $k\in K_+$ and $e_k^2=-1$ for each $k\in K_-$, by the previous equality, we deduce that
$$
e_K^2=(-1)^{\frac{(|K_+|+|K_-|)(|K_+|+|K_-|-1)}{2}+|K_-|}=(-1)^{\frac{(|K_+|+|K_-|)(|K_+|+|K_-|-1)+2|K_-|}{2}}.
$$
In particular, the value of $e_K^2$ depends only by the pair $(\m{K_+},\m{K_-})$ and the statement follows by inspecting all the possible cases one by one.
\end{proof}

By the previous lemma and the fact that $e_K^c=-e_K$ if and only if $\m{K}\in\{1,2\}$, we deduce straightforwardly the following:

\begin{cor}\label{Clifford2}
Let $K\in\mc{P}(n)$. Then, $e_K\in\sph_{\R_{p,q}}$ if and only if 
$$
(\m{K_+},\m{K_-})\in\{(0,1),(0,2),(2,0),(2,3)\}.
$$
\end{cor}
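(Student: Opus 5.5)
We combine Lemma \ref{Clifford1} with the description of the Clifford conjugation on the basis. Recall that
$$
\sph_{\R_{p,q}}=\{J\in\R_{p,q}: J^c=-J,\ J^2=-1\},
$$
so $e_K\in\sph_{\R_{p,q}}$ holds exactly when both conditions are satisfied at once. By definition of the Clifford conjugation, $e_K^c=-e_K$ if and only if $\m{K}\in\{1,2\}$. Note that $K=\varnothing$ is excluded automatically, since $e_\varnothing^c=1\neq -1$.

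Since $K$ is the disjoint union of $K_+$ and $K_-$, we have $|K|=|K_+|+|K_-|$. Hence $\m{K}$ is the remainder of $\m{K_+}+\m{K_-}$ modulo $4$, and the condition $e_K^c=-e_K$ depends only on the pair $(\m{K_+},\m{K_-})$.

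It then suffices to run through the eight pairs allowed by Lemma \ref{Clifford1} and keep those whose sum modulo $4$ lies in $\{1,2\}$:
\begin{itemize}
\item $(0,1)$ has sum $1$, so it is kept.
\item $(0,2)$ has sum $2$, so it is kept.
\item $(1,2)$ has sum $3$, so it is discarded.
\item $(1,3)$ has sum $\equiv 0$, so it is discarded.
\item $(2,0)$ has sum $2$, so it is kept.
\item $(2,3)$ has sum $5\equiv 1$, so it is kept.
\item $(3,0)$ has sum $3$, so it is discarded.
\item $(3,1)$ has sum $4\equiv 0$, so it is discarded.
\end{itemize}
The surviving pairs are exactly $(0,1),(0,2),(2,0),(2,3)$, as claimed.

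There is no real obstacle here. The only point requiring care is recognising that $\m{K}$ is determined by the pair $(\m{K_+},\m{K_-})$, so that both conditions can be tested on the same finite list of residue pairs.
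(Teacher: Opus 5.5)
Your proposal is correct and is essentially the paper's own argument. The paper deduces the corollary directly from Lemma \ref{Clifford1} together with the fact that $e_K^c=-e_K$ if and only if $\m{K}\in\{1,2\}$; your check of the eight residue pairs, using $|K|=|K_+|+|K_-|$, is the explicit verification the paper leaves implicit.
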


Specialising the previous result to $(p,q)=(0,n)$, we obtain the following:

\begin{cor}\label{Clifford3}
Let $K\in\mc{P}(n)$. Then, $e_K\in\sph_{\R_n}$ if and only if $\m{K}\in\{1,2\}$. In particular, $e_K\in\sph_{\R_n}$ if and only if $e_K^2=-1$.
\end{cor}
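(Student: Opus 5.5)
Corollary \ref{Clifford3} follows by specialising Corollary \ref{Clifford2} to $p=0$, $q=n$. With $p=0$ we have $\{1,\ldots,p\}=\varnothing$, so by \eqref{+-} every $K\in\mc{P}(n)$ satisfies $K_+=\varnothing$ and $K_-=K$. Hence $(\m{K_+},\m{K_-})=(0,\m{K})$. Of the four admissible pairs $(0,1),(0,2),(2,0),(2,3)$ listed in Corollary \ref{Clifford2}, only those with first entry $0$ can occur, namely $(0,1)$ and $(0,2)$. Therefore $e_K\in\sph_{\R_n}$ if and only if $\m{K}\in\{1,2\}$, which is the first assertion.

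For the second assertion, I would apply Lemma \ref{Clifford1} in the same way. With $p=0$, the pairs in its list with first entry $0$ are exactly $(0,1)$ and $(0,2)$. So $e_K^2=-1$ if and only if $\m{K}\in\{1,2\}$. Combining this with the first assertion gives $e_K\in\sph_{\R_n}$ if and only if $e_K^2=-1$.

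Alternatively, the second assertion can be seen directly. By definition of the Clifford conjugation, $\m{K}\in\{1,2\}$ is exactly the condition $e_K^c=-e_K$. The extra condition $e_K^c=-e_K$ is therefore automatically satisfied whenever $e_K^2=-1$ in $\R_n$.

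There is no real obstacle. The only point needing care is to note that $p=0$ forces $K_+=\varnothing$ for every $K$, so the four-case list collapses to two cases.
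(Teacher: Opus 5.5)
Your proof is correct and matches the paper, which obtains the corollary by specialising Corollary~\ref{Clifford2} to $(p,q)=(0,n)$: there $K_+=\varnothing$ leaves only the pairs $(0,1),(0,2)$, and the ``in particular'' follows from the same specialisation of Lemma~\ref{Clifford1}. Your ``alternative'' paragraph is not an independent argument, though: deducing $e_K^c=-e_K$ from $e_K^2=-1$ still relies on Lemma~\ref{Clifford1} with $p=0$.
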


The Clifford algebra $\R_{p,q}$ can be endowed also with (at least) another $*$-involution.

\begin{example}[Reversion]\label{exa:reversion}
The Clifford algebra $\R_{p,q}$ can be endowed with the $*$-involution defined on the elements of the basis as
$$
e_K^{c}:=
\left\{
 \begin{array}{rl}
 e_K & \text{ if } |K|\equiv 0,1 \text{ mod } 4, \\
 -e_K & \text{ if } |K|\equiv 2,3 \text{ mod } 4.
 \end{array}
\right.
$$
The $*$-involution $x\mapsto x^c$ is called \textit{reversion}. $\sqbullet$
\end{example}

In what follows, except otherwise stated, we will always assume that $\R_{p,q}$ is endowed with the Clifford conjugation.

\subsection{Slice functions}\label{S2s}

\textit{Let $A$ be an alternative $*$-algebra}. We endow the complexified algebra $A\otimes_{\R}\C$ with the product defined as
$$
(x+\iota y)\cdot(x'+\iota y'):=xx'-yy'+\iota(xy'+yx')
$$
for each $x,x',y,y'\in A$. The algebra $A\otimes_{\R}\C$ is endowed with two commutating $*$-involutions
\begin{itemize}
\item the \textit{complex conjugation} $w:=x+\iota y\mapsto \overline{w}:=x-\iota y$,
\item the \textit{complex $*$-involution} $w:=x+\iota y\mapsto w^c:=x^c+\iota y^c$. 
\end{itemize}
As $A$ is alternative, $A\otimes_{\R}\C$ is also alternative. For each $J\in\sph_A$, the $*$-isomorphism $\phi_J:\C\to \C_J$ introduced in \eqref{*iso} extends to 
$$
\widetilde{\phi}_J:A\otimes_\R\C\to A,\quad x+\iota y\mapsto x+Jy,
$$
which is still a surjective $*$-algebra morphism, but no longer injective.

For a (non-empty) set $D\subset \C$, we define:
$$\textstyle
\Omega_D:=\{\alpha+\beta J\in A : \alpha,\beta\in\R, \alpha+i \beta \in D, J\in\sph_A\}=\bigcup_{J\in \sph_A}\phi_J(D).
$$
A subset $\Omega$ of $A$ is called  \textit{circular} if there exists $D\subset \C$ such that $\Omega=\Omega_D$. In literature, a circular set is also called axially symmetric (with respect to the real axis). A circular set $\Omega=\Omega_D\subset A$ is a \textit{circular slice domain} if $D$ is open and connected in $\C$, and $D\cap\R\neq\varnothing$.

Let us recall the definition of stem function.

\begin{defn}[Stem function]
Let $D\subset \C$ be an open set. A function $F:D\to A\otimes_\R\C$ is called a \textit{stem function on} $D$ if it is complex intrinsic in the sense that
\begin{equation}\label{compint}
\overline{F(z)}=F(\overline{z})
\end{equation}
for each $z\in D$ such that $\overline{z}\in D$. $\sqbullet$
\end{defn}

If $D$ is symmetric with respect to the real axis, then \eqref{compint} holds for each $z\in D$.

\begin{assumption}\label{assSimm}
In what follows, even if not explicitly mentioned,  we will always assume that the subset $D$ of $\C$ is non-empty and symmetric with respect to the real axis.
\end{assumption}

Observe that $F=F_1+\iota F_2:D\to A\otimes_\R\C$ is a stem function if and only if the $A$-valued components $F_1$ and $F_2$ form an \textit{even-odd pair} with respect to the imaginary part of $z$, that is, if and only if $F_1(\overline{z})=F_1(z)$ and $F_2(\overline{z})=-F_2(z)$ for each $z=\alpha+i\beta\in D$, where $\overline{z}=\alpha-i\beta$. We define the \textit{conjugated of} $F$, that we denote by $F^c$, as the function defined as $F^c(z):=(F(z))^c$ for each $z\in D$.  Observe that $F$ is a stem function if and only if $F^c$ is a stem function, because the two $*$-involutions $w\mapsto\overline{w}$ and $w\mapsto w^c$ of $A\otimes_\R\C$ commute.

Next, we recall the definition of slice function.

\begin{defn}[Slice function]
Let $D\subset \C$ be an open set. A function $f:\Omega_D\to A$ is a (\textit{left}) \textit{slice function} if there exists a stem function $F:D\to A\otimes_\R\C$ such that the diagram
$$
\xymatrix{
D\ar[d]_{{\phi_J}|_{D}} \ar[r]^{F}&A\otimes_\R\C \ar[d]^{\widetilde{\phi}_J}\\
\Omega_D\ar[r]^{f}& A
}
$$
commutes for each $J\in\sph_A$. In this case, we say that the slice function $f$ is \textit{induced by} the stem function $F$ and we write $f=\mathcal{I}(F)$. $\sqbullet$
\end{defn}

Let us recall the following \textit{representation formula for slice functions} that allows one to completely reconstruct a slice function by its values on a single complex slice.

\begin{prop}[Representation formula {\cite[Prop.6]{gp}}]\label{6}
Let $\Omega_D\subset A$ be an open circular set and $f:\Omega_D\to A$ a slice function. Then, for each $I\in \sph_A$, we have
$$\textstyle
f(\alpha+\beta J)=\frac{1}{2}(f(\alpha+\beta I)+f(\alpha-\beta I))-\frac{1}{2}J(I(f(\alpha+\beta I)-f(\alpha-\beta I)))
$$
for each $\alpha,\beta\in\R$ and each $J\in \sph_A$ such that $\alpha+\beta J\in\Omega_D$. In particular, $f$ is induced by a unique stem function $F=F_1+\iota F_2$, where $F_1(\alpha+i\beta):=\frac{1}{2}(f(\alpha+\beta I)+f(\alpha-\beta I))$ and $F_2(\alpha+i\beta):=-\frac{1}{2}I(f(\alpha+\beta I)-f(\alpha-\beta I))$ for each $\alpha,\beta\in\R$ with $\alpha+i\beta\in D$.
\end{prop}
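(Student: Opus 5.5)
Fix $I\in\sph_A$ and a point $x=\alpha+\beta J\in\Omega_D$ with $\alpha,\beta\in\R$, $J\in\sph_A$. Since $D$ is symmetric with respect to the real axis (Assumption \ref{assSimm}), both $\alpha+i\beta$ and $\alpha-i\beta$ lie in $D$, so $\alpha\pm\beta I\in\Omega_D$. Let $F=G_1+\iota G_2$ be any stem function inducing $f$. The plan is to evaluate the defining diagram at the three points $\alpha+\beta I$, $\alpha-\beta I$ and $\alpha+\beta J$, solve the first two equations for $G_1(z)$ and $G_2(z)$, where $z=\alpha+i\beta$, and substitute into the third.

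\textbf{Step 1 (the linear system).} By the stem property, $G_1(\overline{z})=G_1(z)$ and $G_2(\overline{z})=-G_2(z)$. Commutativity of the diagram then gives
\[
f(\alpha+\beta I)=G_1(z)+IG_2(z),\qquad f(\alpha-\beta I)=G_1(z)-IG_2(z).
\]
Adding, $G_1(z)=\frac12\bigl(f(\alpha+\beta I)+f(\alpha-\beta I)\bigr)$. Subtracting, $IG_2(z)=\frac12\bigl(f(\alpha+\beta I)-f(\alpha-\beta I)\bigr)$.

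\textbf{Step 2 (solving for $G_2$).} To recover $G_2(z)$, left-multiply by $I$ and use $I(IG_2)=I^2G_2=-G_2$. This is the only place where non-associativity matters, and it is handled by the left alternative identity $x(xy)=x^2y$, equivalently by Artin's theorem applied to $I$ and $G_2(z)$. It yields $G_2(z)=-\frac12 I\bigl(f(\alpha+\beta I)-f(\alpha-\beta I)\bigr)$, which matches the stated $F_2$.

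\textbf{Step 3 (the formula and uniqueness).} Substituting into $f(\alpha+\beta J)=G_1(z)+JG_2(z)$ gives exactly the representation formula. Because the identities of Steps 1 and 2 hold for \emph{every} inducing stem function, its components are forced to equal $F_1$ and $F_2$ pointwise on $D$, which proves uniqueness. The point $z=\alpha$ real needs no separate treatment: there $G_2(\alpha)=0$ by oddness, and the formulas remain consistent.

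\textbf{Main obstacle.} The only delicate point is the order of multiplication in the non-associative setting. The product $J(I(\cdot))$ must be kept as written and must not be rewritten as $(JI)(\cdot)$. Keeping it this way is consistent with the derivation, since Step 3 multiplies $J$ on the left of $G_2(z)=-\frac12 I(\cdots)$.
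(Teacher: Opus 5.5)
Your proof is correct and is essentially the standard argument from the source the paper cites for this result (Ghiloni--Perotti, Prop.~6); the paper itself only quotes the statement. That proof evaluates $f(\alpha+\beta J_1)=F_1(z)+J_1F_2(z)$ at two imaginary units $J_1,J_2$ with $J_1-J_2$ invertible, solves for $F_2(z)$ and then $F_1(z)$ using alternativity, substitutes, and specialises to $J_1=I$, $J_2=-I$, which is exactly the special case you carry out directly; the one step you pass over quickly, that $\alpha+\beta J\in\Omega_D$ forces $\alpha\pm i\beta\in D$, follows from the uniqueness of the decomposition $x=\alpha+\beta J$ (up to the sign of $\beta J$) together with the symmetry of $D$.
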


The $*$-algebra structure of $A\otimes_\R\C$ induces a $*$-algebra structure on the set of all stem functions on $D$. Let $F,G:D\to A\otimes_\R\C$ be stem functions and $f:=\mc{I}(F)$ and $g:=\mc{I}(G)$. We define $f^c:=\mc{I}(F^c)$ and $f\cdot g:=\mc{I}(F\cdot G)$. In this way, the map $\mc{I}$ is a $*$-algebra isomorphism between the set of all stem functions on $D$ and the set of all slice functions on $\Omega_D$.

\subsection{Slice regular functions}\label{S2sr}

Let $D\subset \C$ be an open set. We say that a stem function $F=F_1+\iota F_2:D\to A\otimes_\R\C$ is a \textit{holomorphic stem function on $D$} if it is of class $\Cont^1$ on $D$ and satisfies the following Cauchy-Riemann equation:
$$\textstyle
\frac{\partial F}{\partial\overline{z}}:=\frac{1}{2}\big(\frac{\partial F}{\partial\alpha}+\iota \frac{\partial F}{\partial\beta}\big)=0.
$$
Slice regular functions are defined as those slice functions induced by holomorphic stem functions. 

\begin{defn}[Slice regular function]
We say that the slice function $f=\mathcal{I}(F)$ is \textit{slice regular on} $\Omega_D$ if the stem function $F$ is holomorphic on $D$. $\sqbullet$
\end{defn}

We end this section with the following:

\begin{remark}\label{Cullen}
Let $\Omega\subset\Qq_A$ be an open circular set and $f:\Omega\to A$ a slice regular function. By \cite[Prop.8]{gp}, we know that $f$ is Cullen regular, i.e., $\big(\frac{\partial}{\partial\alpha}+J\frac{\partial}{\partial \beta}\big)f(\alpha+\beta J)=0$ for each $\alpha,\beta\in\R$ and each $J\in \sph_A$ such that $\alpha+\beta J\in\Omega$. $\sqbullet$
\end{remark}

\subsection{Real and complex Nash functions}

In this section, we recall the definition and the main properties of Nash functions. We refer the reader to \cite{bcr, t} and the references mentioned therein for further informations. Let $k$ be either the field of real numbers $\R$ or the field of complex numbers $\C$. Let $n\geq 1$ be an integer. Let $D\subset k^n$ be a (non-empty) open set and $f:D\to k$ a $k$-valued function. 

\begin{defn}\label{defNash}
We say that $f$ is a \textit{$k$-Nash function at $x_0\in D$} if there exist an open neighbourhood $U$ of $x_0$ in $D$ and a non-zero polynomial $P\in k[\x,\t]:=k[\x_1,\ldots,\x_n,\t]$ such that 
\begin{itemize}
\item $f$ is $k$-analytic on $U$,
\item $P(x,f(x))=0$ for each $x\in U$. 
\end{itemize}
The function $f$ is a \textit{$k$-Nash function on $D$} if it is a $k$-Nash function at every point of $D$. $\sqbullet$
\end{defn}

We will also call $\R$-Nash functions \textit{real Nash functions} and $\C$-Nash functions \textit{complex Nash functions}. By definition, if a function $f$ is $k$-Nash on $D$, then $f$ is also $k$-analytic on $D$. 

Let $f:D\to k$ be a $k$-analytic function. As for each polynomial $P\in k[\x,\t]$ the function $D\to k, \, x\mapsto P(x,f(x))$ is $k$-analytic, we deduce straightforwardly that, if $D$ is connected, then the following conditions are equivalent:
\begin{itemize}
\item[{\rm (i)}] $f$ is a $k$-Nash function on $D$,
\item[{\rm (ii)}] there exists $x_0\in D$ such that $f$ is $k$-Nash at $x_0$,
\item[{\rm (iii)}] there exists an irreducible polynomial $P\in k[\x,\t]$ such that $P(x,f(x))=0$ on $D$. Moreover, if $k=\C$, the polynomial $P$ is unique up to a non-zero constant (as a consequence of Hilbert's Nullstellensatz).
\end{itemize}
In particular, if $D$ has finitely many connected components, then $f$  is a $k$-Nash function on $D$ if and only if there exists a non-zero polynomial $P\in k[\x,\t]$ such that $P(x,f(x))=0$ for each $x\in D$. If $D$ has infinitely many connected components, $f$ might not be algebraic over the ring $k[\x,\t]$. That is, in general, the existence of a non-zero polynomial $P\in k[\x,\t]$ such that $P(x,f(x))=0$ for each $x\in D$ is not guaranteed \cite[Ex.2.2]{c}.

%%%

\section{Cartan $*$-algebras}\label{S3}%S3

Let $n\geq 1$ be an integer. For each $K,H\in\mc{P}(n)$, we denote by $K\ds H:=(K\setminus H)\cup(H\setminus K)$ the (usual) \textit{symmetric difference} between $K$ and $H$. It is worthwhile to notice that $\mc{P}(n)$ endowed with the symmetric difference $\ds$ is a commutative group. 

Let us recall the definition of $\ds$-algebra introduced in \cite[Def.2.28]{gp2}. \textit{Fix a (real vector) basis $\{e_K\}_{K\in\mc{P}(n)}$ of $\R^{2^n}$}. A real bilinear map $\rm{b}:\R^{2^n}\times\R^{2^n}\to \R^{2^n}$ is a \textit{$\ds$-product on $\R^{2^n}$} if there exists a function $\sigma:\mc{P}(n)\times \mc{P}(n)\to \R$ such that
\begin{itemize}
\item[{\rm(i)}] $\sigma(K,\varnothing)=\sigma(\varnothing,K)=1$ for each $K\in \mc{P}(n)$, 
\item[{\rm(ii)}] ${\rm b}(e_K, e_H)=\sigma(K,H)e_{K\ds H}$ for each $K,H\in \mc{P}(n)$.
\end{itemize}
In this case, we say that the $\ds$-product $\rm{b}$ on $\R^{2^n}$ is \textit{induced by $\sigma$}. Observe that properties $({\rm i})$ and $({\rm ii})$ imply that $e_{\varnothing}$ is the identity of $\R^{2^n}$, so we write $e_{\varnothing}=1$. If $K=\{k_1,\ldots,k_s\}$ with $1\leq k_1<\ldots<k_s\leq n$, then $e_K$ is also denoted as $e_{k_1\ldots k_s}$ (this notation is consistent with the one introduced in \S\ref{SClifford}). For short, we write $vw$ in place of ${\rm b}(v,w)$ for all $v,w\in\R^{2^n}$.

\begin{defn}[{$\ds$-algebra}]\label{def:delta-algebra}
A (real) algebra $A$ is a \textit{$\ds$-algebra} if it is isomorphic to $\R^{2^n}$ equipped with the $\ds$-product induced by some function $\sigma$. If $\varphi:A\to\R^{2^n}$ is such an isomorphism, we denote the element $\varphi^{-1}(e_K)$ of $A$ still with the symbol $e_K$ for every $K\in\mc{P}(n)$, and we say that the $\ds$-algebra $A$ is \textit{equipped with the basis $\{e_K\}_{K\in\mc{P}(n)}$}, and the product of~$A$ is \textit{induced by $\sigma$}. $\sqbullet$
\end{defn}

The algebras $\HH$, $\O$, $\R_{p,q}$, $\D\HH$ and $\mathbb{S}\O$ are all $\ds$-algebras.

\begin{example}\label{basis}
(i) The division algebra of quaternions $\HH$ endowed with the basis
$$
\{e_{\varnothing}:=1,\, e_1:=j,\, e_2:=i,\, e_{12}:=ji\}
$$
is an associative $\ds$-algebra. While, the division algebra of octonions $\O$ endowed with the basis
$$
\{e_{\varnothing}:=1,\, e_1:=\ell,\, e_2:=j,\, e_3:=i,\, e_{12}:=\ell j,\, e_{13}:=\ell i,\, e_{23}:=ji,\, e_{123}:=\ell(ji)\}
$$
is an alternative, but not associative, $\ds$-algebra. 

(ii) The Clifford algebras $\R_{p,q}$ with basis $\{e_K\}_{K\in\mc{P}(p+q)}$ are all associative $\ds$-algebras.

(iii) The algebra of dual quaternions $\D\HH$ with basis
$$
\{e_{\varnothing}:=1,\, e_1:=\eps,e_2:=j,\, e_3:=i ,\, e_{12}:=\eps j,\, e_{13}:=\eps i,\, e_{23}:=ji,\, e_{123}:=\eps ji\}
$$
is an associative $\ds$-algebra. 

(iv) The algebra of split octonions $\mathbb{S}\O$ with basis 
$$
\{e_{\varnothing}:=1,\, e_1:=\ell,\, e_2:=j,\, e_3:=i,\, e_{12}:=\ell j,\, e_{13}:=\ell i,\, e_{23}:=ji,\, e_{123}:=\ell(ji)\}
$$
is an alternative, but not associative, $\ds$-algebra. $\sqbullet$
\end{example}

The reader observes that the bases described in the previous example are the ones given by the Cayley-Dickson construction.

\begin{assumption}\label{bases}
In what follows, we will assume that the $\ds$-algebras $\HH$, $\O$, $\R_{p,q}$, $\D\HH$ and $\mathbb{S}\O$ are equipped with the bases described in Example \ref{basis}.
\end{assumption}

For our purposes, we need to strengthen the definition of $\ds$-algebra as follows:

\begin{defn}[Cartan algebra]\label{def:cartan-algebras}
Let $A$ be a $\ds$-algebra whose product is induced by a fun\-ction~$\sigma$. We say that $A$ is a \textit{Cartan algebra} if $\sigma(K,H)\in\{-1,1\}$ for each $K,H\in\mc{P}(n)$. $\sqbullet$
\end{defn}

The properties of the Cartan algebra $A$ are encoded by $\sigma$. For instance, if $\{e_K\}_{K\in\mc{P}(n)}$ is the fixed basis of $A$, then $A$ is commutative if and only if $\sigma(K,H)\sigma(H,K)=1$ for each $K,H\in \mc{P}(n)$, because
$$
e_Ke_H=\sigma(K,H)e_{K\ds H}=\sigma(K,H)e_{H\ds K}=\sigma(K,H)\sigma(H,K)e_He_K.
$$
Analogously, it is straightforward to check that $A$ is associative if and only if 
$$
\sigma(K,H)\sigma(K\ds H,W)=\sigma(K,H\ds W)\sigma(H,W)
$$
for each $K,H,W\in\mc{P}(n)$. Recall that an algebra $B$ is alternative if and only if $x(xy)=x^2y$ and $(yx)x=yx^2$ for each $x,y\in B$. Thus, the Cartan algebra $A$ is alternative if and only if
\begin{align}\label{alt}
\begin{split}
\sigma(K,K\ds H)\sigma(K,H)&=\sigma(K,K)\\
\sigma(H,K)\sigma(K\ds H,K)&=\sigma(K,K)
\end{split}
\end{align}
for each $K,H\in\mc{P}(n)$.

The only Cartan algebras $A$ such that $\dim(A)=2$ are the Clifford algebras of Example \ref{nogood}, namely, the complex numbers $\C$ and the split complex numbers $\mathbb{S}\C$. That is why, in what follows, it is not restrictive to assume that $A$ is isomorphic to $\R^{2^n}$ with $n\geq 2$ (i.e. $\dim(A)\geq 4$).

\begin{example}
The algebras $\HH$, $\O$, $\mathbb{S}\O$, and $\R_{p,q}$ with $p+q\geq 2$ are all Cartan algebras. While the algebra of dual quaternions $\D\HH$ is a $\ds$-algebra which is not a Cartan algebra. $\sqbullet$
\end{example}

Cartan algebras enjoy many nice properties:

\begin{lem}\label{lemDelta}
Let $A$ be a Cartan algebra. The following hold:
\begin{itemize}
\item[{\rm(i)}] $e_Ke_H\in\{\pm e_He_K\}$ for each $K,H\in\mc{P}(n)$.
\item[{\rm(ii)}] $(e_Ke_H)e_W\in\{\pm e_K(e_H e_W)\}$ for each $K,H,W\in \mc{P}(n)$.
\item[{\rm(iii)}] $e_K^2\in\{-1,1\}$ for each $K\in\mc{P}(n)$.
\item [{\rm(iv)}] The set $\{\pm e_K\}_{K\in\mc{P}(n)}$ is an invertible loop (i.e. a `non-associative group').
\end{itemize}
\end{lem}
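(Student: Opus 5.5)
Everything comes from the defining relation $e_Ke_H=\sigma(K,H)e_{K\ds H}$ with $\sigma(K,H)\in\{-1,1\}$, together with the fact that $(\mc{P}(n),\ds)$ is an abelian group in which every element is its own inverse ($K\ds K=\varnothing$). No alternativity is needed for (i)--(iii). For (iv) we only need the loop axioms for the set $L:=\{\pm e_K\}_{K\in\mc{P}(n)}$.

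\textbf{Items (i)--(iii).} For (i), since $K\ds H=H\ds K$ we have
$$
e_Ke_H=\sigma(K,H)e_{K\ds H}=\sigma(K,H)\sigma(H,K)\,e_He_K,
$$
and the product of the two signs lies in $\{\pm1\}$. For (ii), using associativity of $\ds$, both sides are multiples of $e_{K\ds H\ds W}$:
$$
(e_Ke_H)e_W=\sigma(K,H)\sigma(K\ds H,W)\,e_{K\ds H\ds W},\qquad e_K(e_He_W)=\sigma(H,W)\sigma(K,H\ds W)\,e_{K\ds H\ds W}.
$$
These two scalars differ by a factor in $\{\pm1\}$. For (iii), $e_K^2=\sigma(K,K)e_{\varnothing}=\sigma(K,K)\in\{-1,1\}$, since $e_\varnothing=1$.

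\textbf{Item (iv).} First, $L$ is closed under multiplication: $(\epsilon e_K)(\delta e_H)=\epsilon\delta\sigma(K,H)e_{K\ds H}\in L$. Second, $1=e_\varnothing\in L$ is a two-sided identity by property (i) of $\sigma$. Third, every element has a two-sided inverse: set $(\epsilon e_K)^{-1}:=\epsilon\sigma(K,K)e_K$, which lies in $L$. By (iii), $\sigma(K,K)^2=1$, so
$$
(\epsilon e_K)\bigl(\epsilon\sigma(K,K)e_K\bigr)=\sigma(K,K)e_K^2=\sigma(K,K)^2=1,
$$
and the product in the other order is $1$ as well.

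It remains to check unique solvability of $ax=b$ and $ya=b$ in $L$. Write $a=\epsilon e_K$ and $b=\delta e_W$. A solution $x=\eta e_H$ must satisfy $K\ds H=W$, which forces $H=K\ds W$. The sign $\eta$ is then uniquely determined by $\epsilon\eta\,\sigma(K,H)=\delta$, because $\sigma(K,H)$ is invertible. The equation $ya=b$ is handled symmetrically. Hence $L$ is a loop, and it is invertible.

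\textbf{Main obstacle.} The only delicate point is what ``invertible loop'' is meant to include. If it means the inverse property $a^{-1}(ab)=b$, this needs the alternativity identities \eqref{alt}. In that case I would derive it from \eqref{alt} together with $\sigma(K,K)^2=1$, via a short sign computation of the same kind as in (ii). If instead the term just means that two-sided inverses exist, the argument above already suffices.
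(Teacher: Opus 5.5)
Your argument is correct and matches the paper's proof. Items (i)--(iii) are the same $\sigma$-sign computations. For (iv), which the paper only calls straightforward, your explicit check of closure, the identity $e_\varnothing$, the two-sided inverses $(\epsilon e_K)^{-1}=\epsilon\sigma(K,K)e_K$ and unique solvability is exactly the intended content.

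Your reading of ``invertible loop'' as a loop with two-sided inverses is the right one. The inverse property can fail for non-alternative Cartan algebras, whereas under alternativity it follows at once from $x^{-1}(xy)=\sigma(K,K)\,x(xy)=\sigma(K,K)\,x^2y=y$ for $x=\pm e_K$.
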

\begin{proof}
As $A$ is a Cartan algebra, we have that $\sigma(K,H)\in\{-1,1\}$ for each $K,H\in\mc{P}(n)$. Property (i) follows by the fact that $e_Ke_H=\sigma(K,H)\sigma(H,K)e_He_K$ for each $K,H\in\mc{P}(n)$, and property (ii) by the fact that 
\begin{align*}
(e_Ke_H)e_W&=\sigma(K,H)e_{K\ds H}e_W=\sigma(K,H)\sigma(K\ds H,W)e_{(K\ds H)\ds W}\\
&=\sigma(K,H)\sigma(K\ds H,W)e_{K\ds(H\ds W)}\\
&=\sigma(K,H)\sigma(K\ds H,W)\sigma(K,H\ds W)e_Ke_{H\ds W}\\
&=\sigma(K,H)\sigma(K\ds H,W)\sigma(K,H\ds W)\sigma(H,W) e_K(e_He_W)
\end{align*}
for each $K,H,W\in \mc{P}(n)$. As $e_K^2=\sigma(K,K)e_\varnothing=\sigma(K,K)$ for each $K\in\mc{P}(n)$, we also have property (iii). At this point, property (iv) is straightforward. 
\end{proof}

Next, we introduce $*$-involutions on Cartan algebras compatible with the $\ds$-algebra structure. 

\begin{defn}[Cartan $*$-algebra]
Let $A$ be a Cartan algebra whose product is induced by a fun\-ction~$\sigma$, and let $\{e_K\}_{K\in\mc{P}(n)}$ be the fixed basis of $A$. We say that $A$ is a \textit{Cartan $*$-algebra} if $A$ is also endowed with a $*$-involution $x\mapsto x^c$ such that $e_k^c\in\{\pm e_k\}$ for each $k\in\{1,\ldots,n\}$.~$\sqbullet$
\end{defn}

There are several examples of Cartan $*$-algebras.

\begin{example}\label{splitoctonions2}
(i) The division algebras $\HH$ and $\O$ endowed with the standard conjugations are Cartan $*$-algebras.  

(ii) The Clifford algebras $\R_{p,q}$ with $p+q\geq 2$ are Cartan $*$-algebras whether they are equipped with the Clifford conjugation or the reversion.

(iii) The split octonions $\mathbb{S}\O=\HH+\ell\HH$ endowed with the $*$-involution introduced in Example~\ref{splitoctonions} is a Cartan $*$-algebra. The algebra $\mathbb{S}\O$ can also be endowed with the $*$-involution defined as $(\alpha+\ell\beta)^c:=\alpha^c+\ell\beta$ for each $\alpha,\beta\in\HH$.  Also with this $*$-involution $\mathbb{S}\O$ is a Cartan $*$-algebra. $\sqbullet$
\end{example}

By Lemma \ref{lemDelta}(i), we deduce the following:

\begin{lem}\label{invCart}
If $A$ is a Cartan $*$-algebra, then $e_K^c\in\{\pm e_K\}$ for each $K\in\mc{P}(n)$.
\end{lem}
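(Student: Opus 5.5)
We argue by induction on $|K|$. The idea is to write each basis element $e_K$ as a product of the generators $e_k$, where the sign of the involution is prescribed. The anti-multiplicativity $(xy)^c=y^cx^c$ reverses the order of a product, and Lemma \ref{lemDelta}(i) lets us swap the two factors back at the cost of a sign.

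The two base cases are immediate. If $K=\varnothing$, then $e_\varnothing=1$ and $1^c=1$ by property (iii) of a $*$-involution. If $|K|=1$, then $K=\{k\}$ and $e_k^c\in\{\pm e_k\}$ by the very definition of a Cartan $*$-algebra.

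For the inductive step, let $|K|\geq 2$ and choose $k\in K$. Set $H:=K\setminus\{k\}$, so that $\{k\}\ds H=K$. By the definition of the $\ds$-product, $e_ke_H=\sigma(\{k\},H)e_K$ with $\sigma(\{k\},H)\in\{\pm1\}$, hence
$$e_K=\sigma(\{k\},H)\,e_ke_H.$$
Apply the involution. It is real linear, so it fixes the real scalar $\sigma(\{k\},H)$, and it is anti-multiplicative, so
$$e_K^c=\sigma(\{k\},H)\,e_H^c\,e_k^c.$$
By the inductive hypothesis $e_H^c=\pm e_H$, and by the base case $e_k^c=\pm e_k$. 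Therefore
$$e_K^c=\pm\,\sigma(\{k\},H)\,e_He_k.$$
Lemma \ref{lemDelta}(i) gives $e_He_k=\pm e_ke_H$, and $e_ke_H=\sigma(\{k\},H)e_K$. Substituting, $e_K^c$ equals a product of signs times $e_K$, which proves $e_K^c\in\{\pm e_K\}$.

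The only point needing care is that the algebra may be non-associative. However, the argument only ever multiplies two basis elements at a time, so no associativity is used. The one structural input beyond the definitions is the commutation sign of Lemma \ref{lemDelta}(i).
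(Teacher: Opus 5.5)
Your proof is correct and follows essentially the same route as the paper. Both argue by induction on $|K|$, write $e_K=\sigma(\{k\},K\setminus\{k\})\,e_ke_{K\setminus\{k\}}$, and apply anti-multiplicativity and the inductive hypothesis. The factors are then swapped back using the commutation sign from Lemma \ref{lemDelta}(i).
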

\begin{proof}
Clearly, the statement is true if $K=\varnothing$. Let $K\in\mc{P}(n)\setminus\{\varnothing\}$. Write $K=\{k_1,\ldots,k_s\}$ with $1\leq k_1<\ldots<k_s\leq n$. Let us proceed by induction on $s=|K|\geq 1$. If $s=1$, then $e_{k_1}^c\in\{\pm e_{k_1}\}$ by definition of Cartan $*$-algebra. Assume that $s\geq 2$. By Lemma \ref{lemDelta}(i), there exists $\eps_1\in\{-1,1\}$ such that $e_{K\setminus\{k_1\}}e_{k_1}=\eps_1 e_{k_1}e_{K\setminus\{k_1\}}$. By inductive hypothesis, there exists $\eps_2, \eps_3\in\{-1,1\}$ such that $e_{K\setminus\{k_1\}}^c=\eps_2 e_{K\setminus\{k_1\}}$ and $e_{k_1}^c=\eps_3 e_{k_1}$. We deduce that
\begin{align*}
e_K^c&=\sigma(\{k_1\}, K\setminus\{k_1\})(e_{k_1}e_{K\setminus\{k_1\}})^c=\sigma(\{k_1\}, K\setminus\{k_1\})e_{K\setminus\{k_1\}}^ce_{k_1}^c\\
&=\sigma(\{k_1\}, K\setminus\{k_1\})\eps_2\eps_3e_{K\setminus\{k_1\}}e_{k_1}=\sigma(\{k_1\}, K\setminus\{k_1\})\eps_1\eps_2\eps_3e_{k_1}e_{K\setminus\{k_1\}}\\
&=\sigma(\{k_1\}, K\setminus\{k_1\})^2\eps_1\eps_2\eps_3e_K=\eps_1\eps_2\eps_3e_K.
\end{align*}
As $\eps:=\eps_1\eps_2\eps_3\in\{-1,1\}$ and $e_K^c=\eps e_K$, we conclude that $e_K^c\in\{\pm e_K\}$, as required.
\end{proof}

Constructing $*$-involutions on a given Cartan algebra $A$ is, in general, a difficult task. Thus, it is natural to ask the following:

\begin{quest2}
Let $A$ be a Cartan algebra. Does there exist a $*$-involution on $A$ that makes $A$ a Cartan $*$-algebra?
\end{quest2}

If the involved Cartan algebra $A$ is alternative, then the previous question has a positive answer.

\begin{prop}\label{involuzione}
Let $A$ be an alternative Cartan algebra. Then, there exists a $*$-involution on $A$ that makes $A$ a Cartan $*$-algebra. 
\end{prop}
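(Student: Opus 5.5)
Define the candidate involution on the basis by $e_K^c:=e_K$ if $e_K^2=1$ and $e_K^c:=-e_K$ if $e_K^2=-1$ (in particular $e_\varnothing^c=1$). Then extend it $\R$-linearly. By Lemma~\ref{lemDelta}(iii) this is well defined. It is clearly involutive, fixes $\R$, and satisfies $e_k^c\in\{\pm e_k\}$. Equivalently, writing $x=x_0+x'$ with $x_0\in\R$ and $x'$ in the span of the $e_K$ with $K\neq\varnothing$, we have $x^c=t(x)-x$ for the linear functional-type map $t(e_K)=e_K+e_K^c=2e_K$ if $e_K^2=1$, and $0$ otherwise. So the only real content is the anti-multiplicativity $(xy)^c=y^cx^c$. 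By bilinearity it suffices to verify $(e_Ke_H)^c=e_H^ce_K^c$ for all $K,H\in\mc{P}(n)$.

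Write $e_K^2=\eps_K\in\{\pm1\}$, so that $e_K^c=\eps_Ke_K$. Set $W:=K\ds H$, so $e_Ke_H=\sigma(K,H)e_W$. The identity to prove becomes
$$\sigma(K,H)\eps_W e_W=\eps_K\eps_H\, e_He_K=\eps_K\eps_H\sigma(H,K)e_W,$$
that is,
$$\eps_W=\eps_K\eps_H\,\sigma(K,H)\sigma(H,K).$$
This amounts to computing $(e_Ke_H)^2$. Set $x=e_K$, $y=e_H$. We need $(xy)^2=x^2y^2\cdot\tau$, where $yx=\tau xy$ with $\tau=\sigma(K,H)\sigma(H,K)\in\{\pm1\}$ (Lemma~\ref{lemDelta}(i)). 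By Artin's theorem the subalgebra generated by $x$ and $y$ is associative. Hence
$$(xy)(xy)=x(yx)y=\tau\, x(xy)y=\tau x^2y^2.$$
Since $e_W^2=\eps_W$ and $(e_Ke_H)^2=\sigma(K,H)^2e_W^2=\eps_W$, we get $\eps_W=\tau\eps_K\eps_H$, which is exactly the required identity. Alternatively, one can use the Moufang identity \eqref{Mou}, or expand with \eqref{alt} directly.

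The main obstacle, modest here, is justifying the reduction to basis elements and the handling of signs. Anti-multiplicativity is bilinear in $(x,y)$, so checking it on basis pairs suffices. The case $K=H$ gives $W=\varnothing$ and the identity $1=\eps_K^2$, which is consistent. The cases where $K$ or $H$ is $\varnothing$ are trivial by $\sigma(\cdot,\varnothing)=\sigma(\varnothing,\cdot)=1$. Finally, I would remark that this involution agrees with the standard conjugation on $\HH,\O$ and with the Clifford conjugation on $\R_{p,q}$ whenever $e_K^2=-1$ iff $e_K^c=-e_K$. This is a sanity check rather than part of the proof.
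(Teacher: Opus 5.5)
Your proof is correct and follows the paper's route: you use the same involution $e_K^c=\sigma(K,K)e_K$, the same reduction to basis elements, and arrive at the same sign identity \eqref{eq}. The only difference is how you verify \eqref{eq}: you apply Artin's theorem to compute $(e_Ke_H)^2=\sigma(K,H)\sigma(H,K)e_K^2e_H^2$, whereas the paper derives it from the two alternativity identities \eqref{alt} on $\sigma$; both are valid, and yours is slightly more transparent.
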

\begin{proof}
We define 
\begin{equation}\label{*Cartan}
e_K^c:=(e_K^2)e_K=\sigma(K,K)e_K
\end{equation}
for each $K\in\mc{P}(n)$ and extend it to a map $\cdot^c:A\to A$ by linearity. As $A$ is a Cartan algebra, we have $\sigma(K,K)\in\{-1,1\}$ for each $K\in\mc{P}(n)$, so $(e_K^c)^c=e_K$ for each $K\in\mc{P}(n)$ and $e_k^c\in\{\pm e_k\}$ for each $k\in\{1,\ldots,n\}$. Thus, we only need to check that  $(xy)^c=y^cx^c$ for each $x,y\in A$. As the product of $A$ is a real bilinear map, it is enough to check the previous equality only on the element of the basis $\{e_K\}_{K\in\mc{P}(n)}$, i.e., we need to show that $(e_Le_M)^c=e_M^ce_L^c$ for each $L,M\in\mc{P}(n)$. Fix $L,M\in\mc{P}(n)$. We have:
\begin{align*}
(e_Le_M)^c&=\sigma(L,M)e_{L\ds M}^c=\sigma(L,M)\sigma(L\ds M,L\ds M)e_{L\ds M}\\
&=\sigma(L,M)^2\sigma(L\ds M,L\ds M)e_Le_M=\sigma(L\ds M,L\ds M)e_Le_M,
\end{align*}
while
$$
e_M^ce_L^c=\sigma(M,M)\sigma(L,L)e_Me_L=\sigma(M,M)\sigma(L,L)\sigma(M,L)\sigma(L,M)e_Le_M.
$$
We only need to show the following equality:
\begin{equation}\label{eq}
\sigma(L\ds M,L\ds M)=\sigma(M,M)\sigma(L,L)\sigma(M,L)\sigma(L,M).
\end{equation}
As $A$ is alternative, by \eqref{alt}, we have
$$
\sigma(M,M)\sigma(L,L)=\sigma(M,L)\sigma(L,M)\sigma(M,L\ds M)\sigma(L,L\ds M).
$$
Thus,
$$
\sigma(M,M)\sigma(L,L)\sigma(M,L)\sigma(L,M)=\sigma(M,L\ds M)\sigma(L,L\ds M).
$$
Taking $K:=L\ds M$, $H:=M$ (so $K\ds H=(L\ds M)\ds M=L$) in the second equality of \eqref{alt}, we deduce
$$
\sigma(L\ds M,L\ds M)=\sigma(M,L\ds M)\sigma(L,L\ds M).
$$
We conclude that the equality \eqref{eq} holds for each $L,M\in\mc{P}(n)$, as required. 
\end{proof}

On the algebras $\HH$, $\O$ and $\mathbb{S}\O$ the $*$-involution defined in the previous proposition gives nothing new. In fact, on $\HH$ and $\O$ the $*$-involution \eqref{*Cartan} coincides with the standard conjugation, while on $\mathbb{S}\O$ it coincides with $*$-involution defined in Example \ref{splitoctonions2}(iii). On the Clifford algebras $\R_{p,q}$ the situation is more interesting:

\begin{example}
The $*$-involution \eqref{*Cartan} coincides with the Clifford involution for the Clifford algebras $\R_n=\R_{0,n}$ and with the reversion for the Clifford algebras $\R_{n,0}$. While, for general Clifford algebras $\R_{p,q}$, with $p,q\geq 1$, it is a $*$-involution different from both the Clifford involution and the reversion. In fact, $e_{1n}^c=e_{1n}$ for the $*$-involution \eqref{*Cartan}, while, $e_{1n}^c=-e_{1n}$ both for the Clifford involution and the reversion. $\sqbullet$
\end{example}

In the following sections, we need the following:

\begin{prop}\label{sqrt-1}
Let $A$ be a Cartan $*$-algebra such that $\sph_A\neq \varnothing$. Then, there exists $K\in \mc{P}(n)$ such that $e_K\in\sph_{A}$.
\end{prop}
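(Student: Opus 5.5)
The plan is to start from an arbitrary imaginary unit $J\in\sph_A$, expand it in the fixed basis, and extract a single basis element that is itself an imaginary unit. Fix $J\in\sph_A$ and write $J=\sum_{K\in\mc{P}(n)}a_Ke_K$ with $a_K\in\R$. By Lemma \ref{invCart}, each $e_K^c$ equals $\eps_Ke_K$ for some $\eps_K\in\{-1,1\}$. Since $e_\varnothing^c=e_\varnothing$, we have $\eps_\varnothing=1$.

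First, I will use the condition $J^c=-J$. Comparing coefficients in $\sum_K a_K\eps_Ke_K=-\sum_Ka_Ke_K$ gives $a_K=0$ whenever $\eps_K=1$. Hence $J$ is supported on the set $S:=\{K: e_K^c=-e_K\}$, and in particular $\varnothing\notin S$.

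Second, I will use the condition $J^2=-1$ and look only at the coefficient of $e_\varnothing=1$. We have $J^2=\sum_{K,H\in S}a_Ka_H\sigma(K,H)e_{K\ds H}$, and $K\ds H=\varnothing$ holds if and only if $K=H$. So the $e_\varnothing$-component of $J^2$ equals $\sum_{K\in S}a_K^2\sigma(K,K)$, and this must be $-1$. By Lemma \ref{lemDelta}(iii), every $\sigma(K,K)=e_K^2$ lies in $\{-1,1\}$. If all of them were $+1$ on $S$, the sum would be $\geq 0$, which is impossible. Therefore there exists $K\in S$ with $a_K\neq0$ and $e_K^2=-1$.

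Finally, for this $K$ we have $e_K^c=-e_K$, so $t(e_K)=0$, and $n(e_K)=e_Ke_K^c=-e_K^2=1$. Hence $e_K\in\sph_A$. No step is a real obstacle. The only point needing care is that the real part of $J^2$ involves only the diagonal terms, which holds because $\sigma$ only produces signs and $K\ds H=\varnothing$ forces $K=H$.
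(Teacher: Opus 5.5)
Your proposal is correct and follows essentially the same route as the paper: use Lemma \ref{invCart} to confine the support of $J$ to $\{K : e_K^c=-e_K\}$, then compare the coefficient of $1$ in $J^2=-1$ (only diagonal terms contribute, since $K\ds H=\varnothing$ forces $K=H$) to find some $K$ in that set with $e_K^2=-1$. Your final check that $n(e_K)=-e_K^2=1$ spells out a step the paper leaves implicit.
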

\begin{proof}
By Lemma \ref{invCart}, $e_K^c\in\{\pm e_K\}$ for each $K\in\mc{P}(n)$. Let $\Tt:=\{K\in \mc{P}(n) : e_K^c=-e_K\}$. If $x\in A\setminus\{0\}$ satisfies $t(x)=0$, then $\Tt\neq\varnothing$ and $
x=\sum_{K\in \Tt} x_K e_K$ for some $x_K\in \R$, because $\{e_K\}_{K\in\mc{P}(n)}$ is a basis of $A$ as a real vector space and the trace $t$ is a real linear map. As $\sph_A\neq \varnothing$, there exists $x\in \sph_A$, i.e., $
x=\sum_{K\in \Tt} x_K e_K$ for some $x_K\in \R$, and $x^2=-1$. The latter equation is equivalent to the following:
$$\textstyle
\sum_{K\in\Tt}x^2_K e_K^2+\sum_{K,H\in \Tt \, :\, K\neq H} x_Kx_H \sigma(K,H)e_{K\ds H}=-1.
$$
By Lemma \ref{lemDelta}(iii), $e_K^2\in\{-1,1\}$ for each $K\in\mc{P}(n)$. Thus, as $\mc{B}:=\{ e_K\}_{K\in\mc{P}(n)}$ is a basis of $A$ as real vector space and $e_{K\ds H}\in\mc{B}\setminus\{1\}$ for each $K,H\in\mc{P}(n)$ with $K\neq H$, we deduce 
$$\textstyle
\sum_{K\in\Tt}x^2_K e_K^2=-1.
$$
In particular, there exists $K\in \Tt$ such that $e^2_K=-1$, otherwise the left hand side of the previous equality would be non-negative. It follows that $e_K\in\sph_A$, as required.
\end{proof}

Let $A$ be a Cartan algebra such that $\{J\in A : J^2=-1\}\neq\varnothing$. Arguing as in the proof of the previous proposition, one can show that there exists $K\in\mc{P}(n)$ such that $e_K^2=-1$. In particular, by Proposition \ref{involuzione} and \eqref{*Cartan}, we deduce the following:

\begin{cor}\label{sqrtim}
Let $A$ be an alternative Cartan algebra such that $\{J\in A : J^2=-1\}\neq \varnothing$. Then, there exists a $*$-involution on $A$ that makes $A$ a Cartan $*$-algebra such that $\sph_A\neq \varnothing$.
\end{cor}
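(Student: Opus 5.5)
The plan is to build the required involution with Proposition \ref{involuzione}, and to use the hypothesis $\{J\in A : J^2=-1\}\neq\varnothing$ only to locate a basis element that squares to $-1$. That element is then automatically an imaginary unit for this involution. Throughout, we work with the $\ds$-product induced by $\sigma$, with $\sigma(K,H)\in\{-1,1\}$.

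\emph{Step 1: a basis element with $e_K^2=-1$.} Fix $J\in A$ with $J^2=-1$ and write $J=\sum_{K\in\mc{P}(n)}x_Ke_K$. Expanding,
$$\textstyle
J^2=\sum_{K}x_K^2e_K^2+\sum_{K\neq H}x_Kx_H\sigma(K,H)e_{K\ds H}.
$$
When $K\neq H$ we have $K\ds H\neq\varnothing$, so the second sum has no component along $e_\varnothing=1$. Comparing the coefficients of $1$ in $J^2=-1$ therefore gives $\sum_K x_K^2e_K^2=-1$, where $e_K^2=\sigma(K,K)\in\{-1,1\}$ by Lemma \ref{lemDelta}(iii). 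If every $e_K$ with $x_K\neq0$ had $e_K^2=1$, the left-hand side would be non-negative. Hence some $K$ has $e_K^2=-1$.

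The point that needs care here is that, unlike in Proposition \ref{sqrt-1}, we have no trace condition, so $\varnothing$ may appear in the support of $J$. This is harmless, because $e_\varnothing^2=1$ only adds a non-negative term.

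\emph{Step 2: the involution.} Since $A$ is an alternative Cartan algebra, Proposition \ref{involuzione} gives the $*$-involution \eqref{*Cartan}, $e_H^c=\sigma(H,H)e_H$, which makes $A$ a Cartan $*$-algebra. For the $K$ found in Step 1 we get $e_K^c=\sigma(K,K)e_K=e_K^2e_K=-e_K$. So $t(e_K)=0$, and $n(e_K)=e_Ke_K^c=-e_K^2=1$. Thus $e_K\in\sph_A$, and in particular $\sph_A\neq\varnothing$.

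No real obstacle is expected. The proof only combines the coefficient comparison from Proposition \ref{sqrt-1} with the explicit formula \eqref{*Cartan}.
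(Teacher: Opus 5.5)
Your proof is correct and follows the paper's route: the paper gets $K$ with $e_K^2=-1$ by rerunning the coefficient-of-$1$ comparison from Proposition \ref{sqrt-1}, then takes the involution \eqref{*Cartan} of Proposition \ref{involuzione}, for which $e_K^c=-e_K$ and hence $e_K\in\sph_A$. Your observation that $\varnothing$ may now lie in the support of $J$ but only adds a non-negative term is exactly what makes the paper's phrase ``arguing as in the proof of the previous proposition'' precise.
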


It is worthwhile to notice that $\{J\in A : J^2=-1\}=\varnothing$ if and only if the involved Cartan algebra $A$ is commutative and $e_k^2=1$ for each $k\in\{1,\ldots,n\}$.

%%%

\section{Systems of Cartan pairs}\label{S4}

\begin{assumption}\label{assumption}
For the rest of this article, except otherwise stated, $A$ is an alternative Cartan $*$-algebra, equipped with the basis $\{e_K\}_{K\in\mc{P}(n)}$ whose product is induced by a function $\sigma$ and such that $\sph_A\neq \varnothing$ (see Corollary \ref{sqrtim}).
\end{assumption}

\subsection{Cartan pairs and their systems}\label{cartanssystems}
We begin by introducing the concept of Cartan pair.

\begin{defn}[Cartan pair]\label{def:CP}
Let $L,M\in\mc{P}(n)$. We say that the pair $(e_L,e_M)$ is a \textit{Cartan pair for $A$} if $e_Le_M\in \sph_A$ or, equivalently, $e_{L\ds M}\in\sph_A$. $\sqbullet$
\end{defn}

The definition of Cartan pair depends on the involved $*$-involution.

\begin{example}
The pair $(1,e_1)$ is a Cartan pair for the Clifford algebra $\R_3$ endowed with the Clifford conjugation, while it is not a Cartan pair for $\R_3$ endowed with the reversion (see Example~\ref{exa:reversion}). In fact, with respect to the reversion, the trace of $e_1$ is $2e_1\neq 0$. $\sqbullet$
\end{example}

For each subset $\Cc$ of $\{(e_L,e_M)\}_{L,M\in\mc{P}(n)}$, we define the sets
\begin{align}\label{partition}
\begin{split}
&\Cc_1:=\{L\in \mc{P}(n): \exists M\in\mc{P}(n) \text{ such that } (e_L,e_M)\in \Cc\},\\
&\Cc_2:=\{M\in \mc{P}(n):\exists L\in\mc{P}(n) \text{ such that } (e_L,e_M)\in \Cc\}.
\end{split}
\end{align}
For each $L,M\in\mc{P}(n)$, we also define the sets
\begin{equation}\label{partition2}
\Cc_{1,M}:=\{L\in \mc{P}(n): (e_L,e_M)\in \Cc\}\quad \text{and}\quad \Cc_{2,L}:=\{M\in \mc{P}(n): (e_L,e_M)\in \Cc\}.
\end{equation}
Clearly, $\Cc_{1,M}\subset \Cc_1$ and $\Cc_{2,L}\subset \Cc_2$ for each $L,M\in\mc{P}(n)$.

We now introduce the notion of system of Cartan pairs.

\begin{defn}[System of Cartan pairs]\label{SCP}
We say that $\Cc\subset \{(e_L,e_M)\}_{L,M\in\mc{P}(n)}$ is a \textit{system of Cartan pairs of $A$}, for short \textit{SCP of $A$}, if it satisfies the following two conditions:
\begin{itemize}
\item[{\rm(i)}] $\{\Cc_1, \Cc_2\}$ is a partition of $\mc{P}(n)$, that is, $\Cc_1\cup \Cc_2=\mc{P}(n)$ and $\Cc_1\cap \Cc_2=\varnothing$.
\item[{\rm(ii)}] If $(e_L,e_M)\in \Cc$, then $(e_L,e_M)$ is a Cartan pair. $\sqbullet$
\end{itemize}
\end{defn}

The division algebras $\HH$ and $\O$ admit a SCP $\Cc$ with $|\Cc_1|=1$ (see Assumption \ref{bases}).

\begin{example}\label{SCPdivision}
(i) The set $\Cc:=\{(1,j),(1,i),(1,ji)\}$ is a SCP for $\HH$ endowed with the standard conjugation.  

(ii) The set $\Cc:=\{(1,\ell),(1,j),(1,i),(1,\ell j),(1,\ell i),(1, ji),(1,\ell (ji))\}$ is a SCP for $\O$ endowed with the standard conjugation. $\sqbullet$
\end{example}

The Clifford algebra $\R_3$ equipped with the Clifford conjugation admits a SCP $\Cc$, but in this case it is not possible to have $|\Cc_1|=1$.

\begin{example}\label{SCPR3}
The set $\Cc:=\{(1,e_1),(1,e_2),(1,e_3),(1,e_{12}),(1,e_{13}),(e_{23},e_{123})\}$ is a SCP for $\R_3$ with $|\Cc_1|=2$. As $e_Ke_{\{1,2,3\}\setminus K}\not\in\sph_{\R_3}$ for each $K\in\mc{P}(3)$, there exist no SCP with $|\Cc_1|=1$.~$\sqbullet$
\end{example}

Also the split octonions $\mathbb{S}\O$ admits no SCP $\Cc$ with $|\Cc_1|=1$.

\begin{example}\label{splitoctonions3}
Let $\mathbb{S}\O$ be the algebra of split octonions equipped with the basis $\{e_K\}_{K\in\mc{P}(3)}$ defined in Example \ref{basis}(iv). Observe that $\{e_K\}_{K\in\mc{P}(3)}\cap\sph_{\mathbb{S}\O}=\{i,j,ji\},
$ both for the $*$-involution $q+\ell p\mapsto q^c-\ell p$ introduced in Example \ref{splitoctonions} and for the $*$-involution $q+\ell p\mapsto q^c+\ell p$ introduced in Example \ref{splitoctonions2}(iii). In particular, 
$$
\Cc:=\{(1,i), (1,j), (1,ji), (\ell, \ell i), (\ell, \ell j), (\ell, \ell(ji))\}
$$
is a SCP (with respect to both these $*$-involutions). Let $H\in\mc{P}(3)$. If $H\in\{\varnothing, \{2\}, \{3\}, \{2,3\}\}$, then $(e_H,\ell)$ is not a Cartan pair, while, if $H\not\in\{\varnothing, \{2\}, \{3\}, \{2,3\}\}$, then $(e_H,1)$ is not a Cartan pair. In particular, if $\Cc'$ is any SCP, then, necessarily, $|\Cc'_1|>1$. We deduce that there exist no SCP $\Cc$ with $|\Cc_1|=1$. $\sqbullet$
\end{example}

We end this section with the following:

\begin{remark}\label{opposto}
By the fact that $e_Le_M=\sigma(L,M)\sigma(M,L)e_Me_L$ for each $L,M\in\mc{P}(n)$, we deduce that $(e_L,e_M)$ is a Cartan pair if and only if $(e_M,e_L)$ is a Cartan pair. Thus, if $\Cc$ is a SCP, then also $\Cc^{\inv}:=\{(e_M,e_L): (e_L,e_M)\in \Cc\}$ is a SCP. $\sqbullet$
\end{remark}

\subsection{Existence of systems of Cartan pairs}

We show in this section that a SCP always exists. Recall our Assumption \ref{assumption}.

\begin{prop}\label{esistenza}
There exists $\Cc\subset\{(e_L,e_M)\}_{L.M\in \mc{P}(n)}$ which is a SCP for $A$.
\end{prop}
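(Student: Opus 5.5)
We build the system from one basis element that is an imaginary unit, using the group structure of $(\mc{P}(n),\ds)$. By Proposition \ref{sqrt-1} (recall $\sph_A\neq\varnothing$ by Assumption \ref{assumption}), there exists $K_0\in\mc{P}(n)$ with $e_{K_0}\in\sph_A$. Note $K_0\neq\varnothing$, because $e_\varnothing=1\notin\sph_A$. The pairs we use are the pairs $(e_L,e_{L\ds K_0})$: for each such pair $L\ds(L\ds K_0)=K_0$, so $e_{L\ds(L\ds K_0)}=e_{K_0}\in\sph_A$. By Definition \ref{def:CP} every such pair is therefore a Cartan pair, and condition (ii) of Definition \ref{SCP} will hold automatically. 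What remains is to choose which $L$ to use so that condition (i), the partition property, holds.

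The map $\tau:\mc{P}(n)\to\mc{P}(n)$, $\tau(L):=L\ds K_0$, is an involution, since $\tau(\tau(L))=L$. It has no fixed points, since $L\ds K_0=L$ would force $K_0=\varnothing$. Hence $\mc{P}(n)$ splits into $2^{n-1}$ disjoint orbits $\{L,L\ds K_0\}$, each of cardinality exactly $2$.

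We pick one representative in each orbit. Concretely, fix $k_0\in K_0$ and set
$$
\Cc_1':=\{L\in\mc{P}(n): k_0\notin L\}.
$$
Because $k_0\in K_0$, exactly one of $L$ and $L\ds K_0$ contains $k_0$, so $\Cc_1'$ contains exactly one element of each orbit. Now define
$$
\Cc:=\{(e_L,e_{L\ds K_0}) : L\in\Cc_1'\}.
$$
By the definitions \eqref{partition}, $\Cc_1=\Cc_1'$ and $\Cc_2=\tau(\Cc_1')=\{M\in\mc{P}(n): k_0\in M\}$. These two sets are complementary in $\mc{P}(n)$, so $\{\Cc_1,\Cc_2\}$ is a partition of $\mc{P}(n)$ and condition (i) holds.

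The only nontrivial input is Proposition \ref{sqrt-1}, which guarantees that an imaginary unit can be taken among the basis elements. Once that is available, the rest is bookkeeping on the elementary abelian $2$-group $(\mc{P}(n),\ds)$, and I expect no real obstacle. The resulting system has $|\Cc_1|=2^{n-1}=\dim(A)/2$, which is consistent with the bound $\Cc(A)\le\dim(A)/2$ announced in the introduction.
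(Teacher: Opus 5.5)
Your proof is correct and follows the paper's own argument: take $K$ with $e_K\in\sph_A$ from Proposition~\ref{sqrt-1}, fix $m\in K$, set $\Cc_1=\{L: m\notin L\}$ and pair each $L\in\Cc_1$ with $L\ds K$, so that $e_Le_{L\ds K}=\sigma(L,L\ds K)e_K\in\{\pm e_K\}\subset\sph_A$. Your fixed-point-free involution $\tau$ is only a repackaging of the paper's check that $M\mapsto M\ds K$ exchanges $\Cc_1$ and $\Cc_2$.
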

\begin{proof}
By Proposition \ref{sqrt-1}, there exists $K\in \mc{P}(n)\setminus\{\varnothing\}$ such that $e_K\in\sph_A$. As $K\neq \varnothing$, we can choose an element $m$ of $K$. Define the sets
$$
\Cc_1:=\{L\in\mc{P}(n) : m\not\in L\} \quad \text{and} \quad \Cc_2:=\{M\in\mc{P}(n) : m\in M\}.
$$
For each $L\in\Cc_1$, as $m\not\in L$ and $m\in K$, we deduce that $m\in K\setminus L$, so 
$$
L\ds K=(L\setminus K)\cup (K\setminus L)\in \Cc_2.
$$
Define the set
$$
\Cc:=\{(e_L,e_{L\ds K}): L\in \Cc_1\}=\{(e_L,e_M): (L,M)\in \Cc_1\times \Cc_2 \text{ and } M=L\ds K\}.
$$
Clearly, $\Cc_1\cap \Cc_2=\varnothing$. Moreover, if $M\in\Cc_2=\mc{P}(n)\setminus\Cc_1$, then $M\ds K\in\Cc_1$ and $(M\ds K)\ds K=M$, so $\Cc$ satisfies property (i) of Definition \ref{SCP}. As $L\ds (L\ds K)=K$ and $\sigma(L,L\ds K)\in\{-1,1\}$ for each $L\in \Cc_1$, we deduce that 
$$
e_{L}e_{L\ds K}=\sigma(L,L\ds K) e_{L\ds (L\ds K)}=\sigma(L,L\ds K) e_K\in\{-e_K,e_K\}\subset \sph_A
$$  
for each $L\in \Cc_1$. Thus, the pair $(e_L,e_{L\ds K})$ is a Cartan pair for each $L\in\Cc_1$, and $\Cc$ satisfies also property (ii) of Definition \ref{SCP}. We conclude that $\Cc$ is a SCP, as required. 
\end{proof}

\begin{remark}\label{rem}
Thanks to the previous proof, if $e_K\in\sph_A$ and $m\in K$, then $\Cc:=\{(L,L\ds K)\in\mc{P}(n)\times\mc{P}(n):m\not\in L\}$ is a SCP for $A$. $\sqbullet$
\end{remark}

One could also introduce the definition of SCP for $\ds$-algebras. For a $\ds$-algebra $A$ which is not a Cartan algebra the existence of SCP is not always guaranteed, as shown in the following example. That is why, we have introduced Cartan algebras, rather than considering $\ds$-algebras.

\begin{example}\label{noC}
Consider the algebra of dual quaternions $\D\HH$ endowed with the basis of Example \ref{basis}(iii). Observe that $\eps i$ is an element of the basis. As $((\eps i)e)^2=(e(\eps i))^2=0$ for each $e\in \D\HH$, for any $*$-involution (for instance, the one defined in Example \ref{dualquaternions}), it holds $(\eps i)e,e(\eps i)\not\in \sph_{\D\HH}$ for each $e\in\D\HH$. Thus, $\D\HH$ does not admit a SCP. $\sqbullet$
\end{example}

\section{Reconstruction of a slice regular function from a system of Cartan pairs}\label{S5}

Recall that, except otherwise stated, \textit{$A$ denotes an alternative Cartan $*$-algebra equipped with the basis $\{e_K\}_{K\in\mc{P}(n)}$ whose product is induced by a function $\sigma$, such that $\sph_A\neq \varnothing$}.

\subsection{Slice regular functions and Cartan pairs}\label{sliceCPS}

Let $D\subset \C$ be a non-empty open subset and $F:=F_1+\iota F_2:D\to A\otimes_\R\C$ a stem function. As $\{e_K\}_{K\in \mc{P}(n)}$ is a basis of $A$ as a $\R$-vector space, there exist unique real valued functions $F_{s,K}:D\to\R$ for $s=1,2$ and $K\in\mc{P}(n)$ such that $F_s=\sum_{K\in\mc{P}(n)} F_{s,K}e_K$ for $s=1,2$. For each $K\in\mc{P}(n)$, define $F_K:=F_{1,K}+\iota F_{2,K}:D\to A\otimes_\R\C$. We have that each $F_K$ has values in $\C=\R\otimes_\R\C$ and
\begin{equation}\label{stemK}\textstyle
F=\sum_{K\in\mc{P}(n)}F_Ke_K
\end{equation}
on $D$. We say that the $F_K$ are the \emph{complex components of $F$}. Evidently, $\overline{F_K(z)}=F_K(\overline{z})$ for each $z=\alpha+i\beta\in D$, so each $F_K$ is a stem function. Observe that $F$ is a holomorphic function, i.e., $\frac{\partial F}{\partial\alpha}+\iota\frac{\partial F}{\partial\beta}=\sum_{K\in\mc{P}(n)}\big(\frac{\partial F_K}{\partial\alpha}+\iota\frac{\partial F_K}{\partial\beta}\big)e_K=0$ if and only if $\frac{\partial F_K}{\partial\alpha}+\iota\frac{\partial F_K}{\partial\beta}=0$ for each $K\in\mc{P}(n)$, i.e., each function $F_K$ is holomorphic (see \cite[Rmk.3(2)]{gp}).

In what follows, if $F:D\to A\otimes_\R\C$ is a stem function and we write $F=\sum_{K\in\mc{P}(n)}F_Ke_K$, we implicitly assume that the $F_K$ are the complex components of $F$. 

\begin{defn}
Let $F=\sum_{K\in\mc{P}(n)}F_Ke_K:D\to A\otimes_\R\C$ be a stem function. For each $L,M\in\mc{P}(n)$ with $L\neq M$, we define the stem function $F_{L,M}:D\to A\otimes_\R\C$ and the slice function $f_{L,M}:\Omega_D\to A$ by
\begin{equation}\label{eqstem}
F_{L,M}:=F_Le_L+F_Me_M
\end{equation}
and
$
f_{L,M}:=\mc{I}(F_{L,M}).  
$ $\sqbullet$
\end{defn} 

Observe that $f_{L,M}=\mc{I}(F_L)e_L+\mc{I}(F_M)e_M$ by \cite[Rmk.7]{gp}. Moreover, if $F$ is holomorphic, then so is each $F_{L,M}$, and every $f_{L,M}$ is slice regular. 

%{\color{green}
%\begin{lem}
%Let $F=\sum_{K\in\mc{P}(n)}F_Ke_K:D\to A\otimes_\R\C$ be a stem function and let $L,M\in\mc{P}(n)$ with $L\neq M$ such that $F_{L,M}$ is holomorphic. Then the stem function
%$$
%F_L+F_Me_{L\ds M}\sigma(L,L)\sigma(M,L):D\to A\otimes_\R\C
%$$
%is also holomorphic. 
%\end{lem}
%
%By Lemma \ref{somm}, we have
%$$
%F_Le_L+F_Me_M=(F_L+\sigma(L,L)\sigma(M,L)F_Me_{L\ds M})e_L
%$$
%for each $L,M\in\mc{P}(n)$. As $F$ is a stem function, {\color{red}we have that} $\overline{F_K(z)}=F_K(\overline{z})$ for each $z\in D$ and each $K\in\mc{P}(n)$ (recall that we are assuming that $D$ is symmetric with respect to the real axes). In particular, the functions 
%\begin{equation}\label{eqstem}
%F_{L,M}:=F_L+\sigma(L,L)\sigma(M,L)F_Me_{L\ds M}
%\end{equation}
%are stem functions for each $L,M\in\mc{P}(n)$, because
%\begin{equation}\label{stem2}
%F_{L,M}=(F_{L,1}+\sigma(L,L)\sigma(M,L)F_{M,1}e_{L\ds M})+\iota(F_{L,2}+\sigma(L,L)\sigma(M,L)F_{M,2}e_{L\ds M}),
%\end{equation}
%so $\overline{F_{L,M}(z)}=F_{L,M}(\overline{z})$ for each $z\in D$ and each $L,M\in\mc{P}(n)$. {\color{red}Observe that
%\begin{equation}\label{stem3}
%F_{M,L}e_L=F_Le_L+F_Me_M.
%\end{equation}
%}}

\begin{lem}\label{stemregular}
The stem function $F$ is holomorphic if and only if the stem function $F_{L,M}$ is holomorphic for each $L,M\in \mc{P}(n)$ such that $(e_L,e_M)$ is a Cartan pair.
\end{lem}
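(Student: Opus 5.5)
The proof reduces everything to the complex components $F_K$ of $F$, using the observation made just before the statement: $F=\sum_{K}F_Ke_K$ is holomorphic if and only if every complex component $F_K$ is holomorphic. This holds because $\{e_K\}_{K\in\mc{P}(n)}$ is a real basis of $A$, hence a complex basis of $A\otimes_\R\C$, and the operator $\frac{\partial}{\partial\alpha}+\iota\frac{\partial}{\partial\beta}$ acts componentwise. The same remark applied to $F_{L,M}=F_Le_L+F_Me_M$, whose complex components are $F_L$, $F_M$ and $0$ (here $L\neq M$, which is automatic for a Cartan pair since $e_{L\ds M}\in\sph_A$ forces $L\ds M\neq\varnothing$), shows that $F_{L,M}$ is holomorphic if and only if both $F_L$ and $F_M$ are holomorphic.

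The direction ``$\Rightarrow$'' is then immediate. If $F$ is holomorphic, every $F_K$ is holomorphic. In particular, for every Cartan pair $(e_L,e_M)$ both $F_L$ and $F_M$ are holomorphic, and therefore so is $F_{L,M}$.

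For ``$\Leftarrow$'', it suffices to show that every $K\in\mc{P}(n)$ occurs as one of the two indices of some Cartan pair. Then the holomorphy of that $F_{L,M}$ gives the holomorphy of $F_K$, and hence of $F$. To do this I would invoke Proposition~\ref{esistenza}, which guarantees the existence of an SCP $\Cc$ for $A$. By condition (i) of Definition~\ref{SCP}, $\Cc_1\cup\Cc_2=\mc{P}(n)$, so every $K$ lies in $\Cc_1$ or in $\Cc_2$. In either case there is a pair in $\Cc$ having $K$ as its first or second index, and by condition (ii) that pair is a Cartan pair.

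Concretely, one can follow Remark~\ref{rem}: fix $e_{K_0}\in\sph_A$, which exists by Proposition~\ref{sqrt-1}. Then $(e_K,e_{K\ds K_0})$ is a Cartan pair for every $K$, because $e_Ke_{K\ds K_0}=\pm e_{K_0}\in\sph_A$. There is no genuine obstacle in this argument. The only point needing care is the componentwise characterisation of holomorphy for $F_{L,M}$, which relies on $L\neq M$ so that $F_L$ and $F_M$ are really distinct coordinates with respect to the basis.
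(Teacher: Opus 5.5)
Your proposal is correct and follows the paper's proof. The forward direction uses componentwise holomorphy. For the converse, fixing $e_{K_0}\in\sph_A$ makes $(e_K,e_{K\ds K_0})$ a Cartan pair for every $K$, which forces each $F_K$ to be holomorphic; you also note explicitly that $L\neq M$ is automatic for a Cartan pair, a point the paper leaves implicit.
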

\begin{proof}
%By \cite[Rmk.3(2)]{gp}, $F$ is a holomorphic stem function if and only if the functions $F_K$ are holomorphic stem functions for each $K\in\mc{P}(n)$. In particular, by \eqref{eqstem}, we deduce that if $F$ is holomorphic, then the stem functions $F_{L,M}$ are holomorphic for each $L,M\in\mc{P}(n)$. 
If $F$ is holomorphic, so each $F_K$ is and the same is true for each $F_{L,M}=F_Le_L+F_Me_M$.

Assume now that the stem function $F_{L,M}$ is holomorphic for each $L,M\in \mc{P}(n)$ such that $(e_L,e_M)$ is a Cartan pair. By Proposition \ref{sqrt-1}, there exists $H\in \sph_A$ such that $e_H\in\sph_A$. Choose $K\in\mc{P}(n)$. As $K\ds(K\ds H)=H$, we have that $(K,K\ds H)$ is a Cartan pair. It follows that $F_{K,K\ds H}=F_Ke_K+F_{K\ds H}e_{K\ds H}$ is holomorphic or, equivalently, both $F_K$ and $F_{K\ds H}$ are. In particular, each $F_K$ is holomorphic, so $F$ is also holomorphic, as required.
\end{proof}

Let $F=\sum_{K\in\mc{P}(n)}F_Ke_K$ be a stem function and let $f=\I(F):\Omega\to A$ be the corresponding slice function. As $\{e_K\}_{K\in\mc{P}(n)}$ is a basis of $A$, there exist unique real-valued functions $f_K:\Omega\to \R$ for $K\in\mc{P}(n)$ such that 
\begin{equation}\label{equa}\textstyle
f=\sum_{K\in\mc{P}(n)} f_Ke_K.
\end{equation}
The functions $f_K:\Omega\to \R$ are the \textit{real components of $f$}. Observe that, in general, there is no relation between the functions $f_K:\Omega\to\R$ and $\I(F_K):\Omega\to A$.  

We end this section with the following:

\begin{prop}[Slice functions and Cartan pairs]\label{slicepres}
Let $f=\I(F):\Omega\to A$ be a slice function, let $L,M\in\mc{P}(n)$ be such that $(e_L,e_M)$ is a Cartan pair, and let $\Omega_{e_{L\ds M}}:=\Omega\cap\C_{e_{L\ds M}}$. We have:
\begin{itemize}
\item[$(\rm{i})$] $f_{L,M}=f_Le_L+f_Me_M$ on $\Omega_{e_{L\ds M}}$.
\item[$(\rm{ii})$] $f_{L,M}(x)e_L=f_L(x)\sigma(L,L)+f_M(x)\sigma(M,L)e_{L\ds M}\in\C_{e_{L\ds M}}$ for all $x\in\Omega_{e_{L\ds M}}$.
\item[$(\rm{iii})$] Define the function $\psi:\Omega_{e_{L\ds M}}\to\C_{e_{L\ds M}}$ by
$$
\psi(x):=f_{L,M}(x)e_L=f_L(x)\sigma(L,L)+f_M(x)\sigma(M,L)e_{L\ds M},
$$
and write $x\in\Omega_{e_{L\ds M}}$ as $x=\alpha+\beta e_{L\ds M}$ with $\alpha,\beta\in\R$. If $f$ is slice regular, then $\psi$ is holomorphic on the complex plane $\C_{e_{L\ds M}}$, that is, $\frac{\partial\psi}{\partial\alpha}+e_{L\ds M}\frac{\partial\psi}{\partial\beta}\equiv0$ on $\Omega_{e_{L\ds M}}$.
\end{itemize}
\end{prop}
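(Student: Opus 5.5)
Set $J:=e_{L\ds M}\in\sph_A$ and $\Omega_J:=\Omega\cap\C_J$. For $x=\alpha+\beta J\in\Omega_J$ the defining diagram of slice functions gives
$$f(x)=\widetilde\phi_J(F(\alpha+i\beta))=\textstyle\sum_K(F_{1,K}+JF_{2,K})e_K,$$
where each $F_{s,K}(\alpha+i\beta)$ is real. Similarly, $f_{L,M}(x)=(F_{1,L}+JF_{2,L})e_L+(F_{1,M}+JF_{2,M})e_M$. The plan is to expand these in the basis and compare coefficients.

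For (i), I expand $f(x)$ in the basis $\{e_K\}$ and determine which terms contribute to the $e_L$ and $e_M$ coefficients. The product $(F_{1,K}+JF_{2,K})e_K$ only involves $e_K$ and $Je_K=\sigma(L\ds M,K)e_{(L\ds M)\ds K}$. Hence the coefficient of $e_L$ collects contributions from $K=L$ (the real part $F_{1,L}$) and from $K=M$ (the term $JF_{2,M}e_M$, since $(L\ds M)\ds M=L$). Symmetrically, the coefficient of $e_M$ comes only from $K=M$ and $K=L$. These are exactly the terms appearing in $f_{L,M}(x)$, because $Je_L\in\{\pm e_M\}$ and $Je_M\in\{\pm e_L\}$. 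Moreover $f_{L,M}(x)$ lies in $\mathrm{span}\{e_L,e_M\}$. So $f_{L,M}(x)=f_L(x)e_L+f_M(x)e_M$.

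For (ii), I multiply (i) on the right by $e_L$. Then $e_Le_L=\sigma(L,L)$ and $e_Me_L=\sigma(M,L)e_{M\ds L}=\sigma(M,L)J$, which is the claimed formula, and the result visibly lies in $\C_J$. Linearity of right multiplication suffices here, so no associativity is needed.

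For (iii), I use Remark \ref{Cullen} applied to the slice regular function $f_{L,M}$ (regular since $F_{L,M}$ is holomorphic). This gives $\partial_\alpha f_{L,M}+J\partial_\beta f_{L,M}=0$ on $\Omega_J$. I multiply on the right by the constant $e_L$. The delicate point, and the main obstacle, is the nonassociative identity $(J\,\partial_\beta f_{L,M})e_L=J(\partial_\beta f_{L,M}\,e_L)$. Since $\partial_\beta f_{L,M}\in\mathrm{span}\{e_L,e_M\}$, it suffices to check $(Je_L)e_L=J(e_Le_L)$ and $(Je_M)e_L=J(e_Me_L)$.
\begin{itemize}
\item The first identity is right alternativity.
\item For the second, $J$, $e_M$ and $e_L$ lie in the subalgebra generated by $e_L$ and $e_M$, since $J=\pm e_Le_M$. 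That subalgebra is associative by Artin's theorem, so the identity holds.
\end{itemize}
It follows that $\partial_\alpha\psi+J\partial_\beta\psi=0$, so $\psi$ is holomorphic on $\C_J$.
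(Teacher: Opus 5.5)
Your proposal is correct and follows essentially the same route as the paper. You expand $f$ and $f_{L,M}$ via the defining diagram of $\I$ with $J=e_{L\ds M}$ and identify the $e_L,e_M$ coefficients, right-multiply by $e_L$ for (ii), and for (iii) combine Cullen regularity of $f_{L,M}$ with Artin's theorem on the subalgebra generated by $e_L,e_M$ to move $e_L$ past $J\,\partial_\beta f_{L,M}$. The differences are cosmetic: the paper shows $f(x)-f_{L,M}(x)\in\bigoplus_{K\neq L,M}\R e_K$ instead of reading off coefficients directly, and it uses Artin's theorem for both basis terms where you use right alternativity for one of them.
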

\begin{proof}
Let $x=\alpha+\beta e_{L\ds M}\in\Omega_{e_{L\ds M}}$ and set $z:=(\phi_{e_{L\ds M}})^{-1}(x)=\alpha+i\beta\in D$. We have:
\begin{align*}
f_{L,M}(x)&=\I(F_Le_L+F_Me_M)(z)=\I(F_Le_L)(z)+\I(F_Me_M)(z)\\
&=F_{1,L}(z)e_L+e_{L\ds M}(F_{2,L}(z)e_L)+F_{1,M}(z)e_M+e_{L\ds M}(F_{2,M}(z)e_M)\\
&=(F_{1,L}(z)+F_{2,M}(z)\sigma(L\ds M,M))e_L+(F_{1,M}(z)+F_{2,L}(z)\sigma(L\ds M,L))e_M%\in\R e_L\oplus\R e_M
\end{align*}
and
\begin{align*}
f(x)-f_{L,M}(x)=&\textstyle\,\I\big(\sum_{K\in\mc{P}(n)\setminus\{L,M\}}F_Ke_K\big)(z)\\
=&\,\textstyle\sum_{K\in\mc{P}(n)\setminus\{L,M\}}\big(F_{1,K}(z)e_K+e_{L\ds M}(F_{2,K}(z)e_K)\big)\\
=&\,\textstyle\sum_{K\in\mc{P}(n)\setminus\{L,M\}}F_{1,K}(z)e_K\\
&\textstyle+\sum_{K\in\mc{P}(n)\setminus\{L,M\}}F_{2,K}(z)\sigma(L\ds M,K)e_{(L\ds M)\ds K}.
\end{align*}
It follows that $f_{L,M}(x)\in\R e_L\oplus\R e_M$. Moreover, as $(L\ds M)\ds K\in\mc{P}(n)\setminus\{L,M\}$ for every $K\in\mc{P}(n)\setminus\{L,M\}$, we deduce that $f(x)-f_{L,M}(x)\in\bigoplus_{K\in\mc{P}(n)\setminus\{L,M\}}\R e_K$, so
\begin{equation}\label{eq:1}
f_{L,M}(x)=f_L(x)e_L+f_M(x)e_M
\end{equation}
and
\begin{equation}\label{eq:2}
f_{L,M}(x)e_L=f_L(x)\sigma(L,L)+f_M(x)\sigma(M,L)e_{L\ds M}\in\C_{e_{L\ds M}}.
\end{equation}

Suppose now that $f$ is slice regular. By Lemma \ref{stemregular}, we have that $f_{L,M}=\I(F_{L,M})=\I(F_Le_L+F_Me_M):\Omega\to A$ is a slice regular function. Bearing in mind \eqref{eq:1} and \eqref{eq:2}, we can define the $\mathscr{C}^1$ functions $\phi_1,\phi_2:\Omega_{e_{L\ds M}}\to\R$ and $\phi:\Omega_{e_{L\ds M}}\to\R e_L\oplus\R e_M$ by
\begin{itemize}
 \item $\phi_1(x):=f_L(x)$, $\phi_2(x):=f_M(x)$,
 \item $\phi(x):=f_{L,M}(x)=\phi_1(x)e_L+\phi_2(x)e_M$,
\end{itemize}
so $\psi(x)=\phi(x)e_L=f_L(x)\sigma(L,L)+f_M(x)\sigma(M,L)e_{L\ds M}$. As $f_{L,M}$ is slice regular, Remark \ref{Cullen} implies that $\frac{\partial\phi}{\partial\alpha}(x)+e_{L\ds M}\frac{\partial\phi}{\partial\beta}(x)=0$. Thus, by Artin's theorem, we deduce:
\begin{align*}
\textstyle\frac{\partial\psi}{\partial\alpha}(x)+e_{L\ds M}\frac{\partial\psi}{\partial\beta}(x)&\textstyle=\frac{\partial\phi}{\partial\alpha}(x)e_L+e_{L\ds M}\big(\frac{\partial\phi}{\partial\beta}(x)e_L\big)\\
&=\textstyle\frac{\partial\phi}{\partial\alpha}(x)e_L+\big(\sigma(L,M)e_Le_M\big)\big(\big(\frac{\partial\phi_1}{\partial\beta}(x)e_L+\frac{\partial\phi_2}{\partial\beta}(x)e_M\big)e_L\big)\\
&=\textstyle\frac{\partial\phi}{\partial\alpha}(x)e_L+\big(\big(\sigma(L,M)e_Le_M\big)\big(\frac{\partial\phi_1}{\partial\beta}(x)e_L+\frac{\partial\phi_2}{\partial\beta}(x)e_M\big)\big)e_L\\
&\textstyle=\frac{\partial\phi}{\partial\alpha}(x)e_L+\big(e_{L\ds M}\frac{\partial\phi}{\partial\beta}(x)\big)e_L=\big(\frac{\partial\phi}{\partial\alpha}(x)+e_{L\ds M}\frac{\partial\phi}{\partial\beta}(x)\big)e_L=0.
\end{align*}
This proves that $\psi$ is holomorphic on the complex plane $\C_{e_{L\ds M}}$ in the usual sense.
\end{proof}

\subsection{Cartan transforms and Cartan reconstruction}

\emph{Let $\Omega=\Omega_D\subset \Qq_A$ be an open circular set, $F=\sum_{K\in\mc{P}(n)}F_Ke_K$ a stem function and $f=\I(F):\Omega\to A$ the corresponding slice function.}

Let us introduce the concepts of Cartan transforms.

\begin{defn}[Cartan transform]\label{Ctransform}
Let $\Cc$ be a SCP of $A$. We define the \textit{Cartan transform $F^{\Cc}:D\to A\otimes_\R\C$ of $F$ (associated to $\Cc$)} as the following stem function
$$\textstyle
F^{\Cc}:=\sum_{(e_L,e_M)\in \Cc}F_{L,M}=\sum_{(e_L,e_M)\in \Cc}(F_Le_L+F_Me_M),
$$
and the \textit{Cartan transform $f^{\Cc}:\Omega\to A$ of $f$ (associated to $\Cc$)} as the following slice function
$$\textstyle
f^{\Cc}:=\I(F^{\Cc})=\sum_{(e_L,e_M)\in \Cc}f_{L,M}. \text{ $\,\sqbullet$}
$$
\end{defn}

The stem function $F^{\Cc}$ has a simple expression in terms of $F$ and $\Cc$.

\begin{prop}\label{KeyCalculation}
The complex components $F^{\Cc}_K$ of $F^{\Cc}$ satisfy
$$
F^{\Cc}_K=\begin{cases}
|\Cc_{2,K}| F_K\quad \text{if } K\in\Cc_1,\\
|\Cc_{1,K}| F_K\quad \text{if } K\in\Cc_2.
\end{cases}
$$
for each $K\in\mc{P}(n)$, where $\Cc_1$ and $\Cc_2$ are defined in \eqref{partition}.
\end{prop}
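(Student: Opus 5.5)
The plan is to expand the definition of $F^{\Cc}$ and collect the coefficient of each basis element $e_K$. By Definition \ref{Ctransform},
$$\textstyle
F^{\Cc}=\sum_{(e_L,e_M)\in\Cc}(F_Le_L+F_Me_M)=\sum_{(e_L,e_M)\in\Cc}F_Le_L+\sum_{(e_L,e_M)\in\Cc}F_Me_M .
$$
Each $F_K$ is $\C$-valued, so $F_Ke_K$ lies in $\C e_K\subset A\otimes_\R\C$. Since $\{e_K\}_{K\in\mc{P}(n)}$ is a $\C$-basis of $A\otimes_\R\C$, the complex components of $F^{\Cc}$ are determined uniquely by gathering, for each fixed $K$, all terms that are multiples of $e_K$.

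Fix $K\in\mc{P}(n)$. In the first sum, a term $F_Le_L$ contributes to $e_K$ exactly when $L=K$. The pairs $(e_K,e_M)\in\Cc$ are indexed by $M\in\Cc_{2,K}$, so the first sum contributes $|\Cc_{2,K}|F_K$. In the same way, the second sum contributes $|\Cc_{1,K}|F_K$, because the relevant pairs $(e_L,e_K)\in\Cc$ are indexed by $L\in\Cc_{1,K}$. Hence $F^{\Cc}_K=(|\Cc_{2,K}|+|\Cc_{1,K}|)F_K$ in general.

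Next I use condition (i) of Definition \ref{SCP}, that $\Cc_1\cap\Cc_2=\varnothing$. If $K\in\Cc_1$, then $K\notin\Cc_2$. Since $\Cc_{1,K}\ne\varnothing$ would force $K\in\Cc_2$ by \eqref{partition}, we get $\Cc_{1,K}=\varnothing$ and $F^{\Cc}_K=|\Cc_{2,K}|F_K$. Symmetrically, if $K\in\Cc_2$, then $\Cc_{2,K}=\varnothing$ and $F^{\Cc}_K=|\Cc_{1,K}|F_K$. By $\Cc_1\cup\Cc_2=\mc{P}(n)$, these two cases cover every $K$.

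The sets $\Cc_{1,K}$ and $\Cc_{2,K}$ are finite since $\mc{P}(n)$ is finite, so all the sums make sense. The only point needing care is whether a pair $(e_L,e_M)$ with $L=M$ could contribute twice to the same $e_K$. This cannot happen, because $L\in\Cc_1$ and $M\in\Cc_2$ are disjoint families, so $L\neq M$; this is also consistent with $F_{L,M}$ being defined only for $L\neq M$. I expect no real obstacle: the argument is pure bookkeeping, with the uniqueness of complex components as the key fact.
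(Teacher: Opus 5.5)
Your proposal is correct and follows essentially the same route as the paper: split $F^{\Cc}$ into the two sums $\sum_{(e_L,e_M)\in\Cc}F_Le_L$ and $\sum_{(e_L,e_M)\in\Cc}F_Me_M$, count how often each $e_K$ occurs (giving $|\Cc_{2,K}|F_K$ and $|\Cc_{1,K}|F_K$ respectively), and use $\Cc_1\cap\Cc_2=\varnothing$ so that only one of these two contributions survives for each $K$. Your explicit remark that $\Cc_{1,K}=\varnothing$ whenever $K\in\Cc_1$ (and symmetrically) spells out the final step, which the paper states only tersely.
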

\begin{proof}
For each $L\in \Cc_1$ and $M\in \Cc_2$, let $\Cc_{1,M}$ and $\Cc_{2,L}$ be the sets introduced in \eqref{partition2}. As
$$
\Cc=\{(e_L,e_M): L\in \Cc_1, M\in\Cc_{2,L}\}=\{(e_L,e_M): M\in \Cc_2, L\in\Cc_{1,M}\},
$$
we have
\begin{align*}
F^\Cc&\textstyle=\sum_{(e_L,e_M)\in\Cc} F_{L,M}=\sum_{(e_L,e_M)\in\Cc} (F_Le_L+F_Me_M)\\
&\textstyle=\sum_{(e_L,e_M)\in\Cc} F_Le_L+\sum_{(e_L,e_M)\in\Cc} F_Me_M\\
&\textstyle=\sum_{L\in\Cc_1}\sum_{M\in\Cc_{2,L}}F_Le_L+ \sum_{M\in\Cc_2}\sum_{L\in\Cc_{1,M}}F_Me_M\\
&\textstyle=\sum_{L\in\Cc_1}|\Cc_{2,L}|F_Le_L+\sum_{M\in\Cc_2}|\Cc_{1,M}|F_Me_M.
\end{align*}
In order to conclude, it is enough to notice that the statement follows by the previous equality because $\{\Cc_1,\Cc_2\}$ is a partition of $\Cc$, so $\Cc_1\cap \Cc_2=\varnothing$. 
\end{proof}

Some consequences are as follows.

\begin{cor}
If $e_K\in\sph_A$ and $m\in K$, then the set $\Cc:=\{(e_L,e_{L\ds K}):m\not\in L\}$ is a SCP for~$A$ and $f^{\Cc}=f$.
\end{cor}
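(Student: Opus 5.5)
The first claim, that $\Cc:=\{(e_L,e_{L\ds K}):m\not\in L\}$ is a SCP for $A$, is exactly Remark \ref{rem}, i.e.\ the construction in the proof of Proposition \ref{esistenza}. There $\Cc_1=\{L\in\mc{P}(n): m\not\in L\}$ and $\Cc_2=\{M\in\mc{P}(n): m\in M\}$ form a partition of $\mc{P}(n)$. Each pair is a Cartan pair, because $e_Le_{L\ds K}=\sigma(L,L\ds K)e_K\in\{\pm e_K\}\subset\sph_A$, and $\sph_A$ is closed under sign change. So only the identity $f^{\Cc}=f$ needs an argument.

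For that identity I will apply Proposition \ref{KeyCalculation}, which gives $F^{\Cc}_H=|\Cc_{2,H}|F_H$ for $H\in\Cc_1$ and $F^{\Cc}_H=|\Cc_{1,H}|F_H$ for $H\in\Cc_2$. It therefore suffices to show that every such cardinality equals $1$.

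For $L\in\Cc_1$, the set $\Cc_{2,L}$ consists of those $M$ with $(e_L,e_M)\in\Cc$. By construction the only such $M$ is $L\ds K$, so $|\Cc_{2,L}|=1$. For $M\in\Cc_2$, the set $\Cc_{1,M}$ consists of those $L\in\Cc_1$ with $L\ds K=M$. Since $\ds$ makes $\mc{P}(n)$ a group in which every element is its own inverse, this forces $L=M\ds K$. Moreover $M\ds K$ does not contain $m$, because $m\in M\cap K$, so it indeed lies in $\Cc_1$. Hence $|\Cc_{1,M}|=1$.

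Putting this together gives $F^{\Cc}_H=F_H$ for every $H$, so $F^{\Cc}=F$ and $f^{\Cc}=\I(F^{\Cc})=\I(F)=f$. There is no real obstacle here; the only point needing care is that $\Cc$ is exactly the graph of the bijection $L\mapsto L\ds K$ from $\Cc_1$ onto $\Cc_2$, which is what makes both cardinalities equal to one.
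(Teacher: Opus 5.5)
Your proposal is correct and matches the paper's proof: the SCP property comes from Remark \ref{rem}, and $|\Cc_{2,L}|=|\Cc_{1,M}|=1$ is fed into Proposition \ref{KeyCalculation} to get $F^{\Cc}=F$. The only difference is that you spell out the cardinality check, namely that $\Cc$ is the graph of the bijection $L\mapsto L\ds K$ from $\Cc_1$ onto $\Cc_2$, which the paper leaves implicit.
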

\begin{proof}
By Remark \ref{rem}, the set $\Cc$ is a SCP for~$A$. As $|\Cc_{2,L}|=|\Cc_{1,M}|=1$ for each $L\in\Cc_1$ and $M\in\Cc_2$, Proposition \ref{KeyCalculation} implies the equality $f^{\Cc}=f$.
\end{proof}

Clearly, the Cartan transform $f^{\Cc}$ of $f$ depends on the choice of the SCP $\Cc$ of $A$.

\begin{example}
Let $f:\HH\to \HH$ be the slice polynomial function $f(q):=q(1+i)$. Observe that $f$ is induced by the stem function $F=F_\varnothing+F_1j+F_2i+F_{12}ji:\C\to\HH\otimes_\R\C$, where $F_\varnothing=F_2:=\rm{Id}_\C$ and $F_1=F_{12}:\equiv0$. Here $\rm{Id}_\C:\C\to\HH\otimes_\R\C$ denotes the function ${\rm Id}_\C(\alpha+i\beta):=\alpha+\iota\beta$. Consider the two SCP $\Cc':=\{(1,j),(1,i),(1,ji)\}$ and $\Cc'':=\{(1,i),(j,ji)\}$ of $\HH$. We have:
\begin{align*}
F^{\Cc'}&=F_{\varnothing,\{1\}}+F_{\varnothing,\{2\}}+F_{\varnothing,\{1,2\}}=F_\varnothing+F_1j+F_\varnothing+F_2i+F_\varnothing+F_{12} ji=3\,{\rm Id}_\C+{\rm Id}_\C i,\\
F^{\Cc''}&=F_{\varnothing,\{2\}}+F_{\{1\},\{1,2\}}=F_\varnothing+F_2i+F_1j+F_{12} ji=F
\end{align*}
so
\begin{align*}
f^{\Cc'}(q)&=\I(F^{\Cc'})(q)=3q+qi=q(3+i),\\
f^{\Cc''}(q)&=\I(F)(q)=f(q)=q(1+i)
\end{align*}
for all $q\in\HH$. Observe that these computations are coherent with Proposition \ref{KeyCalculation}. $\sqbullet$
\end{example}

\begin{cor}\label{const1}
The slice function $f$ is slice regular if and only if its Cartan transform $f^{\Cc}$ is slice regular. 
\end{cor}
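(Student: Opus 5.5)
Corollary \ref{const1} follows from Proposition \ref{KeyCalculation} together with the componentwise characterisation of holomorphy of stem functions recalled at the beginning of \S\ref{sliceCPS}. By definition, $f$ (resp.\ $f^{\Cc}$) is slice regular if and only if $F$ (resp.\ $F^{\Cc}$) is a holomorphic stem function. This happens if and only if each complex component $F_K$ (resp.\ $F^{\Cc}_K$) is holomorphic on $D$, by \cite[Rmk.3(2)]{gp}.

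The key step is to observe that every multiplicity appearing in Proposition \ref{KeyCalculation} is a strictly positive integer. Since $\{\Cc_1,\Cc_2\}$ is a partition of $\mc{P}(n)$, every $K\in\mc{P}(n)$ lies in exactly one of $\Cc_1$ and $\Cc_2$.
\begin{itemize}
\item If $K\in\Cc_1$, then by the definition \eqref{partition} there is some $M$ with $(e_L,e_M)=(e_K,e_M)\in\Cc$. Hence $\Cc_{2,K}\neq\varnothing$ and $c_K:=|\Cc_{2,K}|\geq1$.
\item If $K\in\Cc_2$, the same argument gives $c_K:=|\Cc_{1,K}|\geq 1$.
\end{itemize}
The sets $\Cc_{1,K}$ and $\Cc_{2,K}$ are subsets of $\mc{P}(n)$, so these multiplicities are also finite.

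Proposition \ref{KeyCalculation} then reads $F^{\Cc}_K=c_KF_K$ with $c_K$ a nonzero real constant. Therefore $F^{\Cc}_K$ is holomorphic if and only if $F_K$ is, since $F_K=c_K^{-1}F^{\Cc}_K$. Taking all $K\in\mc{P}(n)$ together, $F^{\Cc}$ is holomorphic if and only if $F$ is. The same relation also shows that $F^{\Cc}$ is of class $\Cont^1$ exactly when $F$ is, which matters because the definition of a holomorphic stem function requires $\Cont^1$.

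There is no genuine obstacle here. The only point needing care is the positivity of the multiplicities $c_K$, which is exactly where property (i) of Definition \ref{SCP} is used. As a by-product, the argument gives $F=\sum_K c_K^{-1}F^{\Cc}_Ke_K$, so $F$ is recovered from $F^{\Cc}$.
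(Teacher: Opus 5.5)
Your proof is correct and follows essentially the paper's route: the paper derives the corollary from Proposition \ref{KeyCalculation} together with Lemma \ref{stemregular}, whose proof rests on the same componentwise criterion for holomorphy (recalled at the start of \S\ref{sliceCPS}) that you invoke directly. Your explicit check that each multiplicity $|\Cc_{2,K}|$ or $|\Cc_{1,K}|$ is at least $1$, which comes from the covering part of Definition \ref{SCP}(i), fills in a detail the paper leaves implicit.
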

\begin{proof}
The statement follows straightforwardly by Lemma \ref{stemregular} and Proposition \ref{KeyCalculation}.
\end{proof}

\begin{cor}\label{const2}
The slice function $f$ is constant if and only if its Cartan transform $f^{\Cc}$ is. 
\end{cor}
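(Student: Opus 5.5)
The plan is to transfer constancy from the stem function level to the slice function level. The key input is Proposition \ref{KeyCalculation}: every complex component of $F^{\Cc}$ is a positive integer multiple of the corresponding component of $F$. Concretely, $F^{\Cc}_K=c_KF_K$ with $c_K=|\Cc_{2,K}|$ if $K\in\Cc_1$ and $c_K=|\Cc_{1,K}|$ if $K\in\Cc_2$.

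First I check that $c_K\geq 1$ for every $K\in\mc{P}(n)$. Since $\{\Cc_1,\Cc_2\}$ is a partition of $\mc{P}(n)$, each $K$ lies in exactly one of $\Cc_1$, $\Cc_2$. If $K\in\Cc_1$, then by definition \eqref{partition} there is some $M$ with $(e_K,e_M)\in\Cc$, so $\Cc_{2,K}\neq\varnothing$. The case $K\in\Cc_2$ is symmetric. Hence $F^{\Cc}_K=c_KF_K$ with $c_K$ a nonzero real number, and conversely $F_K=c_K^{-1}F^{\Cc}_K$.

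Next I relate constancy of $f$ to constancy of $F$. Because $\I$ is an isomorphism between stem functions and slice functions (Proposition \ref{6}), $f$ is constant, say $f\equiv a\in A$, if and only if $F\equiv a$, that is $F_1\equiv a$ and $F_2\equiv 0$. To see this, note that the constant $a$ is induced by the constant stem function $a$, and the stem function is unique. This in turn holds if and only if each complex component $F_K$ is a real constant: $F_{1,K}$ is constant and $F_{2,K}\equiv 0$.

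Finally, $F_K$ is a real constant if and only if $c_KF_K$ is one, since $c_K\neq 0$. So $F$ is constant exactly when $F^{\Cc}$ is, and applying the previous step to $f^{\Cc}=\I(F^{\Cc})$ gives the corollary.

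The only subtle point is the equivalence ``$f$ constant $\iff$ $F$ constant''. If $\Omega$ is not connected, or $D$ has several components, one still argues pointwise with the uniqueness part of Proposition \ref{6}: a constant function is induced by a constant stem function, and the stem function is unique. The argument therefore remains valid.
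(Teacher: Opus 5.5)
Your proof is correct and follows the same route as the paper: the chain $f$ constant $\iff$ $F$ constant $\iff$ $F^{\Cc}$ constant $\iff$ $f^{\Cc}$ constant, with the middle step coming from Proposition \ref{KeyCalculation}. You also make explicit two points the paper leaves implicit: the multiplicities $|\Cc_{2,K}|$ and $|\Cc_{1,K}|$ are nonzero because $\{\Cc_1,\Cc_2\}$ partitions $\mc{P}(n)$, and constancy of $f$ is equivalent to constancy of $F$ by uniqueness of the inducing stem function.
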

\begin{proof}
The slice function $f=\I(F)$ is constant if and only if its stem function $F$ is. By Proposition \ref{KeyCalculation}, $F$ is constant if and only if $F^{\Cc}$ is. Finally, $F^{\Cc}$ is constant if and only if $f^{\Cc}=\mathcal{I}(F^{\Cc})$ is. This completes the proof.
\end{proof}

The Cartan transform of a slice regular function allows to prove the following crucial result:

\begin{prop}\label{constslice}
Suppose that $\Omega$ is a circular slice domain and $f:\Omega\to A$ is a slice regular function. Let $\Cc$ be a SCP for $A$. If the real components $f_L$ of $f$ are constant on $\Omega\cap \C_{e_{L\ds M}}$ for each $L,M\in\mc{P}(n)$ such that $(e_L,e_M)\in\Cc$, then $f$ is constant.
\end{prop}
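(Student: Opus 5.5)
Fix a pair $(e_L,e_M)\in\Cc$ and write $J:=e_{L\ds M}\in\sph_A$. The plan is to show that $f_{L,M}$ is constant on all of $\Omega$, and then to combine these facts over the whole system $\Cc$.

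\emph{Step 1: $f_{L,M}$ is constant on the slice $\C_J$.} Consider the function $\psi(x)=f_L(x)\sigma(L,L)+f_M(x)\sigma(M,L)J$ on $\Omega_J:=\Omega\cap\C_J$ from Proposition~\ref{slicepres}(iii). By that result, $\psi$ is holomorphic on the complex plane $\C_J$. Since $\Omega$ is a circular slice domain, $\Omega_J=\phi_J(D)$ is connected, because $D$ is connected. Through the identification $\phi_J$, $\psi$ becomes a holomorphic function on $D$ whose real part $f_L\sigma(L,L)$ is constant by hypothesis. By the classical Cauchy--Riemann argument, a holomorphic function with constant real part on a connected open set is constant. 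Hence $\psi$ is constant on $\Omega_J$. Since $f_{L,M}(x)=\psi(x)e_L^{-1}=\psi(x)\sigma(L,L)e_L$ and $e_L^2=\sigma(L,L)$, it follows that $f_{L,M}$ is constant on $\Omega_J$. (Alternatively, use Proposition~\ref{slicepres}(i) directly: $f_M$ is then constant as well.)

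\emph{Step 2: from one slice to all of $\Omega$.} The function $f_{L,M}=\I(F_{L,M})$ is slice regular by Lemma~\ref{stemregular}. Let $c$ be its constant value on $\Omega_J$. Then $f_{L,M}-c$ is slice regular on the circular slice domain $\Omega$ and vanishes on $\Omega_J$. Consequently its stem function vanishes: by the representation formula (Proposition~\ref{6}) with $I=J$, $F_1$ and $F_2$ are determined by the values on $\Omega_J$. This gives $F_{L,M}\equiv c$ on $D$. Note that $c$ is real-compatible: since $F_{L,M}$ is holomorphic and $F_2$ is odd, $F_2=0$ on $D\cap\R$, and then the vanishing follows. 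More simply, $F_{L,M}-c$ is determined by $f_{L,M}-c$ restricted to $\Omega_J$, which is zero.

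\emph{Step 3: combining over $\Cc$.} From Step~2, each $F_Le_L+F_Me_M$ is constant for $(e_L,e_M)\in\Cc$, so each $F_L$ and each $F_M$ is constant (comparing components in the basis). Since $\{\Cc_1,\Cc_2\}$ is a partition of $\mc{P}(n)$, every $F_K$ appears in some pair. Hence $F$ is constant, and therefore $f$ is constant. Equivalently, $F^{\Cc}$ is constant, and Corollary~\ref{const2} applies.

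The main point to verify carefully is Step~1: the identification of $\psi$ with an honest holomorphic function, together with the sign bookkeeping. Proposition~\ref{slicepres} already handles both. The rest is routine, using the connectedness of $D$ and the representation formula.
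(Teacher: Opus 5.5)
Your proof is correct and follows essentially the same route as the paper. The steps match: holomorphy of $\psi$ with constant real part on the connected slice $\Omega\cap\C_{e_{L\ds M}}$ gives constancy of $f_{L,M}$ there, and the representation formula transfers this to all of $\Omega$. Your final step, deducing that every $F_K$ is constant via the partition $\{\Cc_1,\Cc_2\}$, is exactly what the paper packages through $f^{\Cc}$ and Corollary~\ref{const2}. The aside in Step~2 about $c$ being ``real-compatible'' is unnecessary and can be dropped, since any constant $c\in A$ is induced by the constant stem function $c$.
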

\begin{proof}
Define $\psi:\Omega_{e_{L\ds M}}\to\C_{e_{L\ds M}}$ as in the statement of Proposition \ref{slicepres}$(\rm{iii})$. Observe that the real part of $\psi$ coincides with the restriction $f_L\sigma(L,L)|_{\Omega_{e_{L\ds M}}}$, which is constant by hypothesis. Thus, the holomorphic function $\psi$ is also constant, because $\Omega_{e_{L\ds M}}$ is connected by hypothesis. In addition, as the algebra $A$ is alternative, we have that $\psi(x)\sigma(L,L)e_L=\phi(x)(\sigma(L,L)e_L^2)=\phi(x)=f_{L,M}(x)$, so $f_{L,M}$ is constant on $\Omega_{e_{L\ds M}}$. By the representation formula for slice functions, we deduce that $f_{L,M}$ is constant on the whole $\Omega$. As the latter assertion is true for all $L,M\in\mc{P}(n)$ with $(e_L,e_M)\in\Cc$, it follows that $f^\Cc=\sum_{(e_L,e_M)\in\Cc}f_{L,M}$ is also constant on $\Omega$. By Corollary \ref{const2}, we conclude that $f$ is constant, as required.
\end{proof}

By the previous proposition we deduce straightforwardly the following:

\begin{cor}[Cartan reconstruction]\label{Crecon}
Suppose that $\Omega$ is a circular slice domain. Let $\Cc$ be a SCP of $A$ and let $f,g:\Omega\to A$ be slice regular functions. If $f_L=g_L$ on $\Omega\cap \C_{e_{L\ds M}}$ for each $L,M\in\mc{P}(n)$ such that $(e_L,e_M)\in\Cc$, then $f$ and $g$ coincide on $\Omega$ up to an additive constant. 
\end{cor}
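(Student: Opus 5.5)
The natural plan is to apply Proposition \ref{constslice} to the difference $h:=f-g$. The class of slice regular functions on $\Omega$ is a real vector space, because the map $\I$ is real linear and holomorphic stem functions form a real vector space. Hence $h$ is slice regular on $\Omega$, induced by the holomorphic stem function $F-G$, where $f=\I(F)$ and $g=\I(G)$.

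Next I would check that the real components behave linearly. Since $\{e_K\}_{K\in\mc{P}(n)}$ is a real basis of $A$ and the decomposition \eqref{equa} is unique, we get $h_K=f_K-g_K$ on $\Omega$ for every $K\in\mc{P}(n)$. The hypothesis says that $f_L=g_L$ on $\Omega\cap\C_{e_{L\ds M}}$ for every $(e_L,e_M)\in\Cc$. Therefore $h_L\equiv 0$, and in particular $h_L$ is constant, on $\Omega\cap\C_{e_{L\ds M}}$ for each such pair.

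Now every hypothesis of Proposition \ref{constslice} holds for $h$: $\Omega$ is a circular slice domain, $h$ is slice regular, $\Cc$ is a SCP of $A$, and the prescribed real components of $h$ are constant on the relevant slices. So $h$ is constant on $\Omega$, say $h\equiv c\in A$, which gives $f=g+c$. This is exactly the claim that $f$ and $g$ coincide on $\Omega$ up to an additive constant.

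I expect no real obstacle, since all the analytic content is already in Proposition \ref{constslice}. The only points needing a line of justification are the linearity of $\I$ (so that $h$ is slice regular) and the uniqueness of real components (so that $h_L=f_L-g_L$). Both follow directly from the definitions recalled in \S\ref{S2s} and \S\ref{sliceCPS}.
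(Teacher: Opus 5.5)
Your proposal is correct and is exactly the paper's proof, which applies Proposition~\ref{constslice} to $f-g$. Your extra justifications, that $f-g$ is slice regular by linearity of $\I$ and that $(f-g)_L=f_L-g_L$ by uniqueness of real components, are the routine details the paper leaves implicit.
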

\begin{proof}
Apply Proposition \ref{constslice} to $f-g$.
\end{proof}

\begin{remark}
Thanks to the proof of Proposition \ref{constslice}, we can improve the statement of Corollary \ref{Crecon} asserting that $f-g=\sum_{M\in\Cc_2} c_Me_M$ on $\Omega$
for some constants $c_M\in\R$. $\sqbullet$
\end{remark}

\subsection{Maximum module principle}

In this section we provide a version of the maximum mo\-dule principle for slice regular functions on Cartan $*$-algebras (compare it with \cite[Thm.7.13]{gsst}).

\textit{Let $\Omega=\Omega_D\subset \Qq_A$ be an open circular set, let $f:\Omega\to A$ be a slice function and let $\{f_K\}_{K\in\mc{P}(n)}$ be its real components, see \eqref{equa}.} We start with the following:

\begin{lem}\label{harmonic}
Let $f:\Omega\to A$ be a slice regular function. Then, for each $J\in\sph_A$ and $K\in\mc{P}(n)$, the function $D\to\R$, $(\alpha,\beta)\mapsto f_K(\alpha+\beta J)$ is harmonic.
\end{lem}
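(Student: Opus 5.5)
The plan is to reduce the claim to the harmonicity of the real and imaginary parts of holomorphic stem components. Fix $J\in\sph_A$. By the definition of slice function, for $z=\alpha+i\beta\in D$ we have $f(\alpha+\beta J)=\widetilde{\phi}_J(F(z))=F_1(z)+JF_2(z)$. Writing $F=\sum_{H\in\mc{P}(n)}F_He_H$ with complex components $F_H=F_{1,H}+\iota F_{2,H}$, this gives
$$
\textstyle f(\alpha+\beta J)=\sum_{H\in\mc{P}(n)}\big(F_{1,H}(z)e_H+F_{2,H}(z)\,Je_H\big).
$$
Since $f$ is slice regular, each $F_H:D\to\C$ is holomorphic, as observed at the beginning of \S\ref{sliceCPS} (see \cite[Rmk.3(2)]{gp}). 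Hence the real functions $F_{1,H}$ and $F_{2,H}$, its real and imaginary parts, are harmonic on $D$.

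Next, expand the fixed vector $J$ in the basis: $J=\sum_{W}j_We_W$ with constants $j_W\in\R$. Then $Je_H=\sum_W j_W\sigma(W,H)e_{W\ds H}$ is a constant vector of $A$. Taking the $e_K$-coordinate of the identity above yields
$$
\textstyle f_K(\alpha+\beta J)=F_{1,K}(z)+\sum_{H\in\mc{P}(n)}c_{H,K}F_{2,H}(z),
$$
where the coefficients $c_{H,K}\in\R$ are constants depending only on $J$, $H$ and $K$. Concretely, $c_{H,K}=j_{K\ds H}\sigma(K\ds H,H)$.

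The function $(\alpha,\beta)\mapsto f_K(\alpha+\beta J)$ is therefore a finite real linear combination of harmonic functions, so it is harmonic, which is the claim. Note that $J$ is an arbitrary imaginary unit, not necessarily a basis element, so the argument must not rely on $J=e_W$; the expansion of $J$ handles this.

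There is no serious obstacle. The only points needing care are that the coefficients of $J$ are constant in $(\alpha,\beta)$, and that $f$ is real analytic (indeed $\Cont^\infty$), so that the Laplacian makes sense. Both follow from the holomorphy of the components $F_H$.
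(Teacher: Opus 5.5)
Your proof is correct, but it takes a different route from the paper's.

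The paper never unpacks the stem function. It starts from Cullen regularity (Remark~\ref{Cullen}), $\big(\frac{\partial}{\partial\alpha}+J\frac{\partial}{\partial\beta}\big)f(\alpha+\beta J)\equiv0$. It then applies the conjugate operator $\frac{\partial}{\partial\alpha}-J\frac{\partial}{\partial\beta}$ to get $\Delta f(\alpha+\beta J)\equiv0$; this step uses $J(Jv)=J^2v=-v$, i.e.\ alternativity. Only at the end does it split the $A$-valued identity into real components along the basis $\{e_L\}$.

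You instead work with the complex components $F_H=F_{1,H}+\iota F_{2,H}$ of the stem function. Their holomorphy makes $F_{1,H}$ and $F_{2,H}$ harmonic. Left multiplication by the fixed $J$ is a constant linear map, so
$f_K(\alpha+\beta J)=F_{1,K}(z)+\sum_{H}j_{K\ds H}\,\sigma(K\ds H,H)\,F_{2,H}(z)$
is a constant-coefficient combination of harmonic functions. Your coefficient is right, since $e_{W\ds H}=e_K$ exactly when $W=K\ds H$.

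What each approach buys:
\begin{itemize}
\item Your version needs neither Cullen regularity nor alternativity, only bilinearity of the product, and it works for any real basis.
\item It makes explicit the smoothness needed to apply the Laplacian.
\item It gives an explicit formula for $f_K$ on each slice, in the spirit of the computation in the proof of Proposition~\ref{slicepres}.
\item The paper's version is a two-line consequence of the Cullen regularity equation already available, and it keeps the computation at the level of $A$-valued functions until the last step.
\end{itemize}
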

\begin{proof}
As $f$ is slice regular, by Remark \ref{Cullen}, we have $(\frac{\partial}{\partial\alpha}+J\frac{\partial}{\partial\beta})f(\alpha+\beta J)\equiv0$ and $\Delta f(\alpha+\beta J)=(\frac{\partial^2}{\partial\alpha^2}+\frac{\partial^2}{\partial\alpha^2})f(\alpha+\beta J)=(\frac{\partial}{\partial\alpha}-J\frac{\partial}{\partial\beta})(\frac{\partial}{\partial\alpha}+J\frac{\partial}{\partial\beta})f(\alpha+\beta J)\equiv0$ on $D$. Thus,
$$\textstyle
\sum_{L\in\mc{P}(n)} \Delta f_L(\alpha+\beta J)e_L=\Delta \big(\sum_{L\in\mc{P}(n)}f_L(\alpha+\beta J)e_L\big)=\Delta f(\alpha+\beta J)\equiv0
$$
on $D$. As $\{e_L\}_{L\in\mc{P}(n)}$ is a basis of $A$ as a real vector space, we conclude that $\Delta f_L(\alpha+\beta J)\equiv0$ on $D$ for each $L\in\mc{P}(n)$, as required.
\end{proof}

We are ready to show the following:

\begin{thm}[Maximum module principle]\label{Cmaxmodule}
Let $\Omega$ be a circular slice domain, let $f:\Omega\to A$ be a slice regular function and let $\Cc$ be a SCP. Suppose that, for each $L,M\in\mc{P}(n)$ with $(e_L,e_M)\in\Cc$, there exists a point $x_{L,M}\in\Omega\cap\C_{e_{L\ds M}}$ such that the module $|f_L|$ of $f_L$ has a local maximum at~$x_{L,M}$. Then, $f$ is constant on $\Omega$.
\end{thm}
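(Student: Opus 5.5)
The plan is to reduce the statement to Proposition \ref{constslice}: it suffices to show that, for every $(e_L,e_M)\in\Cc$, the real component $f_L$ is constant on $\Omega_{e_{L\ds M}}:=\Omega\cap\C_{e_{L\ds M}}$. Once this is established, Proposition \ref{constslice} immediately gives that $f$ is constant on $\Omega$.

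Fix $(e_L,e_M)\in\Cc$ and write $J:=e_{L\ds M}\in\sph_A$. The slice $\Omega_J$ is identified via $\phi_J$ with $D$, which is open and connected (since $\Omega$ is a circular slice domain). By Lemma \ref{harmonic}, the function $u(\alpha,\beta):=f_L(\alpha+\beta J)$ is harmonic on $D$. The function $|u|$ has a local maximum at $z_0:=\phi_J^{-1}(x_{L,M})$.

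The step needing a little care is passing from a local maximum of $|u|$ to one of $u$ or of $-u$. If $u(z_0)\geq 0$, then near $z_0$ we have $u\le |u|\le |u(z_0)|=u(z_0)$, so $u$ has a local maximum at $z_0$. If $u(z_0)<0$, then the same argument shows $-u$ has a local maximum there. In either case the strong maximum principle for harmonic functions gives that $u$ is constant on a neighbourhood of $z_0$. Since harmonic functions are real analytic and $D$ is connected, $u$ is then constant on all of $D$.

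Alternatively, one can use Proposition \ref{slicepres}(iii). Up to the sign $\sigma(L,L)$, $u$ is the real part of the holomorphic function $\psi$ on $\Omega_J$. If $\operatorname{Re}\psi$ (or its negative) has an interior local maximum, then the open mapping theorem forces $\psi$ to be constant on the connected set $\Omega_J$. Either route shows that $f_L$ is constant on $\Omega_{e_{L\ds M}}$ for every $(e_L,e_M)\in\Cc$, and Proposition \ref{constslice} concludes the proof.
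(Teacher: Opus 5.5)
Your proof is correct and follows the paper's argument. The paper also reduces to Proposition \ref{constslice}, uses Lemma \ref{harmonic} to get harmonicity of $f_L$ on the slice $\C_{e_{L\ds M}}$, passes from a local maximum of $|f_L|$ to one of $\pm f_L$, and applies the maximum principle on the connected set $D$. Your explicit justification of the $|u|\to\pm u$ step and the alternative route via $\psi$ from Proposition \ref{slicepres}(iii) are valid but not needed.
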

\begin{proof}
Let $L,M\in\mc{P}(n)$ be such that $(e_L,e_M)\in\Cc$, and let  $z_{L,M}:=(\phi_{e_{L\ds M}})^{-1}(x_{L,M})\in D$. By Lemma \ref{harmonic}, the function $\xi:D\to\R$, defined by $\xi(\alpha,\beta):=f_L(\alpha+\beta e_{L\ds M})$, is harmonic on~$D$. Observe that $D$ is connected by hypothesis, and either $\xi$ or $-\xi$ has a local maximum at $z_{L,M}\in D$. We deduce that $\xi$ is constant on $D$ or, equivalently, $f_L$ is constant on $\Omega\cap\C_{e_{L\ds M}}$. By Proposition \ref{constslice}, we conclude that $f$ is constant, as required.
\end{proof}

The following example shows that there exist non-constant slice regular functions such that the real part $f_{\varnothing}$ of $f$ has a maximum in $\Omega$. Compare it with \cite[Thm.7.13]{gsst}.

\begin{example}
On the Clifford algebra $\R_3$ consider the slice polynomial function $f(x):=x e_{123}$. By Corollary \ref{Clifford3}, each imaginary unit $J\in\sph_{\R_3}$ is of the form $J=\sum_{e_K^2=-1}x_Ke_K$
for suitable $x_K\in\R$. Thus, we have
$$\textstyle
f(\alpha+\beta J)=\alpha e_{123}+\beta\sum_{e_K^2=-1} \sigma(K,\{1,2,3\}) x_Ke_{K\ds\{1,2,3\}}\in\R e_{123}\oplus\bigoplus_{e_K^2=-1}\R e_{K\ds\{1,2,3\}}
$$
for each $\alpha,\beta\in \R$ and each $J=\sum_{e_K^2=-1}x_Ke_K\in\sph_{\R_3}$. Using again Corollary \ref{Clifford3}, we have that $K\ds\{1,2,3\}\neq \varnothing$ for each $K\in\mc{P}(n)$ such that $e_K^2=-1$. We deduce that the real part $f_{\varnothing}$ of $f$ is constantly equal to zero. In particular, each point of $\Qq_{\R_3}$ is a maximum for $f_{\varnothing}$. $\sqbullet$
\end{example}

\subsection{Schwarz reconstruction of a slice regular function}

\emph{Let $\Omega=\Omega_D\subset \Qq_A$ be an open circular set, let $F=\sum_{K\in\mc{P}(n)}F_Ke_K$ be a stem function and let $f=\I(F):\Omega\to A$ be the corresponding slice function.}

In this section we present a more explicit way to `reconstruct' a slice regular function from some of its real components, without using its Cartan transform. In order to do that, we need to strengthen Definition \ref{SCP}. For each $\Cc\subset\{(e_L,e_M)\}_{L,M\in\mc{P}(n)}$ let $\Cc_1$ and $\Cc_2$ be the sets introduced in \eqref{partition}.

\begin{defn}[Schwarz system of Cartan pairs]\label{SSCP}
We say that $\Cc\subset \{(e_L,e_M)\}_{L,M\in\mc{P}(n)}$ is a \textit{Schwarz system of Cartan pairs for $A$}, for short \textit{SSCP of $A$}, if it satisfies the following properties:
\begin{itemize}
\item[{\rm(i)}] $\Cc$ is a SCP of $A$,
\item[{\rm(ii)}] For each $M\in \Cc_2$, there exists a unique $L\in \Cc_1$ such that $(e_L,e_M)\in \Cc$,
\item[{\rm(iii)}] For each $L\in \Cc_1$, there exist an odd number of $M\in \Cc_2$ such that $(e_L,e_M)\in \Cc$. $\sqbullet$
\end{itemize}
\end{defn}

Clearly, not all the SCP are SSCP. For instance, $\Cc:=\{(1,e_K) : |K|\in\{1,2\}\}\cup\{(e_{123},e_{23})\}$ is a SCP for the Clifford algebra $\R_3$ but not a SSCP. Nevertheless, it is easy to check that any SCP in Remark \ref{rem} is also a SSCP. In particular, we deduce the following:

\begin{prop}\label{esistenza2}
There exists $\Cc\subset\{(e_L,e_M)\}_{L.M\in \mc{P}(n)}$ which is a SSCP.
\end{prop}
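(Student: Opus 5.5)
We take the system constructed in Remark \ref{rem} and check that it satisfies the three conditions of Definition \ref{SSCP}. By Proposition \ref{sqrt-1} there is $K\in\mc{P}(n)$ with $e_K\in\sph_A$. Note that $K\neq\varnothing$, since $e_\varnothing=1\notin\sph_A$. We fix $m\in K$ and set
$$
\Cc:=\{(e_L,e_{L\ds K}) : L\in\mc{P}(n),\ m\notin L\}.
$$

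Condition (i) of Definition \ref{SSCP} is exactly Proposition \ref{esistenza}, together with Remark \ref{rem}. In particular $\Cc_1=\{L: m\notin L\}$ and $\Cc_2=\{M: m\in M\}$, and every pair in $\Cc$ is a Cartan pair, since $e_Le_{L\ds K}=\sigma(L,L\ds K)e_K\in\{\pm e_K\}\subset\sph_A$.

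For condition (ii), we fix $M\in\Cc_2$. Suppose $(e_L,e_M)\in\Cc$. Then $M=L\ds K$, so $L=M\ds K$, because $(\mc{P}(n),\ds)$ is a group in which every element is its own inverse. Conversely, $L:=M\ds K$ does not contain $m$, since $m$ lies in both $M$ and $K$. Hence $(e_{M\ds K},e_M)\in\Cc$, and $M\ds K$ is the unique $L\in\Cc_1$ with $(e_L,e_M)\in\Cc$.

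For condition (iii), we fix $L\in\Cc_1$. By the definition of $\Cc$, the pairs with first entry $e_L$ are exactly $(e_L,e_{L\ds K})$. So $\Cc_{2,L}=\{L\ds K\}$ has exactly one element, and one is odd.

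Every step is a direct verification. The only place needing care is that $\ds$ is an involutive group operation, which gives uniqueness in (ii) and exactly one partner in (iii).
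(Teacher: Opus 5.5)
Your proposal is correct and follows essentially the paper's own approach: the paper simply observes that the SCP of Remark~\ref{rem} is also an SSCP. You supply the verification the paper leaves implicit: condition (ii) holds because $L=M\ds K$ is the unique partner of each $M\in\Cc_2$, and condition (iii) holds because $|\Cc_{2,L}|=1$ for each $L\in\Cc_1$.
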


Let $\Cc$ be a SSCP. For each $L\in\Cc_1$, let $\Cc_{2,L}$ be the set introduced in \eqref{partition2}. As $\Cc$ is a SSCP, the set $\Cc_{2,L}$ has odd cardinality for each $L\in \Cc_1$. For each $L\in\Cc_1$, let $\Aa_L$ be a subset of $\Cc_{2,L}$ of cardinality $\tfrac{|\Cc_{2,L}|+1}{2}$ (which is a positive integer, because $|\Cc_{2,L}|$ is odd). Then, $\Cc_{L,2}\setminus\Aa_L$ has cardinality $\tfrac{|\Cc_{2,L}|-1}{2}$. As for each $M\in\Cc_2$ there exists a unique $L\in\Cc_1$ such that $(e_L,e_M)\in\Cc$, for each $M\in \Cc_2$ there exists a unique $L\in\Cc_1$ such that $M\in \Cc_{2,L}$. Thus, the family $\{\Cc_{2,L}\}_{L\in \Cc_1}$ is a partition of $\Cc_2$. In particular, as $\{\Cc_1,\Cc_2\}$ is a partition of $\mc{P}(n)$, we deduce that also the family $\Cc_1\cup \{\Cc_{2,L}\}_{L\in \Cc_1}$ is a partition of $\mc{P}(n)$. For each $(e_L,e_M)\in\Cc$, let $F_{L,M}:=F_Le_L+F_Me_M$ be the stem function introduced in \eqref{eqstem} and define 
$$
\widetilde{F}_{L,M}:=F_Le_L-F_Me_M.
$$
We deduce
\begin{align}
\begin{split}\label{sw}
F&\textstyle=\sum_{K\in\mc{P}(n)}F_Ke_K=\sum_{L\in\Cc_{1}}\Big(F_Le_L+\sum_{M\in\Cc_{2,L}}F_Me_M\Big)\\
&\textstyle=\sum_{L\in\Cc_{1}}\Big(\sum_{M\in\Aa_L}\big(F_Le_L+F_Me_M\big)-\sum_{M\in\Cc_{2,L}\setminus\Aa_L}\big(F_Le_L-F_Me_M\big)\Big)\\
&\textstyle=\sum_{L\in\Cc_{1}}\Big(\sum_{M\in\Aa_L}F_{L,M}-\sum_{M\in\Cc_{2,L}\setminus\Aa_L}\widetilde{F}_{L,M}\Big).
\end{split}
\end{align}

For each $(e_L,e_M)\in\Cc$, let  $f_{L,M}=\mc{I}(F_{L,M})$ and define the function $\widetilde{f}_{L,M}:=\mc{I}(\widetilde{F}_{L,M})$. Observe that, if $f$ is slice regular, then both $f_{L,M}$ and $\widetilde{f}_{L,M}$ are slice regular for each $(e_L,e_M)\in\Cc$. By \cite[Rmk.7]{gp}, it holds 
$$
f_{L,M}=\mc{I}(F_L)e_L+\mc{I}(F_M)e_M \quad \text{and} \quad \widetilde{f}_{L,M}=\mc{I}(F_L)e_L-\mc{I}(F_M)e_M
$$
for each $(e_L,e_M)\in\Cc$. By \eqref{sw}, we deduce 
\begin{align}
\begin{split}\label{sw2}
f&=\mc{I}(F)=\textstyle\sum_{L\in\Cc_{1}}\Big(\sum_{M\in\Aa_L}\mc{I}(F_{L,M})-\sum_{M\in\Cc_{2,L}\setminus\Aa_L}\mc{I}(\widetilde{F}_{L,M})\Big)\\
&=\textstyle\sum_{L\in\Cc_{1}}\Big(\sum_{M\in\Aa_L}f_{L,M}-\sum_{M\in\Cc_{2,L}\setminus\Aa_L}\widetilde{f}_{L,M}\Big).
\end{split}
\end{align}

The functions $\widetilde{f}_{L,M}$ satisfy the following, which is the analogous of Proposition \ref{slicepres}. We omit the proof, as it is exactly the same as the one of Proposition \ref{slicepres}.

\begin{prop}\label{slicepres2}
Let $L,M\in\mc{P}(n)$ be such that $(e_L,e_M)\in\Cc$, and let $\Omega_{e_{L\ds M}}:=\Omega\cap\C_{e_{L\ds M}}$. We have:
\begin{itemize}
\item[$(\rm{i})$] $\widetilde{f}_{L,M}=f_Le_L-f_Me_M$ on $\Omega_{e_{L\ds M}}$.
\item[$(\rm{ii})$] $\widetilde{f}_{L,M}(x)e_L=f_L(x)\sigma(L,L)-f_M(x)\sigma(M,L)e_{L\ds M}\in\C_{e_{L\ds M}}$ for all $x\in\Omega_{e_{L\ds M}}$.
\item[$(\rm{iii})$] Define the function $\widetilde{\psi}:\Omega_{e_{L\ds M}}\to\C_{e_{L\ds M}}$ by
$$
\widetilde{\psi}(x):=\widetilde{f}_{L,M}(x)e_L=f_L(x)\sigma(L,L)-f_M(x)\sigma(M,L)e_{L\ds M},
$$
and write $x\in\Omega_{e_{L\ds M}}$ as $x=\alpha+\beta e_{L\ds M}$ with $\alpha,\beta\in\R$. If $f$ is slice regular, then $\widetilde{\psi}$ is holomorphic on the complex plane $\C_{e_{L\ds M}}$, that is, $\frac{\partial\widetilde{\psi}}{\partial\alpha}+e_{L\ds M}\frac{\partial\widetilde{\psi}}{\partial\beta}\equiv0$ on $\Omega_{e_{L\ds M}}$.
\end{itemize}
\end{prop}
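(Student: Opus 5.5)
The plan is to repeat the proof of Proposition \ref{slicepres} with $F_M$ replaced by $-F_M$. The only new point is to check that $\widetilde F_{L,M}=F_Le_L-F_Me_M$ has the same structure as $F_{L,M}$.

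\textbf{(i).} Fix $x=\alpha+\beta e_{L\ds M}\in\Omega_{e_{L\ds M}}$ and set $z:=\alpha+i\beta$. I will expand $f(x)=\I(F)(z)$ using the definition of $\I$ with $J=e_{L\ds M}$:
\begin{align*}
f(x)=\sum_{K\in\mc{P}(n)}\Big(F_{1,K}(z)e_K+F_{2,K}(z)\,\sigma(L\ds M,K)\,e_{(L\ds M)\ds K}\Big).
\end{align*}
The map $K\mapsto (L\ds M)\ds K$ swaps $L$ and $M$ and preserves $\mc{P}(n)\setminus\{L,M\}$. Hence
\begin{align*}
f_L(x)&=F_{1,L}(z)+\sigma(L\ds M,M)F_{2,M}(z),\\
f_M(x)&=F_{1,M}(z)+\sigma(L\ds M,L)F_{2,L}(z).
\end{align*}
Computing $\widetilde f_{L,M}(x)$ in the same way gives
\begin{align*}
\widetilde f_{L,M}(x)=\big(F_{1,L}+\sigma(L\ds M,M)F_{2,M}\big)(z)\,e_L-\big(F_{1,M}+\sigma(L\ds M,L)F_{2,L}\big)(z)\,e_M.
\end{align*}
This equals $f_L(x)e_L-f_M(x)e_M$, which proves (i).

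\textbf{(ii).} Right-multiply the identity in (i) by $e_L$. We have $e_Le_L=\sigma(L,L)$ and $e_Me_L=\sigma(M,L)e_{L\ds M}$, so
\begin{align*}
\widetilde f_{L,M}(x)e_L=f_L(x)\sigma(L,L)-f_M(x)\sigma(M,L)e_{L\ds M}.
\end{align*}
The right-hand side lies in $\C_{e_{L\ds M}}$, which is (ii).

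\textbf{(iii).} If $f$ is slice regular, each $F_K$ is holomorphic, so $\widetilde F_{L,M}$ is a holomorphic stem function and $\widetilde f_{L,M}$ is slice regular. Remark \ref{Cullen} then gives, on $\Omega_{e_{L\ds M}}$,
\begin{align*}
\Big(\tfrac{\partial}{\partial\alpha}+e_{L\ds M}\tfrac{\partial}{\partial\beta}\Big)\phi=0,\qquad \phi:=\widetilde f_{L,M}=f_Le_L-f_Me_M .
\end{align*}
Set $\widetilde\psi:=\phi e_L$. Then
\begin{align*}
\tfrac{\partial\widetilde\psi}{\partial\alpha}+e_{L\ds M}\tfrac{\partial\widetilde\psi}{\partial\beta}=\Big(\tfrac{\partial\phi}{\partial\alpha}+e_{L\ds M}\tfrac{\partial\phi}{\partial\beta}\Big)e_L=0 .
\end{align*}
The step that needs justification is the reassociation $e_{L\ds M}\big(\tfrac{\partial\phi}{\partial\beta}e_L\big)=\big(e_{L\ds M}\tfrac{\partial\phi}{\partial\beta}\big)e_L$. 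For this, write $e_{L\ds M}=\sigma(L,M)e_Le_M$. Every factor in this expression then lies in the subalgebra generated by $e_L$ and $e_M$, which is associative by Artin's theorem. So the reassociation holds exactly as in the proof of Proposition \ref{slicepres}, and the sign change on $F_M$ plays no role in the argument.
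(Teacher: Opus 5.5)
Your step ``Computing $\widetilde f_{L,M}(x)$ in the same way gives\dots'' contains a sign error. It sits exactly where the sign change on $F_M$ matters. Left multiplication by $e_{L\ds M}$ swaps the $e_L$- and $e_M$-directions, so the minus sign on $F_{2,M}$ is carried into the $e_L$-coefficient. The $A$-valued components of $\widetilde F_{L,M}$ are $F_{1,L}e_L-F_{1,M}e_M$ and $F_{2,L}e_L-F_{2,M}e_M$, so one gets
\[
\widetilde f_{L,M}(x)=\big(F_{1,L}-\sigma(L\ds M,M)F_{2,M}\big)(z)\,e_L-\big(F_{1,M}-\sigma(L\ds M,L)F_{2,L}\big)(z)\,e_M .
\]
Your formulas for $f_L(x)$ and $f_M(x)$ are correct, but they have $+$ in front of the cross terms. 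Using $F_{1,K}(\bar z)=F_{1,K}(z)$ and $F_{2,K}(\bar z)=-F_{2,K}(z)$, what is actually true is $\widetilde f_{L,M}(x)=f_L(\bar x)e_L-f_M(\bar x)e_M$ with $\bar x:=\alpha-\beta e_{L\ds M}$; that is not (i).

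Here is a concrete counterexample to (i). In $\HH$ take $(e_L,e_M)=(1,e_2)=(1,i)$, i.e.\ $L=\varnothing$, $M=\{2\}$; this pair lies in the SSCP $\{(1,j),(1,i),(1,ji)\}$. Take $f(x)=x$, so $F_\varnothing(z)=z$ and $F_K\equiv0$ for $K\neq\varnothing$. Then $\widetilde F_{L,M}=F_\varnothing$, hence $\widetilde f_{L,M}(\alpha+\beta i)=\alpha+\beta i$. On the other hand, $f_L(x)e_L-f_M(x)e_M=\alpha-\beta i$.

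Consequently the proposition cannot be proved as printed. Part (i) and the identity in (ii) fail. The expression $f_L\sigma(L,L)-f_M\sigma(M,L)e_{L\ds M}$ in (iii) is the conjugate, inside $\C_{e_{L\ds M}}$, of the holomorphic function $\psi$ of Proposition \ref{slicepres}. It is therefore anti-holomorphic, and not holomorphic unless constant; in the example it is $\alpha-\beta i$. The paper's one-line justification (``exactly the same as the proof of Proposition \ref{slicepres}'') passes over the same sign.

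What your argument in (iii) does establish correctly is that $x\mapsto\widetilde f_{L,M}(x)e_L$ is holomorphic. It only uses that $\widetilde f_{L,M}$ is slice regular and takes values in $\R e_L\oplus\R e_M$ on the slice. The corrected formula still gives this, so the Artin reassociation goes through. However, that function equals $f_L(\bar x)\sigma(L,L)-f_M(\bar x)\sigma(M,L)e_{L\ds M}$, not the expression in the statement. Only the membership $\widetilde f_{L,M}(x)e_L\in\C_{e_{L\ds M}}$ in (ii) survives unchanged.
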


For each $L,M\in\mc{P}(n)$ such that $(e_L,e_M)\in\Cc$, define
\begin{equation}\label{gLM}
g_{L,M}:=
\left\{
 \begin{array}{rl}
f_{L,M} &\text{ if } M\in \Aa_{L},\\
-\widetilde{f}_{L,M} &\text{ if } M\not\in \Aa_{L}.
\end{array}
\right.
\end{equation}
Thus, \eqref{sw2} becomes
$$\textstyle
f=\sum_{L\in\Cc_{1}}\Big(\sum_{M\in\Aa_L}g_{L,M}+\sum_{M\in\Cc_{2,L}\setminus\Aa_L}g_{L,M}\Big)=\sum_{L\in\Cc_{1}}\sum_{M\in\Cc_{2,L}}g_{L,M}=\sum_{(e_L,e_M)\in\Cc} g_{L,M}.
$$
Putting all the previous observations together, we derive the following:

\begin{prop}[Schwarz reconstruction]\label{Srecon}
Let $f:\Omega\to A$ be a slice regular function and $\Cc$ a SSCP. For each $(e_L,e_M)\in\Cc$ there exists a slice regular function $g_{L,M}:\Omega\to A$ such that
\begin{itemize}
\item[{\rm(i)}] either $g_{L,M}=f_{L,M}$ or $g_{L,M}=-\widetilde{f}_{L,M}$,
\item[{\rm(ii)}] $f=\sum_{(e_L,e_M)\in\Cc} g_{L,M}$.
\end{itemize}
\end{prop}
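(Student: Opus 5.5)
The plan is to assemble the proposition from the construction carried out just before it. We first fix the SSCP $\Cc$ and, for each $L\in\Cc_1$, a subset $\Aa_L\subset\Cc_{2,L}$ of cardinality $\tfrac{|\Cc_{2,L}|+1}{2}$. Such a subset exists because Definition \ref{SSCP}(iii) makes $|\Cc_{2,L}|$ odd, hence $\geq 1$. We then define $g_{L,M}$ by \eqref{gLM}. With this definition, property (i) holds by construction, so the work lies in checking slice regularity and property (ii).

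\emph{Slice regularity.} We write $F=\sum_K F_Ke_K$. By \cite[Rmk.3(2)]{gp}, recalled at the beginning of \S\ref{sliceCPS}, $F$ is holomorphic if and only if each complex component $F_K$ is holomorphic. It follows that $F_{L,M}=F_Le_L+F_Me_M$ and $\widetilde F_{L,M}=F_Le_L-F_Me_M$ are holomorphic stem functions. These are stem functions because each $F_K$ is complex intrinsic and $e_L,e_M$ are real constants of $A$. Therefore $f_{L,M}$, $\widetilde f_{L,M}$, and also $-\widetilde f_{L,M}$, are slice regular.

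\emph{Decomposition.} The key combinatorial point is that $\{\Cc_{2,L}\}_{L\in\Cc_1}$ is a partition of $\Cc_2$, which follows from Definition \ref{SSCP}(ii). Together with Definition \ref{SCP}(i), this shows that $\Cc_1\cup\{\Cc_{2,L}\}_{L\in\Cc_1}$ partitions $\mc{P}(n)$, so that
$$\textstyle F=\sum_{L\in\Cc_1}\big(F_Le_L+\sum_{M\in\Cc_{2,L}}F_Me_M\big).$$
For each fixed $L$, the term $F_Le_L$ appears with coefficient $|\Aa_L|-(|\Cc_{2,L}|-|\Aa_L|)=1$ in
$$\textstyle \sum_{M\in\Aa_L}F_{L,M}-\sum_{M\in\Cc_{2,L}\setminus\Aa_L}\widetilde F_{L,M},$$
while each $F_Me_M$ with $M\in\Cc_{2,L}$ appears with coefficient $1$. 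This gives \eqref{sw}.

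Applying $\mc I$, which is real linear and indeed an isomorphism onto slice functions, yields \eqref{sw2}. Rewriting \eqref{sw2} with \eqref{gLM} gives $f=\sum_{L\in\Cc_1}\sum_{M\in\Cc_{2,L}}g_{L,M}$. The sum over pairs $(L,M)$ with $L\in\Cc_1$ and $M\in\Cc_{2,L}$ is exactly the sum over $(e_L,e_M)\in\Cc$, which proves (ii).

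The main obstacle is the bookkeeping in the coefficient count. Two points need care: the oddness condition is what makes the coefficient of $F_Le_L$ equal to exactly $1$, and uniqueness in (ii) of Definition \ref{SSCP} is what prevents any $F_Me_M$ from being counted twice. Everything else follows directly from linearity of $\mc I$ and from the componentwise characterization of holomorphic stem functions.
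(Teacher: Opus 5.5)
Your proposal is correct and follows essentially the same route as the paper. You choose $\Aa_L\subset\Cc_{2,L}$ of size $\tfrac{|\Cc_{2,L}|+1}{2}$, decompose $F$ via the partition $\Cc_1\cup\{\Cc_{2,L}\}_{L\in\Cc_1}$ as in \eqref{sw}, apply $\mc{I}$ to obtain \eqref{sw2}, and define $g_{L,M}$ by \eqref{gLM}; your explicit coefficient count and the componentwise-holomorphy argument for slice regularity simply spell out steps the paper states briefly.
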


For each $(e_L,e_M)\in \Cc$, define the slice function 
\begin{equation}\label{vecchie}
f_{(e_L,e_M)}:=\mc{I}(F_L+F_M\sigma(L,L)\sigma(M,L)e_{L\ds M}).
\end{equation}
By Artin's theorem and \cite[Rmk.7]{gp}, we deduce
\begin{align}
\begin{split}\label{vecchie2}
f_{(e_L,e_M)}e_L&=\mc{I}(F_L+F_M\sigma(L,L)\sigma(M,L)e_{L\ds M})e_L\\
&=\mc{I}(F_L)e_L+\mc{I}(F_M)\sigma(L,L)\sigma(M,L)e_{L\ds M}e_L\\
&=\mc{I}(F_L)e_L+\mc{I}(F_M)e_{M}
\end{split}
\end{align}
on the slice $\Omega_{e_{L\ds M}}$. We have $e_{L\ds M}^c=-e_{L\ds M}$, because $(e_L,e_M)\in\Cc$. Thus,
$$
f_{(e_L,e_M)}^c=\mc{I}(F_L-F_M\sigma(L,L)\sigma(M,L)e_{L\ds M})=f_{(e_L,e_M)}^ce_L=\mc{I}(F_L)e_L-\mc{I}(F_M)e_{M}
$$
on the slice $\Omega_{e_{L\ds M}}$. We deduce
\begin{equation}\label{uguaSw}
f_{(e_L,e_M)}e_L=f_{L,M}\quad \text{and} \quad f_{(e_L,e_M)}^ce_L=\widetilde{f}_{L,M}
\end{equation}
on the slice $\Omega_{e_{L\ds M}}$ for each $(e_L,e_M)\in \Cc$. 

Recall that, as $\Cc$ is a SSCP, for each $M\in \Cc_2$ there exists unique $L:=L_M\in\Cc_1$ such that $(e_L,e_M)\in\Cc$. Define
\begin{equation}\label{gLM2}
h_M:=
\left\{
 \begin{array}{rl}
f_{(e_L,e_M)} &\text{ if } M\in \Aa_{L_M},\\
-f^c_{(e_L,e_M)} &\text{ if } M\not\in \Aa_{L_M}.
\end{array}
\right.
\end{equation}
Clearly, if $f$ is slice regular, then $h_M$ is slice regular for each $(e_L,e_M)\in\Cc$. Let $g_{L,M}$ be the functions introduced in \eqref{gLM2}. By \eqref{uguaSw}, we deduce
\begin{equation}\label{h=g}
g_{L,M}=h_Me_L
\end{equation}
on the slice $\Omega_{e_{L\ds M}}$ for each $(e_L,e_M)\in \Cc$. 

\begin{remark}\label{SWpres}
Let $(e_L,e_M)\in\Cc$. By \eqref{uguaSw}, we have $f_{(e_L,e_M)}\sigma(L,L)=f_{L,M}e_L$ on the slice $\Omega_{e_{L\ds M}}$. Thus, by Proposition \ref{slicepres}(ii), we deduce that the slice function $f_{(e_L,e_M)}$ preserves the slice $\C_{e_{L\ds M}}$. Thus, also the function $f^c_{(e_L,e_M)}$ preserves the slice $\C_{e_{L\ds M}}$. We deduce: \textit{The slice functions $h_M$ preserve the slice $\C_{e_{L\ds M}}$ for each $(e_L,e_M)\in \Cc$.} $\sqbullet$
\end{remark}

By Proposition \ref{Srecon}, we deduce the following version of the splitting lemma for Cartan $*$-algebras.

\begin{lem}[Splitting lemma for Cartan $*$-algebras]\label{SCPlitting}
Let $K\in\mc{P}(n)$ be such that $e_K\in\sph_A$ and let $m\in K$. If $f$ is slice regular, then for each $M\in\mc{P}(n)$ such that $m\not\in M$ there exists a holomorphic function $\varphi_M:\Omega_{e_K}\to \C_{e_K}$ such that 
$$\textstyle
f(x)=\sum_{M\not\owns m} \varphi_M(x)e_{M\ds K}
$$
for each $x\in \Omega_{e_K}$.
\end{lem}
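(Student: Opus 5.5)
We will use the SCP of Remark \ref{rem}, namely $\Cc:=\{(e_L,e_{L\ds K}) : m\notin L\}$. It is a SSCP: each $M$ with $m\in M$ is paired only with $L=M\ds K$, and each $L$ with $m\notin L$ is paired with exactly one $M$, an odd number. Here $\Cc_1=\{L : m\notin L\}$, $\Cc_{2,L}=\{L\ds K\}$, and $L\ds M=K$ for every pair in $\Cc$. So every slice $\C_{e_{L\ds M}}$ appearing in the constructions of this subsection is the single slice $\C_{e_K}$, and $\Omega_{e_{L\ds M}}=\Omega_{e_K}$.

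Since $|\Cc_{2,L}|=1$, we must take $\Aa_L=\Cc_{2,L}$. Then every $g_{L,M}$ equals $f_{L,M}$, and Proposition \ref{Srecon} gives $f=\sum_{m\notin L} f_{L,L\ds K}$ on $\Omega$. On $\Omega_{e_K}$, Proposition \ref{slicepres}(ii) shows that
\[
\psi_L(x):=f_{L,L\ds K}(x)e_L
\]
takes values in $\C_{e_K}$, and Proposition \ref{slicepres}(iii) shows that $\psi_L$ is holomorphic on the slice. Since $e_L^2=\sigma(L,L)\in\{\pm1\}$, Artin's theorem on the pair $\{f_{L,L\ds K}(x),e_L\}$ gives
\[
f_{L,L\ds K}(x)=\sigma(L,L)\,\psi_L(x)e_L.
\]
Equivalently one can use \eqref{h=g}: $f_{L,M}=h_Me_L$ with $h_M=f_{(e_L,e_M)}$ preserving $\C_{e_K}$ by Remark \ref{SWpres}.

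The statement, however, asks for the basis elements $e_{M\ds K}$ with $m\notin M$, not $e_L$. We will reindex by noting that $L\mapsto L\ds K$ is a bijection from $\{m\notin L\}$ onto $\{m\in M\}$. To reach basis elements indexed by sets not containing $m$, we factor on the other side. By Proposition \ref{slicepres}(i), $f_{L,L\ds K}=f_Le_L+f_{L\ds K}e_{L\ds K}$ on the slice. Setting $M':=L\ds K$ (so $m\in M'$), we can rewrite this as
\[
f_Le_L+f_{M'}e_{M'}=\big(f_{M'}\sigma(M',M')+f_L\sigma(L,M')e_{L\ds M'}\big)\,\sigma(M',M')e_{M'},
\]
again by Artin and alternativity, using $L\ds M'=K$. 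The prefactor lies in $\C_{e_K}$. It is holomorphic because $(e_{M'},e_L)$ is also a Cartan pair (Remark \ref{opposto}), so Proposition \ref{slicepres}(iii) applies with the roles of $L$ and $M'$ swapped.

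This leaves two conventions, and matching the statement's indexing is the step to settle carefully. In the first, index the sum by $M$ with $m\notin M$, put $L:=M$, and use the left factor $\psi_M(x)e_M$. In the second, write $e_{M\ds K}$ with $m\in M\ds K$ and use the right factor just derived. The second matches $e_{M\ds K}$ literally, so we will take
\[
\varphi_M(x):=\sigma(M\ds K,M\ds K)\big(f_{M\ds K}(x)\sigma(M\ds K,M\ds K)+f_M(x)\sigma(M,M\ds K)e_K\big).
\]
This function is holomorphic on $\Omega_{e_K}$ with values in $\C_{e_K}$. Summing over $M\not\ni m$ then gives $f=\sum_{M\not\owns m}\varphi_M e_{M\ds K}$ on $\Omega_{e_K}$.

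The main obstacle is the sign bookkeeping with $\sigma$ in the swapped factorization, together with holomorphy with respect to the right complex structure. Multiplying by the constant $\sigma(M\ds K,M\ds K)\in\{\pm1\}$ preserves holomorphy, and Artin's theorem applies since each product involves only the two elements $e_K$ and $e_{M\ds K}$.
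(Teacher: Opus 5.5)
Your proposal is correct and is essentially the paper's proof. The paper passes to the reversed system $\Cc^{\inv}=\{(e_{M\ds K},e_M): m\notin M\}$ and takes $\varphi_M$ to be the restriction of $h_M=f_{(e_{M\ds K},e_M)}$ to $\Omega_{e_K}$, whereas you keep $\Cc$ and apply Proposition~\ref{slicepres} to the reversed Cartan pair $(e_{M\ds K},e_M)$ by hand. Your $\varphi_M=f_{M\ds K}+\sigma(M\ds K,M\ds K)\sigma(M,M\ds K)f_Me_K$ is literally the same function as the paper's.
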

\begin{proof}
As $e_K\in \sph_A$, by Remark \ref{rem}, the set $\Cc:=\{(e_M,e_{M\ds K}) : m\not\in M\}$ is a SCP. By Remark \ref{opposto}, also the set $\Cc^{\inv}=\{(e_{M\ds K},e_M): m\not\in M\}$  is a SCP. Clearly, $\Cc^{\inv}$ is also a SSCP. Let $\Cc^{\inv}_1$ and $\Cc^{\inv}_2$ be the set introduced in \eqref{partition}. It holds $\Cc_2^{\inv}=\{M\in\mc{P}(n) : m\not\in M\}$. For each $M\in\Cc_2^{\inv}$, let $h_M$ be the slice function introduced in \eqref{gLM2}, which is slice regular, because $f$ is slice regular. By Remark \ref{SWpres}, $h_M$ preserves the slice $\C_{e_{K}}$ for each $M\in\Cc_2^{\inv}$, because $(M\ds K)\ds M=K$. For each $M\in\Cc_2^{\inv}$, let $\varphi_M$ be the restriction of $h_M$ to the slice $\Omega_{e_{K}}$. Then, $\varphi_M:\Omega_{e_K}\to \C_{e_K}$ is a holomorphic function. By Proposition \ref{Srecon} and \eqref{h=g}, we deduce 
\begin{multline*}\textstyle
f(x)=\sum_{(e_L,e_M)\in\Cc^{\inv}} g_M(x)=\sum_{(e_L,e_M)\in\Cc^{\inv}} \varphi_M(x)e_L\\
=\textstyle\sum_{M\in\Cc^{\inv}_2} \varphi_M(x)e_{M\ds K}=\sum_{M\not\owns m} \varphi_M(x)e_{M\ds K}.
\end{multline*}
for each $x\in\Omega_{e_K}$, as required. 
\end{proof}

Compare the previous result with \cite[Lem.2.5]{gs}, \cite[Lem.2.7]{css} and \cite[Lem.2.4]{gp3}. 

If the involved Cartan $*$-algebra $A$ is associative, then \eqref{h=g} holds on the whole circular set $\Omega$. Thus, Proposition \ref{Srecon}(ii) becomes 
$$\textstyle
f(x)=\sum_{L\in\Cc_{1}}\Big(\sum_{M\in\Aa_L}f_{(e_L,e_M)}(x)-\sum_{M\in\Cc_{2,L}\setminus\Aa_L}f^c_{(e_L,e_M)}(x)\Big)e_L=\sum_{(e_L,e_M)\in \Cc}h_M(x)e_L
$$
for each $x\in\Omega$ and each function $h_M$ preserves the slice $\C_{e_{L\ds M}}$, where $L$ is the unique element of $\mc{P}(n)$ such that $(e_L,e_M)\in \Cc$. 

\begin{example}
The set $\Cc:=\{(1,e_1),(1,e_2),(1,e_3),(1,e_{12}),(1,e_{13}),(e_{23},e_{123})\}$ is a SSCP for the Clifford algebra $\R_3$. Let $f:\Omega\to \R_3$ be a slice regular function. Then
$$
f=f_{(1,e_1)}-f^c_{(1,e_2)}+f_{(1,e_3)}-f^c_{(1,e_{12})}+f_{(1,e_{13})}+f_{(e_{23},e_{123})}e_{23}
$$
and each of the functions on the right hand side preserves the corresponding slice $\C_{e_{L\Delta M}}$. $\sqbullet$
\end{example}

\section{Cartan number and its relationship with the reconstructing number}\label{S6}

Recall that, except otherwise stated, \textit{$A$ denotes an alternative Cartan $*$-algebra equipped with the basis $\{e_K\}_{K\in\mc{P}(n)}$ whose product is induced by a function $\sigma$, such that $\sph_A\neq \varnothing$}. 

\subsection{Cartan number of Cartan $*$-algebras}

In this section we introduce the Cartan number $\Cc(A)$ of $A$ and explore its relationship with the reconstructing number $\Rr(A)$. This relationship between the Cartan number $\Cc(A)$ and the reconstructing number $\Rr(A)$ is particularly important, because it will allow us to give upper bounds for $\Rr(A)$ and to make several precise calculations in concrete examples. Even if the definition of the reconstructing number $\Rr(A)$ is already contained in Definition \ref{reconstructing}, we recall it here for the reader's convenience. 

\begin{defn}[Reconstructing number]
We define the \textit{reconstructing number $\Rr(A)$ of $A$} as the minimum integer $\ell$ that satisfies the following property: there exists a subset $\Kk$ of $\mc{P}(n)$ of cardinality $\ell$ such that, for each circular slice domain $\Omega\subset \Qq_A$ and each slice regular function $f:\Omega\to A$, if the real components $f_K$ of $f$ are constant for each $K\in \Kk$, then $f$ is constant. $\sqbullet$
\end{defn}

Let $\Cc$ be a SCP and $\Cc_1$ and $\Cc_2$ the sets introduced in \eqref{partition}. We introduce the following:

\begin{defn}[Cartan number]\label{CN}
The \textit{Cartan number of} $A$ is the integer
\begin{equation*}
\Cc(A):=\min\big\{\min\{|\Cc_1|,|\Cc_2|\} : \, \Cc \text{ is a SCP}\big\}. \sqbullet
\end{equation*}
\end{defn}

By Proposition \ref{esistenza}, a SCP always exists for Cartan $*$-algebras, so the Cartan number $\Cc(A)$ is always well-defined. By Remark \ref{opposto}, it follows that 
\begin{equation}\label{C1}
\Cc(A)=\min\{|\Cc_1| : \, \Cc \text{ is a SCP}\}=\min\{|\Cc_2| : \, \Cc \text{ is a SCP}\}.
\end{equation}
In what follows, even if often without explicitly mentioning it, we will always make use of the previous equality instead of the one of Definition \ref{CN}.

\begin{example}\label{Cartannumber1}
(i) By Example \ref{SCPdivision}, it follows that $\Cc(\HH)=\Cc(\O)=1$.

(ii) By Example \ref{SCPR3}, it follows that $\Cc(\R_3)=2$.

(iii) By Example \ref{splitoctonions3}, it follows that the split octonions $\mathbb{S}\O$ (both if endowed with the $*$-involution of Example \ref{splitoctonions} or the one of Example \ref{splitoctonions2}(iii)) satisfies $\Cc(\mathbb{S}\O)=2$. $\sqbullet$
\end{example}

The following important result establishes an upper bound for $\Rr(A)$.

\begin{thm}[Upper bound]\label{upper}
Let $A$ be an alternative Cartan $*$-algebra. Then,
$$
\Rr(A)\leq \Cc(A)\leq \frac{\dim(A)}{2}.
$$
\end{thm}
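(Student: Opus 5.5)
The proof splits into the two inequalities. For $\Cc(A)\leq \dim(A)/2$, I would exhibit a single SCP with $|\Cc_1|=\dim(A)/2$. The construction in Proposition \ref{esistenza} works. Fix $K$ with $e_K\in\sph_A$ (Proposition \ref{sqrt-1}) and $m\in K$. The SCP $\Cc:=\{(e_L,e_{L\ds K}): m\not\in L\}$ of Remark \ref{rem} then has $\Cc_1=\{L\in\mc{P}(n): m\not\in L\}$, whose cardinality is $2^{n-1}=\dim(A)/2$. Since $\Cc(A)$ is a minimum over all SCP (see \eqref{C1}), this gives $\Cc(A)\leq|\Cc_1|=\dim(A)/2$.

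For $\Rr(A)\leq\Cc(A)$, by \eqref{C1} I would choose an SCP $\Cc$ with $|\Cc_1|=\Cc(A)$ and set $\Kk:=\Cc_1$. It then suffices to show that $\Kk$ has the property in the definition of $\Rr(A)$. So let $\Omega\subset\Qq_A$ be a circular slice domain and $f:\Omega\to A$ a slice regular function whose real components $f_L$ are constant on $\Omega$ for every $L\in\Cc_1$. For each pair $(e_L,e_M)\in\Cc$ we have $L\in\Cc_1$ by definition of $\Cc_1$. Hence $f_L$ is constant on all of $\Omega$, and in particular on $\Omega\cap\C_{e_{L\ds M}}$.

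Proposition \ref{constslice} then applies directly and gives that $f$ is constant. The only point to check is that $\Omega\cap\C_{e_{L\ds M}}$ is connected, which Proposition \ref{constslice} uses. This holds because $\Omega=\Omega_D$ with $D$ connected, and $\phi_{e_{L\ds M}}$ maps $D$ homeomorphically onto $\Omega\cap\C_{e_{L\ds M}}$. So $\Rr(A)\leq|\Kk|=\Cc(A)$.

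No step is a real obstacle: once Proposition \ref{constslice} is available, the argument is bookkeeping. The only care needed is to use the version of $\Cc(A)$ in \eqref{C1} given by Remark \ref{opposto}, so that the first coordinates can be chosen to realize the minimum. Otherwise one would have to pass to $\Cc^{\inv}$ and take $\Kk=\Cc_2$.
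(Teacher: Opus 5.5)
Your proof is correct and follows the paper's argument. The paper likewise applies Proposition \ref{constslice} to get $\Rr(A)\leq|\Cc_1|$ for every SCP, takes the minimum via \eqref{C1}, and uses the SCP of Remark \ref{rem}, whose first-coordinate set has $2^{n-1}=\dim(A)/2$ elements, for the second inequality; your check that $\Omega\cap\C_{e_{L\ds M}}=\phi_{e_{L\ds M}}(D)$ is connected is a detail the paper leaves implicit.
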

\begin{proof}
Let $\Cc$ be a SCP. By Corollary \ref{constslice}, if the real components $f_L$ of a slice regular function $f$ (defined on a circular slice domain $\Omega\subset \Qq_A$) are constant for each $L\in \Cc_1$, then $f$ is constant. In particular, 
$
\Rr(A)\leq |\Cc_1|
$
for each SCP $\Cc$. Thus, $\Rr(A)\leq \Cc(A)$. By Remark \ref{rem}, we have $\Cc(A)\leq\tfrac{\dim(A)}{2}$. We conclude that
$$
\Rr(A)\leq \Cc(A)\leq \frac{\dim(A)}{2},
$$
as required.
\end{proof}

Clearly, the division algebra of complex numbers $\C$ satisfies $\Rr(\C)=\Cc(\C)=\tfrac{\dim(\C)}{2}=1$. The following example shows that there exist also examples with a more `hypercomplex flavour' (i.e. with $\dim(A)\geq 4$).

\begin{example}\label{max}
Consider the Clifford algebra $\R_{1,1}$ endowed with the $*$-involution introduced in \eqref{*Cartan}. As $e_{1}^c=e_1$, $e_2^c=-e_2$ and $e_{12}^c=e_{12}$, we have $\sph_{\R_{1,1}}=\{e_2,-e_2\}$. In particular, $\Qq_{\R_{1,1}}=\C_{e_2}$. The non-constant slice polynomial function $f:\C_{e_2}\to \R_{1,1}$ defined as $f(x):=x e_1$ for each $x\in\C_{e_2}$ satisfies $f_{\varnothing}\equiv 0$. Thus, for each $e\in \{e_{\varnothing}=1,e_1,e_2,e_{12}\}$ the non-constant slice polynomial function $g_e:=fe:\C_{e_2}\to \R_{1,1}$ satisfies $f_e\equiv 0$. We deduce that $\Rr(\R_{1,1})\geq 2$, so
$$
2\leq \Rr(\R_{1,1})\leq \Cc(\R_{1,1})\leq \frac{\dim(\R_{1,1})}{2}=2.
$$ 
We conclude that $\Rr(\R_{1,1})=\Cc(\R_{1,1})=\tfrac{\dim(\R_{1,1})}{2}$. $\sqbullet$
\end{example}

\subsection{Lower bound for the Cartan number}

In this section we want to determine a lower bound for $\Cc(A)$. We first need some preparation.

\begin{lem}\label{permutazione}
Let $V\in \mc{P}(n)$. If $(e_L,e_M)$ is a Cartan pair, then $(e_{L\ds V},e_{M\ds V})$ is a Cartan pair.
\end{lem}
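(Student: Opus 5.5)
The proof is purely combinatorial and rests on the equivalent form of Definition \ref{def:CP}: $(e_L,e_M)$ is a Cartan pair if and only if $e_{L\ds M}\in\sph_A$. The key observation is that the symmetric difference is unchanged when both arguments are translated by the same element $V$. Since $(\mc{P}(n),\ds)$ is a commutative group in which every element is its own inverse, associativity and commutativity give
$$
(L\ds V)\ds(M\ds V)=L\ds M\ds V\ds V=L\ds M\ds\varnothing=L\ds M .
$$

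We then transfer the Cartan pair condition. Because $A$ is a Cartan algebra, we have $e_{L\ds V}e_{M\ds V}=\sigma(L\ds V,M\ds V)\,e_{(L\ds V)\ds(M\ds V)}=\pm e_{L\ds M}$, and similarly $e_Le_M=\sigma(L,M)e_{L\ds M}=\pm e_{L\ds M}$. The set $\sph_A$ is closed under $x\mapsto -x$, since $t(-x)=-t(x)$ and $n(-x)=n(x)$. Hence $e_Le_M\in\sph_A$ is equivalent to $e_{L\ds M}\in\sph_A$, which in turn is equivalent to $e_{L\ds V}e_{M\ds V}\in\sph_A$. This shows that $(e_{L\ds V},e_{M\ds V})$ is a Cartan pair.

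There is no real obstacle in this argument. The only point needing care is the sign $\sigma\in\{\pm1\}$, and this is exactly where the Cartan algebra hypothesis $\sigma(K,H)\in\{-1,1\}$ is used: it makes the two formulations of a Cartan pair in Definition \ref{def:CP} genuinely equivalent, and it keeps the product of the translated basis elements equal to $\pm e_{L\ds M}$.
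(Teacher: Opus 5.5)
Your proof is correct, and it is more direct than the paper's. The paper reduces the claim to showing that $e_Le_M\in\sph_A$ implies $(e_Ve_L)(e_Me_V)\in\sph_A$. It then checks the two defining conditions of $\sph_A$ by hand:
\begin{itemize}
\item it computes $((e_Ve_L)(e_Me_V))^2=-1$ using the Moufang identity \eqref{Mou} and Artin's theorem;
\item it computes $((e_Ve_L)(e_Me_V))^c=-(e_Ve_L)(e_Me_V)$ using \eqref{Mou} and Lemma \ref{invCart}.
\end{itemize}
You instead note that $(L\ds V)\ds(M\ds V)=L\ds M$. So $e_{L\ds V}e_{M\ds V}$ and $e_Le_M$ both equal $\pm e_{L\ds M}$, and the conclusion is literally the hypothesis.

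Your route uses only $\sigma(K,H)\in\{-1,1\}$ and the invariance of $\sph_A$ under $x\mapsto -x$. It therefore needs no alternativity at all. It also yields an equivalence (apply it again with the same $V$) rather than a single implication. In fact, the paper's reduction step already rewrites $(e_Ve_L)(e_Me_V)$ as $\pm e_{L\ds V}e_{M\ds V}$, and the same sign bookkeeping shows this is $\pm e_Le_M$. So for the lemma as stated, the Moufang computation is superfluous.

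What that computation does buy is a statement about arbitrary imaginary units: the same steps show $e_VJe_V\in\sph_A$ for every $J\in\sph_A$, not only for $J=\pm e_{L\ds M}$. However, the paper's applications of this lemma (Lemma \ref{lemSA}, Lemma \ref{goodSCP}, and the example computing $\Cc(\R_6)$) only involve basis elements, where your argument suffices.
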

\begin{proof}
For each $x\in A$, it holds $x\in\sph_A$ if and only if $-x\in \sph_A$. In particular, we may reduce to show: \textit{If $e_Le_M\in\sph_A$, then $(e_Ve_L)(e_Me_V)\in\sph_A$ for each $V\in\mc{P}(n)$.}

Recall that $\sph_A=\{J\in A : t(J)=0, n(J)=1\}=\{J\in A : J^2=-1, J^c=-J\}$. Let $L,M\in\mc{P}(n)$ be such that $e_Le_M\in\sph_A$. Let $V\in\mc{P}(n)$. As $(e_Le_M)^2=-1$, by the Moufang identity \eqref{Mou} and Artin's theorem, we deduce
\begin{align*}
((e_Ve_L)(e_Me_V))^2&=(e_V(e_Le_M)e_V)^2=(e_V(e_Le_M)e_V)(e_V(e_Le_M)e_V)\\
&=e_V((e_Le_M)e_V)(e_V(e_Le_M))e_V=e^3_V(e_Le_M)^2e_V=-e_V^4=-1.
\end{align*}
Using again the Moufang identity \eqref{Mou}, we deduce also
\begin{align*}
((e_Ve_L)(e_Me_V))^c&=(e_Me_V)^c(e_Ve_M)^c=(e_V^ce_M^c)(e_L^ce_V^c)\\
&=e_V^c(e_M^ce_L^c)e_V^c=e_V^c(e_Le_M)^ce_V^c\stackrel{(*)}{=}-e_V(e_Le_M)e_V=-(e_Ve_L)(e_Me_V),
\end{align*}
where the equality ($*$) follows by Lemma \ref{invCart} and by the fact that $(e_Le_M)^c=-e_Le_M$, because $e_Le_M\in\sph_A$. We conclude that $(e_Ve_L)(e_Me_V)\in \sph_A$, as required.
\end{proof}

In order to provide a lower bound for the Cartan number, we define the integer:
\begin{equation}\label{SA}
s_A:=|\{K\in\mc{P}(n) : e_K\in\sph_A\}|.
\end{equation}
By Proposition \ref{sqrt-1}, it follows that $s_A\geq 1$. By the previous lemma we deduce the following:

\begin{lem}\label{lemSA}
The set $\Cc(L):=\{e_M: (e_L,e_M) \text{ {\rm is a Cartan pair}}\}$ has cardinality $s_A$ for each $L\in\mc{P}(n)$.
\end{lem}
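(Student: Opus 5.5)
The plan is to establish a bijection between $\Cc(L)$ and the set $S:=\{e_K : K\in\mc{P}(n),\ e_K\in\sph_A\}$, which has cardinality $s_A$ by \eqref{SA}. By Definition~\ref{def:CP}, $(e_L,e_M)$ is a Cartan pair if and only if $e_{L\ds M}\in\sph_A$. So consider the map
$$
\Phi_L:\Cc(L)\to S,\qquad e_M\mapsto e_{L\ds M}.
$$

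First we check that $\Phi_L$ is well defined. Since the elements $e_M$ are distinct basis vectors, each element of $\Cc(L)$ determines its index $M$ uniquely. If $e_M\in\Cc(L)$, then $(e_L,e_M)$ is a Cartan pair, so $e_{L\ds M}\in\sph_A$, i.e.\ $\Phi_L(e_M)\in S$.

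Next we note that $\Phi_L$ is injective. On $\mc{P}(n)$ the map $M\mapsto L\ds M$ is injective, because $(\mc{P}(n),\ds)$ is a group and $L\ds(L\ds M)=M$.

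Then we show that $\Phi_L$ is surjective. Given $K$ with $e_K\in\sph_A$, set $M:=L\ds K$. Then $L\ds M=K$, so $e_Le_M=\sigma(L,M)e_K\in\{\pm e_K\}\subset\sph_A$, using that $\sph_A=-\sph_A$. Hence $(e_L,e_M)$ is a Cartan pair, so $e_M\in\Cc(L)$ and $\Phi_L(e_M)=e_K$.

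Therefore $|\Cc(L)|=s_A$ for every $L\in\mc{P}(n)$.

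Alternatively, one can follow the paper's hint and invoke Lemma~\ref{permutazione} with $V:=L$. This shows that $e_M\mapsto e_{M\ds L}$ sends $\Cc(L)$ bijectively onto $\Cc(\varnothing)=\{e_M: e_M\in\sph_A\}$, and the latter set has cardinality $s_A$. Either route works, and I do not expect any step to be a real obstacle. The only point needing care is the equivalence in Definition~\ref{def:CP}, which rests on $\sigma(L,M)=\pm1$ (the Cartan hypothesis) together with $\sph_A=-\sph_A$.
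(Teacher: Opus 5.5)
Your proof is correct and uses the same bijection as the paper, namely the translation $e_M\mapsto e_{M\ds L}$ between $\Cc(L)$ and $\Cc(\varnothing)$, where the latter has cardinality $s_A$. The only difference is that you check both inclusions directly from the equivalence $e_Le_M\in\sph_A\iff e_{L\ds M}\in\sph_A$ in Definition~\ref{def:CP} (using $\sigma=\pm1$ and $\sph_A=-\sph_A$) instead of citing Lemma~\ref{permutazione}; that lemma is in fact an immediate consequence of the same equivalence, since $(L\ds V)\ds(M\ds V)=L\ds M$.
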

\begin{proof}
As $(1,e_M)$ is a Cartan pair if and only if $e_M\in\sph_A$, we have $s_A=|\Cc(\varnothing)|$. For each $L\in\mc{P}(n)$, define the map
$$
\theta_L:\{e_K\}_{K\in\mc{P}(n)}\to \{e_K\}_{K\in\mc{P}(n)}, \quad e_K\mapsto e_{K\ds L}.
$$
As $(K\ds L)\ds L=K$ for each $K\in \mc{P}(n)$, $\theta_L$ is a bijection, because $\theta_L\circ \theta_L$ is the identity map. By Lemma \ref{permutazione}, we have $\theta_L(\Cc(\varnothing))\subset \Cc(L)$ and $\theta_L(\Cc(L))\subset \Cc(\varnothing)$. Thus, $\theta_L|_{\Cc(\varnothing)}:\Cc(\varnothing)\to \Cc(L)$ is a bijection. We conclude that $|\Cc(L)|=s_A$ for each $L\in\mc{P}(n)$, as required.
\end{proof}

For each $x\in \R$ denote by $\lceil x \rceil$ the smallest integer greater or equal than $x$. By the previous lemma, using the pigeonhole principle, we deduce straightforwardly the following:

\begin{prop}[Lower bound]\label{lowerbound}
It holds: 
$$
\Cc(A)\geq\left\lceil \frac{\dim(A)}{s_A+1} \right\rceil.
$$
\end{prop}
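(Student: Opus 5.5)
Let $\Cc$ be a SCP realising the minimum in \eqref{C1}, so that $|\Cc_1|=\Cc(A)$. The plan is to double count the pairs in $\Cc$. Since $\{\Cc_1,\Cc_2\}$ is a partition of $\mc{P}(n)$, every $M\in\Cc_2$ lies in some pair $(e_L,e_M)\in\Cc$ with $L\in\Cc_1$, by the definition \eqref{partition} of $\Cc_2$. Hence
$$\textstyle
\Cc_2=\bigcup_{L\in\Cc_1}\Cc_{2,L}.
$$

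For each $L\in\Cc_1$, every $M\in\Cc_{2,L}$ satisfies that $(e_L,e_M)$ is a Cartan pair, by condition (ii) of Definition \ref{SCP}. So $\{e_M: M\in\Cc_{2,L}\}\subset\Cc(L)$, and Lemma \ref{lemSA} gives $|\Cc_{2,L}|\leq s_A$. It follows that
$$
\dim(A)-\Cc(A)=|\Cc_2|\leq\sum_{L\in\Cc_1}|\Cc_{2,L}|\leq s_A\,|\Cc_1|=s_A\,\Cc(A).
$$
This yields $\dim(A)\leq(s_A+1)\Cc(A)$, i.e. $\Cc(A)\geq \dim(A)/(s_A+1)$. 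Since $\Cc(A)$ is an integer, we may replace the right-hand side by its ceiling, which is the statement.

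There is no real obstacle here. The only point needing care is to invoke \eqref{C1}, which rests on Remark \ref{opposto}, so that we may assume the minimum is attained on $\Cc_1$ rather than on $\Cc_2$.
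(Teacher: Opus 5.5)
Your proof is correct and is essentially the argument the paper gives: it states the result follows from Lemma \ref{lemSA} ``using the pigeonhole principle'', without further detail. Your double count, in which each $L\in\Cc_1$ covers at most $s_A$ elements of $\Cc_2$ so that $\dim(A)=|\Cc_1|+|\Cc_2|\leq (s_A+1)|\Cc_1|$, is the explicit version of that pigeonhole step.
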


The next example shows that there exist alternative Cartan $*$-algebras $A$ that achieve this minimum.

\begin{example}
By Example \ref{Cartannumber1}(iii), we have that the Clifford algebra $\R_3$ (endowed with the Clifford conjugation) satisfies $\Cc(\R_3)=2$. As $s_{\R_3}=6$, then 
\begin{equation*}
\Cc(\R_3)=2=\left\lceil \frac{8}{7} \right\rceil=\left\lceil \frac{\dim(\R_3)}{s_{\R_3}+1} \right\rceil. \sqbullet
\end{equation*}
\end{example}

\subsection{Cartan $*$-algebras with Cartan number one}

By the inequality $\Rr(A)\leq \Cc(A)$, we have $\Rr(A)=1$ when $\Cc(A)=1$. Thus, by Example \ref{Cartannumber1}(i), we deduce the following (which was already contained in \cite[Thm.7.10]{gsst} in a stronger form):

\begin{prop}\label{division0}
The division algebras $\HH$ and $\O$ satisfy $\Rr(\HH)=\Rr(\O)=1$.
\end{prop}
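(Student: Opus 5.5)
The statement follows by combining Theorem \ref{upper} with the explicit systems of Cartan pairs of Example \ref{SCPdivision}, once we check that $\Rr$ cannot be smaller than $1$.

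First, I will observe that $\Rr(A)\geq 1$ for every $A$ as in Assumption \ref{assumption}. Indeed, if $\Kk=\varnothing$, the hypothesis of Definition \ref{reconstructing} is vacuous, so the property would force every slice regular function to be constant. The identity function $f(x):=x$ refutes this: it is slice regular on $\Qq_A$, being induced by the holomorphic stem function $\alpha+\iota\beta$, and it is not constant. Hence $\Rr(\HH)\geq 1$ and $\Rr(\O)\geq 1$.

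For the upper bound I will apply Theorem \ref{upper}, which gives $\Rr(A)\leq\Cc(A)$ for every alternative Cartan $*$-algebra. Both $\HH$ and $\O$, with the bases of Assumption \ref{bases} and the standard conjugation, are alternative Cartan $*$-algebras with $\sph_A\neq\varnothing$, as noted in Example \ref{splitoctonions2}(i). Example \ref{Cartannumber1}(i) then gives $\Cc(\HH)=\Cc(\O)=1$.

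The only point that needs checking is that the sets in Example \ref{SCPdivision} really are SCPs, which is what Example \ref{Cartannumber1}(i) relies on. Here $\Cc_1=\{\varnothing\}$ and $\Cc_2=\mc{P}(n)\setminus\{\varnothing\}$, so condition (i) of Definition \ref{SCP} holds. Condition (ii) asks that each pair $(1,e_K)$ with $K\neq\varnothing$ be a Cartan pair, that is, $e_K\in\sph_A$. This is true because every non-real basis element of $\HH$ or $\O$ is a purely imaginary unit. It follows that $|\Cc_1|=1$, so $\Cc(A)\le 1$; since $\Cc_1$ is nonempty, $\Cc(A)=1$.

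Combining the two bounds gives $1\leq\Rr(A)\leq 1$ for $A\in\{\HH,\O\}$, which is the claim. No step presents a real obstacle.
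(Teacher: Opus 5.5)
Your proof is correct and follows the paper's route. The paper deduces $\Rr(\HH)=\Rr(\O)=1$ directly from $\Rr(A)\le\Cc(A)$ (Theorem \ref{upper}) together with $\Cc(\HH)=\Cc(\O)=1$ (Example \ref{Cartannumber1}(i)), leaving the lower bound $\Rr(A)\ge 1$ implicit, whereas you justify it explicitly using the non-constant slice regular identity function.
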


The purpose of this section is to show that the division algebras $\C$, $\HH$ and $\O$ endowed with the standard conjugations are the only alternative Cartan $*$-algebras that satisfy $\Cc(A)=1$, namely, to show the following:

\begin{thm}\label{division1}
$\Cc(A)=1$ if and only if $A$ is a division algebra endowed with the standard conjugation.
\end{thm}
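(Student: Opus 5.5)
The plan is to reduce the condition $\Cc(A)=1$ to the statement that every basis element $e_K$ with $K\neq\varnothing$ lies in $\sph_A$, and then to identify the algebras with this property. Suppose $\Cc(A)=1$, and take a SCP $\Cc$ with $|\Cc_1|=1$, say $\Cc_1=\{L\}$. Then $\Cc_2=\mc{P}(n)\setminus\{L\}$, and $(e_L,e_M)$ is a Cartan pair for every $M\neq L$. Equivalently, $e_{L\ds M}\in\sph_A$ for every $M\neq L$. As $M$ ranges over $\mc{P}(n)\setminus\{L\}$, the set $L\ds M$ ranges over all of $\mc{P}(n)\setminus\{\varnothing\}$. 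Hence $e_K\in\sph_A$ for every $K\neq\varnothing$, that is, $e_K^2=-1$ and $e_K^c=-e_K$. So $s_A=\dim(A)-1$. Conversely, if this holds, then $\Cc:=\{(1,e_K):K\neq\varnothing\}$ is a SCP with $|\Cc_1|=1$, so $\Cc(A)=1$. The same computation as in Example \ref{SCPdivision} shows that this is the case for $\C$, $\HH$ and $\O$ with their standard conjugations, which gives the ``if'' direction.

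For the ``only if'' direction, assume $e_K^2=-1$ and $e_K^c=-e_K$ for all $K\neq\varnothing$. The conjugation is then forced to be the standard one: $x^c=x_\varnothing-\sum_{K\neq\varnothing}x_Ke_K$. I would compute the norm $n(x)=xx^c$. Expanding,
\[
n(x)=\sum_K x_K^2+\sum_{K\neq H}x_Kx_H\,e_K e_H^c .
\]
The cross terms should cancel. For $K\neq H$ both nonempty, $e_Ke_H\in\sph_A$ by Lemma \ref{permutazione} (take $V=K$ in the pair $(1,e_{K\ds H})$) and it is up to sign $e_{K\ds H}$. One then needs anticommutativity $e_Ke_H=-e_He_K$. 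This follows from
\[
(e_K+e_H)^2=-2+e_Ke_H+e_He_K ,
\]
combined with the fact that the symmetric product $e_Ke_H+e_He_K$ is a multiple of $e_{K\ds H}$, while $\sigma(K,H)\sigma(H,K)=\pm1$. To get the sign, apply $c$: we have $(e_Ke_H)^c=e_He_K$, and also $(e_Ke_H)^c=-e_Ke_H$ because $e_Ke_H=\pm e_{K\ds H}$ has trace zero. Hence $e_He_K=-e_Ke_H$. The cross terms $\pm x_\varnothing x_K e_K$ cancel similarly, so $n(x)=\sum_K x_K^2$ is a positive definite form.

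Finally, I would show $n$ is multiplicative, or argue directly that $A$ has no zero divisors. In an alternative algebra, $x^cx=n(x)$ is real, and $x^c(xy)=(x^cx)y=n(x)y$ by alternativity, since $x^c=t(x)-x$ and $x(xy)=x^2y$. So if $x\neq0$ then $n(x)>0$, and $xy=0$ implies $y=0$. Thus $A$ is a finite dimensional alternative division algebra with the standard conjugation. By the Frobenius--Zorn theorem it is $\R,\C,\HH$ or $\O$; $\R$ is excluded since $\sph_A\neq\varnothing$, and $\dim(A)=2^n$ is consistent with the other three.

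The step I expect to be the main obstacle is the sign bookkeeping that shows $n$ is positive definite, namely the anticommutation of distinct imaginary basis elements. If the argument above via the $*$-involution is too loose, I would fall back on using \eqref{alt} and Lemma \ref{lemDelta} directly. I also need to handle that $\sigma$ is only specified up to the $\ds$-structure.
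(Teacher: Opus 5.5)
Your proposal is correct and follows the same route as the paper: you reduce $\Cc(A)=1$ to $e_K\in\sph_A$ for every $K\neq\varnothing$, deduce that distinct nonreal basis elements anticommute, get $xx^c=\sum_K x_K^2$, and conclude that $A$ is a division algebra with the standard conjugation. The differences are minor simplifications: you read off $e_{L\ds M}\in\sph_A$ directly instead of translating the SCP as in Lemma \ref{goodSCP}; you get $e_He_K=-e_Ke_H$ by applying the involution to $e_Ke_H=\sigma(K,H)e_{K\ds H}$ rather than from the Moufang identity, so your $(e_K+e_H)^2$ remark can be dropped; and you make the last step explicit via $x^c(xy)=n(x)y$ and Zorn's theorem, where the paper only exhibits the two-sided inverse $x^c/n(x)$.
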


We start with the following lemma.

\begin{lem}\label{goodSCP}
If $\Cc(A)=1$, then $e_K\in\sph_A$ for each $K\in\mc{P}(n)\setminus\{\varnothing\}$.
\end{lem}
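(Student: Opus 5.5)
Since $\Cc(A)=1$, by \eqref{C1} I will fix a SCP $\Cc$ with $|\Cc_1|=1$, say $\Cc_1=\{L_0\}$. Then $\Cc_2=\mc{P}(n)\setminus\{L_0\}$. The goal is to show that every nonzero $e_K$ lies in $\sph_A$.

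First, every $M\in\Cc_2$ belongs to $\Cc_2$ only by being paired with some element of $\Cc_1$. Hence $(e_{L_0},e_M)\in\Cc$ for every $M\neq L_0$. Consequently, $(e_{L_0},e_M)$ is a Cartan pair for every $M\in\mc{P}(n)\setminus\{L_0\}$; equivalently, $e_{L_0\ds M}\in\sph_A$ for all such $M$.

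Next, I apply Lemma \ref{permutazione} with $V:=L_0$. Each Cartan pair $(e_{L_0},e_M)$ becomes the Cartan pair $(e_{\varnothing},e_{M\ds L_0})=(1,e_{M\ds L_0})$, so $e_{M\ds L_0}\in\sph_A$. One could also read this off directly, since $e_{L_0\ds M}\in\sph_A$ already says it. As $M$ ranges over $\mc{P}(n)\setminus\{L_0\}$, the set $K:=M\ds L_0$ ranges over all of $\mc{P}(n)\setminus\{\varnothing\}$, because $M\mapsto M\ds L_0$ is a bijection of $\mc{P}(n)$ sending $L_0$ to $\varnothing$. Therefore $e_K\in\sph_A$ for each $K\neq\varnothing$.

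The only point needing care is to justify that $\Cc_{2,L_0}=\Cc_2$. This is immediate from the definition of $\Cc_2$ in \eqref{partition} together with $\Cc_1=\{L_0\}$. Beyond that, the argument is essentially bookkeeping with the symmetric difference, and I expect no genuine obstacle.
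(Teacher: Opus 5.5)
Your proof is correct and matches the paper's argument: fix a SCP with $\Cc_1=\{L_0\}$, note that every $M\neq L_0$ is paired with $L_0$, translate by $L_0$ via Lemma~\ref{permutazione}, and use that $M\mapsto M\ds L_0$ is a bijection from $\mc{P}(n)\setminus\{L_0\}$ onto $\mc{P}(n)\setminus\{\varnothing\}$. Your side remark is also right and shortens the argument: since $e_{L_0}e_M=\pm e_{L_0\ds M}$ and $\sph_A=-\sph_A$, the definition of Cartan pair already gives $e_{L_0\ds M}\in\sph_A$, so neither Lemma~\ref{permutazione} nor the paper's auxiliary SCP with $\Cc_1=\{\varnothing\}$ is needed.
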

\begin{proof}
Let $\Cc$ be a SCP such that $\Cc_1=\{\varnothing\}$. Then, $(1,e_K)$ is a Cartan pair for each $K\in\mc{P}(n)\setminus\{\varnothing\}$, so $e_K\in\sph_A$ for each $K\in\mc{P}(n)\setminus\{\varnothing\}$. Thus, we may reduce to show: \textit{If $\Cc(A)=1$, then there exists a SCP $\Cc$ such that $\Cc_1=\{\varnothing\}$}.

Let $\Cc'$ be a SCP such that $|\Cc'_1|=1$. Assume that $\Cc'_1=\{V\}$ for some $V\in \mc{P}(n)\setminus\{\varnothing\}$. In particular, as $\Cc'$ is a SCP, then $(e_V,1)\in\Cc'$. Consider the set
$
\Cc:=\{(1,e_{M\ds V}) : (e_V,e_M)\in \Cc'\}.
$
By Lemma \ref{permutazione}, we have that each element of $\Cc$ is a Cartan pair. As $\mc{P}(n)$ endowed with the symmetric difference $\ds$ is a group, the map 
$
\mc{P}(n)\setminus\{V\}\to \mc{P}(n)\setminus\{\varnothing\},\, M\mapsto M\ds V
$
is a bijection. We conclude that $\Cc$ is a SCP, as required.
\end{proof}

We are ready to show Theorem \ref{division1}.

\begin{proof}[Proof of Theorem \ref{division1}]
By Example \ref{Cartannumber1}(i), we only need to show that if $\Cc(A)=1$, then $A$ is a division algebra endowed with the standard conjugation. By Lemma \ref{goodSCP}, as $\Cc(A)=1$, we have $e_K\in\sph_A$ for each $K\in \mc{P}(n)\setminus\{\varnothing\}$. In particular, $e_K^c=-e_K$ for each $K\in \mc{P}(n)\setminus\{\varnothing\}$. Thus, the $*$-involution acts on $A$ as
\begin{equation}\label{stconj}\textstyle
x:=x_{\varnothing}+\sum_{K\in\mc{P}(n)\setminus\{\varnothing\}}x_Ke_K\mapsto x^c=x_{\varnothing}-\sum_{K\in\mc{P}(n)\setminus\{\varnothing\}}x_Ke_K
\end{equation}
for each $x\in A$. As $e_K\in\sph_A$ for each $K\in\mc{P}(n)\setminus\{\varnothing\}$, we also have  $e_K^2=-1$ for each $K\in\mc{P}(n)\setminus\{\varnothing\}$. By the Moufang identity \eqref{Mou}, it follows that
\begin{align*}
-1&\textstyle=e_{K\ds H}^2=(\sigma(K,H)e_Ke_H)^2=(e_Ke_H)(e_Ke_H)=\sigma(K,H)\sigma(H,K)(e_Ke_H)(e_He_K)\\
&\textstyle=\sigma(K,H)\sigma(H,K)e_K(e_He_H)e_K=-\sigma(K,H)\sigma(H,K)e_K^2=\sigma(K,H)\sigma(H,K)
\end{align*}
for each $K,H\in \mc{P}(n)\setminus\{\varnothing\}$ such that $K\neq H$. We deduce that
$$
e_Ke_H=\sigma(K,H)\sigma(H,K)e_He_K=-e_He_K
$$
for each $K,H\in \mc{P}(n)\setminus\{\varnothing\}$ such that $K\neq H$. Thus,
\begin{align*}
xx^c&\textstyle=\Big(x_{\varnothing}+\sum_{K\in\mc{P}(n)\setminus\{\varnothing\}}x_Ke_K\Big)\Big(x_{\varnothing}-\sum_{K\in\mc{P}(n)\setminus\{\varnothing\}}x_Ke_K\Big)=x_{\varnothing}^2-\Big(\sum_{K\in\mc{P}(n)\setminus\{\varnothing\}}x_Ke_K\Big)^2\\
&\textstyle=\sum_{K\in\mc{P}(n)}x_K^2-\sum_{K,H\in \mc{P}(n)\setminus\{\varnothing\}, \, K\neq H} x_Kx_He_Ke_H=\sum_{K\in\mc{P}(n)}x_K^2
\end{align*}
for each $x\in A$. In particular, $xx^c\in \R\setminus\{0\}$ for each $x\in A\setminus\{0\}$. We deduce
$$
x\frac{x^c}{xx^c}=\frac{x^c}{xx^c}x=1
$$
for each $x\in A$ such that $x\neq 0$. We conclude that $A$ is a division algebra endowed with the conjugation \eqref{stconj}, which is the standard conjugation on $A$, as required.
\end{proof}

\subsection{A sufficient condition to have $\Rr(A)\geq 2$}

The following proposition provides a sufficient condition in order to guarantee that $\Rr(A)\geq 2$.

\begin{prop}\label{maggioredi2}
Assume that $\Cc(A)\geq2$. If 
\begin{equation}\label{buonacondition}
\{K\in \mc{P}(n) : e_K^c=-e_K\}\subset \{K\in \mc{P}(n) : e_K^2=-1\},
\end{equation}
then $\Rr(A)\geq 2$.
\end{prop}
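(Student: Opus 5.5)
The plan is to exhibit, for every single index $K\in\mc{P}(n)$, a non-constant slice regular function on a circular slice domain whose $K$-th real component is constant. This shows that no subset of $\mc{P}(n)$ of cardinality $1$ satisfies the property in the definition of $\Rr(A)$, so $\Rr(A)\geq 2$. Following Example \ref{max}, I will use slice polynomials of the form $f(x):=x\,e_H$ on $\Omega:=\Qq_A$, with $H\in\mc{P}(n)$ to be chosen. Such an $f$ is slice regular, being induced by the holomorphic stem function $z\mapsto z\,e_H$ (the identity stem function multiplied on the right by a constant). It is clearly non-constant, and $\Qq_A=\Omega_{\C}$ is a circular slice domain.

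Let $\Tt:=\{L\in\mc{P}(n): e_L^c=-e_L\}$, and note that $\varnothing\notin\Tt$. As in the proof of Proposition \ref{sqrt-1}, every $J\in\sph_A$ has trace zero, so it can be written as $J=\sum_{L\in\Tt}x_Le_L$. For $x=\alpha+\beta J\in\Qq_A$ we then get
$$\textstyle
f(x)=\alpha e_H+\beta\sum_{L\in\Tt}x_L\,\sigma(L,H)\,e_{L\ds H}.
$$
Reading off the coefficient of $e_K$: the term $\alpha e_H$ contributes only if $H=K$, and the sum contributes only through the index $L=K\ds H$, provided $K\ds H\in\Tt$. Hence $f_K\equiv 0$ as soon as $H\neq K$ and $K\ds H\notin\Tt$. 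Setting $V:=K\ds H$, it is therefore enough to find some $V\in\mc{P}(n)\setminus\{\varnothing\}$ with $V\notin\Tt$, and then to take $H:=K\ds V$.

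The key step is to show that such a $V$ exists, and this is where both hypotheses enter. Suppose, for contradiction, that $\Tt=\mc{P}(n)\setminus\{\varnothing\}$. By \eqref{buonacondition}, every $V\neq\varnothing$ then satisfies $e_V^2=-1$ and $e_V^c=-e_V$, that is, $e_V\in\sph_A$. Consequently $\{(1,e_V): V\neq\varnothing\}$ is a SCP with $|\Cc_1|=1$, which forces $\Cc(A)=1$ and contradicts the assumption $\Cc(A)\geq 2$.

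So some nonempty $V\notin\Tt$ exists. For each $K$, the function $f(x)=x\,e_{K\ds V}$ is then non-constant and slice regular on $\Qq_A$, with $f_K\equiv 0$, which gives $\Rr(A)\geq 2$. The only delicate point is the bookkeeping of the coefficient of $e_K$. This rests on the fact that $L\mapsto L\ds H$ is a bijection of $\mc{P}(n)$, so that distinct terms of the sum land on distinct basis vectors.
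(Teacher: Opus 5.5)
Your proposal is correct and follows the paper's proof essentially verbatim. Both arguments use $\Cc(A)\geq 2$ together with \eqref{buonacondition} to obtain a nonempty $V$ with $e_V^c\neq -e_V$, and then show that the slice polynomial $f(x)=x\,e_{K\ds V}$ on $\Qq_A$ is non-constant with $f_K\equiv 0$. Your phrasing that $V\notin\Tt$ states correctly what the paper's text garbles as ``$t(e_L)=0$'', where it should read $t(e_L)\neq 0$.
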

\begin{proof}
As $\Cc(A)\geq 2$, there exists $L\in\mc{P}(n)\setminus\{\varnothing\}$ such that $e_L\not\in \sph_A$, otherwise the set $\Cc=\{(1,e_L) : L\in\mc{P}(n)\setminus\{\varnothing\}\}$ would be a SCP such that $|\Cc_1|=1$ and $\Cc(A)=1$. Let $H\in \mc{P}(n)$ be any element. Consider the slice polynomial function $f(x):=xe_{L\ds H}$ defined on $\Qq_A$. As $f$ is not a constant function, in order to conclude, it is enough to show: \textit{The real component $f_H$ of $f$ is constantly equal to zero.} 

Let $J\in\sph_A$. As $\{e_K\}_{K\in \mc{P}(n)}$ is a basis of $A$ as a real vector space, $t(J)=0$ and $t$ is a real linear map, it follows that
$
J=\sum_{t(e_K)=0}x_Ke_K
$
for suitable $x_K\in \R$. By \eqref{buonacondition}, $t(e_L)=0$, so
$$\textstyle
Je_{L\ds H}=\sum_{t(e_K)=0}x_Ke_Ke_{L\ds H}\in \bigoplus_{K\neq H} \R e_K
$$
for each $J\in\sph_A$, because $K\ds(L\ds H)=H$ if and only if $K=L$. As $L\neq\varnothing$, it follows that $L\ds H\neq H$ and
$$\textstyle
f(\alpha+\beta J)=(\alpha+\beta J)e_{L\ds H}=\alpha e_{L\ds H} +\beta Je_{L\ds H}\in \bigoplus_{K\neq H} \R e_K
$$
for each $\alpha,\beta\in \R$ and $J\in\sph_A$ such that $\alpha+\beta J\in\Omega$. We conclude that $f_H$ is constantly equal to zero, as required.
\end{proof}

By the inequality $\Rr(A)\leq \Cc(A)$ and the previous proposition, we deduce straightforwardly the following:

\begin{cor}\label{2}
If $\Cc(A)=2$ and $A$ satisfies condition \eqref{buonacondition}, then $\Rr(A)=2$.
\end{cor}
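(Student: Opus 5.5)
This follows by sandwiching $\Rr(A)$ between two bounds already in the paper. For the upper bound, Theorem \ref{upper} gives $\Rr(A)\leq \Cc(A)$, and the hypothesis $\Cc(A)=2$ turns this into $\Rr(A)\leq 2$.

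For the lower bound, I would invoke Proposition \ref{maggioredi2}. Its two hypotheses are $\Cc(A)\geq 2$, which holds since $\Cc(A)=2$, and condition \eqref{buonacondition}, which is assumed. Hence $\Rr(A)\geq 2$.

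Combining the two bounds gives $\Rr(A)=2$. There is no real obstacle here: all the substantive work (the Cartan transform argument behind Theorem \ref{upper} and the construction $f(x)=xe_{L\ds H}$ in Proposition \ref{maggioredi2}) is already done.

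The only point to check is that the sets $\Kk$ in the definition of $\Rr(A)$ range over all subsets of cardinality $1$. Proposition \ref{maggioredi2} gives this, since its proof handles an arbitrary index $H$ by producing a non-constant $f$ with $f_H\equiv 0$.
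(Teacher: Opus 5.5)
Your proposal is correct and matches the paper's argument exactly: it combines the upper bound $\Rr(A)\leq\Cc(A)=2$ from Theorem \ref{upper} with the lower bound $\Rr(A)\geq 2$ from Proposition \ref{maggioredi2}. Your final remark is harmless but unnecessary, since Proposition \ref{maggioredi2} already states $\Rr(A)\geq 2$ outright.
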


Observe that a Cartan algebra $A$ endowed with the $*$-involution introduced in \eqref{*Cartan} always satisfies \eqref{buonacondition}. In particular, condition \eqref{buonacondition} is satisfied by the Clifford algebras $\R_n$ endowed with the Clifford conjugation and the Clifford algebras $\R_{n,0}$ endowed with the reversion (see Example \ref{exa:reversion}). By Theorem \ref{division1} and Proposition \ref{maggioredi2}, we deduce straightforwardly the following:

\begin{cor}\label{corCliffRn}
The Clifford algebra $\R_{p,q}$ endowed with the $*$-involution introduced in \eqref{*Cartan} satisfies $\Rr(\R_{p,q})\geq 2$ for each $p,q\in\mathbb{N}$ such that $n:=p+q\geq 2$ and $(p,q)\neq(0,2)$. In particular,
\begin{itemize}
\item[{\rm(i)}] The Clifford algebra $\R_n$ endowed with the Clifford conjugation satisfies $\Rr(\R_n)\geq 2$ for each $n\geq 3$.
\item[{\rm(ii)}] The Clifford algebra $\R_{n,0}$ endowed with the reversion satisfies $\Rr(\R_{n,0})\geq 2$ for each $n\geq 2$.
\end{itemize}
\end{cor}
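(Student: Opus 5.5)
The plan is to combine Proposition \ref{maggioredi2} with Theorem \ref{division1}. Fix $p,q$ with $n=p+q\geq 2$ and $(p,q)\neq(0,2)$, and endow $\R_{p,q}$ with the $*$-involution \eqref{*Cartan}. Two facts must be checked: condition \eqref{buonacondition} holds, and $\Cc(\R_{p,q})\geq 2$. For the first, by \eqref{*Cartan} we have $e_K^c=\sigma(K,K)e_K=e_K^2e_K$. Hence $e_K^c=-e_K$ exactly when $e_K^2=-1$, so the two sets in \eqref{buonacondition} coincide. In particular $\sph_{\R_{p,q}}$ contains the basis elements with $e_K^2=-1$. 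Such elements exist, so Assumption \ref{assumption} applies: $\R_{p,q}$ is associative, hence alternative, and some $e_K^2=-1$ exists, e.g.\ $e_{12}^2=-e_1^2e_2^2$ or a generator $e_k$ with $k>p$.

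For $\Cc(\R_{p,q})\geq 2$, we argue by contradiction using Theorem \ref{division1}. If $\Cc(\R_{p,q})=1$, then $\R_{p,q}$ would be a division algebra with the standard conjugation, and a real division algebra of dimension $2^n\geq 4$ must be $\HH$ or $\O$. More elementarily, Lemma \ref{goodSCP} would force $e_K^2=-1$ for all $K\neq\varnothing$. In particular every generator would satisfy $e_k^2=-1$, so $p=0$. Moreover $e_{12}^2=-1$ and $e_{123}^2=+1$ whenever $n\geq 3$, which is impossible. Hence $n=2$ and $p=0$, i.e.\ $(p,q)=(0,2)$, which is excluded. Proposition \ref{maggioredi2} then gives $\Rr(\R_{p,q})\geq 2$.

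For items (i) and (ii), I would identify the involution \eqref{*Cartan} concretely. For $\R_n=\R_{0,n}$, Lemma \ref{Clifford1} gives $e_K^2=-1$ iff $\m{K}\in\{1,2\}$, which matches the Clifford conjugation; this is the Example following Proposition \ref{involuzione}. So (i) follows for $n\geq 3$, since $n=2$ gives $(0,2)$. For $\R_{n,0}$, the sign computed in Lemma \ref{Clifford1} is $(-1)^{|K|(|K|-1)/2}$, which is $-1$ iff $\m{K}\in\{2,3\}$. This is exactly the reversion, so (ii) follows for all $n\geq 2$, since $(n,0)\neq(0,2)$.

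The main obstacle is only the careful exclusion of the case $\Cc=1$. I would do it via Lemma \ref{goodSCP} together with the explicit squares from Lemma \ref{Clifford1}, rather than appealing to the classification of division algebras.
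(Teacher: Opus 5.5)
Your proposal is correct and follows essentially the paper's argument. The involution \eqref{*Cartan} makes the two sets in \eqref{buonacondition} coincide, Theorem~\ref{division1} rules out $\Cc(\R_{p,q})=1$ for $(p,q)\neq(0,2)$, Proposition~\ref{maggioredi2} then gives $\Rr(\R_{p,q})\geq 2$, and (i) and (ii) follow by identifying \eqref{*Cartan} with the Clifford conjugation on $\R_{0,n}$ and with the reversion on $\R_{n,0}$. Your elementary exclusion of $\Cc=1$ via Lemma~\ref{goodSCP} is a useful addition: $e_1^2=1$ when $p\geq 1$, and $e_{123}^2=1$ when $p=0$ and $n\geq 3$. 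It spells out the step the paper leaves implicit when it invokes Theorem~\ref{division1}.
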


The next example shows that $\Rr(\R_3)=2$. 

\begin{example}\label{RR3}
\textit{The algebra Clifford algebra $\R_3$ (endowed with the Clifford conjugation) satisfies $\Rr(\R_3)=2$}. In fact, by Example \ref{Cartannumber1}(ii), $\Cc(\R_3)=2$, so by Corollary \ref{2}, $\Rr(\R_3)=2$, because $\R_3$ satisfies condition \eqref{buonacondition}. An explicit example of a non-constant slice regular function with constant real part is given by the slice polynomial function $f(x):=xe_{123}$. $\sqbullet$
\end{example}

The following example shows that Proposition \ref{maggioredi2} is false without assumption \eqref{buonacondition}. Moreover, it provides an example of an associative Cartan $*$-algebras $A$ that satisfies $\Rr(A)<\Cc(A)$.

\begin{example}\label{R11}
Consider the Clifford algebra $\R_{2,0}$ endowed with the Clifford conjugation. Observe that as $e_K^c=-e_K$ for each $K\in\mc{P}(2)\setminus\{\varnothing\}$, condition \eqref{buonacondition} is not satisfied. Moreover, observe that $e_K\in\sph_{\R_{2,0}}$ if and only if $K=\{1,2\}$. We have $\Cc(\R_{2,0})=2$, because $\Cc:=\{(1,e_{12}),(e_1,e_2)\}$ is a SCP, and it is easy to check that $e_Ke_{K\ds \{1\}}\not\in \sph_{\R_{2,0}}$ for each $K\in \mc{P}(2)$. Let us show that $\Rr(\R_{2,0})=1$.

Each $x\in\R_{2,0}$ can be written as $x:=x_0+x_1e_1+x_2e_2+x_{12}e_{12}$ for some $x_0,x_1,x_2,x_{12}\in\R$. Recall that $e_{12}=e_1e_2$ and that $e_1e_2=-e_2e_1$. If $x^2=-1$, then
$$
(x_0^2+x_1^2+x_2^2-x_{12}^2)+2x_0x_1e_1+2x_0x_2e_2+2x_0x_{12}e_{12}=-1.
$$
As $e_K^c=-e_K$ for each $K\in\mc{P}(2)\setminus\{\varnothing\}$, we deduce that $\sph_{\R_{2,0}}$ is the real algebraic set
\begin{equation}\label{sferaconto}
\sph_{\R_{2,0}}=\{x_0=0, \, x_{12}^2-1=x_1^2+x_2^2\}.
\end{equation}
Let $\Omega\subset \Qq_{\R_{2,0}}$ be a circular slice domain and $f:\Omega \to \R_{2,0}$ a slice regular function. Assume that the real component $f_K$ of $f$ is constant for some $K\in\mc{P}(2)$. Up to substitute $f$ with $fe_K$, we may assume that the real part $f_{\varnothing}$ is constant. Moreover, up to substitute $f$ with $f-f_{\varnothing}$ (which is still a slice regular function, because $f_\varnothing$ is constant), we may assume, in addition, that $f_{\varnothing}\equiv0$. Let $D\subset \C$ be the connected open subset invariant under conjugation (see Assumption \ref{assSimm}) such that $\Omega=\Omega_D$ and $F:D\to \R_{2,0}\otimes_\R\C$ the stem function such that $f=\mc{I}(F)$. Let $F_K:=F_{K,1}+\iota F_{K,2}$ for $K\in\mc{P}(2)$ be the complex components of $F$. Then,
\begin{multline*}
f(\alpha+\beta J)=F_{\varnothing,1}(z)+F_{1,1}(z)e_1+F_{2, 1}(z)e_2+F_{12, 1}(z)e_{12}\\
+J(F_{\varnothing,2}(z)+F_{1,2}(z)e_1+F_{2, 2}(z)e_2+F_{12, 2}(z)e_{12})
\end{multline*}
for each $\alpha,\beta\in\R$ and $J\in\sph_{\R_{2,0}}$ such that $\alpha+\beta J\in \Omega$, where $z=\alpha+i \beta$. By \eqref{sferaconto}, we deduce
$$
f_{\varnothing}(\alpha+\beta J)=F_{\varnothing,1}(z)+x_1F_{1,2}(z)+x_2F_{2,2}(z)-x_{12}F_{12,2}(z)
$$
for each $\alpha,\beta\in\R$ and $J=x_1e_1+x_2e_2+x_{12}e_{12}\in\sph_{\R_{2,0}}$ such that $\alpha+\beta J\in \Omega$, where $z=\alpha+i \beta$. In particular, as $f_{\varnothing}\equiv 0$, then
\begin{equation}\label{contoR2}
F_{\varnothing,1}(z)=x_{12}F_{12,2}(z)-x_1F_{1,2}(z)-x_2F_{2,2}(z)
\end{equation}
for each $z\in D$ and $(x_1,x_2,x_{12})\in X:=\{x_{12}^2-1=x_1^2+x_2^2\}$. As $(0,0,1)\in X$, we deduce $F_{\varnothing,1}(z)=F_{12,2}(z)$ for each $z\in D$. By the fact that 
$
(1,0,\sqrt{2}),(\sqrt{2},0,\sqrt{3})\in X,
$
we have  
$$
(\sqrt{2}-1)F_{\varnothing,1}(z)=\tfrac{\sqrt{3}-1}{\sqrt{2}}F_{\varnothing,1}(z)
$$
for each $z\in D$. In particular, $F_{\varnothing,1}\equiv0$. As the vectors $(0,0,1),(1,0,\sqrt{2}),(0,1,\sqrt{2})\in X$ are linearly independent, by \eqref{contoR2}, we have that also $F_{12,2}$, $F_{1,2}$ and $F_{2,2}$ are identically zero. As $f$ is slice regular, the stem function $F$ is holomorphic. We deduce that $F$ is constant, because $D$ is connected. We conclude that $f$ is constant, as required. $\sqbullet$
\end{example}

We end this section by studying the reconstructing number of the algebra $\mathbb{S}\O$ of split octonions. 

\begin{example}
Let $\mathbb{S}\O=\HH+\ell\HH$ be the algebra of split octonions. If $\mathbb{S}\O$ is endowed with the $*$-involution defined as $(q+\ell p)^c=q^c+\ell p$ for each $q,p\in\HH$ (see Example \ref{splitoctonions2}(iii)), then condition \eqref{buonacondition} is satisfied. By Example \ref{Cartannumber1}(iii), we have $\Cc(\mathbb{S}\O)=2$. Thus, by Corollary \ref{2}, we deduce $\Rr(\mathbb{S}\O)=2$.

Consider now $\mathbb{S}\O$ endowed with the $*$-involution defined as  $(q+\ell p)^c=q^c-\ell p$ for each $q,p\in\HH$ (see Example \ref{splitoctonions}). In this case condition \eqref{buonacondition} is no more satisfied and in order to determine $\Rr(\mathbb{S}\O)$ we need some calculation. In order to lighten the notation, we set $e_0:=1$, $e_1:=\ell$, $e_2:=j$, $e_3:=i$, $e_4:=\ell j$, $e_5:=\ell i$, $e_6:=ji$, $e_7:=\ell(ij)$. In particular, $\{e_0,\ldots,e_7\}$ is a real basis of $\mathbb{S}\O$. Let $x:=x_0+x_1e_1+\ldots+x_7e_7\in \mathbb{S}\O$. If $x^2=-1$, we deduce:
$$
(x_0^2+x_1^2-x_2^2-x_3^2+x_4^2+x_5^2-x_6^2+x_7^2)+2x_0\sum_{s=1}^7 x_se_s=-1.
$$
In particular, $\sph_{\mathbb{S}\O}$ is the real algebraic set
$$
\sph_{\mathbb{S}\O}=\{x_0=0,\, x_2^2+x_3^2+x_6^2-1=x_1^2+x_4^2+x_5^2+x_7^2\}.
$$
Arguing as in Example \ref{R11}, with the necessary changes due to the non-associativity of $\mathbb{S}\O$, we conclude that $\Rr(\mathbb{S}\O)=1$. We omit the explicit calculation, because it is simple but long. $\sqbullet$
\end{example}

It is worthwhile to notice that the previous example also shows that the reconstructing number depends, as expected, by the involved $*$-involution.

\subsection{Cartan graph}\label{Sgrafo}

Recall that a \textit{(simple unoriented) graph} is a pair $\Gg=(\Vv,\Ee)$, where $\Vv$ is a set, called \textit{set of vertices}, and $\Ee$ is a subset of $\{\{v,w\} : v,w\in \Vv, v\neq w\}$, called \textit{set of edges}. A graph $\Gg$ is called \textit{regular of degree} $k$ if, for each vertex $v\in \Vv$, there exist exactly $k$ vertices $w\in \Vv$ such that $\{v,w\}\in \Ee$. A \textit{dominating set of} $\Gg$ is a subset $\Ww$ of the set of vertices $\Vv$ such that, for each vertex $v\in\Vv\setminus\Ww$, there exists a vertex $w\in\Ww$ such that $\{v,w\}\in\Ee$. Each graph $\Gg$ has at least one dominating set, in fact, by definition $\Ww:=\Vv$ is always a dominating set for $\Gg$. The \textit{domination number $\gamma(\Gg)$ of} $\Gg$ is the minimum integer $m$ such that there exists a dominating set $\Ww$ of cardinality $m$.

To an alternative Cartan $*$-algebra $A$ we can naturally associate a graph. Define $\Vv_A:=\{e_K\}_{K\in\mc{P}(n)}$ and 
$$
\Ee_A:=\{\{e_K,e_H\} : (e_K,e_H) \text{ is a Cartan pair}\}.
$$ 
As $(e_K,e_K)$ is not a Cartan pair for each $K\in\mc{P}(n)$, we deduce that $\Gg_A:=(\Vv_A,\Ee_A)$ is a (simple) graph (see Figure \ref{FigCartan}). We call the graph $\Gg_A$ the \textit{Cartan graph of} $A$. Let $s_A$ be the integer introduced in \eqref{SA}. By Lemma \ref{lemSA}, for each $K\in\mc{P}(n)$, the sets $\{H\in\mc{P}(n) : (e_K,e_H) \text{ is a Cartan pair}\}$ have the same cardinality $s_A$. Thus, the Cartan graph $\Gg_A$ is a regular graph of degree $s_A$. As $s_A\geq 1$, for each $K\in\mc{P}(n)$ there exists $H\in\mc{P}(n)\setminus\{K\}$ such that $\{e_K,e_H\}\in\Ee_A$. In particular, $\Gg_A$ does not contain isolated vertices. A natural question at this point is to wonder if $\Gg_A$ is always connected. The following example shows that $\Gg_A$ is not always connected. 

\begin{example}
Consider the Clifford algebra $\R_{2,0}$ (endowed with the Clifford conjugation). It is straightforward to see that the Cartan graph $\Gg_{\R_{2,0}}$ of $\R_{2,0}$ is the the one of Figure \ref{Cartan1}, which has 2 connected components. $\sqbullet$
\end{example}
\begin{figure}[htbp]
    \centering
 \begin{tikzpicture}[scale=2]
        \draw (0,0)--(1,0);
        \draw (0,0.5)--(1,0.5);
        \fill (0,0) circle (0.04);
        \fill (0,0.5) circle (0.04);
        \fill (1,0) circle (0.04);
        \fill (1,0.5) circle (0.04);
        
        \node[below] at (0,0) {$1$};
        \node[below] at (1,0) {$e_{12}$};
        \node[above] at (0,0.5) {$e_{1}$};
        \node[above] at (1,0.5) {$e_{2}$};
\end{tikzpicture}
    
    \caption{The Cartan graph of $\R_{2,0}$.}
    \label{Cartan1}
\end{figure}

\begin{figure}[htbp]
    \centering
    \begin{subfigure}{0.45\textwidth}
        \centering
        \begin{tikzpicture}[scale=2]
\foreach \i/\ang in {
    8/247.5,
    1/292.5,
    2/337.5,
    3/22.5,
    4/67.5,
    5/112.5,
    6/157.5,
    7/202.5
}{
    \coordinate (n\i) at (\ang:1);
}

    \foreach \j in {2,3,4,5,6,7}{
            \draw (n1) -- (n\j);
}

    \foreach \j in {3,4,5,6,8}{
            \draw (n2) -- (n\j);
}

    \foreach \j in {4,5,7,8}{
            \draw (n3) -- (n\j);
}

   \foreach \j in {6,7,8}{
            \draw (n4) -- (n\j);
}

   \foreach \j in {6,7,8}{
            \draw (n5) -- (n\j);
}

   \foreach \j in {7,8}{
            \draw (n6) -- (n\j);
}

  \foreach \j in {8}{
            \draw (n7) -- (n\j);
}

\foreach \i in {2,...,7}{
\fill (n\i) circle (0.04);
}

\fill[red] (n1) circle (0.04);

\fill[red] (n8) circle (0.04);

\node[below] at (n1) {$1$};
\node[below right] at (n2) {$e_1$};
\node[above right] at (n3) {$e_2$};
\node[above] at (n4) {$e_3$};
\node[above] at (n5) {$e_{12}$};
\node[above left]  at (n6) {$e_{13}$};
\node[below left] at (n7) {$e_{23}$};
\node[below]  at (n8) {$e_{123}$};
\end{tikzpicture}
    \end{subfigure}
    \begin{subfigure}{0.45\textwidth}
        \centering
        \begin{tikzpicture}[scale=2]

\foreach \i/\ang in {
    16/258.75,
    1/281.25,
    2/303.75,
    3/326.25,
    4/348.75,
    5/11.25,
    6/33.75,
    7/56.25,
    8/78.75,
    9/101.25,
    10/123.75,
    11/146.25,
    12/168.75,
    13/191.25,
    14/213.75,
    15/236.25
}{
    \coordinate (n\i) at (\ang:1);
}

\foreach \j in {2,...,10,11}{
            \draw (n1) -- (n\j);
}

\foreach \j in {3,...,8}{
            \draw (n2) -- (n\j);
}

\foreach \j in {12,13,14}{
            \draw (n2) -- (n\j);
}

\foreach \j in {4,5,6,9,10,12,13,15}{
            \draw (n3) -- (n\j);
}

\foreach \j in {5,7,9,11,12,14,15}{
            \draw (n4) -- (n\j);
}

\foreach \j in {8,10,11,13,14,15}{
            \draw (n5) -- (n\j);
}

\foreach \j in {7,8,9,10,12,13,16}{
            \draw (n6) -- (n\j);
}

\foreach \j in {8,9,11,12,14,16}{
            \draw (n7) -- (n\j);
}

\foreach \j in {10,11,13,14,16}{
            \draw (n8) -- (n\j);
}

\foreach \j in {10,11,12,15,16}{
            \draw (n9) -- (n\j);
}

\foreach \j in {11,13,15,16}{
            \draw (n10) -- (n\j);
}

\foreach \j in {14,15,16}{
            \draw (n11) -- (n\j);
}

\foreach \j in {13,14,15,16}{
            \draw (n12) -- (n\j);
}

\foreach \j in {14,15,16}{
            \draw (n13) -- (n\j);
}

\foreach \j in {15,16}{
            \draw (n14) -- (n\j);
}

\foreach \j in {16}{
            \draw (n15) -- (n\j);
}

\foreach \i in {2,...,15}{
\fill (n\i) circle (0.04);
}

\fill[red] (n1) circle (0.04);

\fill[red] (n16) circle (0.04);

\node[below]       at (n1) {$1$};
\node[below right] at (n2) {$e_1$};
\node[below right]       at (n3) {$e_2$};
\node[right]       at (n4) {$e_3$};
\node[right] at (n5) {$e_4$};

\node[above right]       at (n6) {$e_{12}$};
\node[above right]       at (n7) {$e_{13}$};

\node[above]       at (n8) {$e_{14}$};
\node[above]       at (n9) {$e_{23}$};

\node[above left]  at (n10) {$e_{24}$};
\node[above left]        at (n11) {$e_{34}$};

\node[left]        at (n12) {$e_{123}$};
\node[left]        at (n13) {$e_{124}$};
\node[below left]  at (n14) {$e_{134}$};
\node[below left]       at (n15) {$e_{234}$};
\node[below]       at (n16) {$e_{1234}$};
        \end{tikzpicture}
    \end{subfigure}
    
    % Didascalia globale
    \caption{The Cartan graph of $\R_3$ (left) and $\R_4$ (right) with a distinguished dominating set (labelled with red bullets).}
    \label{FigCartan}
\end{figure}

The dominating number $\gamma(\Gg_A)$ of the Cartan graph $\Gg_A$ is strictly related to the Cartan number $\Cc(A)$. In fact, it holds the following:

\begin{prop}\label{grafo}
$\gamma(\Gg_A)=\Cc(A)$.
\end{prop}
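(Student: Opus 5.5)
We prove the two inequalities $\gamma(\Gg_A)\leq\Cc(A)$ and $\Cc(A)\leq\gamma(\Gg_A)$ separately, translating between systems of Cartan pairs and dominating sets. Throughout we use the description \eqref{C1} of $\Cc(A)$ as $\min\{|\Cc_1| : \Cc \text{ is a SCP}\}$, and the fact that $\{e_K,e_H\}\in\Ee_A$ if and only if $(e_K,e_H)$ is a Cartan pair, which is symmetric by Remark \ref{opposto}.

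For $\gamma(\Gg_A)\leq\Cc(A)$, take a SCP $\Cc$ with $|\Cc_1|=\Cc(A)$ and set $\Ww:=\{e_L : L\in\Cc_1\}$. We check that $\Ww$ is dominating. Let $e_M\notin\Ww$. Since $\{\Cc_1,\Cc_2\}$ is a partition of $\mc{P}(n)$, we have $M\in\Cc_2$, so there is $L\in\Cc_1$ with $(e_L,e_M)\in\Cc$. This pair is a Cartan pair, hence $\{e_L,e_M\}\in\Ee_A$. Therefore $\gamma(\Gg_A)\leq|\Ww|=\Cc(A)$.

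For $\Cc(A)\leq\gamma(\Gg_A)$, take a minimum dominating set $\Ww$ and let $\Cc_1':=\{L : e_L\in\Ww\}$ and $\Cc_2':=\mc{P}(n)\setminus\Cc_1'$. For each $M\in\Cc_2'$, choose some $L_M\in\Cc_1'$ adjacent to $e_M$, and let
$$
\Cc:=\{(e_{L_M},e_M) : M\in\Cc_2'\}.
$$
By construction every pair in $\Cc$ is a Cartan pair, and the set of second coordinates is $\Cc_2'$. The set of first coordinates, however, is only a subset of $\Cc_1'$. This is the main obstacle: condition (i) of Definition \ref{SCP} requires that every $L\in\Cc_1'$ actually occurs in some pair.

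We remove the obstacle using minimality of $\Ww$. Suppose some $L\in\Cc_1'$ occurs in no pair. Since $s_A\geq1$, the vertex $e_L$ has a neighbour $e_H$, and there are two cases.

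If $H\in\Cc_2'$, we add the pair $(e_L,e_H)$ to $\Cc$. This is allowed because Definition \ref{SCP} does not require uniqueness of partners. It does not change the second coordinate set, since $H$ already lies in $\Cc_2'$.

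If every neighbour of $e_L$ lies in $\Ww$, then $\Ww\setminus\{e_L\}$ is still dominating. Indeed, $e_L$ is dominated by one of its neighbours, which remains in the set. Any $e_M$ previously dominated only by $e_L$ must have been a neighbour of $e_L$, hence lies in $\Ww$ and needs no domination. This contradicts the minimality of $\Ww$, so this case cannot occur.

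After these additions, the sets of first and second coordinates of $\Cc$ are exactly $\Cc_1'$ and $\Cc_2'$. Thus $\Cc$ is a SCP, and $\Cc(A)\leq|\Cc_1'|=\gamma(\Gg_A)$. Together with the first inequality, this gives $\gamma(\Gg_A)=\Cc(A)$.
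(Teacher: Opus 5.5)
Your proof is correct and follows the paper's argument. The first inequality is identical, and the second rests on the same key step: minimality of $\Ww$ forces every $e_L\in\Ww$ to have a neighbour outside $\Ww$. The only difference is that the paper takes all Cartan pairs $(e_L,e_M)$ with $e_L\in\Ww$, $e_M\notin\Ww$ at once, while you choose one partner per $M$ and then patch the unused $L$ (your case split is really ``some neighbour of $e_L$ lies outside $\Ww$, or all lie in $\Ww$'', which is just a phrasing point).
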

\begin{proof}
We show the two inequalities $\gamma(\Gg_A)\leq \Cc(A)$ and $\gamma(\Gg_A)\geq \Cc(A)$ separately.
 
$\bullet\, \, \gamma(\Gg_A)\leq \Cc(A)$. Let $\Cc$ be a SCP and $\Cc_1$ and $\Cc_2$ the sets introduced in \eqref{partition}. Assume that $\Cc(A)=|\Cc_1|$ (see \eqref{C1}). Let $\Ww:=\{e_L : L\in\Cc_1\}$ and $e_M\in\Vv_A\setminus \Ww$. As $\{\Cc_1,\Cc_2\}$ is a partition of $\mc{P}(n)$, we have $M\in\Cc_2$. Thus, there exists $L\in\Cc_1$ such that $(e_L,e_M)\in \Cc$. In particular, $(e_L,e_M)$ is a Cartan pair, so $\{e_L,e_M\}\in \Ee_A$. We deduce that $\Ww$ is a dominating set for the graph $\Gg_A$. Thus, $\gamma(\Gg_A)\leq |\Ww|=|\Cc_1|=\Cc(A)$, as required. 

$\bullet\, \, \gamma(\Gg_A)\geq \Cc(A)$. Let $\Ww$ be a dominating set of $\Gg_A$ such that $\gamma(\Gg_A)=|\Ww|$. We claim: \textit{The set}
$
\Cc:=\{(e_L,e_M): e_L\in \Ww, e_M\in \{e_K\}_{K\in\mc{P}(n)}\setminus\Ww \text{ and } (e_L,e_M) \text{ is a Cartan pair}\}
$
\textit{is a SCP}. 

Clearly, $\Cc_1\cap \Cc_2=\varnothing$ and each element of $\Cc$ is a Cartan pair. Thus, we only need to show that $\Cc_1\cup \Cc_2=\mc{P}(n)$. By the definition of dominating set, for each $e_M\in \{e_K\}_{K\in\mc{P}(n)}\setminus\Ww$, there exists $e_L\in\Ww$ such that $\{e_L,e_M\}\in\Ee_A$. Thus, $(e_L,e_M)$ is a Cartan pair, so $(e_L,e_M)\in \Cc$. In particular, $M\in\Cc_2$. 

 Choose now $e_L\in\Ww$. By Lemma \ref{sqrt-1}, there exists at least one $M'\in\mc{P}(n)$ such that $(e_L,e_M')$ is a Cartan pair. We deduce that there exists $e_M\in \{e_K\}_{K\in\mc{P}(n)}\setminus\Ww$ such that $(e_L,e_M)$ is a Cartan pair, because, otherwise $\Ww\setminus\{e_L\}$ would be a dominating set of $\Gg_A$ and 
$$
\gamma(\Gg_A)\leq|\Ww\setminus\{e_L\}|=|\Ww|-1<|\Ww|=\gamma(\Gg_A),
$$ 
which is a contradiction. Thus, $(e_L,e_M)\in \Cc$. In particular, $L\in\Cc_1$. We deduce that
$$
\mc{P}(n)=\{K : e_K\in \Ww\}\cup\{K : e_K\not\in\Ww\}\subset \Cc_1\cup\Cc_2\subset \mc{P}(n).
$$
We conclude that, $\Cc_1\cup \Cc_2=\mc{P}(n)$, so $\Cc$ is a SCP, as claimed.

As $\Cc$ is a SCP, we have $|\Cc_1|\geq \Cc(A)$ (see \eqref{C1}). We conclude that $\gamma(\Gg_A)=|\Ww|\geq|\Cc_1|\geq \Cc(A)$, as required.
\end{proof}

We deduce straightforwardly the following:

\begin{cor}\label{ricoprire}
Let $\Ww$ be a subset of $\{e_K\}_{K\in\mc{P}(n)}$. If for each $e_M\in\{e_K\}_{K\in\mc{P}(n)}\setminus\Ww$ there exists $e_L\in\Ww$ such that $(e_L,e_M)$ is a Cartan pair, then $\Cc(A)\leq |\Ww|$.
\end{cor}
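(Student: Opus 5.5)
The hypothesis of Corollary \ref{ricoprire} says exactly that $\Ww$ is a dominating set of the Cartan graph $\Gg_A$. I will argue that way. By definition of $\Ee_A$, the pair $\{e_L,e_M\}$ is an edge of $\Gg_A$ precisely when $(e_L,e_M)$ is a Cartan pair. So every vertex $e_M\in\Vv_A\setminus\Ww$ is adjacent to some vertex $e_L\in\Ww$, which is the definition of a dominating set.

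Then $\gamma(\Gg_A)\leq|\Ww|$, since the domination number is the minimum cardinality of a dominating set. Proposition \ref{grafo} gives $\gamma(\Gg_A)=\Cc(A)$, and combining the two yields $\Cc(A)\leq|\Ww|$.

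One could instead avoid graph language and build a SCP directly, as in the second half of the proof of Proposition \ref{grafo}. The natural candidate is $\Cc=\{(e_L,e_M): e_L\in\Ww,\ e_M\notin\Ww,\ (e_L,e_M)\text{ Cartan pair}\}$. The only delicate point there is that every $e_L\in\Ww$ must belong to $\Cc_1$. This can fail if $e_L$ has no Cartan partner outside $\Ww$, and it is why that proof works with a \emph{minimal} dominating set. With Proposition \ref{grafo} already available, this issue is absorbed by the passage through $\gamma(\Gg_A)$, so there is no real obstacle. The argument is a two-line deduction: $\Ww$ is dominating, hence $\Cc(A)=\gamma(\Gg_A)\leq|\Ww|$.
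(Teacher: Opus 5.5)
Your proposal is correct and follows the paper's own proof: the hypothesis says exactly that $\Ww$ is a dominating set of the Cartan graph $\Gg_A$, so Proposition \ref{grafo} gives $\Cc(A)=\gamma(\Gg_A)\leq|\Ww|$. Your side remark is also accurate: a direct SCP construction needs every element of $\Ww$ to have a Cartan partner outside $\Ww$, which is why the proof of Proposition \ref{grafo} starts from a dominating set of minimum cardinality.
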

\begin{proof}
The set $\Ww$ is a dominating set for the Cartan graph $\Gg_A$. Thus, by Proposition \ref{grafo}, we deduce $\Cc(A)=\gamma(\Gg_A)\leq |\Ww|$, as required.
\end{proof}

The dominating number of a graph is an important invariant studied by many authors. For further information and details, we refer the reader to the monograph \cite{H} and the references contained therein. In the literature, there exist several upper bounds for the dominating number of regular graphs depending on their degree \cite{B, B2, R, S}. Thanks to Proposition \ref{grafo}, we can use these upper bounds to provide upper bounds for the reconstructing number $\Rr(A)$ of $A$. For instance, in \cite[Thm.7]{B}, it is shown that for a regular graph $\Gg=(\Vv,\Ee)$ of degree $\geq 6$ it holds
$$
\gamma(\Gg)\leq \frac{127}{418}|\Vv|\simeq 0.30382775 |\Vv|.
$$
In particular, we deduce the following (compare it with Theorem \ref{upper}):

\begin{prop}\label{domin}
Assume that $s_A\geq 6$. Then,
$$
\Rr(A)\leq \Cc(A)\leq \frac{127}{418}\dim(A).
$$
\end{prop}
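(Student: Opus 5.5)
The statement follows by chaining three facts already available in the paper. The first is the inequality $\Rr(A)\leq \Cc(A)$ from Theorem \ref{upper}. The second is the identity $\Cc(A)=\gamma(\Gg_A)$ from Proposition \ref{grafo}. The third is the external bound \cite[Thm.7]{B}: for any regular graph $\Gg=(\Vv,\Ee)$ of degree at least $6$ one has $\gamma(\Gg)\leq \frac{127}{418}|\Vv|$.

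The plan is therefore to check that the Cartan graph $\Gg_A$ satisfies the hypotheses of the graph-theoretic result, and then compute. By Lemma \ref{lemSA}, each vertex $e_K$ of $\Gg_A$ is adjacent to exactly $s_A$ vertices. We should confirm that these neighbours are distinct from $e_K$ itself. This holds because $(e_K,e_K)$ is never a Cartan pair: $e_Ke_K=e_K^2\in\{\pm1\}$ is real, so its trace is nonzero and it does not lie in $\sph_A$. We should also check that the correspondence between Cartan pairs and edges does not double count. It does not, since by Remark \ref{opposto} the relation "$(e_K,e_H)$ is a Cartan pair" is symmetric. Hence $\Gg_A$ is a simple regular graph of degree $s_A\geq 6$, as the paper already observed when defining the Cartan graph.

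Next we identify the vertex count: $|\Vv_A|=|\{e_K\}_{K\in\mc{P}(n)}|=2^n=\dim(A)$. Applying \cite[Thm.7]{B} gives $\gamma(\Gg_A)\leq \frac{127}{418}\dim(A)$. Combined with Proposition \ref{grafo} and Theorem \ref{upper}, this yields
$$
\Rr(A)\leq \Cc(A)=\gamma(\Gg_A)\leq \frac{127}{418}\dim(A).
$$

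The only delicate point is making sure that the version of the bound in \cite{B} applies to graphs of arbitrary degree at least $6$, and not merely to degree exactly $6$. It also needs to hold without a connectivity assumption, since $\Gg_A$ may be disconnected, as the $\R_{2,0}$ example shows. If the cited result requires connectivity, we would apply it to each connected component separately. Each component is still $s_A$-regular, and domination numbers and vertex counts are both additive over components, so summing the componentwise bounds gives the same inequality.
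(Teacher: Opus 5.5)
Your proposal is correct and is essentially the paper's own argument. The paper applies \cite[Thm.7]{B}, which it quotes for regular graphs of any degree $\geq 6$ (this settles the point you flag), to the $s_A$-regular Cartan graph $\Gg_A$ on $2^n=\dim(A)$ vertices, and chains this with $\Cc(A)=\gamma(\Gg_A)$ from Proposition~\ref{grafo} and $\Rr(A)\leq\Cc(A)$ from Theorem~\ref{upper}. Your extra checks are sound and only make explicit what the paper takes for granted: there are no loops since $e_K^2\in\{\pm1\}$, the Cartan-pair relation is symmetric by Remark~\ref{opposto}, and the componentwise reduction covers the case where connectivity is required.
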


Consider  the Clifford algebra $\R_{p,q}$ endowed with the Clifford conjugation. We end this section with the following:

\begin{cor}\label{stimaRpq}
For each $p,q\in\mathbb{N}$ such that $n:=p+q\geq 5$, it holds
$$
\Rr(\R_{p,q})\leq \Cc(\R_{p,q})\leq  \frac{127}{418}\dim(\R_{p,q}).
$$
\end{cor}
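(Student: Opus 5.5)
The statement follows from Proposition \ref{domin}, so the plan is to check that its hypothesis $s_A\geq 6$ holds for $A=\R_{p,q}$ (with the Clifford conjugation) as soon as $n=p+q\geq 5$. First I need $\R_{p,q}$ to satisfy Assumption \ref{assumption}. It is associative, hence alternative, and it is a Cartan $*$-algebra by Example \ref{splitoctonions2}(ii). It remains to check $\sph_{\R_{p,q}}\neq\varnothing$, which I will do by exhibiting basis elements in $\sph_{\R_{p,q}}$; this also starts the count of $s_A$.

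By Corollary \ref{Clifford2}, $e_K\in\sph_{\R_{p,q}}$ if and only if
$$
(\m{K_+},\m{K_-})\in\{(0,1),(0,2),(2,0),(2,3)\}.
$$
So I have to show that, for every splitting $n=p+q\geq5$, at least six subsets $K\subset\{1,\ldots,n\}$ have one of these residue patterns. I will split into cases according to $q$.
\begin{itemize}
\item If $q\geq 2$: there are $q$ sets with $K_+=\varnothing$, $|K_-|=1$, and $\binom{q}{2}$ sets with $K_+=\varnothing$, $|K_-|=2$. If $p\geq 2$ there are in addition $\binom{p}{2}\geq1$ sets with $|K_+|=2$, $K_-=\varnothing$.
\item If $q=1$: we have $p\geq4$, so there are $\binom{p}{2}\geq 6$ sets of type $(2,0)$.
\item If $q=0$: we have $p\geq5$, so there are $\binom{p}{2}\geq 10$ sets of type $(2,0)$.
\end{itemize}

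When $q\geq2$ the total is $q+\binom q2+\binom p2$, with the last term present only if $p\geq2$. The cases to check are:
\begin{itemize}
\item $q\geq 3$: this gives at least $3+3=6$.
\item $q=2$, $p\geq 3$: this gives $2+1+\binom p2\geq 6$.
\end{itemize}
In every case $s_{\R_{p,q}}\geq 6$. In particular $\sph\neq\varnothing$, and by Lemma \ref{lemSA} the Cartan graph is $s_A$-regular of degree at least $6$. Proposition \ref{domin} then gives $\Rr(\R_{p,q})\leq\Cc(\R_{p,q})\leq\frac{127}{418}\dim(\R_{p,q})$.

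The only delicate step is the case analysis for small $q$, in particular $q\in\{0,1,2\}$, where the contributions from $K_-$ are small. There the counting relies on sets of type $(2,0)$, and these give enough elements only because $n\geq5$ forces $p$ to be large.
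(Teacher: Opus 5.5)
Your proof is correct and takes essentially the paper's route. You reduce to $s_{\R_{p,q}}\geq 6$ via Proposition \ref{domin}, then count the basis units of types $(0,1)$, $(0,2)$, $(2,0)$ from Corollary \ref{Clifford2}, which gives $q+\binom{q}{2}+\binom{p}{2}$; the only difference is that the paper bounds this sum below uniformly by $\frac{n^2-1}{4}\geq 6$ through an algebraic identity, whereas your case split on $q$ does the same job and is correct and complete.
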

\begin{proof}
In view of Proposition \ref{domin}, we only need to show that, if $n\geq 5$, then $s_{\R_{p,q}}\geq 6$. By Corollary \ref{Clifford2}, we have that $e_k\in\sph_{\R_{p,q}}$ for each $k\in \{p+1,\ldots,n\}$ (and there are exactly $n-p=q$ of them), and that $e_K\in\sph_{\R_{p,q}}$ for each $K\in\mc{P}(n)$ such that $|K|=2$ and either $K\subset\{1,\ldots,p\}$ or $K\subset \{p+1,\ldots,n\}$ (and there are exactly $\tbinom{p}{2}+\tbinom{q}{2}$ of them). As $s_{\R_{p,q}}=|\{K\in\mc{P}(n) : e_K\in\sph_{\R_{p,q}}\}|$, we deduce that
\begin{align*}
s_{\R_{p,q}}&\geq q+\binom{p}{2}+\binom{q}{2}=\frac{p^2+(q+1)^2-(p+q+1)}{2}\\
&=\frac{(p+q+1)^2+(p-q-1)^2-2(p+q+1)}{4}=\frac{(n+1)^2+(p-q-1)^2-2(n+1)}{4}\\
&\geq \frac{(n+1)^2-2(n+1)}{4}=\frac{n^2-1}{4}.
\end{align*}
We deduce that if $n\geq 5$, then $s_{\R_{p,q}}\geq 6$, as required. 
\end{proof}

\section{On the reconstructing number of Clifford algebras}\label{Scliff}

\textit{Let $p,q\in\mathbb{N}$ with $n:=p+q\geq 2$. We consider the Clifford algebras $\R_{p,q}$ endowed with the Clifford conjugation. Recall that $\R_n:=\R_{(0,n)}$.}

In this section we improve the estimations of Theorem \ref{upper} and Proposition \ref{stimaRpq} for the reconstructing number $\Rr(\R_{p,q})$. We start with the study of $\Rr(\R_{p,q})$ in full generality, that is, for each $n:=p+q\geq 2$. Later, we focus on the algebras $\R_n$ improving significantly our initial results. 

\subsection{On the reconstructing number of the Clifford algebras $\R_{p,q}$}

In view of the inequa\-lity $\Rr(\R_{p,q})\leq \Cc(\R_{p,q})$, we start by studying the Cartan numbers $\Cc(\R_{p,q})$. Recall that, given an integer $m\geq 0$, we denote by $\m{m}$ the remainder of $m$ divided by 4. Moreover, given $K\in\mc{P}(n)$, we denote by $\m{K}$ the remainder of $|K|$ divided by 4. 

\begin{prop}[Cartan number of the Clifford algebras $\R_{p,q}$]\label{Rpq}
It holds
\begin{equation}\label{CRpq}
\Cc(\R_{p,q})\leq\frac{n^3+5n+6}{6}.
\end{equation}
\end{prop}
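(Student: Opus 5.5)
We will apply Corollary \ref{ricoprire}: it suffices to exhibit a set $\Ww\subset\{e_K\}_{K\in\mc{P}(n)}$ with $|\Ww|\leq\frac{n^3+5n+6}{6}$ such that every $e_M\notin\Ww$ forms a Cartan pair with some element of $\Ww$. Since $(e_L,e_M)$ is a Cartan pair iff $e_{L\ds M}\in\sph_{\R_{p,q}}$, and Corollary \ref{Clifford2} characterises the basis imaginary units via the pair $(\m{K_+},\m{K_-})\in\{(0,1),(0,2),(2,0),(2,3)\}$, the problem becomes combinatorial. We need a family $\Ww$ of subsets of $\{1,\ldots,n\}$ such that every $M$ is at distance $\ds$ equal to some "good" set from an element of $\Ww$.

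\emph{Choice of $\Ww$.} Note that
$$\tfrac{n^3+5n+6}{6}=\tbinom{n}{3}+\tbinom{n}{2}+\tbinom{n}{1}+\tbinom{n}{0}-\text{(lower corrections)}.$$
Indeed $\binom n3+\binom n1+1=\frac{n^3-3n^2+2n+6n+6}{6}$, which differs from the target by $\frac{n^2-n}{2}=\binom n2$. So the bound equals $\binom n0+\binom n1+\binom n2+\binom n3$: the number of subsets of cardinality at most $3$. I will therefore take $\Ww:=\{e_L : |L|\leq 3\}$.

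\emph{Domination.} Let $M$ with $|M|\geq 4$. I will find $L\subset M$ with $|L|\leq 3$ such that $K:=M\setminus L=L\ds M$ is good. Write $a:=|M_+|$ and $b:=|M_-|$. We need to remove $u\leq a$ elements from $M_+$ and $v\leq b$ from $M_-$ with $u+v\leq3$ so that $(\m{a-u},\m{b-v})$ lands in the good list. The key observation is that the good list contains, for each residue $\m{a}$, targets reachable cheaply: if $\m a\in\{0,1\}$ aim for first coordinate $0$ with $u=\m a$, and otherwise aim for $2$ with $u=\m a-2$, so $u\leq1$. Then the second coordinate must be in $\{1,2\}$ (if the first is $0$) or $\{0,3\}$ (if it is $2$), which needs $v\leq 2$; checking the $4$ residues of $b$ shows $v\leq 2$ suffices in each case (e.g. target $\{1,2\}$: $\m b=0\Rightarrow v=2$, $3\Rightarrow v=1$). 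Hence $u+v\leq 3$.

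\emph{Main obstacle.} The hard part is availability: when $a$ or $b$ is small we may not have enough elements to remove (say $a=0$ but we want first coordinate $2$), and $K$ must be nonempty for $e_K$ to be an imaginary unit. Since $|M|\geq4$, I will handle these boundary cases by switching targets, e.g. using alternative residues or trading removals between $M_+$ and $M_-$, with a short case check on small $a$ or $b$. The aim is to verify that some choice with $u+v\leq3$, $u\leq a$, $v\leq b$ and $K\neq\varnothing$ always exists; any remaining exceptional configurations can be absorbed, if necessary, by adding a few more sets to $\Ww$ while staying within the counted bound.
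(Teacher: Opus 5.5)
Your strategy is the same as the paper's. You take $\Ww=\{e_L:|L|\le 3\}$, whose cardinality is exactly $\tbinom n0+\tbinom n1+\tbinom n2+\tbinom n3=\frac{n^3+5n+6}{6}$, and combine it with Corollary~\ref{ricoprire} and the residue criterion of Corollary~\ref{Clifford2}. Your uniform rule is a tidier substitute for the paper's inspection of all sixteen pairs $(\m{K_+},\m{K_-})$: remove $u\in\{0,1\}$ indices of $M_+$ to make the first residue $0$ or $2$, then $v\le 2$ indices of $M_-$. It also shows that one can always take $L\subset M$. The paper instead, in the case $(1,0)$ with $|K_+|=1$, adjoins an index outside $K$ or treats $(p,q)=(1,n-1)$ separately.

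The boundary verification is missing: you only announce it, and you misidentify where it fails. In your scheme $u\le\m{a}\le a$ always holds. So the situation ``$a=0$ but we want first coordinate $2$'' never arises: for $a=0$ you aim at first residue $0$ with $u=0$. Also, $K\neq\varnothing$ is automatic, because $(0,0)$ is not an admissible pair. The only way $v\le b$ can fail is $b=0$ with $\m{a}\in\{0,1\}$, where you would need $v=2$ and $M_-=\varnothing$. For $1\le b\le 3$ the required $v$ never exceeds $b$.

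In that case $a=|M|\ge 4$, and switching the target to $(2,0)$ works:
\begin{itemize}
\item if $\m{a}=0$, remove $2$ indices of $M_+$;
\item if $\m{a}=1$, remove $3$ indices of $M_+$.
\end{itemize}
With this single case written out, the argument is complete.

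You should drop the fallback of absorbing exceptional configurations ``by adding a few more sets to $\Ww$ while staying within the counted bound''. $|\Ww|$ already equals $\frac{n^3+5n+6}{6}$, so no enlargement is possible and the case check is genuinely required.
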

\begin{proof}
If $n=p+q\leq 3$, the statement is trivially true, because the right hand side of \eqref{CRpq} is equal to the dimension of $\R_{p,q}$. Thus we may assume in what follows that $n\geq 4$. Observe that, as 
$$
\frac{n^3+5n+6}{6}= 1+n+\binom{n}{2}+\binom{n}{3},
$$
the right hand side of the previous inequality is exactly the number of subset $H$ of $\{1,\ldots,n\}$ such that $|H|\leq 3$. Thus, by Corollary \ref{ricoprire}, we may reduce to show: \textit{For each $K\in\mc{P}(n)$ such that $|K|\geq 4$, there exists $H\in \mc{P}(n)$ such that $|H|\leq 3$ and $(e_H,e_K)$ is a Cartan pair.}

For each $K\in\mc{P}(n)$, let $K_+$ and $K_-$ be the sets introduced in \eqref{+-}. We consider all the possible cases $(\m{K_+},\m{K_-})\in \{(k,h) : k,h=0,1,2,3\}$ with $|K|\geq 4$ and for each of them we have to find $H\in\mc{P}(n)$ with $|H|\leq 3$ such that $e_He_K\in\sph_{\R_{p,q}}$. By Corollary \ref{Clifford2}, $e_He_K\in \sph_{\R_{p,q}}$ if and only if 
$$
(\m{(H\ds K)_+},\m{(H\ds K)_-})\in\{(0,1), (0,2), (2,0), (2,3)\}.
$$
Let $K\in\mc{P}(n)$ be such that $|K|\geq 3$. 
 
$\bullet$ $(\m{K_+},\m{K_-})\in\{(0,1), (0,2), (2,0), (2,3)\}$. Then, the pair $(1,e_K)$ is a Cartan pair.

$\bullet$ $(\m{K_+},\m{K_-})\in\{(1,1), (1,2), (3,0), (3,3)\}$. There exists $k\in\{1,\ldots,p\}\cap K$. The pair $(e_k,e_K)$ is a Cartan pair. 

$\bullet$ $(\m{K_+},\m{K_-})\in\{(2,1), (0,3)\}$. There exists $k\in\{p+1,\ldots,n\}\cap K$. The pair $(e_k,e_K)$ is a Cartan pair. 

$\bullet$ $(\m{K_+},\m{K_-})\in\{(1,3),(3,1)\}$. There exist $k\in\{1,\ldots,p\}\cap K$ and $h\in\{p+1,\ldots,n\}\cap K$. The pair $(e_{kh},e_K)$ is a Cartan pair. 

$\bullet$ $(\m{K_+},\m{K_-})=(2,2)$. There exists $H\subset\{1,\ldots,p\}\cap K$ with $|H|=2$. The pair $(e_H,e_K)$ is a Cartan pair. 

$\bullet$ $(\m{K_+},\m{K_-})=(0,0)$. Observe that $K\neq \varnothing$, because $|K|\geq 3$. Then, either there exists $H\subset\{1,\ldots,p\}\cap K$ with $|H|=2$ or there exists $H\subset \{p+1,\ldots,n\}\cap K$ with $|H|=2$. In both cases $(e_H,e_K)$ is a Cartan pair.

$\bullet$ $(\m{K_+},\m{K_-})=(3,2)$. There exist $H\subset\{1,\ldots,p\}\cap K$ such that $|H|=3$. The pair $(e_H, e_K)$ is a Cartan pair.

$\bullet$ $(\m{K_+},\m{K_-})=(1,0)$. As $\m{K_+}=1$, then either $|K_+|=1$ or $|K_+|\geq 5$. If $|K_+|\geq 5$, then there exists $H\subset \{1,\ldots,p\}\cap K$ such that $|H|=3$ and $(e_H,e_K)$ is a Cartan pair. Assume now that $|K_+|=1$. If $(p,q)\neq (1,n-1)$, then $p\geq 2$, because $n=p+q\geq 4$. In particular, there exists $k\in \{1,\ldots,p\}\setminus K$ and the pair $(e_k,e_K)$ is a Cartan pair. If $(p,q)=(1,n-1)$, as $n=p+q\geq 4$, then there exists $H\subset \{2,\ldots,n-1\}$ such that $|H|=2$ and $(e_{H\cup K},e_H)$ is a Cartan pair. 
\end{proof}

By the previous result and the inequality $\Rr(\R_{p,q})\leq \Cc(\R_{p,q})$, we deduce straightforwardly the following:

\begin{cor}[Reconstructing number of the Clifford algebras $\R_{p,q}$]\label{reconRpq}
It holds
\begin{equation}\label{RRpq}
\Rr(\R_{p,q})\leq\frac{n^3+5n+6}{6}.
\end{equation}
\end{cor}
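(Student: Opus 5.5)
This follows at once by chaining two inequalities already proved. First, I would invoke Theorem \ref{upper}, whose proof gives $\Rr(A)\leq|\Cc_1|$ for every SCP $\Cc$ of an alternative Cartan $*$-algebra $A$, and hence $\Rr(A)\leq\Cc(A)$. The hypotheses are satisfied by $A=\R_{p,q}$ with the Clifford conjugation and $n=p+q\geq 2$. Indeed, $\R_{p,q}$ is associative, hence alternative, and it is a Cartan $*$-algebra by Example \ref{splitoctonions2}(ii). Moreover, $\sph_{\R_{p,q}}\neq\varnothing$: by Corollary \ref{Clifford2}, $e_K\in\sph_{\R_{p,q}}$ whenever $(\m{K_+},\m{K_-})\in\{(0,1),(2,0)\}$, and such a $K$ exists for every $n\geq 2$. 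If $q\geq1$, take $K=\{n\}$. If $q=0$, then $p=n\geq2$ and we may take $K=\{1,2\}$.

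Second, I would combine this with Proposition \ref{Rpq}, which bounds the Cartan number:
$$
\Rr(\R_{p,q})\leq\Cc(\R_{p,q})\leq\frac{n^3+5n+6}{6}.
$$

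There is no real obstacle here, since all the combinatorial work is contained in Proposition \ref{Rpq}, where Corollary \ref{ricoprire} shows that the basis elements $e_H$ with $|H|\leq 3$ dominate the Cartan graph. The only point to check is that Assumption \ref{assumption} holds for $\R_{p,q}$ in every case $n\geq 2$, which is exactly the verification of $\sph_{\R_{p,q}}\neq\varnothing$ given above.
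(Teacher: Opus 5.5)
Your proof is correct and is exactly the paper's argument: the corollary follows by chaining $\Rr(\R_{p,q})\leq\Cc(\R_{p,q})$ from Theorem~\ref{upper} with the bound on $\Cc(\R_{p,q})$ from Proposition~\ref{Rpq}. Your explicit check that $\sph_{\R_{p,q}}\neq\varnothing$ for every $n\geq 2$ (taking $K=\{n\}$ if $q\geq 1$, or $K=\{1,2\}$ if $q=0$) is a correct verification of a standing assumption that the paper leaves implicit.
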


Considering all the algebras $\R_{p,q}$ is a very general situation and our previous estimation is quite `rough'. Nevertheless, it is worthwhile to notice that \eqref{RRpq} is already much sharper, when $n=p+q$ is big enough, with respect to both the inequalities of Theorem \ref{upper} and Proposition \ref{stimaRpq}.

\subsection{On the Cartan number of the Clifford algebras $\R_n$}

In this section, we study the Cartan number of the Clifford algebras $\R_n$. We start with the following:

\begin{example}[Cartan number of $\R_4$]\label{R4}
The set 
$
\Cc:=\{(1,e_K) : |K| = 1,2\}\cup \{(e_{1234},e_K) : |K|=3\}
$
is a SCP for $\R_4$, so $\Cc(\R_4)\leq 2$. Moreover, by Theorem \ref{division1}, we have $\Cc(\R_4)\geq 2$. We deduce that $\Cc(\R_4)=2$. $\sqbullet$
\end{example}

As we already know that $\Cc(\R_2)=\Cc(\HH)=1$ and $\Cc(\R_3)=2$ (see Example \ref{Cartannumber1}(i) and (ii)), the previous example allow us to assume, in what follows, that $n\geq 5$. 

For each subset $\Cc$ of $\{(e_L,e_M)\}_{L,M\in\mc{P}(n)}$, let $\Cc_1$ and $\Cc_2$ be the sets introduced in \eqref{partition}. We start by determining the Cartan number of $\R_n$ when  $[n]_4=1$, which is the easiest case to approach. 

\begin{prop}\label{lem41}
Let $n\geq 5$ be such that $[n]_4=1$. Then, $\Cc(\R_n)=2$.
\end{prop}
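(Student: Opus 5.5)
The inequality $\Cc(\R_n)\geq 2$ comes for free. Since $n\geq 5$, $\R_n$ is not a division algebra, so Theorem \ref{division1} applies. It also follows directly from Lemma \ref{goodSCP}, because $e_{123}^2=1$ and hence $e_{123}\notin\sph_{\R_n}$. The real work is the upper bound $\Cc(\R_n)\leq 2$. By Corollary \ref{ricoprire}, it suffices to find a dominating set of the Cartan graph with two vertices. The natural choice, mimicking Example \ref{R4}, is
$$
\Ww:=\{1,\,e_{N}\},\qquad N:=\{1,\ldots,n\}.
$$

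The key computation uses Corollary \ref{Clifford3}, which in $\R_n$ says that $e_He_K\in\sph_{\R_n}$ if and only if $\m{H\ds K}\in\{1,2\}$. Taking $H=\varnothing$, every $e_K$ with $\m{K}\in\{1,2\}$ is adjacent to $1$. Taking $H=N$ gives $H\ds K=N\setminus K$, so $|N\ds K|=n-|K|$. Since $\m{n}=1$, we get $\m{N\ds K}=\m{1-|K|}$. This equals $1$ when $\m{K}=0$ and $2$ when $\m{K}=3$. So every $e_K$ with $\m{K}\in\{0,3\}$ is adjacent to $e_N$.

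The two residue classes therefore cover all of $\mc{P}(n)$. The only care needed concerns the two vertices of $\Ww$ themselves. The vertex $K=\varnothing$ has $\m{K}=0$ but lies in $\Ww$, so it needs no covering. The vertex $K=N$ has $\m{K}=1$ and also lies in $\Ww$. Every other vertex is adjacent to some element of $\Ww$.

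Hence $\Ww$ is a dominating set, and Corollary \ref{ricoprire} gives $\Cc(\R_n)\leq 2$. Alternatively, one can write the SCP explicitly:
$$
\Cc:=\{(1,e_K): \m{K}\in\{1,2\},\,K\neq N\}\cup\{(e_N,e_K):\m{K}\in\{0,3\},\,K\neq\varnothing\}.
$$
For this set one checks that $\Cc_1=\{\varnothing,N\}$ and that $\Cc_2$ is the complement. Both pieces of $\Cc$ are non-empty for $n\geq 5$; for instance $(1,e_1)$ and $(e_N,e_{123})$ both belong to $\Cc$.

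There is no serious obstacle here. The only step needing attention is the parity bookkeeping $\m{n-|K|}$ under the hypothesis $\m{n}=1$, together with the degenerate vertices $\varnothing$ and $N$.
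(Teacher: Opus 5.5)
Your proposal is correct and is essentially the paper's proof. The paper also takes the two elements $1$ and $e_{1\ldots n}$, covers each $K$ with $\m{K}\in\{1,2\}$ by the Cartan pair $(1,e_K)$ and each $K$ with $\m{K}\in\{0,3\}$ by $(e_{1\ldots n},e_K)$, and gets $\Cc(\R_n)\geq 2$ from Theorem \ref{division1}; your explicit treatment of the exceptional vertices $\varnothing$ and $N$, and your alternative lower bound via Lemma \ref{goodSCP} using $e_{123}^2=1$, are just a more careful rendering of the same argument.
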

\begin{proof}
Let $n\geq 5$ be an integer such that $\m{n}=1$. By Corollary \ref{Clifford3}, we have $e_K\in\sph_{\R_n}$ if and only if $[K]_4\in\{1,2\}$. Let $K\in\mc{P}(n)\setminus\{1, e_{1\ldots n}\}$. If $[K]_4\in\{1,2\}$, then $(1,e_K)$ is a Cartan pair. While, if $[K]_4\in\{0,3\}$, then $(e_{1\ldots n},e_K)$ is a Cartan pair. It follows that there exists a SCP such that $|\Cc_1|=2$, so $\Cc(\R_n)\leq 2$. By Theorem \ref{division1}, we have $\Cc(\R_n)\geq2$. We conclude that $\Cc(\R_n)=2$, as required.
\end{proof}

Next, we study the cases $[n]_4\in\{0,2,3\}$.

\begin{prop}\label{lem4}
Let $n\geq 6$ be such that $[n]_4\in\{0,2,3\}$. Then, $\Cc(\R_n)\leq 4$.
\end{prop}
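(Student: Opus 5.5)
The plan is to use Corollary~\ref{ricoprire}. We exhibit an explicit set $\Ww$ of four basis elements of $\R_n$ such that every other basis element forms a Cartan pair with some element of $\Ww$. By Corollary~\ref{Clifford3}, $e_He_K=\pm e_{H\ds K}$ lies in $\sph_{\R_n}$ if and only if $\m{H\ds K}\in\{1,2\}$. So $(e_H,e_K)$ is a Cartan pair exactly when $|H\ds K|\equiv 1,2 \pmod 4$, and the whole problem is a residue computation.

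The candidate is
$$
\Ww:=\{1,e_1,e_2,e_{12}\},
$$
corresponding to the subsets $\varnothing,\{1\},\{2\},\{1,2\}$. I will check that it dominates by peeling off classes of $K\in\mc{P}(n)$ one element at a time.

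First, $W=\varnothing$ covers every $K$ with $\m{K}\in\{1,2\}$. So it remains to treat $\m{K}\in\{0,3\}$.

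Next, take $W=\{1\}$, so that $|K\ds\{1\}|=|K|\pm1$. If $1\notin K$ and $\m{K}=0$, the residue is $1$. If $1\in K$ and $\m{K}=3$, the residue is $2$. Two classes remain:
\begin{itemize}
\item[(A)] $1\notin K$ and $\m{K}=3$;
\item[(B)] $1\in K$ and $\m{K}=0$.
\end{itemize}

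The same argument with $W=\{2\}$ covers the $K$ in (A) with $2\in K$, where the residue is $2$, and the $K$ in (B) with $2\notin K$, where the residue is $1$. What is left is the $K$ with $1,2\notin K$ and $\m{K}=3$, and the $K$ with $1,2\in K$ and $\m{K}=0$.

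Finally, $W=\{1,2\}$ handles both remaining classes. In the first, $|K\ds\{1,2\}|=|K|+2\equiv1$. In the second, $|K\ds\{1,2\}|=|K|-2\equiv2$. Every $K$ with $e_K\notin\Ww$ is therefore adjacent to $\Ww$ in the Cartan graph, and Corollary~\ref{ricoprire} gives $\Cc(\R_n)\le 4$.

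I do not expect a real obstacle once $\Ww$ is fixed; the only care needed is the bookkeeping of residues in the case analysis above. It is worth noting that this argument seems to use nothing about $\m{n}$ beyond $n\ge2$, which is needed for $e_1,e_2$ to exist. So it should in fact give $\Cc(\R_n)\le4$ for all $n\ge2$, consistent with the sharper values already obtained in Proposition~\ref{lem41} and Example~\ref{R4}. If one wants to see the difficulty a more naive attempt runs into: the first instinct is to use $e_{1\ldots n}$ as in Proposition~\ref{lem41}. For $\m{n}=3$ the complement map sends residues $1,2$ to $2,1$, so $e_{1\ldots n}$ covers nothing beyond what $1$ already covers. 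This is what forces small-support elements such as $e_1,e_2,e_{12}$ instead.
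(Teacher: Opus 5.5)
Your proof is correct, and the residue bookkeeping checks out: $\{1,e_1,e_2,e_{12}\}$ dominates the Cartan graph of $\R_n$ for every $n\geq 2$.

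The paper uses the same tool, namely Corollary~\ref{ricoprire} together with the criterion $\m{H\ds K}\in\{1,2\}$ from Corollary~\ref{Clifford3}. It does, however, split according to $\m{n}$. For $\m{n}\in\{0,2\}$ it takes $\{1,e_1,e_{2\ldots n},e_{1\ldots n}\}$. There $e_{1\ldots n}$ absorbs one of the two leftover residue classes through $|K|\mapsto n-|K|$, and $e_1$, $e_{2\ldots n}$ split the other class according to whether $1\in K$. Only for $\m{n}=3$, where, as you observe, $e_{1\ldots n}$ covers nothing new, does the paper switch to exactly your set $\{1,e_1,e_2,e_{12}\}$, with the same four-way analysis of $\{1,2\}\cap K$.

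That analysis never uses $\m{n}$ or $n\geq 6$. Your uniform version therefore removes the case distinction and gives the slightly more general bound $\Cc(\R_n)\leq 4$ for all $n\geq 2$. The paper's choice for $\m{n}\in\{0,2\}$ gives nothing sharper; it only keeps continuity with the construction of Proposition~\ref{lem41}.
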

\begin{proof}
In order to conclude, it is enough to show that there exist $H_1,H_2,H_3\in \mc{P}(n)$ such that for each $K\in\mc{P}(n)\setminus\{\varnothing,H_1,H_2,H_3\}$ at least one among $(1,e_K), (e_{H_1},e_K)$, $(e_{H_2},e_K)$ or $(e_{H_3},e_K)$ is a Cartan pair. Recall that, by Corollary \ref{Clifford3}, we have $e_K\in\sph_{\R_n}$ if and only if $[K]_4\in\{1,2\}$. In particular, $(1,e_K)$ is a Cartan pair for each $K\in\mc{P}(n)$ such that $[K]_4\in\{1,2\}$. Thus, we are reduced to show: \textit{There exist $H_1,H_2,H_3\in \mc{P}(n)$ with the following property: For each $K\in\mc{P}(n)\setminus\{\varnothing, H_1,H_2,H_3\}$ such that $[K]_4\in\{0,3\}$, at least one among $(e_{H_1},e_K)$, $(e_{H_2},e_K)$ or $(e_{H_3},e_K)$ is a Cartan pair.} We consider the cases $[n]_4\in\{0,2,3\}$, separately.

\noindent{\sc Case $[n]_4=0$.} We take $H_1=\{1\}$, $H_2=\{2,\ldots,n\}$ and $H_3=\{1,\ldots,n\}$. Observe that $\m{H_2}=3$. Let $K\in \mc{P}(n)$. If $\m{K}=3$, then $(e_{H_3}, e_K)$ is a Cartan pair. Assume that $\m{K}=0$. We only have the following two possibilities:
\begin{itemize}
\item $1\in K$, then $\m{H_2\ds K}=1$, so $(e_{H_2}, e_K)$ is a Cartan pair.
\item $1\not\in K$, then $\m{H_1\ds K}=1$, so $(e_{H_1}, e_K)$ is a Cartan pair.
\end{itemize}
We conclude that, if $\m{n}=0$, then $\Cc(\R_n)\leq 4$, as required.

\noindent {\sc Case $\m{n}=2$.} We take $H_1=\{1\}$, $H_2=\{2,\ldots,n\}$ and $H_3=\{1,\ldots,n\}$. Observe that $\m{H_2}=1$. Let $K\in \mc{P}(n)$. If $\m{K}=0$, then $(e_{H_3}, e_K)$ is a Cartan pair. Assume that $\m{K}=3$. We only have the following two possibilities:
\begin{itemize}
\item $1\in K$, then $\m{H_1\ds K}=1$, so $(e_{H_1}, e_K)$ is a Cartan pair.
\item $1\not\in K$, then $\m{H_2\ds K}=2$, so $(e_{H_2}, e_K)$ is a Cartan pair.
\end{itemize}
We conclude that, if $\m{n}=2$, then $\Cc(\R_n)\leq 4$, as required.

\noindent {\sc Case $\m{n}=3$.} We take $H_1=\{1\}$, $H_2=\{2\}$ and $H_3=\{1,2\}$. Let $K\in \mc{P}(n)$ be such that $\m{K}\in\{0,3\}$. We only have the following possibilities:
\begin{itemize}
\item If $\{1,2\}\cap K=\varnothing$, then $(e_{H_3}, e_K)$ is a Cartan pair. 
\item If $\{1,2\}\subset K$, then $(e_{H_3}, e_K)$ is a Cartan pair. 
\item If $|\{1,2\}\cap K|=1$ and $\m{K}=0$, then either $(e_{H_1},e_K)$ is a Cartan pair if $1\not\in K$, or $(e_{H_2},e_K)$ is a Cartan pair if $2\not\in K$.
\item If $|\{1,2\}\cap K|=1$ and $\m{K}=3$, then either $(e_{H_1},e_K)$ is a Cartan pair if $1\in K$, or $(e_{H_2},e_K)$ is a Cartan pair if $2\in K$.
\end{itemize}
We conclude that, if $\m{n}=3$, then $\Cc(\R_n)\leq 4$, as required.
\end{proof}

Putting all the previous results together (see Example \ref{Cartannumber1}(i) and (ii), Theorem \ref{division1}, Example \ref{R4}, Proposition \ref{lem41} and Proposition \ref{lem4}), we deduce the following:

\begin{thm}[Cartan number of the Clifford algebras $\R_n$]\label{CartanRn}
For each $n\geq 2$, it holds:
\begin{itemize}
\item[{\rm(i)}] $\Cc(\R_2)=\Cc(\HH)=1$.
\item[{\rm(ii)}] $\Cc(\R_3)=\Cc(\R_4)=2$.
\item[{\rm(iii)}] If $\m{n}=1$, then $\Cc(\R_n)=2$.
\item[{\rm(iv)}] If $n\geq 6$ and $\m{n}\in\{0,2,3\}$, then $2\leq \Cc(\R_n)\leq 4$.
\end{itemize}
\end{thm}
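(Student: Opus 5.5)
The statement collects results already proved or exhibited earlier in the paper, so the plan is to assemble them item by item rather than to prove anything new.

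\emph{Item (i).} The Clifford algebra $\R_2$ is $*$-isomorphic to $\HH$ with the standard conjugation, and Example \ref{SCPdivision}(i) gives an SCP $\Cc$ with $|\Cc_1|=1$. By \eqref{C1}, $\Cc(\HH)\le 1$, and since a Cartan number is a positive integer, equality holds. This is recorded in Example \ref{Cartannumber1}(i).

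\emph{Items (ii) and (iii).} These need an upper bound of $2$ and a lower bound of $2$.
\begin{itemize}
\item The upper bound $2$ comes from explicit SCPs with $|\Cc_1|=2$. For $\R_3$ use Example \ref{SCPR3}, for $\R_4$ use Example \ref{R4}, and for $\m{n}=1$ with $n\ge 5$ use Proposition \ref{lem41}.
\item The lower bound $2$ is uniform. By Theorem \ref{division1}, $\Cc(A)=1$ forces $A$ to be a division algebra with the standard conjugation. For $n\ge 3$, $\R_n$ is not a division algebra, since $\dim(\R_n)=2^n\ge 8$ and $\R_n\not\cong\O$ because $\R_n$ is associative.
\item A more intrinsic way to see the lower bound is Lemma \ref{goodSCP}: $\Cc(A)=1$ would force $e_K\in\sph_{\R_n}$ for every nonempty $K$. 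This fails for $K=\{1,2,3\}$, because $\m{K}=3$ contradicts Corollary \ref{Clifford3}.
\end{itemize}

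\emph{Item (iv).} The upper bound $\Cc(\R_n)\le 4$ for $n\ge 6$ with $\m{n}\in\{0,2,3\}$ is exactly Proposition \ref{lem4}. The lower bound $2$ is the same argument as above.

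The only step needing any care is the lower bound. Theorem \ref{division1} is phrased up to $*$-isomorphism, so I will simply use the explicit check through Lemma \ref{goodSCP} and Corollary \ref{Clifford3}, which avoids invoking any classification of division algebras.
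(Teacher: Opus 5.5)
Your proposal is correct and follows the paper's own argument, which assembles Example \ref{Cartannumber1}, Example \ref{R4}, Proposition \ref{lem41}, Proposition \ref{lem4} and Theorem \ref{division1} in the same way. For the lower bound $\Cc(\R_n)\geq 2$ when $n\geq 3$, you use Lemma \ref{goodSCP} together with $e_{123}\notin\sph_{\R_n}$ from Corollary \ref{Clifford3}; this is just the first step of the proof of Theorem \ref{division1} made explicit, and it rightly makes the dimension/classification remark in your second bullet unnecessary.
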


We end this section by showing that $\Cc(\R_6)=4$.

\begin{example}[Cartan number of $\R_6$]
\textit{It holds $\Cc(\R_6)=4$.} 

By Lemma \ref{lem4}, $\Cc(\R_6)\leq 4$. Thus, we only need to show that $\Cc(\R_6)>3$. Let $\Cc$ be a SCP for $\R_6$ such that $|\Cc_1|=\Cc(\R_6)$. Let $V\in \Cc_1$. As $\mc{P}(n)$ endowed with the symmetric difference is a group, the map 
$$
\{e_K\}_{K\in\mc{P}(n)}\to \{e_K\}_{K\in\mc{P}(n)},\quad e_K\mapsto e_{K\ds V}
$$
is a bijection. Thus, by Proposition \ref{permutazione}, the set 
$
\Cc':=\{(e_{L\ds V}, e_{M\ds V}): (L,M)\in \Cc\}
$
is a SCP such that $\varnothing\in \Cc'_1$ and $|\Cc_1'|=|\Cc_1|$. We deduce that there exists a SCP $\Cc$ such that $\varnothing\in \Cc_1$ and $|\Cc_1|=\Cc(\R_6)$. By Corollary \ref{Clifford3}, $(1,e_K)$ is a Cartan pair for each $K\in \mc{P}(n)$ such that $\m{K}\in\{1,2\}$. In particular, in order to conclude, it is enough to show: \textit{For each $H,W\in \mc{P}(n)\setminus\{\varnothing\}$ such that $H\neq W$, there exists $K\in\mc{P}(n)\setminus \{\varnothing, H, W\}$ such that $\m{K}\in\{0,3\}$, and both the pairs $(e_H,e_K)$ and $(e_W,e_K)$ are not Cartan pairs.}

Up to a permutation of the indices, we may assume that 
$$
e_H\in \{e_1,e_{12},e_{123},e_{1234},e_{12345},e_{123456}\}.
$$
We consider all these cases separately. Recall that, by Corollary \ref{Clifford3}, $(e_V,e_K)$ is a Cartan pair if and only if $\m{V\ds K}\in\{1,2\}$. In particular, we have to show that for each $H\in \{\{1\},\{12\},\{123\},\{1234\},\{12345\},\{123456\}\}.$ and each $W\in \mc{P}(n)\setminus\{\varnothing, H\}$, there exists $K\in \mc{P}(n)\setminus\{\varnothing. H,W\}$ with $\m{K}\in\{0,3\}$ such that $\m{H\ds K},\m{H\ds K}\in\{0,3\}$. 

\noindent(I) $e_H=e_1$. Up to a permutation of the indices, we only have to consider the following cases:
$$
e_W\in \{e_2, e_{12}, e_{23}, e_{123}, e_{234}, e_{1234}, e_{2345}, e_{12345}, e_{23456}, e_{123456}\}.
$$
\begin{itemize}
\item If $e_W\in \{e_{12}, e_{23}, e_{123}, e_{234}, e_{2345}, e_{12345}, e_{123456}\}$, then the set $K:=\{2,5,6\}$ satisfies $K\not\in\{\varnothing, H,W\}$, $\m{K}=3$ and both the pairs $(e_H,e_K)$ and $(e_W,e_K)$ are not Cartan pairs.
\item If $e_W\in  \{e_2,  e_{1234}, e_{23456}\}$, then the set $K:=\{1,2,5,6\}$ satisfies $K\not\in\{\varnothing, H,W\}$, $\m{K}=0$ and both the pairs $(e_H,e_K)$ and $(e_W,e_K)$ are not Cartan pairs.
\end{itemize}

\noindent(II) $e_H=e_{12}$. By the previous case and up to a permutation of the indices, we only have to consider the following cases:
$$
e_W\in \{e_{13}, e_{34}, e_{123}, e_{134}, e_{345}, e_{1234}, e_{1345}, e_{3456}, e_{12345}, e_{13456}, e_{123456}\}.
$$
\begin{itemize}
\item If $e_W\in \{e_{13}, e_{123}, e_{134}, e_{345}, e_{1345}, e_{3456},e_{12345}e_{123456}\}$, then the set $K:=\{1,5,6\}$ satisfies $K\not\in\{\varnothing, H,W\}$, $\m{K}=3$ and both the pairs $(e_H,e_K)$ and $(e_W,e_K)$ are not Cartan pairs.
\item If $e_W\in\{e_{34},e_{1234}\}$, then the set $K:=\{1,4,6\}$ satisfies $K\not\in\{\varnothing, H,W\}$, $\m{K}=3$ and both the pairs $(e_H,e_K)$ and $(e_W,e_K)$ are not Cartan pairs.
\item If $e_W=e_{13456}$, then the set $K:=\{2,5,6\}$ satisfies $K\not\in\{\varnothing, H,W\}$, $\m{K}=3$ and both the pairs $(e_H,e_K)$ and $(e_W,e_K)$ are not Cartan pairs.
\end{itemize}

\noindent(III) $e_H=e_{123}$. By the previous cases and up to a permutation of the indices, we only have to consider the following cases:
$$
e_W\in \{e_{124}, e_{145}, e_{456}, e_{1234}, e_{1245}, e_{1456}, e_{12345}, e_{12456}, e_{123456}\}.
$$
\begin{itemize}
\item If $e_W\in \{e_{124}, e_{145}, e_{456}, e_{1234}, e_{12345}\}$, then the set $K:=\{1,2,5,6\}$ satisfies $K\not\in\{\varnothing, H,W\}$, $\m{K}=0$ and both the pairs $(e_H,e_K)$ and $(e_W,e_K)$ are not Cartan pairs.
\item If $e_W\in\{e_{1245},e_{123456}\}$, then the set $K:=\{1,5,6\}$ satisfies $K\not\in\{\varnothing, H,W\}$, $\m{K}=3$ and both the pairs $(e_H,e_K)$ and $(e_W,e_K)$ are not Cartan pairs.
\item If $e_W\in\{e_{1456},e_{12456}\}$, then the set $K:=\{3,5,6\}$ satisfies $K\not\in\{\varnothing, H,W\}$, $\m{K}=3$ and both the pairs $(e_H,e_K)$ and $(e_W,e_K)$ are not Cartan pairs.
\end{itemize}

\noindent(IV) $e_H=e_{1234}$. By the previous cases and up to a permutation of the indices, we only have to consider the following cases:
$$
e_W\in \{e_{1235}, e_{1256}, e_{12345}, e_{12356}, e_{123456}\}.
$$
\begin{itemize}
\item If $e_W\in \{e_{1235}, e_{12345}, e_{123456}\}$, then the set $K:=\{1,2,6\}$ satisfies $K\not\in\{\varnothing, H,W\}$, $\m{K}=0$ and both the pairs $(e_H,e_K)$ and $(e_W,e_K)$ are not Cartan pairs.
\item If $e_W=e_{1256}$, then the set $K:=\{3,4,5,6\}$ satisfies $K\not\in\{\varnothing, H,W\}$, $\m{K}=0$ and both the pairs $(e_H,e_K)$ and $(e_W,e_K)$ are not Cartan pairs.
\item If $e_W=e_{12356}$, then the set $K:=\{1,4,5,6\}$ satisfies $K\not\in\{\varnothing, H,W\}$, $\m{K}=3$ and both the pairs $(e_H,e_K)$ and $(e_W,e_K)$ are not Cartan pairs.
\end{itemize}

\noindent(V) $e_H=e_{12345}$. By the previous cases and up to a permutation of the indices, we only have to consider the following cases:
$$
e_W\in \{e_{12346}, e_{123456}\}.
$$
The set $K:=\{1,5,6\}$ satisfies $K\not\in\{\varnothing, H,W\}$, $\m{K}=3$ and both the pairs $(e_H,e_K)$ and $(e_W,e_K)$ are not Cartan pairs.

\noindent(VI) As the case $e_H=e_{123456}$ is included in the previous cases, we conclude that $\Cc(\R_6)>3$, as required. $\sqbullet$
\end{example}

\subsection{On the reconstructing number of the Clifford algebras $\R_n$}

By Corollary \ref{corCliffRn} and Example \ref{R4}, it follows $\Rr(\R_4)=2$. At this point we have (see Corollary \ref{division0}, Example \ref{RR3}, Corollary \ref{corCliffRn}, Lemma \ref{lem41} and Lemma \ref{lem4}):
\begin{itemize}
\item $\Rr(\R_2)=1$ and $\Rr(\R_3)=\Rr(\R_4)=2$,
\item if $n\geq 5$ and $\m{n}=1$, then $\Rr(\R_n)=2$,
\item if $n\geq 6$ and $\m{n}\in\{0,2,3\}$, then $2\leq \Rr(\R_n)\leq 4$.
\end{itemize}

The purpose of this section is to improve the inequality $2\leq \Rr(\R_n)$ for $\m{n}\in\{0,2,3\}$ when $n\geq 6$. 

\begin{prop}\label{maggiore3}
Let $n\geq 6$ be such that $\m{n}\in\{0,2,3\}$. Then, $\Rr(\R_n)\geq 3$.
\end{prop}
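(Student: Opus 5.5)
The plan is to exhibit, for every pair of indices, one explicit non-constant slice polynomial whose two corresponding real components vanish identically, in the spirit of Proposition \ref{maggioredi2}. By Definition \ref{reconstructing}, $\Rr(\R_n)\geq 3$ means that for every $\Kk=\{H,W\}\subset\mc{P}(n)$ (the case $|\Kk|=1$ is covered by shrinking) there is a non-constant slice regular $f$ on a circular slice domain with $f_H,f_W$ constant. I will take $f(x):=xe_V$ on $\Qq_{\R_n}$ for a suitable $V\in\mc{P}(n)$.

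\emph{Reduction to combinatorics.} Write $x=\alpha+\beta J$ with $J\in\sph_{\R_n}$. Since $t(J)=0$ and, under the Clifford conjugation, $e_K^c=-e_K$ exactly when $\m{K}\in\{1,2\}$, we have $J=\sum_{\m{K}\in\{1,2\}}x_Ke_K$. Hence
$$
f(\alpha+\beta J)=\alpha e_V+\beta\textstyle\sum_{\m{K}\in\{1,2\}}x_K\sigma(K,V)e_{K\ds V}.
$$
The coefficient of $e_H$ is $\alpha\delta_{H,V}\pm\beta x_{H\ds V}$, where the second term is present only if $\m{H\ds V}\in\{1,2\}$. So $f_H\equiv0$ as soon as $H\neq V$ and $\m{H\ds V}\in\{0,3\}$, that is, as soon as $(e_H,e_V)$ is not a Cartan pair (Corollary \ref{Clifford3}). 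It therefore suffices to prove the following claim: \emph{for each $H\neq W$ there is $V\notin\{H,W\}$ with $\m{V\ds H},\m{V\ds W}\in\{0,3\}$.}

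\emph{Translation.} Set $D:=H\ds W\neq\varnothing$ and $U:=V\ds H$. The claim becomes: find $U\notin\{\varnothing,D\}$ with $\m{U}\in\{0,3\}$ and $\m{U\ds D}\in\{0,3\}$. Write $U=A\cup B$ with $A\subset D$, $|A|=a$, and $B\subset D^{c}$, $|B|=b$, and put $d:=|D|$. Then $|U|=a+b$ and $|U\ds D|=d-a+b$. We need integers $0\leq a\leq d$ and $0\leq b\leq n-d$, with $(a,b)\neq(0,0),(d,0)$, such that both $a+b$ and $d-a+b$ are $\equiv 0,3 \pmod 4$.

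\emph{Case analysis on $d\bmod 4$.} When $n-d\geq 3$, there is room to adjust $b$. For instance, $(a,b)=(0,3)$ works when $\m{d}\in\{0,1\}$, and choices such as $a=1$ or $a=2$ with $b\in\{2,3\}$ handle $\m{d}\in\{2,3\}$. The delicate situation is $d$ close to $n$, i.e.\ $b\leq 2$, and this is the step where the hypothesis $\m{n}\in\{0,2,3\}$, $n\geq 6$ must enter. For example, with $D=\{1,\ldots,n\}$ we need $a$ and $n-a$ both $\equiv0,3$. This is possible with $a=4$ when $\m{n}=0$ (note $n\geq 8$ there), and with $a=3$ when $\m{n}\in\{2,3\}$. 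By contrast, it fails for $n=5$, consistent with $\Rr(\R_5)=2$.

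The main obstacle is organizing the finitely many residue cases $(\m{d},\,n-d\in\{0,1,2\})$ against $\m{n}$ so that every case receives an explicit $(a,b)$. I expect this to be a routine but careful table; $n\geq 6$ guarantees enough elements in $D$ (or in $D^c$) to realize the required sizes.
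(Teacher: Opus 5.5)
Your reduction is the same as the paper's. The paper first normalizes one of the two indices to $\varnothing$ via $f\mapsto fe_H$, and then looks for $H\notin\{\varnothing,K\}$ with $\m{H},\m{K\ds H}\in\{0,3\}$. That is exactly your claim about $U$, with $D=K$, and your $(a,b)$ bookkeeping is a tidier way to organize the case split the paper makes according to $\m{n}$ and $\m{K}$.

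The one thing still missing is the table itself, which you flag as the main obstacle. It does close, with the following choices:
\begin{itemize}
\item[(i)] $n-d\geq 3$: take $(a,b)=(0,3)$ if $\m{d}\in\{0,1\}$, $(1,2)$ if $\m{d}=2$, and $(2,2)$ if $\m{d}=3$.
\item[(ii)] $n-d=1$: take $(3,0)$ if $\m{n}\in\{0,3\}$, and $(2,1)$ if $\m{n}=2$.
\item[(iii)] $n-d=2$: take $(3,0)$ if $\m{n}=0$, and $(2,1)$ if $\m{n}\in\{2,3\}$.
\end{itemize}
Together with your choices for $n-d=0$, this completes the proof. In each case $n\geq 6$ (indeed $n\geq 8$ when $\m{n}=0$) guarantees $a\leq d$ and $(a,b)\neq(d,0)$.

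Your phrase ``$a=1$ or $a=2$ with $b\in\{2,3\}$'' is too loose: for example $(1,3)$ fails when $\m{d}=3$, and $(2,3)$ fails when $\m{d}=2$. The specific pairs have to be stated.
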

\begin{proof}
Let $\Omega\subset \Qq_{\R_n}$ be a circular slice domain. We have to show that, for each $K,H\in\mc{P}(n)$ such that $K\neq H$, there exists a non-constant slice regular function $f:\Omega\to \R_n$ such that the real components $f_K$ and $f_H$ are constant. Up to substitute $f$ with $fe_H$, we may reduce to show: \textit{For each $K\in \mc{P}(n)\setminus\{\varnothing\}$, there exists a non-constant slice regular function $f:\Omega\to \R_n$ such that the real components $f_{\varnothing}$ and $f_K$ are constant.} In particular, it is enough to show: \textit{For each $K\in \mc{P}(n)\setminus\{\varnothing\}$, there exists $H\in \mc{P}(n)$ such that the non-constant slice polynomial function $f(x):=x e_H$ has constant real components $f_{\varnothing}$ and $f_K$.}

Let $H\in \mc{P}(n)$ and $f(x):=xe_H$. By Corollary \ref{Clifford3}, we have 
$$
\{W\in\mc{P}(n) : e_W^c=-e_W\}=\{W\in\mc{P}(n) : e_W^2=-1\}.
$$
In particular, each imaginary unit $J\in\sph_{\R_n}$ can be written (uniquely) as
$
J=\sum_{e_W^2=-1} x_W e_W
$
for suitable $x_W\in \R$. Thus,
$$\textstyle
f(\alpha+\beta J)=\alpha e_H +\beta Je_H=\alpha e_H +\beta\sum_{e_W^2=-1} x_W e_We_H=\alpha e_H +\beta\sum_{e_W^2=-1} \sigma(W,H)x_W e_{W\ds H}.
$$
for each $\alpha,\beta\in \R$ and $J=\sum_{e_W^2=-1}x_We_W\in\sph_{\R_n}$. By Corollary \ref{Clifford3}, we deduce that all the real components $f_K$ of $f$ are constant for each 
$$
K\not\in \{H\}\cup \{ W\ds H : \m{W}=1,2\}.
$$
In particular, if $H\neq \varnothing$ and $\m{H}=0,3$, then $f_{\varnothing}$ is constant. Thus, in order to conclude it is enough to show: \textit{For each $K\in\mc{P}(n)\setminus\{\varnothing\}$, there exists $H\in\mc{P}(n)\setminus\{\varnothing,K\}$ such that $\m{H}=0,3$ and}
\begin{equation}\label{cond}
K\not\in \{ W\ds H : \m{W}=1,2\}.
\end{equation}

Let $K\in\mc{P}(n)\setminus\{\varnothing\}$. Observe that $K\not\in \{ W\ds H : \m{W}=1,2\}$ if and only if $\m{K\ds H}\neq1,2$. We consider the cases $[n]_4\in\{0,2,3\}$ separately.

\noindent{\sc Case $[n]_4=0$.} We consider several cases accordingly to $\m{K}$.
\begin{itemize}
\item $\m{K}\in\{0,1\}$. If $K=\{1,\ldots,n\}$, we take $H:=\{1,2,3,4\}$. If $|K|<n$, then we take $H:=\{1,\ldots,n\}\setminus K$. Clearly, in both cases, $H\neq \varnothing$ and $\m{H}=0$. Moreover, as $n\geq 6$, then $H\neq K$. As in both cases $\m{K\ds H}=0$, then \eqref{cond} holds.
\item $\m{K}=2$. As $K\neq \varnothing$, then there exists $k\in K$. We take $H:=(\{1,\ldots,n\}\setminus K)\cup \{k\}$. Clearly, $H\not\in\{ \varnothing, K\}$. Moreover, $\m{H}=3$. As $K\ds H=\{1,\ldots,n\}\setminus\{k\}$, then $\m{K\ds H}=3$, so \eqref{cond} holds.
\item $\m{K}=3$. As $n\geq 6$ then there exists $H\in\mc{P}(n)$ with $|H|=4$ such that either $H\subset K$ or $H\subset \{1,\ldots,n\}\setminus K$. In both cases $H\not\in\{ \varnothing, K\}$ and $\m{H}=0$. As $\m{K\ds H}=3$, then \eqref{cond} holds.
\end{itemize}
We conclude that, if $\m{n}=0$, then $\Rr(\R_n)\geq 3$, as required.

\noindent{\sc Case $[n]_4=2$.} We consider several cases accordingly to $\m{K}$.
\begin{itemize}
\item $\m{K}\in\{0,3\}$. Then there exists $k,h\in\{1,\ldots,n\}\setminus K$ such that $h\neq k$. Moreover, as $K\neq \varnothing$, there exists $s,r\in K$ such that $s\neq r$. We take $H:=\{k,h,s,r\}$. Clearly $H\not\in\{ \varnothing, K\}$ and $\m{H}=0$. As in both cases $\m{K\ds H}\in\{0,3\}$, then \eqref{cond} holds.
\item $\m{K}=1$. If $|K|<n-1$, as $n\geq 6$, then there exists $H\subset \{1,\ldots,n\}\setminus K$ with $|H|=3$. If $|K|=n-1$, then there exists $k,h\in K$ such that $k\neq h$ and $s\in\{1,\ldots,n\}\setminus K$. In this case we take $H:=\{h,k,s\}$. Clearly $H\not\in\{ \varnothing, K\}$ and $\m{H}=3$. As in both cases $\m{K\ds H}=0$, then \eqref{cond} holds.
\item $\m{K}=2$. If $|K|>2$, then there exists $H\subset K$ such that $|H|=3$. If $|K|=2$, as $n\geq 6$, then there exists $k,h\in\{1,\ldots,n\}\setminus K$ such that $k\neq h$. Let $s\in K$. In this case we take $H:=\{k,h,s\}$. Clearly $H\not\in\{ \varnothing, K\}$ and $\m{H}=3$. As in both cases $\m{K\ds H}=3$, then \eqref{cond} holds.
\end{itemize}
We conclude that, if $\m{n}=2$, then $\Rr(\R_n)\geq 3$, as required.

\noindent{\sc Case $[n]_4=3$.} We consider several cases accordingly to $\m{K}$.
\begin{itemize}
\item $\m{K}\in\{0,3\}$. If $K=\{1,\ldots,n\}$, we take $H:=\{1,2,3,4\}$. If $|K|<n$, then we take $H:=\{1,\ldots,n\}\setminus K$. Clearly $H\not\in\{ \varnothing, K\}$ and $\m{H}=0,3$. Moreover, as $n\geq 6$, then $H\neq K$. As in both cases $\m{K\ds H}=3$, then \eqref{cond} holds.
\item $\m{K}=1$. If $|K|<n-2$, then there exists $H\subset \{1,\ldots,n\}\setminus K$ such that $|H|=3$. If $|K|=n-2$, then as $n\geq 6$, there exists $k,h\in K$ such that $k\neq h$. Let $s\in \{1,\ldots,n\}\setminus K$. We take $H=\{k,h,s\}$. In both cases $H\not\in\{ \varnothing, K\}$ and $\m{H}=3$. Moreover, $\m{K\ds H}=0$, so \eqref{cond} holds.
\item $\m{K}=2$. If $|K|>2$, then there exists $H\subset K$ such that $\m{H}=3$. If $|K|=2$, as $n\geq 6$, then there exists $k,h\in \{1,\ldots,n\}\setminus K$ such that $k\neq h$. Let $s\in K$. We take $H=\{k,h,s\}$. In both cases $H\not\in\{ \varnothing, K\}$ and $\m{H}=3$. Moreover, $\m{K\ds H}=3$, so \eqref{cond} holds.
\end{itemize}
We conclude that, if $\m{n}=3$, then $\Rr(\R_n)\geq 3$, as required.
\end{proof}

By the inequality $\Rr(\R_n)\leq \Cc(\R_n)$, we deduce straightforwardly the following:

\begin{cor}
Let $n\geq 6$ be such that $\m{n}\in\{0,2,3\}$. Then, $\Cc(\R_n)\geq 3$.
\end{cor}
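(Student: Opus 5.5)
This corollary is a direct chaining of two results already established. The plan is to combine the lower bound of Proposition \ref{maggiore3} with the upper bound $\Rr(A)\leq\Cc(A)$ of Theorem \ref{upper}, applied to $A=\R_n$ equipped with the Clifford conjugation.

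First I check that $\R_n$ satisfies the standing hypotheses of Theorem \ref{upper}. It is an associative, hence alternative, Cartan $*$-algebra. Moreover, $\sph_{\R_n}\neq\varnothing$, since for instance $e_1\in\sph_{\R_n}$ by Corollary \ref{Clifford3}. So Assumption \ref{assumption} is satisfied.

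Next, for $n\geq 6$ with $\m{n}\in\{0,2,3\}$, Proposition \ref{maggiore3} gives $\Rr(\R_n)\geq 3$. Theorem \ref{upper} gives $\Rr(\R_n)\leq\Cc(\R_n)$. Chaining the two yields $3\leq\Rr(\R_n)\leq\Cc(\R_n)$, which is the claim.

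There is no real obstacle here: all the combinatorial work was done in the case analysis of Proposition \ref{maggiore3}. Together with Proposition \ref{lem4}, the corollary pins $\Cc(\R_n)$ to the interval $[3,4]$ in these residue classes.
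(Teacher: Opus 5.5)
Your proof is correct and matches the paper's argument: the corollary follows by chaining the lower bound $\Rr(\R_n)\geq 3$ of Proposition~\ref{maggiore3} with the inequality $\Rr(\R_n)\leq\Cc(\R_n)$ of Theorem~\ref{upper}. Your check that $\R_n$ with the Clifford conjugation satisfies Assumption~\ref{assumption} is an appropriate addition that the paper leaves implicit.
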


Putting all the previous results together, we obtain the following:

\begin{thm}[Reconstructing number of the Clifford algebras $\R_n$]\label{reconRn}
For each $n\geq 2$, it holds:
\begin{itemize}
\item[{\rm(i)}] $\Rr(\R_2)=\Rr(\HH)=1$.
\item[{\rm(ii)}] $\Rr(\R_3)=\Rr(\R_4)=2$.
\item[{\rm(iii)}] If $\m{n}=1$, then $\Rr(\R_n)=2$.
\item[{\rm(iv)}] If $n\geq 6$ and $\m{n}\in\{0,2,3\}$, then $3\leq \Rr(\R_n)\leq 4$.
\end{itemize}
\end{thm}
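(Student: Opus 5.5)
The theorem is a compilation of results already proved, so I will assemble it item by item, using in each case the sandwich $\text{lower bound}\leq\Rr(\R_n)\leq\Cc(\R_n)$ supplied by Theorem \ref{upper}.

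For (i), I will use the $*$-isomorphism $\R_2\cong\HH$. It matches the basis of Example \ref{basis} up to relabelling, and the reconstructing number does not depend on this relabelling. Proposition \ref{division0} then gives $\Rr(\HH)=1$.

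For (ii), the case $\R_3$ is Example \ref{RR3}. For $\R_4$, Example \ref{R4} gives $\Cc(\R_4)=2$, hence $\Rr(\R_4)\leq 2$ by Theorem \ref{upper}. The reverse bound $\Rr(\R_4)\geq 2$ comes from Corollary \ref{corCliffRn}(i). That corollary applies because $\R_n$ with the Clifford conjugation satisfies \eqref{buonacondition}, by Corollary \ref{Clifford3}, and because $\Cc(\R_4)\geq 2$ by Theorem \ref{division1}.

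For (iii), take $n\equiv 1 \bmod 4$. This forces $n\geq 5$, since $n\geq 2$. Proposition \ref{lem41} gives $\Cc(\R_n)=2$. Corollary \ref{2}, or equivalently Corollary \ref{corCliffRn}(i) combined with Theorem \ref{upper}, then yields $\Rr(\R_n)=2$.

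For (iv), the upper bound $\Rr(\R_n)\leq\Cc(\R_n)\leq 4$ follows from Theorem \ref{upper} and Proposition \ref{lem4}. The lower bound $\Rr(\R_n)\geq 3$ is Proposition \ref{maggiore3}.

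There is no real obstacle here; the only point to check is that each cited hypothesis holds in its case. Specifically: $n\geq 6$ in (iv); $\Cc(A)\geq 2$ together with \eqref{buonacondition} wherever Proposition \ref{maggioredi2} is used; and, in (i), that $\R_2$ with the Clifford conjugation really is $\HH$ with its standard conjugation.
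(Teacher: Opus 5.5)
Your proposal is correct and follows the paper's own argument. The paper likewise obtains the theorem by assembling Proposition~\ref{division0}, Example~\ref{RR3}, Example~\ref{R4} with Corollary~\ref{corCliffRn}, Proposition~\ref{lem41}, Proposition~\ref{lem4} and Proposition~\ref{maggiore3}, with the upper bounds coming from $\Rr(\R_n)\leq\Cc(\R_n)$ in Theorem~\ref{upper}; the hypothesis checks you list (condition \eqref{buonacondition} via Corollary~\ref{Clifford3}, $\Cc\geq 2$ via Theorem~\ref{division1}, and $n\equiv 1 \bmod 4$ with $n\geq 2$ forcing $n\geq 5$) are the right ones.
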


\section{Applications to slice-Nash functions}\label{S8}

\subsection{Slice-Nash functions}\label{SsliceNash}

\textit{Let $A$ be a finite dimensional alternative $*$-algebra with unity $1\neq 0$ such that $\sph_A\neq\varnothing$. Let $F:=F_1+\iota F_2:D\to A\otimes_\R\C$ be a stem function defined on a non-empty open set $D\subset \C$ and $f:\Omega\to A$ the corresponding slice function. We assume, in addition, that $D$ is invariant under conjugation (see Assumption \ref{assSimm}).}

As already mentioned in \S\ref{IntroNash}, the first and second author of this paper introduced in \cite{bc} the notion of Nash functions for slice regular functions of one quaternionic or octonionic variable. Even if in \cite{bc} the focus is only on quaternions and octonions, the definitions and most of the results of \cite{bc} extends verbatim to slice regular functions $f$ defined on open circular sets $\Omega\subset\Qq_A$. In this section, we briefly recall the definitions and main results of \cite{bc} that we need for our purposes. 

The left multiplication by an element of $\sph_A$ induces a complex structure on $A$. More precisely, for a fixed imaginary unit $J\in\sph_A$, the sum of $A$ together with the complex scalar multiplication $\C_J\times A\to A$ sending the pair $(\gamma,a)\in \C_J\times A$ into the product $\gamma a$ in $A$ defines a structure of $\C_J$-vector space on $A$ (if $A$ is alternative, but not associative, this fact follows by Artin's theorem). Define the integers $d_A:=\dim(A)-1$ and $u_A:=\tfrac{\dim(A)}{2}-1$. Observe that $u_A$ is equal to the dimension of $A$ as a complex vector space minus 1. If $\{1,J_1,\ldots,J_{u_A}\}$ is a basis of $A$ as a $\C_J$-vector space, then $\Bb_J:=\{1,J, J_1,JJ_1,\ldots,J_{u_A},JJ_{u_A}\}$ is a basis of $A$ as a $\R$-vector space \cite[Lem.2.3]{gp3}. A real vector basis of $A$ of this form is called a \textit{splitting basis of $A$ associated to $J$}. 

\textit{Let $J\in\sph_A$ be an imaginary unit and $\Bb_J:=\{J_0:=1,J, J_1,JJ_1,\ldots,J_{u_A},JJ_{u_A}\}$ a splitting basis of $A$ associated to $J$.} 

As $\Bb_J$ is a basis of $A$ as a $\R$-vector space, there exist unique real valued functions $F_{s,\ell}^{J,J_k}:D\to  \R$ for $s,\ell=1,2$ and $k=0,\ldots,u_A$ such that
$$\textstyle
F_s=\sum_{k=0}^{u_A}(F_{s,1}^{J,J_k}+F_{s,2}^{J,J_k}J)J_k
$$
for $s=1,2$. Define
$
F_{J,J_k}^{\ell}:=F_{1,\ell}^{J,J_k}+\iota F_{2,\ell}^{J,J_k}:D\to \C
$
for each $\ell=1,2$ and $k=0,\ldots,u_A$. We have
$$\textstyle
F=F_1+\iota F_2=\sum_{k=0}^{u_A}(F_{J,J_k}^{1}+F_{J,J_k}^{2}J)J_k.
$$

We start by recalling the following:

\begin{defn}[Stem-Nash function {\cite[Def.5.2]{bc}}]
A stem function $F:D\to A\otimes_\R\C$ is called a \textit{stem-Nash function on} $D$ if there exist an imaginary unit $J\in \sph_A$ and a splitting basis $\Bb_J$ of $A$ associated to $J$ such that the functions $F_{J,J_k}^{\ell}$ are complex Nash functions on $D$ for each $\ell=1,2$ and each $k=0,\ldots,u_A$. $\sqbullet$
\end{defn}

As complex Nash functions are, in particular, holomorphic (see Definition \ref{defNash}), we have that, if $F$ is a stem-Nash function on $D$, then the functions $F_{J,J_k}^{\ell}$ are holomorphic functions for each $\ell=1,2$ and $k=0,\ldots,u_A$. We deduce that (see also \cite[Rmk.3(2)]{gp}): \textit{If $F$ is a stem-Nash function, then $F$ is a holomorphic stem function.}

Next, we recall the definition of slice-Nash functions.

\begin{defn}[Slice-Nash functions {\cite[Def.5.7]{bc}}]\label{defslicenash}
We say that $f:=\mathcal{I}(F):\Omega\to A$ is a \textit{slice-Nash function on} $\Omega$ if the stem function $F$ is a stem-Nash function on $D$. $\sqbullet$
\end{defn}

As stem-Nash functions are, in particular, holomorphic, we deduce the following:

\begin{remark}\label{NashReg}
If $f$ is a slice-Nash function on $\Omega$, then $f$ is slice regular on $\Omega$. The converse, clearly, it is not true (see, for instance, Example \ref{exnonash} below). $\sqbullet$
\end{remark}

For each $J\in\sph_A$, denote by $f_J$ the restriction of $f$ to $\Omega_J:=\Omega\cap \C_J$. Let $\Bb_J:=\{J_0:=1,J, J_1,JJ_1,\ldots,J_{u_A},JJ_{u_A}\}$ be a splitting basis of $A$ associated to $J$. By the splitting lemma \cite[Lem.2.4]{gp3}, there exist unique holomorphic functions  $f_1^{J,J_k},f^{J,J_k}_2:\Omega_J\to \C_J$ for $k=0,\ldots,u_A$ such that 
$$\textstyle
f_J(x)=\sum_{k=0}^{u_A}(f_1^{J,J_k}(x)+f^{J,J_k}_2(x)J)J_k
$$
for each $x\in\Omega_J$. 

Let $\Bb:=\{\e_0,\ldots,\e_{d_A}\}$ be any basis of $A$ as a $\R$-vector space (here we are not requiring that $1\in\Bb$). Define
$$
\Omega_{\Bb}:=\{(x_0,\ldots,x_{d_A})\in \R^{d_A+1} : x_0\e_0+\ldots+x_{d_A}\e_{d_A}\in \Omega\}\subset \R^{d_A+1}.
$$
Let $f:\Omega\to A$ be a slice function. Then, there exist unique real valued functions $f_\ell^{\Bb}:\Omega\to \R$ for $\ell=0,\ldots,d_A$ such that $f(x)=f_0^{\Bb}(x)\e_0+\ldots+f_{d_A}^{\Bb}(x)\e_{d_A}$ for each $x\in \Omega$. After the identification of $A$ with $\R^{d_A+1}$ induced by the basis $\Bb$, the functions $f_\ell^{\Bb}$ can be regarded as functions of real variables
$$
f^{\Bb,\R}_{\ell}:\Omega_{\Bb}\to \R, \quad (x_0,\ldots,x_{d_A})\mapsto f_\ell^{\Bb}(x_0\e_0+\ldots+x_{d_A}\e_{d_A}).
$$
In order to lighten the exposition, up to a light abuse of notation, we will write $\Omega$ instead of $\Omega_{\Bb}$ and $f_\ell$ instead of $f_\ell^{\Bb,\R}$. This will create no confusion, as the situation will be always clear by the context. 

In \cite{bc}, we provided the following characterisation for slice-Nash functions. Even if the proofs contained in \cite{bc} are only for $A=\HH, \O$, it is easy to check that the same proofs, with obvious modifications, holds true also in this more general context.

\begin{thm}[Characterisation of slice-Nash functions {\cite[Thm. 5.9, Thm.5.10]{bc}}]\label{charNash}
The following are equivalent:
\begin{itemize}
\item[{\rm(i)}] $f$ is a slice-Nash function on $\Omega$.
\item[{\rm(ii})] There exist $J\in\sph_A$ and a splitting basis $\Bb_J$ of $A$ associated to $J$ such that the functions  $f_1^{J,J_k},f^{J,J_k}_2$ are $\C_J$-Nash functions on $\Omega_J$ for each $k=0,\ldots,u_A$. 
\item[{\rm(iii)}] For each $J\in\sph_A$ and each splitting basis $\Bb_J$ of $A$ associated to $J$, the functions $f_1^{J,J_k},f^{J,J_k}_2$ are $\C_J$-Nash functions on $\Omega_J$ for each $k=0,\ldots,u_A$. 
\item[{\rm(iv})] $f$ is slice regular and there exists a basis $\Bb$ of $A$ as a $\R$-vector space such that the real components $f_\ell$ of $f$ are real Nash functions on $\Omega$ for each $\ell=0,\ldots,d_A$.
\item[{\rm(v)}] $f$ is slice regular and for each basis $\Bb$ of $A$ as a $\R$-vector space, the real components $f_\ell$ of $f$ are real Nash functions for each $\ell=0,\ldots,d_A$.
\end{itemize}
\end{thm}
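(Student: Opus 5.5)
The plan is to close the loop of equivalences (i)$\Rightarrow$(iii)$\Rightarrow$(ii)$\Rightarrow$(i) and then (i)$\Rightarrow$(v)$\Rightarrow$(iv)$\Rightarrow$(ii). The argument follows \cite{bc}. The only place where non-associativity matters, we replace associativity by Artin's theorem applied to the subalgebra generated by $J$ and $J_k$.

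\emph{The splitting equivalences.} On $\Omega_J$ we have, for $z=\alpha+i\beta$ and $x=\phi_J(z)$,
\[
f_J(x)=\textstyle\sum_k\big(\phi_J(F^1_{J,J_k}(z))+\phi_J(F^2_{J,J_k}(z))J\big)J_k .
\]
This follows by expanding $F_1(z)+JF_2(z)$ in the splitting basis and using Artin's theorem to move $J$ past $J_k$. Uniqueness in the splitting lemma \cite[Lem.2.4]{gp3} then gives $f^{J,J_k}_\ell=\phi_J\circ F^\ell_{J,J_k}\circ\phi_J^{-1}$. Hence (i) for a given pair $(J,\Bb_J)$ is the same as (ii) for that pair.

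To get (iii), i.e.\ independence of the pair $(J,\Bb_J)$, I will argue as follows.
\begin{itemize}
\item Changing the splitting basis at fixed $J$ is a $\C_J$-linear change of coordinates with constant coefficients. Complex Nash functions are closed under sums and under products with constants, so Nash-ness is preserved.
\item Changing $J$ to $I$ uses the representation formula, Proposition~\ref{6}. The stem components $F_1,F_2$ are recovered from $f|_{\Omega_I}$ by $\R$-linear operations together with the reflection $z\mapsto\bar z$.
\item A complex Nash $g$ has $z\mapsto\overline{g(\bar z)}$ complex Nash, so the components with respect to $I$ are $\C$-linear combinations of Nash functions and their conjugate reflections. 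They are therefore Nash.
\end{itemize}

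\emph{The real-component equivalences.} For (i)$\Rightarrow$(v), fix any real basis $\Bb$. Each real component of $f(\alpha+\beta J)$, for all $J\in\sph_A$, is a fixed $\R$-linear combination of:
\begin{itemize}
\item the real and imaginary parts of the $F^\ell_{J_0,J_k}(\alpha+i\beta)$, which are real Nash in $(\alpha,\beta)$;
\item with coefficients the coordinates of $J$.
\end{itemize}
Globally on $\Omega$, the variables $\alpha=t(x)/2$ and $\beta=\sqrt{n(x-\alpha)}$ are semialgebraic functions of $x$. So is $J=(x-\alpha)/\beta$ off the real axis, where $n(x-\alpha)\in\R_{>0}$ on $\Qq_A$. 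Compositions of Nash functions with analytic semialgebraic maps are Nash, so each $f_\ell$ is semialgebraic and analytic. It is therefore Nash, with analyticity across the real axis coming from $f$ being slice regular via the even–odd structure. The implication (v)$\Rightarrow$(iv) is trivial.

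For (iv)$\Rightarrow$(ii), fix $J$. Restricting to the plane $\C_J$ is a linear embedding, so the restrictions of the $f_\ell$ to $\Omega_J$ are real Nash in $(\alpha,\beta)$. Expressing $f_J$ in a splitting basis associated to $J$ is a constant real-linear change of coordinates. Hence the real and imaginary parts of each holomorphic $f^{J,J_k}_\ell$ are real Nash. A holomorphic function whose real part is real Nash is complex Nash, by Theorem~\ref{carbone}. This gives (ii).

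The main obstacle I expect is the global step in (i)$\Rightarrow$(v): checking that the real components are Nash on all of $\Omega$, in particular near $\Omega\cap\R$, where $J$ is undefined. I would handle it by writing each component as an even function of $\beta$ times the odd factor $\beta J=x-\alpha$. That makes it a polynomial expression in $x$ with coefficients that are Nash functions of $(\alpha,\beta^2)$, so both Nash-ness and analyticity are preserved across the real axis.
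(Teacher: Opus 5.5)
The paper gives no proof of this statement. It only cites \cite[Thm.5.9, Thm.5.10]{bc} and says the proofs extend ``with obvious modifications''. Your chain of implications is a reasonable way to supply those modifications: Artin's theorem replaces associativity, Theorem~\ref{carbone} handles (iv)$\Rightarrow$(ii), and the even--odd rewriting deals with $\Omega\cap\R$. One step, however, does not work as written.

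\emph{The gap: the pair $f_1^{J,J_k},f_2^{J,J_k}$ is not unique.} Both functions are $\C_J$-valued, so $G_k:=f_1^{J,J_k}+f_2^{J,J_k}J$ is again $\C_J$-valued. The splitting lemma \cite[Lem.2.4]{gp3} determines only $G_k$, one function per $k$, not the pair. For any holomorphic $h:\Omega_J\to\C_J$, the pair $(f_1^{J,J_k}+hJ,\;f_2^{J,J_k}-h)$ gives the same $G_k$. Taking $h$ non-Nash (e.g.\ the restriction of $\exp$) shows that (iii) only makes sense once the pair is normalised. So your identification $f_\ell^{J,J_k}=\phi_J\circ F^\ell_{J,J_k}\circ\phi_J^{-1}$ is not forced by ``uniqueness in the splitting lemma''. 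You must impose the intrinsic condition $f_\ell^{J,J_k}(\overline{x})=\overline{f_\ell^{J,J_k}(x)}$. This restores uniqueness: if $a+bJ\equiv0$ with $a,b$ intrinsic, evaluating at $\overline{x}$ and conjugating gives $a-bJ\equiv0$. The functions $\phi_J\circ F^\ell_{J,J_k}\circ\phi_J^{-1}$ satisfy this condition because the $F^\ell_{J,J_k}$ are stem functions. With this normalisation your first step is correct.

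\emph{The same gap in (iv)$\Rightarrow$(ii).} The real coordinates of $f_J(x)$ in $\Bb_J$ are only the real and imaginary parts of $G_k(x)$. These do not determine $f_\ell^{J,J_k}(x)$ pointwise. So your ``constant real-linear change of coordinates'' plus Theorem~\ref{carbone} only shows that each $G_k$ is $\C_J$-Nash. You still need
\[
f_1^{J,J_k}(x)=\tfrac12\big(G_k(x)+\overline{G_k(\overline{x})}\big),\qquad f_2^{J,J_k}(x)=-\tfrac12\big(G_k(x)-\overline{G_k(\overline{x})}\big)J,
\]
which are Nash by the reflection property $g(z)\mapsto\overline{g(\overline{z})}$ that you already use elsewhere.

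\emph{Two smaller remarks.}

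First, independence of the pair $(J,\Bb_J)$ needs no representation formula. The identity $F=\sum_k\big(F^1_{J,J_k}J_k+F^2_{J,J_k}\,JJ_k\big)$ shows that the $F^\ell_{J,J_k}$ are just the $\C$-coordinates of $F$ in the real basis $\Bb_J$, viewed as a $\C$-basis of $A\otimes_\R\C$. Two such bases differ by a constant real matrix.

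Second, $\Omega\subset\Qq_A$ is in general not open in $A$, and $D$ may have infinitely many components. So in (iv) and (v), ``real Nash on $\Omega$'' must be read locally, as restrictions of Nash functions on open subsets of $A$. Accordingly, run (i)$\Rightarrow$(v) locally rather than via global semialgebraicity. Your formulas already provide such local extensions: $\alpha$ is linear and $\beta^2$ polynomial on all of $A$, and near real points you have $H(\alpha,\beta^2)+(x-\alpha)G(\alpha,\beta^2)$. The Nash property of $H,G$ in $(\alpha,s)$, $s=\beta^2$, deserves one line. Suppose a nonzero $P(\alpha,\beta,t)$ annihilates the even component (for $G$, use $P(\alpha,\beta,\beta t)$). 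Then $P(\alpha,\sqrt s,t)P(\alpha,-\sqrt s,t)\in\R[\alpha,s,t]$ is nonzero and annihilates it for $s>0$, hence near $s=0$ by the identity principle.
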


We end this section with the following:

\begin{remark}
From another perspective, Savi, in the recent work \cite{sa}, studied quaternionic and octonionic algebras from a model theoretic point of view. Slice regular functions which are definable in the theories he introduced are precisely those slice regular functions whose real components are real Nash functions. In particular, by Theorem \ref{charNash}, it follows that the class of definable slice regular functions are exactly the slice-Nash functions defined on open circular sets that are, in addition, semialgebraic. $\sqbullet$
\end{remark}

\subsection{Nash number}

In this section, \textit{$A$ denotes an alternative Cartan $*$-algebra equipped with the basis $\{e_K\}_{K\in\mc{P}(n)}$ whose product is induced by a function $\sigma$, such that $\sph_A\neq \varnothing$}.

Even if the definition of the Nash number $\Nn(A)$ of $A$ is already contained in Definition \ref{Nashnumber}, we recall it here for the reader's convenience. 

\begin{defn}[Nash number]
The \textit{Nash number $\Nn(A)$ of $A$} is the minimum integer $\ell$ that satisfies the following property: there exists a subset $\Kk$ of $\mc{P}(n)$ of cardinality~$\ell$ such that, for each open circular set $\Omega\subset \Qq_A$ and each slice regular function $f:\Omega\to A$, if the real components $f_K$ of $f$ are real Nash functions for each $k\in\Kk$, then $f$ is a slice-Nash function.~$\sqbullet$
\end{defn}

The Nash number $\Nn(A)$ is well-defined. In fact, by Theorem \ref{charNash}, if all the real components of $f$ are real Nash functions, then $f$ is a slice-Nash function. \textit{Let $\Omega=\Omega_D\subset \Qq_A$ be an open circular set, $F=\sum_{K\in\mc{P}(n)}F_Ke_K:D\to A\otimes_\R\C$ a stem function and $f=\I(F):\Omega\to A$ the corresponding slice function. Let $\{f_K\}_{K\in\mc{P}(n)}$ be the real components of $f$, see \eqref{equa}.} Recall that $D$ is invariant under conjugation, see Assumption \ref{assSimm}.

The purpose of this section is to show the following:

\begin{thm}\label{mainNash}
It holds $\Nn(A)\leq \Cc(A)$.
\end{thm}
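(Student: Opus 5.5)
Fix an SCP $\Cc$ with $|\Cc_1|=\Cc(A)$ (possible by \eqref{C1}). Let $\Kk:=\Cc_1$, and let $f=\I(F):\Omega\to A$ be slice regular on an open circular $\Omega=\Omega_D$ with $f_L$ real Nash on $\Omega$ for every $L\in\Cc_1$. The goal is to show that $f$ is slice-Nash. The plan follows the proof of Proposition \ref{constslice}, with ``constant'' replaced by ``Nash'' and the complex input being Theorem \ref{carbone} ($\Nn(\C)=1$).

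\emph{Step 1 (pass to slices).} Fix $(e_L,e_M)\in\Cc$ and put $J:=e_{L\ds M}\in\sph_A$. Let $\psi=f_{L,M}e_L$ on $\Omega_J$ be as in Proposition \ref{slicepres}(iii). It is holomorphic on $\Omega_J\subset\C_J$, and its real part is $\sigma(L,L)f_L|_{\Omega_J}$. Restricting a real Nash function of $\dim(A)$ real variables to the real $2$-plane $\C_J$ (a linear substitution $x=\alpha+\beta J$) gives a real Nash function of $(\alpha,\beta)$, since a polynomial relation $P(x,f_L(x))=0$ restricts to one that is nonzero. We need the restricted polynomial to be nonzero; if it vanishes identically, we first take $P$ irreducible and use local analyticity to find a nonzero relation. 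This technical point I expect to be routine via the equivalent characterisations of Nash functions.

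\emph{Step 2 (Carbone's theorem).} Apply Theorem \ref{carbone} on each connected component of $D$, transported by $\phi_J$. This shows that $\psi$ is $\C_J$-Nash on $\Omega_J$. Because $A$ is alternative, $f_{L,M}=\sigma(L,L)\psi e_L$ on $\Omega_J$. Hence on $\Omega_J$ we have $f_{L,M}=g_1\,e_L$ with $g_1:=\sigma(L,L)\psi$ a $\C_J$-Nash function.

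\emph{Step 3 (slice-Nash for $f_{L,M}$).} Complete $\{1\}$ or $\{e_L\}$ to a splitting basis $\Bb_J$ associated to $J$. In the splitting decomposition of $f_{L,M}|_{\Omega_J}$, the components are then $\C_J$-linear combinations of $g_1$, possibly conjugated by the fixed basis change, which is $\C_J$-linear because it is left multiplication. So they are $\C_J$-Nash, and Theorem \ref{charNash}(ii)$\Rightarrow$(i) gives that $f_{L,M}$ is slice-Nash on $\Omega$. Slice-Nash functions form a real vector space: characterisation (v) applies, and sums of real Nash functions are Nash. Therefore $f^\Cc=\sum_{(e_L,e_M)\in\Cc}f_{L,M}$ is slice-Nash.

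\emph{Step 4 (undo the Cartan transform).} By Proposition \ref{KeyCalculation}, $F^\Cc_K=c_KF_K$ with positive integers $c_K$. Write $F=\sum_K c_K^{-1}F^\Cc_Ke_K$. Using characterisation (iv)/(v) with the basis $\{e_K\}$, it suffices to see that each $\I(F^\Cc_Ke_K)$ is slice-Nash whenever $f^\Cc$ is. This is the main obstacle, since the real components of $\I(F_K e_K)$ are not simply the $f_K$. I would handle it at the stem level. Choose $J$ with $\Bb_J$ a splitting basis compatible with $\{e_K\}$. The complex Nash property of the $F_{J,J_k}^\ell$ for $F^\Cc$ should then be equivalent to that of each complex component $F^\Cc_K$, because the $F_{J,J_k}^\ell$ are obtained from the $F_K$ by an invertible constant linear change with real coefficients determined by $\sigma$. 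Complex Nash functions are closed under $\C$-linear combinations, so rescaling the components by $c_K^{-1}$ preserves stem-Nashness. Hence $F$ is stem-Nash, $f$ is slice-Nash, and $\Nn(A)\le|\Cc_1|=\Cc(A)$.
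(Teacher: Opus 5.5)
Your proposal is correct and follows essentially the paper's route: restrict to the slice $\C_{e_{L\ds M}}$, use Proposition~\ref{slicepres}(iii) and Theorem~\ref{carbone} to get a complex Nash function there, lift it with Theorem~\ref{charNash}, and then read off the complex components at the stem level. The only difference is that the paper works with $f_{(e_L,e_M)}$ and extracts $F_L,F_M$ directly from its stem (then invokes $\Cc_1\cup\Cc_2=\mc{P}(n)$); your passage through $f^{\Cc}$ and Proposition~\ref{KeyCalculation} is harmless but unnecessary, since your Step~4 argument applied to each single $F_{L,M}$ already shows that $F_L$ and $F_M$ are complex Nash.
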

\begin{proof}
Let $\Cc$ be a SCP, and $\Cc_1$ and $\Cc_2$ the sets introduced in \eqref{partition}. Assume that $\Cc(A)=|\Cc_1|$ (see \eqref{C1}) and that $f_L$ is a real Nash function for each $L\in\Cc_1$. For each $(e_L,e_M)\in\Cc$, let 
$$
f_{(e_L,e_M)}:=\mc{I}(F_L+F_M\sigma(L,L)\sigma(M,L)e_{L\ds M})
$$
be the slice regular function introduced in \eqref{vecchie}. By \eqref{vecchie2}, \eqref{uguaSw} and Proposition \ref{slicepres}(ii), we have
$$
f_{(e_L,e_M)}(x)=f_L(x)+f_M(x)\sigma(L,L)\sigma(M,L)e_{L\ds M}
$$
for each $x\in \Omega_{e_{L\ds M}}$. We start by showing: \textit{$f_{(e_L,e_M)}$ is a slice-Nash function for each $(e_L,e_M)\in \Cc$.}

Let $(e_L,e_M)\in\Cc$. By Propositions \ref{slicepres}(iii), the function
$
f_L\sigma(L,L)+f_M\sigma(M,L)e_{L\ds M}
$
is holomorphic on $\Omega_{e_{L\ds M}}$. Thus, the function
$$
f_L+f_M\sigma(L,L)\sigma(M,L)e_{L\ds M}:\Omega_{e_{L\ds M}}\to \C_{e_{L\ds M}}
$$
is holomorphic for each $(e_L,e_M)\in \Cc$. By \cite[Thm.1.3]{c}, we deduce that the function $f_L+\sigma(L,L)\sigma(M,L)f_Me_{L\ds M}$ is a complex Nash function on $\Omega\cap\C_{e_{L\ds M}}$, because $f_L$ is a real Nash function on $\Omega_{e_{L\ds M}}$. As $(e_L,e_M)$ is a Cartan pair, we have $e_{L\ds M}\in\sph_A$. Let $m\in L\ds M$. In particular, 
$
\{e_K,e_{L\ds M}e_K : m\not\in K\}
$
is a splitting basis of $A$ associated to $e_{L\ds M}$. By Theorem \ref{charNash}, we deduce that the function $f_{(e_L,e_M)}$ is a slice-Nash function, as required. 

Let $(e_L,e_M)\in\Cc$. The stem function of $f_{(e_L,e_M)}$ is $F_L+F_M\sigma(L,L)\sigma(M,L)e_{L\ds M}$. As $f_{(e_L,e_M)}$ is a slice-Nash function, $F_L$ and $F_M$ are complex Nash function. As $\Cc_1\cup \Cc_2=\mc{P}(n)$, we deduce that $F=\sum_{K\in\mc{P}(n)}F_Ke_K$ is a stem-Nash function. We conclude that $f$ is a slice-Nash function, as required.
\end{proof}

By the previous result and Example \ref{Cartannumber1}(i), we deduce straightforwardly the following:

\begin{cor}
The division algebras $\HH$ and $\O$ satisfy $\Nn(\HH)=\Nn(\O)=1$.
\end{cor}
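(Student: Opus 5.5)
The statement to prove is $\Nn(\HH)=\Nn(\O)=1$. The plan is to combine the upper bound of Theorem \ref{mainNash} with the trivial lower bound coming from the definition of $\Nn$.

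For the upper bound, Example \ref{Cartannumber1}(i) gives $\Cc(\HH)=\Cc(\O)=1$. This follows from the systems of Cartan pairs in Example \ref{SCPdivision}, for which $\Cc_1=\{\varnothing\}$. Both $\HH$ and $\O$, with their standard conjugations and the bases of Assumption \ref{bases}, are alternative Cartan $*$-algebras with $\sph_A\neq\varnothing$. Hence Theorem \ref{mainNash} applies and gives $\Nn(\HH)\leq 1$ and $\Nn(\O)\leq 1$. Concretely, the subset $\Kk=\{\varnothing\}$ witnesses the bound. If the real part $f_\varnothing$ of a slice regular $f$ is real Nash, then, as in the proof of Theorem \ref{mainNash}:
\begin{itemize}
\item each $f_{(1,e_M)}$ restricted to the slice $\C_{e_M}$ is holomorphic with Nash real part;
\item by \cite[Thm.1.3]{c}, each such restriction is therefore complex Nash;
\item reassembling the stem function $F$ from its complex components shows that $f$ is slice-Nash.
\end{itemize}

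For the lower bound, I must check that $\Nn(A)\geq 1$, i.e.\ that the empty set $\Kk=\varnothing$ does not work. With $\Kk=\varnothing$ the hypothesis is vacuous, so the property would assert that every slice regular function on every open circular set is slice-Nash. This is false. A counterexample is the slice regular function $f(x)=\exp(x)$, defined through the power series with real coefficients on $\Qq_A$. Its restriction to any slice $\C_J$ is the classical exponential, which is not algebraic over the polynomial ring, so by Theorem \ref{charNash}(iii) $f$ is not slice-Nash (compare Remark \ref{NashReg}). Hence $\Nn\geq 1$, and equality follows.

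I do not expect a real obstacle. The only point needing care is the transcendence of $\exp$ in the lower bound. It can be checked in the standard way: a nonzero polynomial relation $P(z,e^z)=0$ is impossible, for instance by comparing growth along the positive real axis, or because the functions $e^{kz}$ are linearly independent over $\C(z)$.
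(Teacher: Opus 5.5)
Your proposal is correct and follows the paper's argument: the corollary comes from $\Nn(A)\leq\Cc(A)$ (Theorem \ref{mainNash}) together with $\Cc(\HH)=\Cc(\O)=1$ (Example \ref{Cartannumber1}(i)). Your explicit lower bound, which rules out $\Kk=\varnothing$ via the slice regular but non-slice-Nash function $\exp$, is a valid addition that the paper leaves implicit; the paper uses the same function in Example \ref{exnonash}.
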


\subsection{On the Nash number of Clifford algebras}\label{NashRpq}

\textit{Let $p,q\in\mathbb{N}$ with $n:=p+q\geq 2$. We consider the Clifford algebras $\R_{p,q}$ endowed with the Clifford conjugation. Recall that $\R_n:=\R_{(0,n)}$.}

In this section, we make use of Theorem \ref{mainNash} and the result of \S\ref{Scliff} to deduce the analogous of Theorem \ref{reconRpq} and \ref{reconRn} for the Nash number $\Nn(\R_{p,q})$ of the Clifford algebras $\R_{p,q}$. By Proposition \ref{Rpq} and Theorem \ref{mainNash}, we deduce straightforwardly the following:

\begin{thm}[Nash number of the Clifford algebras $\R_{p,q}$]\label{Nash1}
It holds
\begin{equation}\label{betterstima}
\Nn(\R_{p,q})\leq \frac{n^3+5n+6}{6}.
\end{equation}
\end{thm}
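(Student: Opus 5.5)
The plan is to combine the two inequalities already established in the paper. By Theorem \ref{mainNash}, every alternative Cartan $*$-algebra satisfies $\Nn(A)\leq \Cc(A)$. To apply this to $A=\R_{p,q}$ with the Clifford conjugation, I will first verify that $\R_{p,q}$ meets the standing hypotheses of that theorem:
\begin{itemize}
\item it is associative, hence alternative;
\item it is a Cartan $*$-algebra with respect to the basis $\{e_K\}_{K\in\mc{P}(n)}$ (see Example \ref{splitoctonions2}(ii));
\item it satisfies $\sph_{\R_{p,q}}\neq\varnothing$.
\end{itemize}

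The last condition is the only one needing a small check. By Corollary \ref{Clifford2}, $e_K\in\sph_{\R_{p,q}}$ as soon as $(\m{K_+},\m{K_-})\in\{(0,1),(2,0)\}$. If $q\geq1$, I take $K=\{n\}$, which gives $(\m{K_+},\m{K_-})=(0,1)$. If $q=0$, then $p=n\geq 2$, and I take $K=\{1,2\}$, which gives $(\m{K_+},\m{K_-})=(2,0)$. In either case $\sph_{\R_{p,q}}\neq\varnothing$, so Theorem \ref{mainNash} applies and yields $\Nn(\R_{p,q})\leq\Cc(\R_{p,q})$.

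Next, Proposition \ref{Rpq} gives $\Cc(\R_{p,q})\leq \frac{n^3+5n+6}{6}$. Chaining this with the previous inequality produces \eqref{betterstima}.

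No real obstacle remains. The substantive work, namely the combinatorial covering by subsets of size at most $3$ and the transfer from real Nash components to complex Nash stem components via \cite[Thm.1.3]{c}, is already contained in Proposition \ref{Rpq} and Theorem \ref{mainNash}.
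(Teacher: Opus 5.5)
Your proposal is correct and takes the same route as the paper, which obtains the bound in one line by chaining Theorem \ref{mainNash} ($\Nn(A)\leq\Cc(A)$) with Proposition \ref{Rpq}. Your explicit check that $\R_{p,q}$ with the Clifford conjugation meets the standing hypotheses is also correct, and the paper leaves it implicit; in particular, $\sph_{\R_{p,q}}\neq\varnothing$ via $e_n$ when $q\geq 1$ and via $e_{12}$ when $q=0$.
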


In order to show the analogous of Theorem \ref{reconRn} for the Nash number $\Nn(\R_{n})$ of the Clifford algebras $\R_{n}$, we need first to adapt some of the results of \S\ref{Scliff} to this context. We start with the following:

\begin{example}\label{exnonash}
Let $\exp(x)$ be the slice regular function induced by the holomorphic stem function $z\mapsto e^z:=1\otimes e^z$. The function $f(x):=\exp(x)e_{123}$, both if defined on the quadratic cone $\Qq_{\R_3}$ of $\R_3$ or the quadratic cone $\Qq_{\R_4}$ of $\R_4$, is a slice regular function which is not a slice-Nash function. Arguing as in the proof of Proposition \ref{maggioredi2}, it follows that the real component $f_{\varnothing}$ of $f$ is constantly equal to zero. In particular, $f_{\varnothing}$ is a real Nash function. We deduce that $\Nn(\R_3)\geq 2$ and $\Nn(\R_4)\geq 2$. $\sqbullet$
\end{example}

Recall that, given an integer $m\geq 0$, we denote by $\m{m}$ the remainder of $m$ divided by 4. Next, we adapt the argument of Proposition \ref{maggiore3} to show its analogous for $\Nn(\R_n)$.

\begin{prop}\label{maggiore3Nash}
Let $n\geq 6$ be such that $\m{n}\in\{0,2,3\}$. Then, $\Nn(\R_n)\geq 3$.
\end{prop}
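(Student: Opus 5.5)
We adapt the proof of Proposition \ref{maggiore3}, replacing the slice polynomial $x$ by the slice exponential of Example \ref{exnonash}. Let $\Omega\subset\Qq_{\R_n}$ be an open circular set. We must show that for each $K,H\in\mc{P}(n)$ with $K\neq H$ there is a slice regular $f:\Omega\to\R_n$ with $f_K$ and $f_H$ real Nash, but $f$ not slice-Nash.

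First we reduce to $H=\varnothing$ by replacing $f$ with $fe_H$. Right multiplication by $e_H$ permutes the basis up to signs, since $e_Le_H=\sigma(L,H)e_{L\ds H}$. So real components of $fe_H$ are, up to sign, real components of $f$. Moreover, $f$ is slice-Nash if and only if $fe_H$ is: the stem function transforms by $F\mapsto Fe_H$, which permutes the complex components up to sign, and we can use Theorem \ref{charNash}(v). It therefore suffices to show the following. \emph{For each $K\in\mc{P}(n)\setminus\{\varnothing\}$ there exists $H\in\mc{P}(n)$ such that $f(x):=\exp(x)e_H$ has $f_\varnothing$ and $f_K$ real Nash, while $f$ is not slice-Nash.}

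Next we compute $f$ on a slice. By Corollary \ref{Clifford3}, every $J\in\sph_{\R_n}$ has the form $J=\sum_{e_W^2=-1}x_We_W$. Then for $\alpha+\beta J\in\Omega$,
$$\textstyle
f(\alpha+\beta J)=e^\alpha\cos\beta\, e_H+e^\alpha\sin\beta\sum_{e_W^2=-1}\sigma(W,H)x_We_{W\ds H}.
$$
Hence $f_L\equiv0$ for every $L\notin\{H\}\cup\{W\ds H:\m{W}\in\{1,2\}\}$. Zero is certainly a real Nash function. So we need $H\neq\varnothing$ with $\m{H}\in\{0,3\}$, giving $f_\varnothing\equiv0$, and $H\neq K$ with $\m{K\ds H}\notin\{1,2\}$, giving $f_K\equiv0$. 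This is exactly the combinatorial condition \eqref{cond} together with $H\notin\{\varnothing,K\}$ and $\m{H}\in\{0,3\}$. The case analysis on $\m{n}\in\{0,2,3\}$ and $\m{K}$ in the proof of Proposition \ref{maggiore3} already produces such an $H$ for every $K\neq\varnothing$, and we reuse it verbatim.

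It remains to show that $f=\exp(x)e_H$ is not slice-Nash, which we expect to be the only genuinely new point. Its stem function is $F=e^ze_H$, so the complex component $F_H=e^z$ is not a complex Nash function. Take $J:=e_1\in\sph_{\R_n}$ and a splitting basis associated to $J$ containing $e_H$ or $e_1e_H$ up to sign, e.g.\ $\{e_L,e_1e_L: 1\notin L\}$ as in the proof of Theorem \ref{mainNash}. Then one of the functions $f^{J,J_k}_\ell$ in Theorem \ref{charNash}(iii) equals $\pm e^x$ on $\Omega_J$, which is not $\C_J$-Nash. Alternatively, by Theorem \ref{charNash}(v), the component $f_H(\alpha+\beta J)$ restricted to a slice is $e^\alpha\cos\beta$, which is not real Nash: it is transcendental in $\alpha$ along the real axis, since $\Omega\cap\R$ is nonempty and open in $\R$. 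Either argument shows that $f$ is not slice-Nash, which completes the proof.
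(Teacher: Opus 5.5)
Your proposal is correct and follows the paper's route: the paper's proof just says to rerun the proof of Proposition \ref{maggiore3} with $\exp(x)e_H$ in place of $xe_H$, and you additionally spell out the two points it leaves implicit, namely that right multiplication by $e_H$ preserves the slice-Nash property and that $\exp(x)e_H$ is not slice-Nash. One small slip: your alternative non-Nash argument assumes $\Omega\cap\R\neq\varnothing$, which can fail for an arbitrary open circular set, but your splitting-basis argument does not need this, and since one suitable $\Omega$ suffices you may simply take $\Omega=\Qq_{\R_n}$.
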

\begin{proof}
The proof is exactly the same as the one of Proposition \ref{maggiore3}, except that in this case one has to consider the slice regular function $f(x):=\exp(x)e_H$ instead of the slice polynomial function $f(x):=x e_H$.
\end{proof}

Analogously, one can show that $\Nn(\R_n)\geq 2$ if $\m{m}=1$. Using the previous results, by Theorem \ref{CartanRn} and Theorem \ref{mainNash}, we deduce straightforwardly the following:

\begin{thm}[Nash number of the Clifford algebras $\R_n$]\label{Nash2}
For each $n\geq 2$, it holds:	
\begin{itemize}
\item[{\rm(i)}] $\Nn(\R_2)=\Nn(\HH)=1$.
\item[{\rm(ii)}] $\Nn(\R_3)=\Nn(\R_4)=2$.
\item[{\rm(iii)}] If $\m{n}=1$, then $\Nn(\R_n)=2$,
\item[{\rm(iv)}] If $n\geq 6$ and $\m{n}\in\{0,2,3\}$, then $3\leq \Nn(\R_n)\leq 4$.
\end{itemize}
\end{thm}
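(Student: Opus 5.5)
The plan is to assemble the four items from upper bounds and matching lower bounds already available, treating each case separately. Every upper bound comes from Theorem \ref{mainNash}, $\Nn(\R_n)\leq\Cc(\R_n)$, combined with Theorem \ref{CartanRn}. This gives:
\begin{itemize}
\item[{\rm(i)}] $\Nn(\R_2)\leq 1$;
\item[{\rm(ii)}] $\Nn(\R_3),\Nn(\R_4)\leq 2$;
\item[{\rm(iii)}] $\Nn(\R_n)\leq 2$ when $\m{n}=1$;
\item[{\rm(iv)}] $\Nn(\R_n)\leq 4$ when $n\geq 6$ and $\m{n}\in\{0,2,3\}$.
\end{itemize}
For (i), the trivial lower bound $\Nn\geq 1$ holds because not every slice regular function is slice-Nash: the function $\exp$ is slice regular but its restriction to a slice is not algebraic. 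So $\Nn(\R_2)=\Nn(\HH)=1$, consistent with the corollary for $\HH$.

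The lower bounds in (ii) and (iv) are already proved. Example \ref{exnonash} gives $\Nn(\R_3),\Nn(\R_4)\geq 2$, and Proposition \ref{maggiore3Nash} gives $\Nn(\R_n)\geq 3$ in case (iv).

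The only step needing care is the lower bound $\Nn(\R_n)\geq 2$ for $\m{n}=1$, $n\geq 5$, in (iii). I would mimic Proposition \ref{maggioredi2} with $x$ replaced by $\exp(x)$.
\begin{itemize}
\item \emph{Step 1: choose $L$.} Since $\Cc(\R_n)=2\geq 2$, there is $L\neq\varnothing$ with $e_L\notin\sph_{\R_n}$. For instance, take any $L$ with $\m{L}\in\{0,3\}$, say $L=\{1,2,3\}$.
\item \emph{Step 2: define $f$.} Given an arbitrary index $H$, set $f(x):=\exp(x)e_{L\ds H}$.
\item \emph{Step 3: show $f_H\equiv 0$.} By Corollary \ref{Clifford3}, $\R_n$ satisfies \eqref{buonacondition}. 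Write $J=\sum_{t(e_K)=0}x_Ke_K$, so that $f(\alpha+\beta J)=e^\alpha\cos\beta\, e_{L\ds H}+e^\alpha\sin\beta\, Je_{L\ds H}$. Since $t(e_L)\neq 0$ and $L\ds H\neq H$, the component along $e_H$ vanishes identically; in particular $f_H$ is a real Nash function.
\item \emph{Step 4: show $f$ is not slice-Nash.} By Theorem \ref{charNash}, $f$ is not slice-Nash, since its stem function $e^z e_{L\ds H}$ has a non-Nash complex component.
\end{itemize}
Since $H$ was arbitrary, no single-element set $\Kk$ works, so $\Nn(\R_n)\geq 2$.

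The main obstacle is making Step 4 rigorous in this generality. I would argue via the stem function: the complex component $F_{L\ds H}=e^z$ of $f$'s stem function is not algebraic over $\C[z]$. If it were Nash, some nonzero polynomial $P$ would satisfy $P(z,e^z)=0$, contradicting the transcendence of $e^z$ over $\C(z)$.
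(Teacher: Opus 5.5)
Your proposal is correct and follows the paper's route. The upper bounds come from Theorem \ref{mainNash} together with Theorem \ref{CartanRn}. The lower bounds come from Example \ref{exnonash} and Proposition \ref{maggiore3Nash}, and for $\m{n}=1$ you give exactly the $\exp$-version of Proposition \ref{maggioredi2} that the paper only invokes with the word ``analogously''.

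Your Step 4 implicitly assumes that stem-Nashness can be tested on the complex components $F_K$ with respect to $\{e_K\}_{K\in\mc{P}(n)}$. This holds because passing from a splitting basis to $\{e_K\}$ is a constant real linear change of coordinates, and complex Nash functions are closed under such linear combinations.
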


%%%

\bibliographystyle{amsalpha}

\end{document}